\title{Foundations of Algebraic Theories\\ and 
Higher Dimensional Categories
\\ \quad
\\
}
\author{Soichiro Fujii}
\date{December 7, 2018}
\tikzset{2cell/.style={-implies,double,double equal sign distance,shorten 
>=2pt, shorten <=3pt}}
\tikzset{2cellshort/.style={-implies,double,double equal sign distance,shorten 
>=4pt, shorten <=5pt}}
\tikzset{2cellr/.style={implies-,double,double equal sign distance,shorten 
>=3pt, shorten <=2pt}}
\tikzset{3cell/.style={-implies,double,double distance=2.5pt,shorten >=2pt, 
shorten <=3pt}}
\tikzset{labelsize/.style={font=\scriptsize}}
\tikzset{string/.style={very thick}}
\tikzset{
  pto/.style={->,postaction={decorate},
    decoration={
        markings,
        mark=at position 0.5 with {\arrow{|}}}
  },
}
\newcommand{\tzsquare}[3]{
\draw[2cell] (#1,#2)++(0,0.25) to node[auto,labelsize]{#3} ++(0,-0.5);}
\newcommand{\tzsquareup}[3]{
\draw[2cell] (#1,#2)++(0,-0.25) to node[auto,swap,labelsize]{#3} ++(0,0.5);}
\newcommand{\tzsquareupswap}[3]{
\draw[2cell] (#1,#2)++(0,-0.25) to node[auto,labelsize]{#3} ++(0,0.5);}
\numberwithin{equation}{chapter}
\declaretheoremstyle[
spaceabove=6pt, spacebelow=6pt,
headfont=\normalfont\itshape,
notefont=\mdseries, notebraces={(}{)},
bodyfont=\normalfont,
postheadspace=0.5em,
qed=$\square$
]{myproofstyle}
\declaretheorem[style=plain,numberwithin=chapter,name=Theorem]{thm}
\declaretheorem[style=plain,sibling=thm,name=Proposition]{proposition}
\declaretheorem[style=plain,sibling=thm,name=Corollary]{corollary}
\declaretheorem[style=definition,qed=$\blacksquare$,sibling=thm,name=Definition]{definition}
\declaretheorem[style=definition,qed=$\blacksquare$,sibling=thm,name=Example]{example}
\declaretheorem[style=definition,qed=$\blacksquare$,sibling=thm,name=Axiom]{axiom}
\declaretheorem[style=definition,qed=$\blacksquare$,sibling=thm,name=Convention]{convention}
\mathchardef\mhyphen="2D
\newcommand{\CAT}{{\mathbf{CAT}}}
\newcommand{\Cat}{\mathbf{Cat}}
\newcommand{\Set}{\mathbf{Set}}
\newcommand{\SET}{\mathbf{SET}}
\newcommand{\tCAT}{\mathscr{C\!A\!T}}
\newcommand{\tCat}{{\mathscr{C}\!at}}
\newcommand{\MonCATls}{{\mathscr{M}\!on\mathscr{C\!A\!T}^{\mathrm{ls}}_{\mathrm{lax}}}}
\newcommand{\SymMonCATls}{{\mathscr{S}\!ym\mathscr{M}\!on
\mathscr{C\!A\!T}^{\mathrm{ls}}_{\mathrm{lax}}}}
\newcommand{\MonCAT}{{\mathscr{M}\!on\mathscr{C\!A\!T}_{\mathrm{lax}}}}
\newcommand{\MonCATst}{{\mathscr{M}\!on\mathscr{C\!A\!T}_{\mathrm{strong}}}}
\newcommand{\MonCATol}{{\mathscr{M}\!on\mathscr{C\!A\!T}_{\mathrm{oplax}}}}
\newcommand{\twoCAT}{\underline{2\mhyphen\mathscr{C\!A\!T}}}
\newcommand{\Ab}{\mathbf{Ab}}
\newcommand{\Str}{\mathrm{Str}}
\newcommand{\Sem}{\mathrm{Sem}}
\newcommand{\univ}[1]{\mathcal{#1}}
\newcommand{\PROF}{{\mathscr{P\!R\!O\!F}}}
\newcommand{\PPROF}{{\mathbb{PROF}}}
\newcommand{\FPow}{{\mathscr{F\!P}\!ow}}
\newcommand{\FProd}{{\mathscr{F\!P}\!rod}}
\newcommand{\tcat}[1]{\mathscr{#1}}
\newcommand{\monoid}[1]{\mathsf{#1}}
\newcommand{\End}{\monoid{End}}
\newcommand{\Endcl}[1]{\monoid{End}(#1)}
\newcommand{\Endopset}[1]{\monoid{End}_\Set(#1)}
\newcommand{\Endopab}[1]{\monoid{End}_\Ab(#1)}
\newcommand{\Th}[1]{{\mathbf{Th}(#1)}}
\newcommand{\Mon}[1]{{\mathbf{Mon}(#1)}}
\newcommand{\FinSet}{\mathbf{FinSet}}
\newcommand{\enrich}[2]{{\langle #1, #2\rangle}}
\newcommand{\Simp}{\Delta_{a}}
\newcommand{\ord}[1]{\mathbf{#1}}
\newcommand{\dcat}[1]{\mathbb{#1}}
\newcommand{\isoa}{\mathfrak{a}}
\newcommand{\isor}{\mathfrak{r}}
\newcommand{\isol}{\mathfrak{l}}
\newcommand{\Cpo}{\mathbf{Cpo}}
\newcommand{\F}{\mathbf{F}}
\newcommand{\Pcat}{\mathbf{P}}
\newcommand{\Ncat}{\mathbf{N}}
\newcommand{\NN}{\mathbb{N}}
\newcommand{\pres}[2]{{{\langle\, #1\, |\, #2 \,\rangle}}}
\newcommand{\interp}[1]{{[\![#1]\!]}}
\newcommand{\interpp}[1]{{[\![#1]\!]'}}
\newcommand{\group}{\mathrm{Grp}}
\newcommand{\mon}{\mathrm{Mon}}
\newcommand{\cmon}{\mathrm{CMon}}
\newcommand{\clo}{\mathrm{Clo}}
\newcommand{\Mod}[2]{{\mathbf{Mod}(#1,#2)}}
\newcommand{\olAct}[1]{{\mathscr{A}\!ct_{\mathrm{oplax}}(#1)}}
\newcommand{\olActl}[1]{{\mathscr{A}\!ct^{\mathrm{l}}_{\mathrm{oplax}}(#1)}}
\newcommand{\Enrich}[1]{{\mathscr{E}\!nrich(#1)}}
\newcommand{\Enrichtwo}[1]{{\mathscr{E}\!nrich'(#1)}}
\newcommand{\Enrichr}[1]{{\mathscr{E}\!nrich^{\mathrm{r}}(#1)}}
\newcommand{\MtMod}[1]{{\mathscr{M}\!\mathscr{M}\!od(#1)}}
\newcommand{\MtModfib}[2]{{\mathbf{MMod}_{#1}(#2)}}
\newcommand{\MtTH}{{\mathscr{M}\!\mathscr{T\!H}}}
\DeclareRobustCommand{\rvdots}{%
  \vbox{
    \baselineskip4\p@\lineskiplimit\z@
    \kern-\p@
    \hbox{.}\hbox{.}\hbox{.}
  }}
\newcommand{\pto}{}
\newcommand{\pgets}{}
\DeclareRobustCommand{\pto}{\mathrel{\mathpalette\p@to@gets\to}}
\DeclareRobustCommand{\pgets}{\mathrel{\mathpalette\p@to@gets\gets}}
\newcommand{\p@to@gets}[2]{%
  \ooalign{\hidewidth$\m@th#1\mapstochar\mkern5mu$\hidewidth\cr$\m@th#1\longrightarrow$\cr}%
}
\newcommand{\ptensor}{\odot}
\newcommand{\ptensorrev}{\mathbin{\ptensor^\mathrm{rev}}}
\newcommand{\G}{\mathbb{G}}
\newcommand{\N}{\mathbb{N}}
\newcommand{\id}[1]{{\mathrm{id}_{#1}}}
\newcommand{\ob}[1]{{\mathrm{ob}(#1)}}
\newcommand{\sfam}[1]{\mathcal{#1}}
\newcommand{\op}{{\mathrm{op}}}
\newcommand{\co}{{\mathrm{co}}}
\newcommand{\coop}{{\mathrm{coop}}}
\newcommand{\ar}[1]{{\mathrm{ar}_{#1}}}
\newcommand{\name}[1]{{\lceil #1\rceil}}
\newcommand{\cat}[1]{\mathcal{#1}}
\newcommand{\smcat}[1]{\mathbb{#1}}
\newcommand{\cocomma}[1]{{\mathbf{gph}(#1)}}
\newcommand{\reg}{\mathrm{R}}
\newcommand{\streg}{\mathrm{SR}}
\newcommand{\DayI}{\widehat{I}}
\newcommand{\Dayo}{\mathbin{\widehat{\otimes}}}
\newcommand{\Lan}{\mathrm{Lan}}
\newcommand{\Ran}{\mathrm{Ran}}
\newcommand{\normalemb}{{K_\mathrm{N}}}
\newcommand{\discemb}{{K_\mathrm{D}}}
\newcommand{\enGph}[1]{{{#1}\mhyphen\mathbf{Gph}}}
\newcommand{\enCat}[1]{{{#1}\mhyphen\mathbf{Cat}}}
\newcommand{\entCAT}[1]{{{#1}\mhyphen\tCAT}}
\newcommand{\Mnd}[1]{{\mathbf{Mnd}(#1)}}
\newcommand{\defemph}[1]{\textbf{#1}}
\newcommand{\Contr}[1]{{\mathbf{Contr}(#1)}}
\newcommand{\OC}[1]{{#1}\mhyphen{\mathbf{OC}}}
\newcommand{\Operad}[1]{{{#1}\mhyphen\mathbf{Opd}}}
\newcommand{\arity}[1]{{\mathrm{ar}_{#1}}}
\newcommand{\colim}{\mathrm{colim}}
\begin{document}

\frontmatter
\begin{titlepage}
    \begin{center}
        \vspace*{1cm}
 
        \Huge
        Foundations of Algebraic Theories\\ and 
        Higher Dimensional Categories
        
        \vspace{10cm}
        \LARGE{Soichiro Fujii}
 
        \vfill
 \LARGE{A Doctor Thesis
 Submitted to\\
 the Graduate School of the University of Tokyo\\
 on December 7, 2018\\
 in Partial Fulfillment of the Requirements\\
 for the Degree of Doctor of Information Science and
 Technology\\
 in Computer Science}
 
    \end{center}
\end{titlepage}
\chapter*{Abstract}
Universal algebra uniformly captures various algebraic structures, 
by expressing them  
as equational theories or abstract clones. The ubiquity of 
algebraic structures in mathematics and related fields 
has given rise to several variants of 
universal algebra, such as symmetric operads, 
non-symmetric operads, generalised operads, and monads.
These variants of universal algebra are called
{notions of algebraic theory}.
Although notions of algebraic theory share the basic aim of providing 
a background theory to describe algebraic structures,
they use various techniques to achieve this goal and, 
to the best of our knowledge,
no general framework for notions of algebraic theory 
which includes all of the examples above was known.
Such a framework would lead to a better understanding of 
notions of algebraic theory by revealing their essential structure,
and provide a uniform way to compare different notions of algebraic theory.
In the first part of this thesis, we develop a unified framework 
for notions of algebraic theory which includes all of the above examples.
Our key observation is that each notion of algebraic theory can be 
identified with a monoidal category, in such a way that theories 
correspond to monoid objects therein. 
We introduce a categorical structure called {metamodel}, which 
underlies the definition of models of theories.
The notion of metamodel subsumes not only the standard definitions of 
models but also non-standard ones, such as 
graded algebras of symmetric operads and 
relative algebras of monads on $\Set$ introduced by 
Hino, Kobayashi, Hasuo and Jacobs.
We also consider morphisms between notions of algebraic theory, which are
a monoidal version of profunctors.
Every strong monoidal functor gives rise to an adjoint pair of such morphisms,
and provides a uniform way to establish isomorphisms between categories of 
models in different notions of algebraic theory.
A general structure-semantics adjointness result and a double categorical
universal property of categories of models are also shown.

In the second part of this thesis, we shift from the general study of 
algebraic structures, and focus on a particular algebraic 
structure: higher dimensional categories.
Higher dimensional categories arise in such diverse fields as 
topology, mathematical physics 
and theoretical computer science. 
On the other hand, the structure of higher dimensional categories is 
quite complex and even their definition is known to be 
subtle.
Among several existing definitions of higher dimensional categories,
we choose to look at the one proposed by Batanin and later 
refined by Leinster.
In Batanin and Leinster's approach, 
higher dimensional categories are defined as models 
of a certain generalised operad, hence it falls within the
unified framework developed in the first part of this thesis.
Batanin and Leinster's definition has also been used 
by van den Berg, Garner and Lumsdaine to 
describe the higher dimensional 
structures of types in Martin-L{\"o}f intensional type theory.
We show that the notion of extensive category plays a central role
in Batanin and Leinster's definition.
Using this, we generalise their definition by 
allowing enrichment over any locally presentable extensive category.

\chapter*{Acknowledgement}
First, I wish to express my sincere gratitude to my supervisor, Masami Hagiya,
for his support and valuable advice.
I also thank Ichiro Hasuo, who has been my supervisor until March 
2017, for his continuous encouragement.
I am grateful to Shin-ya Katsumata for having weekly meetings with me, 
in which we have discussed various topics.

I thank Thomas Cottrell and John Power of the University of Bath for the
pleasant collaboration which forms the second part of this thesis.
I have learned a lot from hours of critical  discussions with them.

I would like to thank my family and friends, as always,
for their warm support and cheerful encouragement.

The financial support during my doctor course 
by the IST-RA program of the Graduate School of Information Science and 
Technology, the University of Tokyo, is gratefully acknowledged. 

\tableofcontents 
\mainmatter 
\chapter{Introduction}
\section{Algebraic structures in mathematics and computer science}
\label{sec_alg_str_math_cs}
Algebras permeate both pure and applied mathematics.
Important types of algebras, such as vector spaces, groups and rings, arise 
naturally in many branches of mathematical sciences and 
it would not be an exaggeration to say that algebraic structures 
are one of the most universal and fundamental structures in mathematics.

In computer science, too, concepts related to algebraic structures play an 
essential role.
For example, in programming language theory, we can find relationship to 
algebraic structures via the study of \emph{computational effects}.
Let us start with an explanation of computational effects.
Computer programs may roughly be thought of as mathematical functions, mapping 
an input to the result of computation.
However, this understanding is too crude and in reality programs often show 
non-functional behaviours;
for example, if a program interacts with the memory of the computer,
then an input to the program alone might not suffice to determine its output
(one has to know the initial state of the memory as well).
Such non-functional behaviours of programs are called {computational 
effects}.
It has been known since the work by Moggi~\cite{Moggi_comp_lambda} 
that computational effects can be modelled uniformly using the notion of 
\emph{monad}.
As we shall see later, a monad can be thought of as a specification of 
a type of algebras.

More recently, another approach to computational effects has been 
proposed by Plotkin and Power \cite{Plotkin_Power}.
In this approach, computational effects are modelled by
\emph{Lawvere theories} \cite{Lawvere_thesis} instead of monads;
a Lawvere theory can also be thought of as a specification of a type of 
algebras, akin to \emph{equational 
theory} in universal algebra.
Constructions on Lawvere theories originally developed in the study of 
algebraic structures, such as tensors and sums of Lawvere 
theories~\cite{Freyd}, have been shown to be capable of modelling combinations 
of computational effects~\cite{Hyland_Plotkin_Power},
and the resulting Lawvere theories can verify equivalences of programs 
which are crucial in program optimisation \cite{Kammar_Plotkin}.

As another example of algebraic structures arising in computer science,
one can point out a deep connection of higher dimensional categories and
the Martin-L{\"o}f (intensional) type theory~\cite{Martin-Lof}.
Higher dimensional categories may be thought of as particularly intricate types 
of algebras, defined by a number of complex operations and 
equations.
Their importance was first recognised in homotopy theory~\cite{Grothendieck},
because they naturally arise as 
higher dimensional versions of the fundamental groupoids of topological spaces.
It has been shown that equality types in the Martin-L{\"o}f type theory
endow a weak $\omega$-category structure to each 
type~\cite{Hofmann_Streicher,vandenBerg_Garner,Lumsdaine}.
This observation has led the researchers to seek more profound connections of 
type theory and homotopy theory, bridged by higher category theory,
culminating in the introduction and recent intensive study of 
\emph{homotopy type theory}~\cite{HoTT}.

\medskip

This thesis studies foundational issues around algebraic structures.
In the first part of the thesis, we investigate metamathematical aspects of
algebraic structures, by developing a unified framework for notions of 
algebraic theory.
In the second part, we focus on a particular type of algebras,
weak $n$-dimensional categories for each natural number $n$,
and generalise a known definition.
We now turn to more detailed outlines of these parts of the thesis.

\section{Unifying notions of algebraic theory}
A type of algebras, such as groups, is normally specified by a family of 
operations and a family of equational axioms. 
We call such a specification of a type of algebras an \emph{algebraic theory},
and call a background theory for a type of algebraic theories a \emph{notion of 
algebraic theory}. 
In order to capture various types of algebras, a variety of notions of 
algebraic theory have been introduced.
Examples include {universal algebra} \cite{Birkhoff_abst_alg},
symmetric and non-symmetric operads \cite{May_loop},
generalised operads (also called clubs)
\cite{Burroni_T_cats,Kelly_club_data_type,Hermida_representable,Leinster_book}, 
PROPs  and PROs \cite{MacLane_cat_alg},
and monads \cite{Eilenberg_Moore,Linton_equational};
we shall review these notions of algebraic theory in 
Chapter~\ref{chap_notions_ex}.

Notions of algebraic theory all aim to provide a means to define algebras, but 
they attain this goal in quite distinct manners.
The diversity of the existing notions of algebraic theory 
leaves one wonder what, if any, is a formal core or essence shared by them.
Our main aim in the first part of this thesis is to provide an answer to this 
question, by developing a unified 
framework for notions of algebraic theory.

The starting point of our approach is quite simple. 
We identify a notion of algebraic theory with an (arbitrary) monoidal category,
and algebraic theories in a notion of algebraic theory with monoid objects in
the corresponding monoidal category.
As we shall review in Section~\ref{subsec:alg_thy_as_monoids},
it has been observed that each type of algebraic theories we have 
listed above can be characterised as monoid objects in a suitable monoidal 
category.
From now on let us adopt the terminology to be introduced in 
Chapter~\ref{chap_framework}:
we call a monoidal category a \emph{metatheory} and a monoid object therein
a \emph{theory}, to remind ourselves of our intention. 

In order to formalise the semantical aspect of notions of algebraic theory---by
which we mean definitions of \emph{models} (= {algebras}) of an algebraic 
theory, their homomorphisms, and so on---we introduce the concept of 
\emph{metamodel}.
Metamodels are a certain categorical structure defined relative to   
a metatheory $\cat{M}$ and a category $\cat{C}$, 
and are meant to capture a 
\emph{notion of model} of an algebraic theory, i.e., what it means to take a 
model of a theory in $\cat{M}$ 
in the category $\cat{C}$.
A model of an algebraic theory is always given relative to some notion of 
model, even though usually it is not recognised explicitly.
We shall say more about the idea of notions of model at the beginning of 
Section~\ref{subsec:enrichment}.
A metamodel of a metatheory $\cat{M}$ in a category $\cat{C}$
generalise both an $\cat{M}$-category (as in enriched category theory) having 
the same set of objects as $\cat{C}$,
and a {(left) oplax action} of $\cat{M}$ on $\cat{C}$.
Indeed, as we shall see in Sections~\ref{subsec:enrichment} and 
\ref{subsec:oplax_action}, it has been observed that enrichments
(which we introduce as a slight generalisation of $\cat{M}$-categories)
and oplax actions can account for the standard semantics of the known
notions of algebraic theory.
Our concept of metamodel provides a unified account of the semantical aspects 
of notions of algebraic theory.

Metamodels of a fixed metatheory $\cat{M}$ naturally form a 
2-category $\MtMod{\cat{M}}$,
and we shall see that theories in $\cat{M}$ can be identified with
certain metamodels of $\cat{M}$ in the terminal category $1$.
This way we obtain a fully faithful 2-functor 
from the category $\Th{\cat{M}}$ of theories in $\cat{M}$ (which is identical 
to the category of monoid objects in $\cat{M}$)
to $\MtMod{\cat{M}}$.
A metamodel $\Phi$ of $\cat{M}$ in $\cat{C}$ provides a definition of model of 
a monoid object in $\cat{M}$ as an object of $\cat{C}$ with additional 
structure,
hence if we fix a metamodel $(\cat{C},\Phi)$ and a theory $\monoid{T}$,
we obtain the 
category of models $\Mod{\monoid{T}}{(\cat{C},\Phi)}$ equipped with 
the forgetful functor 
$U\colon\Mod{\monoid{T}}{(\cat{C},\Phi)}\longrightarrow\cat{C}$.
By exploiting the 2-category $\MtMod{\cat{M}}$, the 
construction $\Mod{-}{-}$ of categories of models may be expressed as 
the following composition 
\begin{equation}
\label{eqn:Mod_as_hom}
\begin{tikzpicture}[baseline=-\the\dimexpr\fontdimen22\textfont2\relax ]
      \node (1) at (0,1.5)  {$\Th{\cat{M}}^\op\times \MtMod{\cat{M}}$};
      \node (2) at (0,0)  {$\MtMod{\cat{M}}^\op\times \MtMod{\cat{M}}$};
      \node (3) at (0,-1.5) {$\tCAT$,};
      \draw[->] (1) to node[auto,labelsize]{inclusion} (2);
      \draw[->] (2) to node[auto,labelsize] {$\MtMod{\cat{M}}(-,-)$} (3);
\end{tikzpicture}
\end{equation}
where $\MtMod{\cat{M}}(-,-)$ is the hom-2-functor
and $\tCAT$ is a 2-category of categories.

We also introduce morphisms (and 2-cells) between metatheories
(Section \ref{subsec:morphism_metatheory}).
Such morphisms are a monoidal version of \emph{profunctors}.
The principal motivation of the introduction of morphisms of metatheories is 
to compare different notions of algebraic theory,
and indeed our morphisms of metatheories induce 2-functors between the 
corresponding 2-categories of metamodels.
Analogously to the well-known fact for profunctors
that any functor induces an adjoint pair of profunctors,
we see that any strong monoidal functor $F$ induces an adjoint pair 
$F_\ast\dashv F^\ast$ of morphisms of metatheories.
Therefore, whenever we have a strong monoidal functor 
$F\colon \cat{M}\longrightarrow\cat{N}$ between metamodels,
we obtain a 2-adjunction
\begin{equation}
\label{eqn:2-adjunction}
\begin{tikzpicture}[baseline=-\the\dimexpr\fontdimen22\textfont2\relax ]
      \node (L) at (0,0)  {$\MtMod{\cat{M}}$};
      \node (R) at (4.5,0)  {$\MtMod{\cat{N}}$.};
      \draw[->,transform canvas={yshift=5pt}]  (L) to node[auto,labelsize] 
      {$\MtMod{F_\ast}$} (R);
      \draw[<-,transform canvas={yshift=-5pt}]  (L) to 
      node[auto,swap,labelsize] {$\MtMod{F^\ast}$} 
      (R);
      \node[rotate=-90,labelsize] at (2.25,0)  {$\dashv$};
\end{tikzpicture} 
\end{equation}
Now, the strong monoidal $F$ also induces a functor 
\[
\Th{F}\colon\Th{\cat{M}}\longrightarrow\Th{\cat{N}},
\]
which is in fact a restriction of $\MtMod{F_\ast}$.
This implies that, immediately from the description (\ref{eqn:Mod_as_hom})
of categories of models and the 2-adjointness (\ref{eqn:2-adjunction}),
for any $\monoid{T}\in\Th{\cat{M}}$ and $(\cat{C},\Phi)\in\MtMod{\cat{N}}$,
we have a canonical isomorphism of categories
\begin{equation}
\label{eqn:iso_models}
\Mod{\Th{F}(\monoid{T})}{(\cat{C},\Phi)}\cong
\Mod{\monoid{T}}{\MtMod{F^\ast}(\cat{C},\Phi)}.
\end{equation}
In fact, as we shall see, the action of $\MtMod{-}$ on morphisms of metatheories
preserves the ``underlying categories'' of metamodels.
So $\MtMod{F^\ast}(\cat{C},\Phi)$ is also a metamodel of $\cat{M}$ in $\cat{C}$,
and we have an isomorphism of categories \emph{over} $\cat{C}$
(that is, the isomorphism (\ref{eqn:iso_models}) commutes with the forgetful 
functors).

The above argument gives a unified conceptual account for a range of known 
results on the compatibility of semantics of notions of algebraic theory.
For example, it is known that any Lawvere theory $\monoid{T}$
induces a monad $\monoid{T}'$ on $\Set$
in a way such that the models of $\monoid{T}$ and $\monoid{T'}$ in $\Set$
(with respect to the standard notions of model) coincide;
this result follows from the existence of a natural strong monoidal functor 
between the metatheories corresponding to Lawvere theories and monads on $\Set$,
together with the simple observation that the induced 2-functor between
the 2-categories of metamodels preserves the standard metamodel.
This and other examples will be treated in Section~\ref{sec:comparing}.

\medskip

In Chapter~\ref{chap:str_sem} we study \emph{structure-semantics adjunctions} 
within our framework. 
If we fix a metatheory $\cat{M}$ and a metamodel $(\cat{C},\Phi)$ of $\cat{M}$,
we obtain a functor 
\begin{equation}
\label{eqn:semantics_from_Th}
\Th{\cat{M}}^\op\longrightarrow \CAT/\cat{C}
\end{equation}
by mapping a theory $\monoid{T}$ in $\cat{M}$  to the category of models 
$\Mod{\monoid{T}}{(\cat{C},\Phi)}$ equipped with the forgetful functor 
into $\cat{C}$.
The functor (\ref{eqn:semantics_from_Th}) is sometimes called the 
\emph{semantics functor},
and it has been observed for many notions of algebraic theory that 
this functor (or an appropriate variant of it) admits a left adjoint
called the \emph{structure functor} 
\cite{Lawvere_thesis,Linton_equational,Linton_outline,Dubuc_Kan,Street_FTM,Avery_thesis}.
The idea behind the structure functor is as follows.
One can understand a functor $V\colon\cat{A}\longrightarrow\cat{C}$
into $\cat{C}$ as specifying an additional structure (in a broad sense) on 
objects in $\cat{C}$, by viewing $\cat{A}$ as the category of $\cat{C}$-objects 
equipped with that structure, and $V$ as the forgetful functor.
The structure functor then maps $V$ to the best approximation of that structure
by theories in $\cat{M}$.
Indeed, if (\ref{eqn:semantics_from_Th}) is fully faithful (though
this is not always the case), then the structure functor reconstructs the 
theory from
its category of models.

We cannot get a left adjoint to the functor 
(\ref{eqn:semantics_from_Th}) for an arbitrary metatheory $\cat{M}$ and its 
metamodel $(\cat{C},\Phi)$.
In order to get general structure-semantics adjunctions,
we extend the category $\Th{\cat{M}}$ of theories in $\cat{M}$
to the category $\Th{\widehat{\cat{M}}}$ of theories in the metatheory 
$\widehat{\cat{M}}=[\cat{M}^\op,\SET]$ equipped with the \emph{convolution 
monoidal structure}~\cite{Day_thesis}.
We show in Theorem~\ref{thm:str_sem_small} that 
the structure-semantics adjunction
\[
\begin{tikzpicture}[baseline=-\the\dimexpr\fontdimen22\textfont2\relax ]
      \node(11) at (0,0) 
      {$\Th{\widehat{\cat{M}}}^\op$};
      \node(22) at (4,0) {$\CAT/\cat{C}$};
  
      \draw [->,transform canvas={yshift=5pt}]  (22) to node 
      [auto,swap,labelsize]{$\Str$} (11);
      \draw [->,transform canvas={yshift=-5pt}]  (11) to node 
      [auto,swap,labelsize]{$\Sem$} (22);
      \path (11) to node[midway](m){} (22); 

      \node at (m) [labelsize,rotate=90] {$\vdash$};
\end{tikzpicture}
\] 
exists for any metatheory $\cat{M}$ and its metamodel $(\cat{C},\Phi)$.

\medskip

We conclude the first part of this thesis in Chapter~\ref{chap:double_lim},
by giving a universal characterisation of categories of models in our 
framework.
It is known that the Eilenberg--Moore categories (= categories of models) of 
monads can be characterised by a 2-categorical universal property in the 
2-category $\tCAT$ of categories~\cite{Street_FTM}.
We show in Theorem~\ref{thm:double_categorical_univ_property}
that our category of models admit a similar universal characterisation,
but instead of inside the 2-category $\tCAT$, inside the \emph{pseudo double 
category} $\PPROF$ of categories, functors, profunctors and natural 
transformations.
The notion of pseudo double category, as well as $\PPROF$ itself, 
was introduced by Grandis and Par{\'e} \cite{GP1}.
In the same paper they also introduced the notion of \emph{double limit},
a suitable limit notion in (pseudo) double categories.
The double categorical universal property that our categories of models 
enjoy can also be formulated in terms of double limits;
see Corollary~\ref{cor:Mod_as_dbl_lim}.

\section{Higher dimensional category theory}
Higher dimensional category theory is a relatively young field.
It studies higher dimensional generalisations of categories,
such as \emph{strict $n$-categories} and \emph{weak $n$-categories}
for $n\in\N\cup\{\omega\}$; in this thesis we shall only consider 
the case where $n\in\N$.

Let us start with the description of the simpler strict $n$-categories.
A strict $n$-category has 0-cells, which we draw as
\[\bullet,\]
1-cells lying between pairs of 0-cells 
\[\bullet\longrightarrow\bullet,\]
2-cells lying between pairs of parallel 1-cells
\[\begin{tikzpicture}[baseline=-\the\dimexpr\fontdimen22\textfont2\relax ]
      \node(21) at (0,0) {$\bullet$};
      \node(22) at (1.5,0) {$\bullet,$};

      \draw [->,bend left=30]  (21) to node (2u) {} (22);      
      \draw [->,bend right=30] (21) to node (2b) {} (22); 
      
      \draw [->] (2u) to (2b);
\end{tikzpicture}
\]
and so on up to $n$-cells lying between pairs of parallel $(n-1)$-cells.
There are also various identity cells and composition operations of cells,
which are required to satisfy a number of equations.
One way to make this informal description of strict $n$-category precise 
without too much complication is to define it by induction on $n$.
That is, an $(n+1)$-category $\cat{A}$ may be given by a set $\ob{\cat{A}}$
of 0-cells (or objects), and for each pair $A,B\in\ob{\cat{A}}$ of 0-cells,
an $n$-category $\cat{A}(A,B)$,
together with a family of operations ($n$-functors) 
$j_A\colon 1\longrightarrow\cat{A}(A,A)$ 
and $M_{A,B,C}\colon \cat{A}(B,C)\times\cat{A}(A,B)\longrightarrow\cat{A}(A,C)$,
subject to the category axioms.
Using the notion of enriched category~\cite{Kelly:enriched},
we may give a succinct 
inductive definition of the category $\enCat{n}$ of small strict 
$n$-categories and (strict) $n$-functors as follows:
\begin{equation}
\label{eqn:induction_enCat}
\enCat{0}=\Set,\qquad\enCat{(n+1)}=\enCat{(\enCat{n})}.
\end{equation}
Here, the construction $\enCat{(-)}$ maps any monoidal category $\cat{V}$
to the category $\enCat{\cat{V}}$ of all small $\cat{V}$-categories
and $\cat{V}$-functors.
In the above definition, we always use the cartesian monoidal structure,
the category $\enCat{\cat{V}}$ having all finite products whenever $\cat{V}$ 
does.

The more general \emph{weak} $n$-categories may be obtained by modifying
the definition of strict $n$-category, replacing equational axioms by
coherent equivalences.
For $n=0$ and $1$ there is no difference between the strict and weak notions,
0-categories being sets and 1-categories being ordinary categories.
Weak 2-categories are known as \emph{bicategories}~\cite{Benabou_bicat}.
In a bicategory, the compositions $(h\circ g)\circ f$ and $h\circ(g\circ f)$
of 1-cells may not be equal;
instead there must be a designated invertible 2-cell $\alpha_{f,g,h}\colon
(h\circ g)\circ f\longrightarrow h\circ (g\circ f)$,
and these 2-cells are required to satisfy some \emph{coherence} axioms,
such as the pentagon axiom asserting the commutativity of
the diagram
\begin{equation}
\label{eqn:pentagon}
\begin{tikzpicture}[baseline=-\the\dimexpr\fontdimen22\textfont2\relax ]
      \node (T) at (0,1.5)  {$((k\circ h)\circ g)\circ f$};
      \node (L) at (-3,0.2)  {$(k\circ (h\circ g))\circ f$};
      \node (R) at (3,0.2)  {$(k\circ h)\circ (g\circ f)$};
      \node (BL) at (-2,-1.5) {$k\circ ((h\circ g)\circ f)$};
      \node (BR) at (2,-1.5) {$k\circ(h\circ(g\circ f)).$};
      \draw[->] (T) to node[auto,swap,labelsize] {$\alpha_{g,h,k}\circ f$} (L);
      \draw[->] (L) to node[auto,swap,labelsize] {$\alpha_{f,h\circ g, k}$} 
      (BL);
      \draw[->] (BL) to node[auto,swap,labelsize] {$k\circ\alpha_{f,g,h}$} (BR);
      \draw[->] (T) to node[auto,labelsize] {$\alpha_{f,g,k\circ h}$} (R);
      \draw[->] (R) to node[auto,labelsize] {$\alpha_{g\circ f,h,k}$} (BR);
\end{tikzpicture}
\end{equation}
Weak 3-categories are known as 
\emph{tricategories}~\cite{Gordon_Power_Street_tricat}. 
In a tricategory we also have 2-cells like $\alpha_{f,g,h}$,
which are now required to be only equivalences rather than isomorphisms;
instead of the commutativity of the diagram (\ref{eqn:pentagon})
there is a designated invertible 3-cell (say, $\pi_{f,g,h,k}$) filling that 
diagram,
and these 3-cells must satisfy their own coherence axioms.

Although weak $n$-categories are fundamental, arising in topology (as the 
fundamental 
$n$-groupoid of a topological space~\cite{Grothendieck})
as well as in computer science (as the structure of a type in Martin-L{\"o}f
intensional type theory \cite{vandenBerg_Garner,Lumsdaine}),
they are quite complex structure.
Various authors have proposed definitions of weak $n$-category (see e.g., 
\cite{Leinster_survey}).
Among them, we shall focus on the one proposed by Batanin~\cite{Batanin_98}
and later modified by Leinster~\cite{Leinster_book}; we 
remark that it is their definition which is used in 
\cite{vandenBerg_Garner,Lumsdaine} to capture the structure of a type
in Martin-L{\"o}f type theory.

Let us describe Leinster's approach, since that is what we shall consider
in this thesis.
Leinster defines weak $n$-categories as \emph{$n$-graphs} with
additional algebraic structure, where
an $n$-graph consists of 0-cells, 1-cells lying between pairs of 0-cells,
2-cells lying between pairs of parallel 1-cells, and so on up to $n$-cells 
lying between pairs of parallel $(n-1)$-cells (and no operations).
Using the notion of enriched graph~\cite{Wolff}, the category $\enGph{n}$ of 
$n$-graphs can be given inductively as follows:
\begin{equation}
\label{eqn:induction_enGph}
\enGph{0}=\Set,\qquad\enGph{(n+1)}=\enGph{(\enGph{n})}.
\end{equation}
It is easily shown by induction that the canonical forgetful functor
$U^{(n)}\colon\enCat{n}\longrightarrow\enGph{n}$ has a left adjoint $F^{(n)}$, 
and the adjunction $F^{(n)}\dashv U^{(n)}$ generates a monad $\monoid{T}^{(n)}$
on $\enGph{n}$, the \emph{free strict $n$-category monad}.\footnote{The functor 
$U^{(n)}$ is in fact monadic, so Eilenberg--Moore algebras of 
$\monoid{T}^{(n)}$ are precisely strict $n$-categories.}
The monad $\monoid{T}^{(n)}$ is in fact \emph{cartesian}, and it is known that
any cartesian monad $\monoid{S}$ on a category $\cat{C}$
with finite limits defines a 
notion of algebraic theory (in the sense of the previous section), 
that of \emph{$\monoid{S}$-operads}.
An $\monoid{S}$-operad naturally takes models in the category $\cat{C}$;
thus in the current case, $\monoid{T}^{(n)}$-operads takes models in 
$\enGph{n}$.
Leinster then introduces the notion of \emph{contraction} on 
$\monoid{T}^{(n)}$-operads,
and defines a $\monoid{T}^{(n)}$-operad $\monoid{L}^{(n)}$ 
as the \emph{initial operad with a contraction}.
Finally, weak $n$-categories are defined to be models of $\monoid{L}^{(n)}$.

Leinster's definition of weak $n$-category 
starts from the category $\Set$ of sets,
in the sense that the key inductive definitions (\ref{eqn:induction_enCat})
and (\ref{eqn:induction_enGph}) have the base cases $\Set$.
Necessarily, certain properties of $\Set$ must be used to carry out the 
definition, but it has not been clear precisely which properties are used,
because many propositions in \cite{Leinster_book} 
are proved by set-theoretic manipulation.
Our main goal in the second part of this thesis is to clarify this.
The conclusion we get is that, among many properties that the category $\Set$
enjoys, 
\emph{extensivity}~\cite{Carboni_Lack_Walters,Cockett} and \emph{local 
presentability}~\cite{Gabriel_Ulmer,Adamek_Rosicky} are enough to 
carry out the definition of weak $n$-category.
We show this by generalising Leinster's definition, 
starting from an arbitrary extensive and locally presentable category $\cat{V}$
(again in the sense that we modify the base cases of (\ref{eqn:induction_enCat})
and (\ref{eqn:induction_enGph}), replacing $\Set$ by $\cat{V}$).
We call the resulting ``enriched'' weak $n$-categories
\emph{weak $n$-dimensional $\cat{V}$-categories}.
Examples of categories $\cat{V}$ of interest other than $\Set$ 
satisfying both extensivity and local presentability include the category 
$\omega$-$\Cpo$ of posets with 
sups of $\omega$-chains,
$\omega$-$\Cpo$-bicategories (weak 2-dimensional $\omega$-$\Cpo$-categories)
being used in the work \cite{Power_Tanaka} axiomatising 
binders \cite{FPT99}.

\medskip
In Chapter~\ref{chap:extensive},
we prepare for our main development by showing several properties
of extensive categories.
In particular, we show that if $\cat{V}$ is extensive, then so are 
$\enGph{\cat{V}}$
and $\enCat{\cat{V}}$ (for the latter category to make sense, we also have to 
assume that $\cat{V}$ has finite products),
thus illuminating the implicit induction in Leinster's approach.

Using properties on extensive categories shown in Chapter~\ref{chap:extensive},
in Chapter~\ref{chap:free_strict_n_cat_monad} we prove that 
even when we start from an arbitrary extensive category $\cat{V}$ with finite 
limits, we obtain an adjunction between the category 
$\enGph{\cat{V}}^{(n)}$ of $n$-dimensional
$\cat{V}$-graphs (enriching $\enGph{n}$) and the category 
$\enCat{\cat{V}}^{(n)}$ of strict $n$-dimensional $\cat{V}$-categories
(enriching $\enCat{n}$).
We moreover show that the resulting monad $\monoid{T}^{(n)}$ on 
$\enGph{\cat{V}}^{(n)}$ is cartesian.
This allows us to consider $\monoid{T}^{(n)}$-operads.

In Chapter~\ref{chap:def_weak_n_V_cat}, we first
generalise Leinster's notion of contraction to the enriched case. 
Leinster's original definition of contraction was couched in purely set 
theoretic 
terms, so we adapt Garner's conceptual reformulation \cite{Garner_homotopy} of 
it (with homotopy theoretic background~\cite{Garner_understanding}).
This way we may give a meaning to the phrase \emph{$\monoid{T}^{(n)}$-operad 
with a contraction} for an arbitrary extensive category 
$\cat{V}$ with finite limits.
Finally, to show the existence of the \emph{initial} such,
we assume that our  $\cat{V}$ is locally presentable as well.
Under this additional assumption we prove that the initial 
$\monoid{T}^{(n)}$-operad with a contraction $\monoid{L}^{(n)}$ exists,
and we define weak $n$-dimensional $\cat{V}$-categories
to be models of $\monoid{L}^{(n)}$.

\section{Set theoretic conventions}
\label{sec:foundational_convention}
As is typical in category theory, in this thesis we will occasionally
have to consider sets larger than those one usually encounters 
in other areas of mathematics and computer science.
In order to deal with them, we shall assume 
the existence of a few universes.
Roughly speaking, a universe $\univ{U}$ is a set with a sufficiently strong
closure property so that one can perform a range of set theoretic operations 
on elements in $\univ{U}$ without having to worry about the 
resulting set popping out of $\univ{U}$.
For example, if a group $G$ is an element of $\univ{U}$
(that is, the tuple consisting of the underlying set, the unit element, the 
inverse operation and the multiplication operation of $G$, is in $\univ{U}$),
so are all subgroups of $G$, quotient groups of $G$, powers of $G$ by
elements of $\univ{U}$, etc.
Note, however, that the set of {all} groups in $\univ{U}$ is 
{not} in $\univ{U}$. 

Although we will never refer to the details of the definition
of universe in this thesis, we state it here for the sake of completeness.
\begin{definition}[{\cite[Definition~1.1.1]{KS:CS}}]
A set $\univ{U}$ is called a \defemph{universe} if the following hold:
\begin{itemize}
\item if $x\in\univ{U}$ and $y\in x$, then $y\in\univ{U}$;
\item if $x\in\univ{U}$, then $\{x\}\in\univ{U}$;
\item if $x\in\univ{U}$, then $\mathcal{P}(x)=\{y\mid y\subseteq 
x\}\in\univ{U}$;
\item if $I\in\univ{U}$ and $(x_i)_{i\in I}$ is an $I$-indexed family of 
elements of $\univ{U}$, then $\bigcup_{i\in I} x_i\in\univ{U}$;
\item $\N\in\univ{U}$, where $\N=\{0,1,\dots\}$ and for 
all $n\in\N$, $n=\{0,1,\dots, n-1\}$.
\qedhere
\end{itemize}
\end{definition}

The following axiom of universes is often assumed in 
addition to ZFC in the literature.

\begin{axiom}
For each set $x$, there exists a universe $\univ{U}$ 
such that $x\in\univ{U}$.
\end{axiom}

In fact, in this thesis we will only need three universes $\univ{U}_1$,
$\univ{U}_2$ and $\univ{U}_3$ with $\univ{U}_1\in\univ{U}_2\in\univ{U}_3$. 
We now fix these universes once and for all.

Let $\univ{U}$ be a universe.
We define several size-regulating conditions 
on sets and other mathematical structures
in reference to $\univ{U}$.

\begin{itemize}
\item A set is said to be 
\defemph{in $\univ{U}$} if 
it is an element of $\univ{U}$.
\end{itemize}

In this thesis, a category is always assumed to have sets of objects
and of morphisms (rather than \emph{proper classes} of them).
We say that a category $\cat{C}$ is
\begin{itemize}
\item \defemph{in $\univ{U}$}
if the tuple $(\ob{\cat{C}},(\cat{C}(A,B))_{A,B\in\ob{\cat{C}}},
(\id{C}\in\cat{C}(C,C))_{C\in\ob{\cat{C}}},(\circ_{A,B,C}\colon$
$\cat{C}(B,C)\times\cat{C}(A,B)\longrightarrow\cat{C}(A,C))_{A,B,C\in
\ob{\cat{C}}})$, consisting of the data for $\cat{C}$,
is an element of $\univ{U}$;
\item \defemph{locally in $\univ{U}$}
if for each $A,B\in\ob{\cat{C}}$, 
the hom-set $\cat{C}(A,B)$ is in $\cat{U}$.
\end{itemize}
We also write $C\in\cat{C}$ for $C\in\ob{\cat{C}}$.

We extend these definitions to other mathematical structures.
For example, a group is said to be \defemph{in $\univ{U}$} if it is an element
of $\univ{U}$, 
a 2-category is \defemph{locally in $\univ{U}$}  
if all its hom-categories are in $\univ{U}$, and so on.

\medskip

Recall the universes $\univ{U}_1$, $\univ{U}_2$ and $\univ{U}_3$
we have fixed above.
\begin{convention}
\label{conv:size}
A set or other mathematical structure (group, category, etc.) 
is said to be:
\begin{itemize}
\item \defemph{small} if it is in $\univ{U}_1$;
\item \defemph{large} if it is in $\univ{U}_2$;
\item \defemph{huge} if it is in $\univ{U}_3$.
\end{itemize}
Sets and other mathematical structures are often assumed to be small by 
default, even when we do not say so explicitly.

A category (or a 2-category) is said to be:
\begin{itemize}
\item \defemph{locally small} if it is large and locally in $\univ{U}_1$;
\item \defemph{locally large} if it is huge and locally in $\univ{U}_2$.\qedhere
\end{itemize}
\end{convention}
In the following, we mainly talk about the 
size-regulating conditions using the terms \emph{small}, \emph{large} 
and \emph{huge},
avoiding direct references to the universes $\univ{U}_1$, $\univ{U}_2$
and $\univ{U}_3$.

We shall use the following basic (2-)categories throughout this thesis.
\begin{itemize}
\item $\Set$, the (large) category of all small sets and functions.
\item $\SET$, the (huge) category of all large sets and functions.
\item $\Cat$, the (large) category of all small categories and functors.
\item $\CAT$, the (huge) category of all large categories and functors.
\item $\tCat$, the (large) 2-category of all small categories, functors and 
natural 
transformations.
\item $\tCAT$, the (huge) 2-category of all large categories, functors and 
natural 
transformations.
\item $\twoCAT$, the 2-category of all huge 2-categories,
2-functors and 2-natural transformations.
\end{itemize}

\section{2-categorical notions}
\label{sec:2-cat_notions}
In order to fix the terminology, we define various 2-categorical notions here.

A 2-functor $F\colon\tcat{A}\longrightarrow\tcat{B}$ is called:
\begin{itemize}
\item \defemph{fully faithful} iff for each $A,A'\in\tcat{A}$,
$F_{A,A'}\colon\tcat{A}(A,A')\longrightarrow\tcat{B}(FA,FA')$ is an isomorphism 
of categories;
\item \defemph{locally an equivalence} iff for each $A,A'\in\tcat{A}$,
$F_{A,A'}\colon\tcat{A}(A,A')\longrightarrow\tcat{B}(FA,FA')$ is an equivalence 
of categories;
\item \defemph{locally fully faithful} iff for each $A,A'\in\tcat{A}$,
$F_{A,A'}\colon\tcat{A}(A,A')\longrightarrow\tcat{B}(FA,FA')$ is fully faithful;
\item \defemph{locally faithful} iff for each $A,A'\in\tcat{A}$,
$F_{A,A'}\colon\tcat{A}(A,A')\longrightarrow\tcat{B}(FA,FA')$ is faithful;
\item \defemph{bijective on objects} iff 
$\ob{F}\colon\ob{\tcat{A}}\longrightarrow\ob{\tcat{B}}$ is a bijection;
\item \defemph{essentially surjective (on objects)} iff for each $B\in\tcat{B}$,
there exists $A\in\tcat{A}$ and an isomorphism $FA\cong B$ in $\tcat{B}$;
\item an \defemph{isomorphism} iff it is bijective on objects and fully 
faithful;
\item an \defemph{equivalence} iff it is essentially surjective and fully 
faithful.
\end{itemize}

For a 2-category $\tcat{B}$, let
\begin{itemize}
\item $\tcat{B}^\op$ be the 2-category obtained by reversing 1-cells: 
$\tcat{B}^\op(A,B)=\tcat{B}(B,A)$;
\item $\tcat{B}^\co$ be the 2-category obtained by reversing 2-cells:
$\tcat{B}^\co(A,B)=\tcat{B}(A,B)^\op$;
\item $\tcat{B}^\coop$ be the 2-category obtained by reversing both 1-cells and 
2-cells: $\tcat{B}^\coop(A,B)=\tcat{B}(B,A)^\op$.
\end{itemize} 
We adopt the same notation for bicategories as well.

\part{A unified framework for notions of algebraic theory}

\chapter{Notions of algebraic theory}
\label{chap_notions_ex}
In almost every field of pure and applied mathematics, 
\emph{algebras} (in a broad sense) arise quite naturally 
in one way or another.
An algebra, typically, is a set equipped with 
a family of operations.
So for example the symmetric group of order five $\mathfrak{S}_5$
and the ring of integers $\mathbb{Z}$ are both algebras.
Structural similarities between important algebras have led to 
the introduction and study of various \emph{types of algebras},
such as 
monoids, groups, rings, vector spaces, lattices, 
Boolean algebras, and Heyting algebras.
A type of algebras is normally specified by a family of operations
and a family of equational axioms. 
We shall use the term \emph{algebraic theory} to refer to 
a specification of a type of algebras.

Subsequently, 
various authors have set out to develop \emph{notions of algebraic theory}.
A notion of algebraic theory is a background theory for 
a certain type of algebraic theories.
The most famous classical example of notions of algebraic theory
is Birkhoff's \emph{universal algebra}~\cite{Birkhoff_abst_alg}. 

There are several 
motivations behind the introduction of notions of algebraic theory.
First, by working at this level of generality,
one can prove theorems for various types of algebras
once and for all; for instance, the homomorphism theorems
in universal algebra (see e.g., \cite[Section II.6]{Burris_Sankappanavar}) 
generalise the homomorphism 
theorems for groups
to monoids, rings, lattices, etc.
Second, novel notions of algebraic theory have sometimes 
been proposed in order to 
set up powerful languages expressive enough to capture
interesting but intricate types of algebras.
This applies to (the topological versions of) 
symmetric and non-symmetric operads, used to define
up-to-homotopy topological commutative monoids and monoids~\cite{May_loop},
and to globular operads, by which a definition of weak $\omega$-category
is given~\cite{Batanin_98,Leinster_book}.

In this chapter we shall review several known notions of algebraic theory,
in order to provide motivation and background knowledge for our unified 
framework for notions of algebraic theory developed from 
Chapter~\ref{chap_framework} on.
The contents of this chapter are well-known to the specialists.

\section{Universal algebra}
\label{sec:univ_alg}
Universal algebra~\cite{Birkhoff_abst_alg} deals with types of algebras
defined by finitary operations and equations between them.
As a running example, let us consider 
\emph{groups}.
A group can be defined as a set $G$
equipped with an element $e^G\in G$ (the unit), and 
two functions $i^G\colon G\longrightarrow G$ (the inverse) and 
$m^G\colon G\times G\longrightarrow G$ (the multiplication),
satisfying the following axioms:
\begin{itemize}
\item for all $g_1\in G$, $m^G(g_1,e^G)=g_1$ (the right unit axiom);
\item for all $g_1\in G$, $m^G(g_1,i^G(g_1))=e^G$ (the right inverse axiom);
\item for all $g_1,g_2,g_3\in G$, $m^G(m^G(g_1,g_2),g_3)=m^G(g_1,m^G(g_2,g_3))$ 
(the associativity axiom).
\end{itemize}
(From these three axioms it follows that 
for all $g_1\in G$, $m^G(e^G,g_1)=g_1$ (the left unit axiom) and 
$m^G(i^G(g_1),g_1)=e^G$ (the left inverse axiom).)
This definition of group turns out to be an instance of 
the notion of \emph{presentation of an equational theory},
one of the most fundamental notions in universal algebra introduced below.

First we introduce the notion of \emph{graded set}, which provides a convenient 
language for clean development of universal algebra.
\begin{definition}
\label{def:graded_set}
\begin{enumerate}
\item An \defemph{($\NN$-)graded set} $\Gamma$ is a family 
$\Gamma=(\Gamma_n)_{n\in\NN}$ of sets
indexed by natural numbers $\NN=\{0,1,2,\dots\}$.
By an \defemph{element of $\Gamma$} we mean an element of the 
set $\coprod_{n\in\NN} \Gamma_n=\{\,(n,\gamma)\mid n\in\NN, \gamma\in 
\Gamma_n\,\}$.
We write $x\in\Gamma$ if $x$ is an element of $\Gamma$.
\item If $\Gamma=(\Gamma_n)_{n\in\NN}$ and $\Gamma'=(\Gamma'_n)_{n\in\NN}$
are graded sets, then a \defemph{morphism of graded sets} $f\colon 
\Gamma\longrightarrow
\Gamma'$ is a family of functions $f=(f_n\colon \Gamma_n\longrightarrow 
\Gamma'_n)_{n\in\NN}$.\qedhere
\end{enumerate}
\end{definition}

We can routinely extend the basic notions of set theory to graded sets.
For example, we say that a graded set $\Gamma'$ is a \defemph{graded subset}
of a graded set $\Gamma$ (written as $\Gamma'\subseteq \Gamma$)
if for each $n\in\NN$, $\Gamma'_n$ is a subset of $\Gamma_n$.
Given arbitrary graded sets $\Gamma$ and $\Gamma'$, their 
\defemph{cartesian product} (written as $\Gamma\times\Gamma'$) is defined by 
$(\Gamma\times\Gamma')_n=\Gamma_n\times\Gamma'_n$ for each $n\in\NN$.
An \defemph{equivalence relation} on a graded set $\Gamma$ is 
a graded subset $R\subseteq \Gamma\times\Gamma$ such that each $R_n\subseteq
\Gamma_n\times\Gamma_n$ is an equivalence relation on the set $\Gamma_n$.
Given such an equivalence relation $R$ on $\Gamma$, we can form the 
\defemph{quotient graded set} $\Gamma/R$ by 
setting $(\Gamma/R)_n=\Gamma_n/R_n$, the quotient set of $\Gamma_n$
with respect to $R_n$.
These notions will be used below.

A graded set can be seen as a \emph{(functional) signature}.
That is, we can regard a graded set $\Sigma$ as the signature whose set of 
$n$-ary functional symbols is given by $\Sigma_n$ for each $n\in\NN$.
We often use the symbol $\Sigma$ to denote a graded set when we want to
emphasise this aspect of graded sets, as in the following definition. 

\begin{definition}
\label{def:Sigma_alg}
Let $\Sigma$ be a graded set.
\begin{enumerate}
\item A \defemph{$\Sigma$-algebra} is a set $A$ equipped with,
for each $n\in\NN$ and $\sigma\in\Sigma_n$,
a function $\interp{\sigma}^A\colon A^n\longrightarrow A$
called the \defemph{interpretation of $\sigma$}.
We write such a $\Sigma$-algebra as 
$(A,(\interp{\sigma}^A)_{n\in\NN,\sigma\in\Sigma_n})$
or simply $(A,\interp{-}^A)$.
We often omit the superscript in $\interp{-}^A$.
\item If $(A,\interp{-}^A)$
and $(B,\interp{-}^{B})$
are $\Sigma$-algebras,
then a \defemph{$\Sigma$-homomorphism} from 
$(A,\interp{-}^A)$ to 
$(B,\interp{-}^{B})$
is a function $f\colon A\longrightarrow B$ such that for any $n\in\NN$,
$\sigma\in\Sigma_n$ and $a_1,\dots,a_n\in A$,
\[
f(\interp{\sigma}^A(a_1,\dots,a_n))=\interp{\sigma}^B(f(a_1),\dots,f(a_n))
\]
holds (that is, the diagram
\[
\begin{tikzpicture}[baseline=-\the\dimexpr\fontdimen22\textfont2\relax ]
      \node (TL) at (0,2)  {$A^n$};
      \node (TR) at (3,2)  {$B^n$};
      \node (BL) at (0,0) {$A$};
      \node (BR) at (3,0) {$B$};
      \draw[->] (TL) to node[auto,labelsize](T) {$f^n$} (TR);
      \draw[->]  (TR) to node[auto,labelsize] {$\interp{\sigma}^{B}$} (BR);
      \draw[->]  (TL) to node[auto,swap,labelsize] {$\interp{\sigma}^A$} (BL);
      \draw[->] (BL) to node[auto,labelsize](B) {$f$} (BR);
\end{tikzpicture} 
\]
commutes).\qedhere
\end{enumerate}
\end{definition}

As an example, let us consider the graded set $\Sigma^\group$
defined as $\Sigma^\group_0=\{e\}$, $\Sigma^\group_1=\{i\}$,
$\Sigma^\group_2=\{m\}$ and $\Sigma^\group_n=\emptyset$ for all $n\geq 3$.
Then the structure of a group is given by
that of a $\Sigma^\group$-algebra.
Note that to give an element $e^G\in G$ is equivalent to give a 
function $\interp{e}\colon 1\longrightarrow G$ where $1$ is 
a singleton set, and that for any set $G$, $G^0$ is a singleton set.
Also, between groups, the notions of group homomorphism and 
$\Sigma^\group$-homomorphism coincide.

However, not all $\Sigma^\group$-algebras are groups;
for a $\Sigma^\group$-algebra to be a group, the interpretations
must satisfy the group axioms.
Notice that all group axioms are {equations} between
certain expressions built from
variables and operations.
This is the fundamental feature shared by all algebraic structures
expressible in universal algebra.
The following notion of $\Sigma$-term defines {``expressions
built from variables and operations''}
relative to arbitrary graded sets $\Sigma$.

\begin{definition}
\label{def:Sigma_term}
Let $\Sigma$ be a graded set.
The graded set $T(\Sigma)=(T(\Sigma)_n)_{n\in\NN}$ of \defemph{$\Sigma$-terms}
is defined inductively as follows.
\begin{enumerate}
\item For each $n\in\NN$ and  $i\in\{1,\dots,n\}$, 
\[
x_i^{(n)}\in T(\Sigma)_n.
\]
We sometimes omit the superscript and write $x_i$ for $x_i^{(n)}$.
\item For each $n,k\in\NN$, $\sigma\in\Sigma_k$ and $t_1,\dots,t_k\in 
T(\Sigma)_n$,
\[
\sigma(t_1,\dots,t_k)\in T(\Sigma)_n.
\]
When $k=0$, we usually omit the parentheses in $\sigma()$
and write instead as $\sigma$.\qedhere
\end{enumerate}
\end{definition}

An immediate application of the inductive nature of the above
definition of $\Sigma$-terms is the canonical extension of the interpretation 
function $\interp{-}$
of a $\Sigma$-algebra from $\Sigma$ to $T(\Sigma)$.

\begin{definition}
\label{def:interp_Sigma_term}
Let $\Sigma$ be a graded set and 
$(A,\interp{-})$ be a 
$\Sigma$-algebra.
We define the \defemph{interpretation} $\interpp{-}$ of $\Sigma$-terms
recursively as follows.
\begin{enumerate}
\item For each $n\in\NN$ and $i\in\{1,\dots,n\}$,
\[
\interpp{x^{(n)}_i}\colon A^n\longrightarrow A
\]
is the $i$-th projection $(a_1,\dots,a_n)\longmapsto a_i$.
\item For each $n,k\in\NN$, $\sigma\in\Sigma_k$ and $t_1\dots,t_k\in 
T(\Sigma)_n$,
\[
\interpp{\sigma(t_1,\dots,t_k)}\colon A^n\longrightarrow A
\] 
maps $(a_1,\dots,a_n)\in A^n$ to 
$\interp{\sigma}(\interpp{t_1}(a_1,\dots,a_n),\dots,
\interpp{t_k}(a_1,\dots,a_n))$;
that is, $\interpp{\sigma(t_1,\dots,t_k)}$ is the following composite:
\[
\begin{tikzpicture}[baseline=-\the\dimexpr\fontdimen22\textfont2\relax ]
      \node (1) at (0,0)  {$A^n$};
      \node (2) at (3,0)  {$A^k$};
      \node (3) at (5,0) {$A.$};
      \draw[->] (1) to node[auto,labelsize]{$\langle 
      \interpp{t_1},\dots,\interpp{t_k}\rangle$} (2);
      \draw[->] (2) to node[auto,labelsize] {$\interp{\sigma}$} (3);
\end{tikzpicture} 
\]
\end{enumerate}
Note that for any $n\in\NN$ and $\sigma\in\Sigma_n$,
$\interp{\sigma}=\interpp{\sigma(x^{(n)}_1,\dots,x^{(n)}_n)}$.
Henceforth, for any $\Sigma$-term $t$
we simply write $\interp{t}$ for $\interpp{t}$ defined above.
\end{definition}

\begin{definition}
\label{def:Sigma_eq}
Let $\Sigma$ be a graded set.
An element of the graded set $T(\Sigma)\times T(\Sigma)$ is called a 
\defemph{$\Sigma$-equation}.
We write a $\Sigma$-equation $(n,(t,s))\in T(\Sigma)\times T(\Sigma)$ (that is,
$n\in\NN$ and $t,s\in T(\Sigma)_n$) as $t\approx_n s$ or $t\approx s$.
\end{definition}

\begin{definition}
\label{defn:univ_alg_pres_eq_thy}
A \defemph{presentation of an equational theory} $\pres{\Sigma}{E}$ 
is a pair consisting of:
\begin{itemize}
\item a graded set $\Sigma$ of \defemph{basic operations}, 
and
\item a graded set $E\subseteq T(\Sigma)\times T(\Sigma)$
of \defemph{equational axioms}.\qedhere
\end{itemize}
\end{definition}

\begin{definition}
\label{defn:univ_alg_model}
Let $\pres{\Sigma}{E}$ be a presentation of an equational theory.
\begin{enumerate}
\item A \defemph{model of $\pres{\Sigma}{E}$} is a $\Sigma$-algebra
$(A,\interp{-})$
such that for any $t\approx_n s\in E$,
$\interp{t}=\interp{s}$ holds.
\item A \defemph{homomorphism}
between models of $\pres{\Sigma}{E}$ is just
a $\Sigma$-homomorphism between the corresponding
$\Sigma$-algebras.\qedhere
\end{enumerate}
\end{definition}

Consider the presentation of an equational theory 
$\pres{\Sigma^\group}{E^\group}$, where 
\[E^\group_1=\{\,m(x^{(1)}_1,e)\approx x^{(1)}_1, \quad
m(x^{(1)}_1,i(x^{(1)}_1))\approx e\,\},
\]
\[ E^\group_3=\{\,m(m(x^{(3)}_1,x^{(3)}_2),x^{(3)}_3)\approx
m(x^{(3)}_1,m(x^{(3)}_2,x^{(3)}_3))\,\}\]
and $E^\group_n=\emptyset$ for all $n\in\NN\setminus \{1,3\}$.
Clearly, groups are the same as models of
$\pres{\Sigma^\group}{E^\group}$.
Many other types of algebras---indeed all examples we have mentioned in the 
first paragraph of this chapter---can be written as models of $\pres{\Sigma}{E}$
for a suitable choice of the presentation of an equational theory
$\pres{\Sigma}{E}$;
see any introduction to universal algebra 
(e.g.,~\cite{Burris_Sankappanavar})
for details.

\medskip

We conclude this section by reviewing the machinery of 
\emph{equational logic}, which enables us to investigate consequences of 
equational axioms without referring to their models.
We assume that the reader is familiar with the basics of 
mathematical logic, such as 
substitution of a term $t$ for a variable $x$ in a term $s$ 
(written as $s[x\mapsto t]$), simultaneous substitutions
(written as $s[x_1\mapsto t_1, \dots, x_k\mapsto t_k]$), and 
the notion of proof (tree) and its definition by inference rules.

\begin{definition}
\label{def:eq_logic}
Let $\pres{\Sigma}{E}$ be a presentation of an equational theory.
\begin{enumerate}
\item Define the set of \defemph{$\pres{\Sigma}{E}$-proofs} inductively
by the following inference rules.
Every $\pres{\Sigma}{E}$-proof is a finite rooted tree whose vertices 
are labelled by $\Sigma$-equations.
\begin{center}
\AxiomC{}
\LeftLabel{(\sc{Ax})\ }
\RightLabel{\ (if $t\approx_n s \in E$)}
\UnaryInfC{$t\approx_n s$}
\DisplayProof

\bottomAlignProof
\AxiomC{}
\LeftLabel{(\sc{Refl})\ }
\UnaryInfC{$t\approx_n t$}
\DisplayProof
\quad
\bottomAlignProof
\AxiomC{$t\approx_n s $}
\LeftLabel{(\sc{Sym})\ }
\UnaryInfC{$s\approx_n t$}
\DisplayProof
\quad
\bottomAlignProof
\AxiomC{$t\approx_n s$}
\AxiomC{$s\approx_n u$}
\LeftLabel{(\sc{Trans})\ }
\BinaryInfC{$t\approx_n u$}
\DisplayProof

\bottomAlignProof
\AxiomC{$s\approx_k s'$}
\AxiomC{$t_1\approx_n t'_1$}
\AxiomC{$\cdots$}
\AxiomC{$t_k\approx_n t'_k$}
\LeftLabel{(\sc{Cong})\ }
\QuaternaryInfC{$s[x^{(k)}_1 \mapsto t_1, \dots, x^{(k)}_k\mapsto t_k]\approx_n 
s'[x^{(k)}_1 \mapsto t'_1, \dots, x^{(k)}_k\mapsto t'_k]$}
\DisplayProof
\end{center}
\item A $\Sigma$-equation 
$t\approx_n s\in T(\Sigma)\times T(\Sigma)$ is called an
\defemph{equational theorem of $\pres{\Sigma}{E}$} if there exists 
a $\pres{\Sigma}{E}$-proof whose root is labelled by $t\approx_n s$.
We write \[
\pres{\Sigma}{E}\vdash t\approx_n s\]
to mean that $t\approx_n s$ is 
an equational theorem of $\pres{\Sigma}{E}$, and denote by 
$\overline{E}\subseteq T(\Sigma)\times T(\Sigma)$ the 
graded set of all equational theorems of $\pres{\Sigma}{E}$.
\qedhere
\end{enumerate}
\end{definition}

Equational logic is known to be both \emph{sound} and \emph{complete},
in the following sense.

\begin{definition}
\label{def:semantical_consequence_rel}
\begin{enumerate}
\item Let $\Sigma$ be a graded set and 
$(A,\interp{-})$ be a $\Sigma$-algebra.
For any $\Sigma$-equation $t\approx_n s\in T(\Sigma)\times T(\Sigma)$, we write 
\[
(A,\interp{-})\vDash t\approx_n s
\]
to mean $\interp{t}=\interp{s}$.
\item Let $\pres{\Sigma}{E}$ be a presentation of an equational theory.
For any $\Sigma$-equation $t\approx_n s\in T(\Sigma)\times T(\Sigma)$, we write
\[
\pres{\Sigma}{E}\vDash t\approx_n s
\] 
to mean that for any model 
$(A,\interp{-})$ of $\pres{\Sigma}{E}$,
$(A,\interp{-})\vDash t\approx_n s$.\qedhere
\end{enumerate}
\end{definition}

\begin{thm}
\label{thm:eq_logic_sound_complete}
Let $\pres{\Sigma}{E}$ be a presentation of an equational theory.
\begin{enumerate}
\item (Soundness) Let $t\approx_n s\in T(\Sigma)\times T(\Sigma)$.
If $\pres{\Sigma}{E}\vdash t\approx_n s$ then $\pres{\Sigma}{E}\vDash 
t\approx_n s$.
\item (Completeness) Let $t\approx_n s\in T(\Sigma)\times T(\Sigma)$.
If $\pres{\Sigma}{E}\vDash t\approx_n s$ then $\pres{\Sigma}{E}\vdash 
t\approx_n s$.
\end{enumerate}
\end{thm}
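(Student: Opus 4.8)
The plan is to prove soundness and completeness separately, relying on a common technical ingredient that I shall call the \emph{substitution lemma}: for any $\Sigma$-algebra $(A,\interp{-})$, any $u\in T(\Sigma)_m$ and any $r_1,\dots,r_m\in T(\Sigma)_n$, one has
\[
\interp{u[x^{(m)}_1\mapsto r_1,\dots,x^{(m)}_m\mapsto r_m]}=\interp{u}\circ\langle\interp{r_1},\dots,\interp{r_m}\rangle
\]
as functions $A^n\longrightarrow A$. I would establish this first, by a routine induction on the structure of $u$ following Definition~\ref{def:Sigma_term}: the variable case is immediate, and the operation case follows from the recursive clause of Definition~\ref{def:interp_Sigma_term}. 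Both halves of the theorem will invoke it.

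For \emph{soundness}, I would fix a model $(A,\interp{-})$ of $\pres{\Sigma}{E}$ and argue by induction on $\pres{\Sigma}{E}$-proofs that every equational theorem $t\approx_n s$ satisfies $\interp{t}=\interp{s}$. The rule (\textsc{Ax}) holds precisely because $A$ is a model; (\textsc{Refl}), (\textsc{Sym}) and (\textsc{Trans}) are immediate from the fact that equality of interpretations is an equivalence relation. The only rule requiring work is (\textsc{Cong}): from the inductive hypotheses $\interp{s}=\interp{s'}$ and $\interp{t_i}=\interp{t_i'}$ for $1\leq i\leq k$, the substitution lemma rewrites the two sides of the conclusion as $\interp{s}\circ\langle\interp{t_1},\dots,\interp{t_k}\rangle$ and $\interp{s'}\circ\langle\interp{t_1'},\dots,\interp{t_k'}\rangle$, which then coincide.

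For \emph{completeness}, I would build a \emph{term model}. Fix $n$ and let $A=T(\Sigma)_n/\overline{E}_n$ be the quotient of the $n$-variable terms by the equational theorems. This $A$ carries a $\Sigma$-algebra structure with $\interp{\sigma}([t_1],\dots,[t_k])=[\sigma(t_1,\dots,t_k)]$, which is well defined exactly because (\textsc{Cong}) (together with (\textsc{Refl})) makes provable equality a congruence. Next I would verify that $A$ is a model of $\pres{\Sigma}{E}$: for an axiom $t'\approx_m s'\in E$ and classes $[r_1],\dots,[r_m]\in A$, the substitution lemma identifies $\interp{t'}([r_1],\dots,[r_m])$ with $[t'[x^{(m)}_i\mapsto r_i]]$ and similarly for $s'$, and these agree since (\textsc{Ax}) followed by (\textsc{Cong}) proves $t'[x^{(m)}_i\mapsto r_i]\approx_n s'[x^{(m)}_i\mapsto r_i]$. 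Finally, evaluating at the \emph{generic} tuple $([x^{(n)}_1],\dots,[x^{(n)}_n])\in A^n$ yields, by a short induction on terms, $\interp{t}([x^{(n)}_1],\dots,[x^{(n)}_n])=[t]$ for every $t\in T(\Sigma)_n$. Hence if $\pres{\Sigma}{E}\vDash t\approx_n s$, then $\interp{t}=\interp{s}$ in $A$ in particular, and evaluating at the generic tuple gives $[t]=[s]$, that is, $\pres{\Sigma}{E}\vdash t\approx_n s$.

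I expect the main obstacle to be organisational rather than conceptual: the substitution lemma underlies every nontrivial step, so it must be stated at the right level of generality (simultaneous substitution of $m$ terms, each in $n$ variables) before being invoked. The one genuinely creative move is the construction of the term model together with the observation that evaluating at the tuple of variable classes recovers a term's own class; once this is in place, completeness is immediate, and the remaining equivalence-relation bookkeeping for soundness and the congruence checks for well-definedness are routine.
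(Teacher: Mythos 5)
Your proposal is correct. For soundness it is the same argument the paper intends: the paper's proof consists precisely of the remark that soundness follows by ``a straightforward induction over $\pres{\Sigma}{E}$-proofs'', which is what you carry out (with the substitution lemma supplying the only nontrivial case, (\textsc{Cong})). For completeness the paper gives no argument at all and instead cites the literature (Johnstone; Burris--Sankappanavar, Section~II.14); your term-model construction---quotient $T(\Sigma)_n$ by provable equality $\overline{E}_n$, which is an equivalence relation by (\textsc{Refl}), (\textsc{Sym}), (\textsc{Trans}) and a congruence by (\textsc{Cong}), check the quotient is a model, and evaluate at the tuple of variable classes---is exactly the classical Birkhoff argument contained in those references, so you have filled in what the paper delegates. (It is also, in effect, the same quotient $T(\Sigma)/\overline{E}$ that the paper later builds in its clone section as $\monoid{T}^{\pres{\Sigma}{E}}$.) One presentational repair: when verifying that the quotient is a model you assert that ``the substitution lemma identifies $\interp{t'}([r_1],\dots,[r_m])$ with $[t'[x^{(m)}_i\mapsto r_i]]$'', but the substitution lemma alone (a statement about an arbitrary algebra) does not give this; you also need the evaluation identity $\interp{t}([x^{(n)}_1],\dots,[x^{(n)}_n])=[t]$, which you only state afterwards. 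There is no circularity---that identity is proved by induction on terms independently of the model verification---but you should either establish it first, or prove the stronger identity $\interp{u}([r_1],\dots,[r_m])=[u[x^{(m)}_1\mapsto r_1,\dots,x^{(m)}_m\mapsto r_m]]$ directly by induction on $u$, of which the generic-tuple fact is the special case $r_i=x^{(n)}_i$.
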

\begin{proof}
The soundness theorem is proved by a straightforward induction over
$\pres{\Sigma}{E}$-proofs.
For the completeness theorem, see 
e.g.,~\cite[Corollary~1.5]{Johnstone_notes} or 
\cite[Section~II.14]{Burris_Sankappanavar}.
\end{proof}

\section{Clones}
\label{sec:clone}
The central notion we have introduced in the previous
section is that of 
\emph{presentation of an equational theory} 
(Definition~\ref{defn:univ_alg_pres_eq_thy}),
whose main purpose is to define its 
\emph{models} (Definition~\ref{defn:univ_alg_model}).
It can happen, however, that
two different presentations of equational theories 
define the ``same'' models, sometimes 
in a quite superficial manner.

For example, consider the following presentation of an equational theory
$\pres{\Sigma^{\group'}}{E^{\group'}}$:
\[
\Sigma^{\group'}=\Sigma^\group,
\]
\begin{multline*}
E^{\group'}_1=\{\,m(x^{(1)}_1,e)\approx x^{(1)}_1,\quad
m(e,x^{(1)}_1)\approx x^{(1)}_1, \\
m(x^{(1)}_1,i(x^{(1)}_1))\approx e,\quad
m(i(x^{(1)}_1),x^{(1)}_1)\approx e\,\},
\end{multline*}
\[
E^{\group'}_n=E^{\group}_n\quad\text{ for all }n\in\NN\setminus\{1\}.
\]
It is a classical fact that a group can be defined either 
as a model of $\pres{\Sigma^\group}{E^\group}$ or
as a model of $\pres{\Sigma^{\group'}}{E^{\group'}}$.
Indeed, we may add arbitrary {equational theorems} of 
$\pres{\Sigma^\group}{E^\group}$, such as
$i(i(x_1))\approx x_1$, $i(m(x_1,x_2))\approx m(i(x_2),i(x_1))$
and $x_1\approx x_1$,
as additional equational axioms and still obtain the groups as the models.

As another example, let us consider the presentation of an equational 
theory $\pres{\Sigma^{\group''}}{E^{\group''}}$ defined as:
\[
\Sigma^{\group''}_0=\{e,e'\}, \quad
\Sigma^{\group''}_n=\Sigma^\group_n \quad\text{ for all }n\in\NN\setminus\{0\},
\]
\[
E^{\group''}_0=\{e\approx e'\}, \quad E^{\group''}_n=E^\group_n\quad
\text{ for all }n\in\NN\setminus\{0\}.
\]
To make a set $A$ into a model of $\pres{\Sigma^{\group''}}{E^{\group''}}$,
formally we have to specify two elements $\interp{e}$ and $\interp{e'}$
of $A$, albeit they are forced to be equal and play the role of unit
with respect to the group structure determined by $\interp{m}$.
We cannot quite say that models of $\pres{\Sigma^{\group''}}{E^{\group''}}$
are \emph{equal} to models of $\pres{\Sigma^\group}{E^\group}$,
since their data differ;
however, it should be intuitively clear that there is no point
in distinguishing them.
(In precise mathematical terms, our claim of the ``sameness''
amounts to the existence of an isomorphism of categories between 
the categories of 
models of $\pres{\Sigma^\group}{E^\group}$
and of models of $\pres{\Sigma^{\group''}}{E^{\group''}}$
preserving the underlying sets of models, i.e., 
commuting with the forgetful functors into $\Set$.)

\medskip

A presentation of an equational theory has much freedom in choices both of 
basic operations and of equational axioms.
It is really a \emph{presentation}.
In fact, there is a notion which may be thought of as
an \emph{equational theory} itself,
something that a presentation of an equational theory presents;
it is called an \emph{(abstract) clone}.

\begin{definition}
\label{def:clone}
A \defemph{clone} $\monoid{T}$ consists of:
\begin{description}
\item[(CD1)] a graded set $T=(T_n)_{n\in\NN}$;
\item[(CD2)] for each $n\in\NN$ and $i\in\{1,\dots,n\}$, an element
\[
p^{(n)}_i\in T_n;
\]
\item[(CD3)] for each $k,n\in\NN$, a function
\[
\circ^{(n)}_k\colon T_k\times (T_n)^k\longrightarrow T_n
\]
whose action on an element $(\phi,\theta_1,\dots,\theta_k)\in T_k\times (T_n)^k$
we write as $\phi\circ^{(n)}_k(\theta_1,\dots,\theta_k)$
or simply as $\phi\circ(\theta_1,\dots,\theta_k)$;
\end{description}
satisfying the following equations:
\begin{description}
\item[(CA1)] for each $k,n\in\NN$, $j\in\{1,\dots,k\}$ and 
$\theta_1,\dots,\theta_k\in T_n$, 
\[
p^{(k)}_j\circ^{(n)}_k(\theta_1,\dots,\theta_k) = \theta_j;
\]
\item[(CA2)] for each $n\in\NN$, $\theta\in T_n$,
\[
\theta\circ^{(n)}_n (p^{(n)}_1,\dots,p^{(n)}_n) = \theta;
\]
\item[(CA3)] for each $l,k,n\in\NN$, $\psi\in T_l$, $\phi_1,\dots,\phi_l\in
T_k$, $\theta_1,\dots,\theta_k\in T_n$, 
\begin{multline*}
\psi\circ^{(k)}_l\big(\phi_1\circ^{(n)}_k(\theta_1,\dots,\theta_k),\ 
\dots,\ \phi_l\circ^{(n)}_k(\theta_1,\dots,\theta_k)\big)
\\=
\big(\psi\circ^{(k)}_l(\phi_1,\dots,\phi_l)\big)\circ^{(n)}_k 
(\theta_1,\dots,\theta_k).
\end{multline*}
\end{description}
Such a clone is written as 
$\monoid{T}=(T,(p^{(i)}_n)_{n\in\NN,i\in\{1,\dots,n\}},
(\circ^{(n)}_k)_{k,n\in\NN})$
or simply $(T,p,\circ)$.
\end{definition}

To understand the definition of clone,
it is helpful to draw some pictures known as \emph{string diagrams}
(cf.~\cite{Curien_operad,Leinster_book}).
Given a clone $\monoid{T}=(T,p,\circ)$,
let us denote an element $\theta$ of $T_n$
by a triangle with $n$ ``{input wires}'' and a single ``{output wire}'':
\begin{equation*}
\begin{tikzpicture}[baseline=-\the\dimexpr\fontdimen22\textfont2\relax ]
      \coordinate (TL) at (0,1);
      \coordinate (BL) at (0,-1);
      \coordinate (R) at (1.5,0);
      \path[draw,string] (TL)--(BL)--(R)--cycle;
      \node at (0.5,0) {$\theta$};
      \path[draw,string] (TL)+(0,-0.3)-- +(-0.6,-0.3);
      \node at (-0.3,0) {$\rvdots$};
      \path[draw,string] (BL)+(0,0.3)-- +(-0.6,0.3);
      \draw [decorate,decoration={brace,amplitude=5pt,mirror}]
      (TL)++(-0.8,-0.1) -- +(0,-1.8) node [midway,xshift=-0.5cm,labelsize] 
      {$n$};
      \path[draw,string] (R)-- +(0.6,0);
\end{tikzpicture}
\end{equation*}
The element $p^{(n)}_i$ in (CD2) may also be denoted by
\begin{equation}
\label{eqn:clone_p}
\begin{tikzpicture}[baseline=-\the\dimexpr\fontdimen22\textfont2\relax ]
      \coordinate (t) at (0,1);
      \coordinate (s) at (0,0.4);
      \coordinate (t3) at (0,0);
      \coordinate (t4) at (0,-0.4);
      \coordinate (t5) at (0,-1);
      \node [circle,draw,inner sep=0.15em,fill] at (0.7,-1) {};
      \node [circle,draw,inner sep=0.15em,fill] at (0.7,-0.4) {};  
      \node [circle,draw,inner sep=0.15em,fill] at (0.7,0.4) {};
      \node [circle,draw,inner sep=0.15em,fill] at (0.7,1) {};
      \draw [string] (0,1) -- (0.7,1);
      \node at (0.3,0.7) {$\rvdots$};
      \draw [string] (0,0.4) -- (0.7,0.4);
      \draw [string] (0,0) -- (2,0);
      \draw [string] (0,-1) -- (0.7,-1);
      \node at (0.3,-0.7) {$\rvdots$};
      \draw [string] (0,-0.4) -- (0.7,-0.4);
      \node [labelsize, left of=t3, node distance=0.42cm] {$(i\text{-th})$};
      \draw [decorate,decoration={brace,amplitude=5pt,mirror}]
      (-0.8,1.1) -- +(0,-2.2) node [midway,xshift=-0.5cm,labelsize] 
      {$n$};
\end{tikzpicture}
\end{equation}
and $\phi\circ^{(n)}_k(\theta_1,\dots,\theta_k)$ in (CD3) by
\begin{equation}
\label{eqn:clone_composition}
\begin{tikzpicture} [baseline={([yshift=-0.5ex]current bounding box.center)}]
      \coordinate (TL) at (0,1.5);
      \coordinate (BL) at (0,-1.5);
      \coordinate (R) at (1.5,0);
      \path[draw,string] (TL)--(BL)--(R)--cycle;
      \node at (0.5,0) {$\phi$};
      \path[draw,string] (TL)+(0,-0.3)-- +(-0.6,-0.3);
      \node at (-0.3,0) {$\rvdots$};
      \path[draw,string] (BL)+(0,0.3)-- +(-0.6,0.3);
      \path[draw,string] (R)-- +(0.6,0);
  \begin{scope}[shift={(-2.1,1.2)}]
      \coordinate (TL) at (0,0.8);
      \coordinate (BL) at (0,-0.8);
      \coordinate (R) at (1.5,0);
      \path[draw,string] (TL)--(BL)--(R)--cycle;
      \node at (0.5,0) {$\theta_1$};
      \node at (0.5,-1.2) {$\rvdots$};
      \node at (-0.3,0) {$\rvdots$};
  \end{scope}
  \begin{scope}[shift={(-2.1,-1.2)}]
      \coordinate (TL) at (0,0.8);
      \coordinate (BL) at (0,-0.8);
      \coordinate (R) at (1.5,0);
      \path[draw,string] (TL)--(BL)--(R)--cycle;
      \node at (0.5,0) {$\theta_{k}$};
      \node at (-0.3,0) {$\rvdots$};
  \end{scope}
  \begin{scope}[shift={(-2.1,0)}]
      \node [circle,draw,inner sep=0.15em,fill] at (-1.3,0.5) {};
      \node [circle,draw,inner sep=0.15em,fill] at (-1.3,-0.5) {};
      \draw [string] (-2,0.5) -- (-1.4,0.5) ..controls (-1,0.5) and  (-0.9,1.7) 
      .. (-0.6,1.7)--(0,1.7);
      \draw [string] (-2,0.5) -- (-1.4,0.5) ..controls (-1,0.5) and  
      (-0.9,-0.7) .. (-0.6,-0.7)--(0,-0.7);
      \draw [string] (-2,-0.5) -- (-1.4,-0.5) ..controls (-1,-0.5) and  
      (-0.9,0.7) .. (-0.6,0.7)--(0,0.7);
      \draw [string] (-2,-0.5) -- (-1.4,-0.5) ..controls (-1,-0.5) and  
      (-0.9,-1.7) .. (-0.6,-1.7)--(0,-1.7);
      \node at (-1.7,0) {$\rvdots$};
      \draw [decorate,decoration={brace,amplitude=5pt}]
      (-2.2,-0.7) -- (-2.2,0.7) node [midway,xshift=-0.5cm,labelsize] 
      {$n$};
  \end{scope}
\end{tikzpicture}\ .
\end{equation}
Then the axioms (CA1)--(CA3) simply assert natural equations 
between the resulting ``circuits''.
For instance, (CA2) for $n=3$ reads:
\begin{equation*}
\begin{tikzpicture}[baseline=-\the\dimexpr\fontdimen22\textfont2\relax ]
      \draw [string] (0,0.3) -- (0.6,0.3) ..controls (0.9,0.3) and  (1.2,1.5) 
      .. (1.5,1.5)--(3.1,1.5)..controls (3.4,1.5) and  (3.7,0.3) .. 
      (4,0.3)--(4.6,0.3);
      \draw [string] (0,0) -- (0.6,0) ..controls (0.9,0) and  (1.2,1.2) .. 
      (1.5,1.2)--(2,1.2);
      \draw [string] (0,-0.3) -- (0.6,-0.3) ..controls (0.9,-0.3) and  
      (1.2,0.9) .. (1.5,0.9)--(2,0.9);
      \draw [string] (0,0.3) -- (2,0.3);
      \draw [string] (0,0) -- (4.6,0);
      \draw [string] (0,-0.3) -- (2,-0.3);
      \draw [string] (0,-0.3) -- (0.6,-0.3) ..controls (0.9,-0.3) and  
      (1.2,-1.5) .. (1.5,-1.5)--(3.1,-1.5)..controls (3.4,-1.5) and  (3.7,-0.3) 
      .. (4,-0.3)--(4.6,-0.3);
      \draw [string] (0,0) -- (0.6,0) ..controls (0.9,0) and  (1.2,-1.2) .. 
      (1.5,-1.2)--(2,-1.2);
      \draw [string] (0,0.3) -- (0.6,0.3) ..controls (0.9,0.3) and  (1.2,-0.9) 
      .. (1.5,-0.9)--(2,-0.9);
      \node [circle,draw,inner sep=0.15em,fill] at (2,1.2) {};
      \node [circle,draw,inner sep=0.15em,fill] at (2,0.9) {};  
      \node [circle,draw,inner sep=0.15em,fill] at (2,0.3) {};
      \node [circle,draw,inner sep=0.15em,fill] at (2,-0.3) {};  
      \node [circle,draw,inner sep=0.15em,fill] at (2,-0.9) {};
      \node [circle,draw,inner sep=0.15em,fill] at (2,-1.2) {}; 
      \node [circle,draw,inner sep=0.15em,fill] at (0.7,0.3) {};
      \node [circle,draw,inner sep=0.15em,fill] at (0.7,0) {};
      \node [circle,draw,inner sep=0.15em,fill] at (0.7,-0.3) {};
      \path[draw,string] (4.6,0.7)--(4.6,-0.7)--(6.1,0)--cycle;
      \node at (5.1,0) {$\theta$};
      \draw [string] (6.1,0) -- (6.7,0);
\end{tikzpicture}
\ =\ 
\begin{tikzpicture}[baseline=-\the\dimexpr\fontdimen22\textfont2\relax ]
      \draw [string] (0,0.3) -- (0.6,0.3);
      \draw [string] (0,0) -- (0.6,0);
      \draw [string] (0,-0.3) -- (0.6,-0.3);

      \path[draw,string] (0.6,0.7)--(0.6,-0.7)--(2.1,0)--cycle;
      \node at (1.1,0) {$\theta$};
      \draw [string] (2.1,0) -- (2.7,0);
\end{tikzpicture}\ .
\end{equation*}

Next we define {models} of a clone.
We first need a few preliminary definitions.

\begin{definition}
\label{def:endo_clone}
Let $A$ be a set.
Define the clone $\Endcl{A}=(\enrich{A}{A},p,\circ)$
as follows:
\begin{description}
\item[(CD1)] for each $n\in\NN$, let $\enrich{A}{A}_n$ be the set of all
functions from $A^n$ to $A$; 
\item[(CD2)] for each $n\in\NN$ and $i\in\{1,\dots,n\}$,
let $p^{(n)}_i$ be the $i$-th projection 
$A^n\longrightarrow A$, $(a_1,\dots,a_n)\longmapsto a_i$;
\item[(CD3)] for each $k,n\in\NN$, $g\colon A^k\longrightarrow A$ and
$f_1,\dots,f_k\colon A^n\longrightarrow A$, 
let $g\circ^{(n)}_k(f_1,\dots,f_k)$ be the function
$(a_1,\dots,a_n)\longmapsto g(f_1(a_1,\dots,a_n),\dots,f_k(a_1,\dots,a_n))$,
that is, the following composite:
\[
\begin{tikzpicture}[baseline=-\the\dimexpr\fontdimen22\textfont2\relax ]
      \node (1) at (0,0)  {$A^n$};
      \node (2) at (3,0)  {$A^k$};
      \node (3) at (5,0) {$A.$};
      \draw[->] (1) to node[auto,labelsize]{$\langle f_1,\dots,f_k\rangle$} (2);
      \draw[->] (2) to node[auto,labelsize] {$g$} (3);
\end{tikzpicture} 
\]
\end{description}
It is straightforward to check the axioms (CA1)--(CA3).
\end{definition}

\begin{definition}
\label{def:clone_hom}
Let $\monoid{T}=(T,p,\circ)$ and $\monoid{T'}=(T',p',\circ')$ be 
clones.
A \defemph{clone homomorphism} from $\monoid{T}$ to $\monoid{T'}$
is a morphism of graded sets (Definition~\ref{def:graded_set}) 
$h\colon T\longrightarrow T'$ which preserves the structure of clones;
precisely,
\begin{itemize}
\item for each $n\in\NN$ and $i\in\{1,\dots,n\}$, $h_n(p^{(n)}_i)=p'^{(n)}_i$;
\item for each $k,n\in\NN$, $\phi\in T_k$ and $\theta_1,\dots,\theta_k\in T_n$,
\[
h_n\big(\phi\circ^{(n)}_k(\theta_1,\dots,\theta_k)\big)=
h_k(\phi)\circ'^{(n)}_k \big(h_n(\theta_1),\dots,h_n(\theta_k)\big).\qedhere
\]
\end{itemize}
\end{definition}

\begin{definition}
\label{def:clone_model}
Let $\monoid{T}$ be a clone.
A \defemph{model of $\monoid{T}$} consists of a set $A$ and 
a clone homomorphism $\alpha\colon \monoid{T}\longrightarrow \Endcl{A}$.
\end{definition}

Let us then define the notion of homomorphism between models.
First we extend the definition of the graded set $\enrich{A}{A}$ 
introduced in Definition~\ref{def:endo_clone}.

\begin{definition}
\begin{enumerate}
\item Let $A$ and $B$ be sets. The graded set $\enrich{A}{B}$ is defined by 
setting,
for each $n\in\NN$, $\enrich{A}{B}_n$ be the set of all functions from $A^n$
to $B$.
\item Let $A,A'$ and $B$ be sets and $f\colon A'\longrightarrow A$ be a 
function.
The morphism of graded sets 
$\enrich{f}{B}\colon\enrich{A}{B}\longrightarrow\enrich{A'}{B}$
is defined by setting, for each $n\in\NN$, 
$\enrich{f}{B}_n\colon\enrich{A}{B}_n\longrightarrow\enrich{A'}{B}_n$ be the 
precomposition by $f^n\colon (A')^n\longrightarrow A^n$;
that is, $h\longmapsto h\circ f^n$. 
\item Let $A,B$ and $B'$ be sets and $g\colon B\longrightarrow B'$ be a 
function.
The morphism of graded sets 
$\enrich{A}{g}\colon\enrich{A}{B}\longrightarrow\enrich{A}{B'}$
is defined by setting, for each $n\in\NN$, 
$\enrich{A}{g}_n\colon\enrich{A}{B}_n\longrightarrow\enrich{A}{B'}_n$ be the 
postcomposition by $g\colon B\longrightarrow B'$;
that is, $h\longmapsto g\circ h$.\qedhere
\end{enumerate}
\end{definition}

\begin{definition}
\label{def:clone_mod_hom}
Let $\monoid{T}$ be a clone, and $(A,\alpha)$ and $(B,\beta)$ be models of 
$\monoid{T}$.
A \defemph{homomorphism} from $(A,\alpha)$ to $(B,\beta)$ is a function 
$f\colon A\longrightarrow B$ making the following diagram of 
morphisms of graded sets commute:
\[
\begin{tikzpicture}[baseline=-\the\dimexpr\fontdimen22\textfont2\relax ]
      \node (TL) at (0,2)  {$T$};
      \node (TR) at (3,2)  {$\enrich{A}{A}$};
      \node (BL) at (0,0) {$\enrich{B}{B}$};
      \node (BR) at (3,0) {$\enrich{A}{B}.$};
      \draw[->] (TL) to node[auto,labelsize](T) {$\alpha$} (TR);
      \draw[->]  (TR) to node[auto,labelsize] {$\enrich{A}{f}$} (BR);
      \draw[->]  (TL) to node[auto,swap,labelsize] {$\beta$} (BL);
      \draw[->] (BL) to node[auto,swap,labelsize](B) {$\enrich{f}{B}$} (BR);
\end{tikzpicture} \qedhere
\]
\end{definition}

\medskip

Now let us turn to the relation between
presentations of equational theories 
(Definition~\ref{defn:univ_alg_pres_eq_thy})
and clones.
We start with the observation that the graded set $T(\Sigma)$ of $\Sigma$-terms
(Definition~\ref{def:Sigma_term})
has a canonical clone structure, given as follows:
\begin{description}
\item[(CD2)] for each $n\in\NN$ and $i\in\{1,\dots,n\}$, let $p^{(n)}_i$ be 
$x^{(n)}_i\in T(\Sigma)_n$;
\item[(CD3)] for each $k,n\in\NN$, $s\in T(\Sigma)_k$ and $t_1,\dots,t_k\in 
T(\Sigma)_n$, let $s\circ^{(n)}_k(t_1,\dots,t_k)$ be
$s[x^{(k)}_1\mapsto t_1,\dots,x^{(k)}_k\mapsto t_k]\in T(\Sigma)_n$.
\end{description}
We denote the resulting clone by $\monoid{T}(\Sigma)$.
In fact, this clone is characterised by the following universal property.

\begin{proposition}
\label{prop:free_clone}
Let $\Sigma$ be a graded set.
The clone $\monoid{T}(\Sigma)$ is the free clone generated from the 
graded set $\Sigma$. 
That is, the morphism of graded sets $\eta_\Sigma\colon \Sigma\longrightarrow
T(\Sigma)$, defined by 
$(\eta_\Sigma)_n(\sigma)=\sigma(x^{(n)}_1,\dots,x^{(n)}_n)$ for each $n\in\NN$ 
and $\sigma\in\Sigma_n$,
satisfies the following property:
given any clone $\monoid{S}=(S,p,\circ)$ and any morphism of 
graded sets $f\colon \Sigma\longrightarrow S$, there exists a 
unique clone homomorphism $g\colon \monoid{T}(\Sigma)\longrightarrow \monoid{S}$
such that $g\circ \eta_\Sigma=f$.
\[
\begin{tikzpicture}[baseline=-\the\dimexpr\fontdimen22\textfont2\relax ]
      \node (TL) at (0,1)  {$\Sigma$};
      \node (TR) at (3,1)  {${T}(\Sigma)$};
      \node (BR) at (3,-1) {$S$};
      \node (B) at (1.5,-1.8) {{(graded sets)}};
      \draw[->] (TL) to node[auto,labelsize](T) {$\eta_\Sigma$} (TR);
      \draw[->]  (TR) to node[auto,labelsize] {$g$} (BR);
      \draw[->]  (TL) to node[auto,swap,labelsize] {$f$} (BR);
\end{tikzpicture} 
\qquad
\begin{tikzpicture}[baseline=-\the\dimexpr\fontdimen22\textfont2\relax ]
      \node (TR) at (3,1)  {$\monoid{T}(\Sigma)$};
      \node (BR) at (3,-1) {$\monoid{S}$};
      \node (B) at (3,-1.8) {(clones)};
      \draw[->,dashed]  (TR) to node[auto,labelsize] {$g$} (BR);
\end{tikzpicture} 
\]
\end{proposition}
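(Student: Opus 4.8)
The plan is to construct the required clone homomorphism $g$ by recursion on the inductive definition of $\Sigma$-terms (Definition~\ref{def:Sigma_term}), and then to verify the homomorphism axioms and uniqueness by induction over the same structure. Concretely, I would define $g_n\colon T(\Sigma)_n\longrightarrow S_n$ by the two clauses $g_n(x^{(n)}_i)=p^{(n)}_i$ and $g_n(\sigma(t_1,\dots,t_k))=f_k(\sigma)\circ^{(n)}_k(g_n(t_1),\dots,g_n(t_k))$, where on the right-hand sides $p^{(n)}_i$ and $\circ^{(n)}_k$ refer to the clone structure of $\monoid{S}$. Since $T(\Sigma)$ is freely generated as a graded set by these two term-forming operations, this recursion determines $g$ uniquely as a morphism of graded sets, and preservation of the projections is immediate from the first clause.

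The heart of the argument, and the step I expect to require the most care, is showing that $g$ preserves clone composition, i.e.\ that
\[
g_n\big(s\circ^{(n)}_k(t_1,\dots,t_k)\big)=g_k(s)\circ^{(n)}_k\big(g_n(t_1),\dots,g_n(t_k)\big)
\]
for all $s\in T(\Sigma)_k$ and $t_1,\dots,t_k\in T(\Sigma)_n$. Recalling that composition in $\monoid{T}(\Sigma)$ is simultaneous substitution, the left-hand side is $g_n\big(s[x^{(k)}_1\mapsto t_1,\dots,x^{(k)}_k\mapsto t_k]\big)$, and I would prove the identity by induction on the term $s$. When $s=x^{(k)}_j$ is a variable, the substitution yields $t_j$, while $g_k(x^{(k)}_j)=p^{(k)}_j$, so the claim reduces to $p^{(k)}_j\circ^{(n)}_k(g_n(t_1),\dots,g_n(t_k))=g_n(t_j)$, which is exactly axiom (CA1) for $\monoid{S}$. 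When $s=\sigma(s_1,\dots,s_l)$, substitution distributes over the operation symbol, so after unfolding the definition of $g$ and applying the induction hypothesis to each $s_j$, the claim follows from the associativity axiom (CA3) for $\monoid{S}$, which rearranges $f_l(\sigma)\circ^{(n)}_l(\dots\circ^{(n)}_k(\dots))$ into $\big(f_l(\sigma)\circ^{(k)}_l(g_k(s_1),\dots,g_k(s_l))\big)\circ^{(n)}_k(\dots)$; the inner factor is precisely $g_k(s)$ by definition of $g$.

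It then remains to check the triangle $g\circ\eta_\Sigma=f$ and uniqueness. For the triangle, for $\sigma\in\Sigma_n$ I would compute $g_n\big((\eta_\Sigma)_n(\sigma)\big)=g_n\big(\sigma(x^{(n)}_1,\dots,x^{(n)}_n)\big)=f_n(\sigma)\circ^{(n)}_n(p^{(n)}_1,\dots,p^{(n)}_n)$, which equals $f_n(\sigma)$ by axiom (CA2) for $\monoid{S}$. For uniqueness, given any clone homomorphism $g'$ with $g'\circ\eta_\Sigma=f$, I would argue $g'=g$ by induction on terms: on variables both send $x^{(n)}_i$ to $p^{(n)}_i$ because clone homomorphisms preserve projections, and on $\sigma(t_1,\dots,t_k)$ I would use the identity $\sigma(t_1,\dots,t_k)=(\eta_\Sigma)_k(\sigma)\circ^{(n)}_k(t_1,\dots,t_k)$ in $\monoid{T}(\Sigma)$ together with the facts that $g'$ preserves composition and $g'_k\big((\eta_\Sigma)_k(\sigma)\big)=f_k(\sigma)$, reducing to the induction hypothesis on the $t_j$. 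The only genuine subtlety throughout is the bookkeeping in the substitution/composition step, where the clone axioms of $\monoid{S}$ do all the real work; everything else is a routine structural induction.
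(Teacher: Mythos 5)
Your proposal is correct and follows essentially the same route as the paper's proof: you define $g$ by the same two recursion clauses, verify it is a clone homomorphism by establishing the same substitution identity via induction on $s$, and conclude with the triangle and uniqueness. The only difference is that you spell out the details (the (CA1)/(CA3) case analysis, the (CA2) computation for $g\circ\eta_\Sigma=f$, and the inductive uniqueness argument) that the paper compresses into ``this can be shown by induction on $s$'' and ``the uniqueness of $g$ is clear.''
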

\begin{proof}
The clone homomorphism $g$ may be defined by using the inductive nature of 
the definition of $T(\Sigma)$, as follows:
\begin{enumerate}
\item for each $n\in\NN$ and $i\in\{1,\dots,n\}$, let
\[
g_n(x^{(n)}_i)=p^{(n)}_i;
\]
\item for each $k,n\in\NN$, $\sigma\in\Sigma_k$ and $t_1,\dots,t_k\in 
T(\Sigma)_n$, let 
\[
g_n(\sigma(t_1,\dots,t_k))=f_k(\sigma)\circ^{(n)}_k(g_n(t_1),\dots,g_n(t_k)).
\]
\end{enumerate}
To check that $g$ is indeed a clone homomorphism, it suffices to show 
for each $s\in T(\Sigma)_k$ and $t_1,\dots,t_k\in T(\Sigma)_n$,
\[
g_n(s[x^{(k)}_1\mapsto t_1,\dots,x^{(k)}_k\mapsto t_k])=
g_k(s)\circ^{(n)}_k(g_n(t_1),\dots, g_n(t_k));
\]
this can be shown by induction on $s$.
The uniqueness of $g$ is clear.
\end{proof}
The construction given in Definition~\ref{def:interp_Sigma_term}
is a special case of the above; let $\monoid{S}$ be $\Endcl{A}$.

Recall from Definition~\ref{def:eq_logic} the 
graded set $\overline{E}\subseteq T(\Sigma)\times T(\Sigma)$ 
of equational theorems of a presentation of an equational theory 
$\pres{\Sigma}{E}$.
By the rules ({\sc{Refl}}), ({\sc{Sym}}) and ({\sc{Trans}}), $\overline{E}$
is an equivalence relation on $T(\Sigma)$. 
Hence we may consider the quotient graded set $T(\Sigma)/\overline{E}$.
By the rule ({\sc{Cong}}), the clone operations on $T(\Sigma)$
induce well-defined operations on $T(\Sigma)/\overline{E}$;
in particular, we can define $\circ^{(n)}_k$ on $T(\Sigma)/\overline{E}$ by
\[
[\phi]_{\overline{E}}\circ^{(n)}_k([\theta_1]_{\overline{E}},\dots,
[\theta_k]_{\overline{E}})= 
[\phi(\theta_1,\dots,\theta_k)]_{\overline{E}}.
\]
This makes the graded set $T(\Sigma)/\overline{E}$ into a clone;
the clone axioms for $T(\Sigma)/{\overline{E}}$ may be immediately checked 
by noticing the existence of a surjective morphism of graded sets 
$q\colon T(\Sigma)\longrightarrow T(\Sigma)/\overline{E}$ (given by 
$\theta\longmapsto [\theta]_{\overline{E}}$) preserving 
the clone operations.
The resulting clone is denoted by $\monoid{T}^\pres{\Sigma}{E}$.
It is also characterised by a universal property.
\begin{proposition}
\label{prop:quotient_clone}
Let $\pres{\Sigma}{E}$ be a presentation of an equational theory.
The clone homomorphism $q\colon \monoid{T}(\Sigma)\longrightarrow 
\monoid{T}^\pres{\Sigma}{E}$, defined by $q_n(\theta)=[\theta]_{\overline{E}}$
for each $n\in\NN$ and $\theta\in T(\Sigma)_n$, satisfies the following 
property: given any clone $\monoid{S}=(S,p,\circ)$ and a clone homomorphism 
$g\colon \monoid{T}(\Sigma)\longrightarrow \monoid{S}$ such that for any 
$t\approx_n s\in E$, $g_n(t)=g_n(s)$, there exists a unique
clone homomorphism $h\colon 
\monoid{T}^\pres{\Sigma}{E}\longrightarrow\monoid{S}$
such that $h\circ q=g$.
\[
\begin{tikzpicture}[baseline=-\the\dimexpr\fontdimen22\textfont2\relax ]
      \node (TL) at (0,1)  {$\monoid{T}(\Sigma)$};
      \node (TR) at (3,1)  {$\monoid{T}^\pres{\Sigma}{E}$};
      \node (BR) at (3,-1) {$\monoid{S}$};
      \draw[->] (TL) to node[auto,labelsize](T) {$q$} (TR);
      \draw[->,dashed]  (TR) to node[auto,labelsize] {$h$} (BR);
      \draw[->]  (TL) to node[auto,swap,labelsize] {$g$} (BR);
\end{tikzpicture} 
\]
\end{proposition}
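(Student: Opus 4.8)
The plan is to define $h$ by the only formula compatible with the constraint $h \circ q = g$, namely $h_n([\theta]_{\overline{E}}) = g_n(\theta)$ for each $n \in \NN$ and $\theta \in T(\Sigma)_n$, and then to check that this is well defined, that it is a clone homomorphism, and that it is unique. Uniqueness comes for free and in fact dictates the definition: since $q$ is surjective, any $h$ satisfying $h \circ q = g$ must obey $h_n([\theta]_{\overline{E}}) = h_n(q_n(\theta)) = g_n(\theta)$. Granting well-definedness, the clone homomorphism axioms for $h$ then transfer through $q$ from those for $g$. For projections, $h_n([x^{(n)}_i]_{\overline{E}}) = g_n(x^{(n)}_i) = p^{(n)}_i$, using that $g$ is a clone homomorphism and that $p^{(n)}_i = x^{(n)}_i$ in $\monoid{T}(\Sigma)$. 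For composition, the formula defining $\circ^{(n)}_k$ on $T(\Sigma)/\overline{E}$ together with preservation of composition by $g$ gives $h_n\big([\phi]_{\overline{E}} \circ^{(n)}_k ([\theta_1]_{\overline{E}}, \dots, [\theta_k]_{\overline{E}})\big) = g_k(\phi) \circ^{(n)}_k (g_n(\theta_1), \dots, g_n(\theta_k))$, which is exactly $h_k([\phi]_{\overline{E}}) \circ^{(n)}_k (h_n([\theta_1]_{\overline{E}}), \dots, h_n([\theta_k]_{\overline{E}}))$.

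Thus the substance of the proposition is the well-definedness of $h$: I must show that $g$ collapses every equational theorem, i.e.\ that $\theta \approx_n \theta' \in \overline{E}$ implies $g_n(\theta) = g_n(\theta')$. Writing $K = \{\,(n,(\theta,\theta')) \mid g_n(\theta) = g_n(\theta')\,\}$ for the ``kernel'' graded relation of $g$, this amounts to $\overline{E} \subseteq K$. I would prove this by induction on the structure of $\pres{\Sigma}{E}$-proofs (Definition~\ref{def:eq_logic}), showing that $K$ is closed under each inference rule. The rule (\textsc{Ax}) is precisely the hypothesis that $g_n(t) = g_n(s)$ for every $t \approx_n s \in E$, while (\textsc{Refl}), (\textsc{Sym}) and (\textsc{Trans}) hold simply because equality in $S$ is reflexive, symmetric and transitive.

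The only step that engages the clone structure, and hence the main (if mild) obstacle, is the congruence rule (\textsc{Cong}). The crucial observation is that substitution in $T(\Sigma)$ \emph{is} clone composition: $s[x^{(k)}_1 \mapsto t_1, \dots, x^{(k)}_k \mapsto t_k] = s \circ^{(n)}_k (t_1, \dots, t_k)$ in $\monoid{T}(\Sigma)$, by the very definition of the clone structure on $T(\Sigma)$. Given the premises of (\textsc{Cong}), the induction hypothesis supplies $g_k(s) = g_k(s')$ and $g_n(t_i) = g_n(t'_i)$ for $1 \le i \le k$. Since $g$ preserves composition, $g_n\big(s[x^{(k)}_1 \mapsto t_1, \dots]\big) = g_k(s) \circ^{(n)}_k (g_n(t_1), \dots, g_n(t_k))$, and similarly for the primed data; the two right-hand sides coincide, so the conclusion of (\textsc{Cong}) lies in $K$. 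This closes the induction, yielding $\overline{E} \subseteq K$ and hence well-definedness, which together with the remarks above completes the proof.
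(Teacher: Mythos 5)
Your proposal is correct and follows essentially the same route as the paper's own proof: define $h_n([\theta]_{\overline{E}})=g_n(\theta)$, establish well-definedness by induction on $\pres{\Sigma}{E}$-proofs, and deduce uniqueness from the surjectivity of $q$. You simply spell out the induction (in particular the (\textsc{Cong}) case via the identification of substitution with clone composition) and the homomorphism checks that the paper leaves implicit.
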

\begin{proof}
The clone homomorphism $h$ is given by 
$h_n([\theta]_{\overline{E}})=g_n(\theta)$;
this is shown to be well-defined by induction on $\pres{\Sigma}{E}$-proofs
(see Definition~\ref{def:eq_logic}).
The uniqueness of $h$ is immediate from the surjectivity of $q$.
\end{proof}

We can now show that for any presentation of an equational theory 
$\pres{\Sigma}{E}$,
to give a model of $\pres{\Sigma}{E}$ is equivalent to give a model of the clone
$\monoid{T}^\pres{\Sigma}{E}$.
A model of the clone $\monoid{T}^\pres{\Sigma}{E}$ 
(Definition~\ref{def:clone_model}) can be---by 
Proposition~\ref{prop:quotient_clone}---equivalently given as a suitable clone 
homomorphism out of 
$\monoid{T}(\Sigma)$; this in turn is---by 
Proposition~\ref{prop:free_clone}---equivalently given as a suitable morphism 
of graded sets 
out of $\Sigma$,
which is nothing but a model of the presentation of an equational theory 
$\pres{\Sigma}{E}$ (Definition~\ref{defn:univ_alg_model}).

We also remark that every clone is isomorphic to a clone of the form 
$\monoid{T}^\pres{\Sigma}{E}$
for some presentation of an equational theory $\pres{\Sigma}{E}$.

\medskip

The inference rules of equational logic we have given in
Definition~\ref{def:eq_logic} can be understood as 
the inductive definition of the congruence relation 
$\overline{E}\subseteq T(\Sigma)\times T(\Sigma)$  
on the clone $\monoid{T}(\Sigma)$
generated by $E\subseteq T(\Sigma)\times T(\Sigma)$.
The notion of clone therefore provides conceptual understanding 
of equational logic.

We conclude that
the classical universal algebra based on presentations of equational 
theories may be replaced by the theory of clones to a certain extent.
Given a presentation of an equational theory $\pres{\Sigma}{E}$,
the clone $\monoid{T}^\pres{\Sigma}{E}$ it presents can be obtained 
by letting $T^\pres{\Sigma}{E}$ to be the graded set of \emph{$\Sigma$-terms 
modulo equational theorems of $\pres{\Sigma}{E}$}.

We remark that the well-known notion of \emph{Lawvere theory} 
\cite{Lawvere_thesis} is essentially equivalent to that of clones;
see e.g., \cite{Taylor_clone}.
In this thesis, we shall only deal with clones,
leaving the translation of the results to Lawvere theories or universal algebra
to the interested reader.

\section{Non-symmetric operads}
Non-symmetric operads~\cite{May_loop} may be seen as a variant
of clones.
Compared to clones, non-symmetric operads are less expressive (for example,
the groups cannot be captured by non-symmetric operads), but 
their models can be taken in wider contexts than for clones (we will
introduce a notion of model of a non-symmetric operad using abelian 
groups and their tensor products).

Before giving the definition of non-symmetric operad,
we shall introduce the corresponding notion of presentation.
Let $\Sigma$ be a graded set. We say that
a $\Sigma$-term (Definition~\ref{def:Sigma_term}) $t\in T(\Sigma)_n$ is  
\defemph{strongly regular} if in $t$ each of the variables 
$x^{(n)}_1,\dots,x^{(n)}_n$
appears precisely once, and in this order (from left to right).
For example, consider the graded set $\Sigma^\mon$ defined by 
$\Sigma^\mon_0=\{e\}$, $\Sigma^\mon_2=\{m\}$ and $\Sigma^\mon_n=\emptyset$
for all $n\in\NN\setminus\{0,2\}$.
Among the $\Sigma^\mon$-terms,
\[
m(x^{(1)}_1,e)\in T(\Sigma)_1,\quad
m(x^{(2)}_1,x^{(2)}_2)\in T(\Sigma)_2,
\quad
m(m(x^{(3)}_1,x^{(3)}_2),x^{(3)}_3)\in T(\Sigma)_3 
\]
are strongly regular, but 
\[
m(x^{(1)}_1,x^{(1)}_1)\in T(\Sigma)_1,\quad
x^{(2)}_1\in T(\Sigma)_2,\quad
m(x^{(2)}_2,x^{(2)}_1)\in T(\Sigma)_2
\]
are not.
The following definition
introduces the same notion inductively.
\begin{definition}
\label{def:streg_Sigma_term}
Let $\Sigma$ be a graded set.
The graded set $T_\streg(\Sigma)=(T_\streg(\Sigma)_n)_{n\in\NN}$ of 
\defemph{strongly regular $\Sigma$-terms} is defined inductively as follows.
\begin{enumerate}
\item $x^{(1)}_1\in T_\streg(\Sigma)_1$.
\item For each $k,n_1,\dots,n_k\in\NN$, $\sigma\in\Sigma_k$ and 
$t_1\in T_\streg(\Sigma)_{n_1},\dots,t_k\in T_\streg(\Sigma)_{n_k}$,
writing $n_1+\dots+n_k=n$,
\begin{multline*}
\sigma(t_1[x^{(n_1)}_1\mapsto x^{(n)}_1,\dots,x^{(n_1)}_{n_1}\mapsto 
x^{(n)}_{n_1}],
\dots,\\
t_k[x^{(n_k)}_1\mapsto x^{(n)}_{n_1+\dots+n_{k-1}+1},\dots,
x^{(n_k)}_{n_k}\mapsto x^{(n)}_{n_1+\dots+n_{k-1}+n_k}])
\in T_\streg(\Sigma)_{n}.
\end{multline*}
When $k=0$, we usually write $\sigma$ instead of $\sigma()$.\qedhere
\end{enumerate}
\end{definition}

\begin{definition}[cf.~Definition~\ref{def:Sigma_eq}]
Let $\Sigma$ be a graded set. 
An element of the graded set $T_\streg(\Sigma)\times T_\streg(\Sigma)$ is called
a \defemph{strongly regular $\Sigma$-equation}.
We write a strongly regular $\Sigma$-equation $(n,(t,s))\in 
T_\streg(\Sigma)\times T_\streg(\Sigma)$ as $t\approx_n s$ or $t\approx s$.
\end{definition}

\begin{definition}[cf.~Definition~\ref{defn:univ_alg_pres_eq_thy}]
A \defemph{strongly regular presentation of an equational theory}
$\pres{\Sigma}{E}$ is a pair consisting of:
\begin{itemize}
\item a graded set $\Sigma$ of \defemph{basic operations}, and
\item a graded set $E\subseteq T_\streg(\Sigma)\times T_\streg(\Sigma)$ of 
(strongly regular) \defemph{equational axioms}.\qedhere
\end{itemize}
\end{definition}

The notion of model of a strongly regular presentation of an equational theory
may be defined just as in Definition~\ref{defn:univ_alg_model},
since any strongly regular presentation of an equational theory can be seen as 
a presentation of an equational theory.

As an example of strongly regular presentations of equational theories, 
consider $\pres{\Sigma^\mon}{E^\mon}$ defined as follows:
\[
E^\mon_1=\{\,m(x^{(1)}_1,e)\approx x^{(1)}_1, \quad m(e,x^{(1)}_1)\approx 
x^{(1)}_1 \,\},
\]
\[
E^\mon_3=\{\,m(m(x^{(3)}_1,x^{(3)}_2),x^{(3)}_3)\approx
m(x^{(3)}_1,m(x^{(3)}_2,x^{(3)}_3))\,\},
\]
$E^\mon_n=\emptyset$ for all $n\in\NN\setminus\{1,3\}$.
Models of $\pres{\Sigma^\mon}{E^\mon}$ are precisely \emph{monoids}.

\medskip

In order to appreciate the value of strongly regular presentations 
of equational theories (and of non-symmetric operads),
let us now introduce another notion of model.
This notion of model is based on abelian groups, in contrast to 
the one introduced in Definition~\ref{defn:univ_alg_model}
based on sets.

Let $\pres{\Sigma}{E}$ be a strongly regular presentation of an equational 
theory.
Define an \defemph{interpretation} of $\Sigma$ on an abelian group
$A$ to be a function 
$\interp{-}$ which for each $n\in\NN$
and $\sigma\in\Sigma_n$, assigns a group homomorphism
$\interp{\sigma}\colon A^{\otimes n}\longrightarrow A$,
where $A^{\otimes n}$ is the tensor product of $n$-many copies of $A$
($A^{\otimes 0}$ is the additive abelian group $\mathbb{Z}$, the unit
for tensor).
Given such an interpretation $\interp{-}$ of $\Sigma$,
we can extend it to $T_\streg(\Sigma)$, following the inductive definition
of $T_\streg(\Sigma)$ in Definition~\ref{def:streg_Sigma_term}
(cf.~Definition~\ref{def:interp_Sigma_term}):
\begin{enumerate}
\item Let $\interp{x^{(1)}_1}\colon A\longrightarrow A$ be the 
identity homomorphism.
\item For $k,n_1,\dots,n_k\in\NN$, $\sigma\in\Sigma_k$
and $t_1\in T_\streg(\Sigma)_{n_1},\dots,t_k\in T_\streg(\Sigma)_{n_k}$, let 
\[
\interp{\sigma(t_1,\dots,t_k)}\colon A^{\otimes (n_1+\dots+n_k)}\longrightarrow
A
\]
(we omit the substitutions in $t_i$) 
to be the composite:
\[
\begin{tikzpicture}[baseline=-\the\dimexpr\fontdimen22\textfont2\relax ]
      \node (1) at (-1,0)  {$A^{\otimes (n_1+\dots+n_k)}$};
      \node (2) at (3,0)  {$A^{\otimes k}$};
      \node (3) at (5,0) {$A.$};
      \draw[->] (1) to node[auto,labelsize]{$\interp{t_1}\otimes\dots\otimes 
      \interp{t_k}$} (2);
      \draw[->] (2) to node[auto,labelsize] {$\interp{\sigma}$} (3);
\end{tikzpicture} 
\]
\end{enumerate}
(Note that, in contrast, we cannot extend $\interp{-}$ to $T(\Sigma)$
in any natural way.
For example, tensor products do not have an analogue of projections
for cartesian products.)
We define an \defemph{abelian group model of $\pres{\Sigma}{E}$}
to be an abelian group $A$ together with an interpretation $\interp{-}$
of $\Sigma$ on $A$ such that for any $t\approx_n s\in E$, 
$\interp{t}=\interp{s}$
(cf.~Definition~\ref{defn:univ_alg_model}, which may be called 
a \emph{set model of $\pres{\Sigma}{E}$}).
The abelian group models of $\pres{\Sigma^\mon}{E^\mon}$ are 
precisely the \emph{rings (with $1$)}. 

\medskip

We now turn to the definition of non-symmetric operad:

\begin{definition}[cf.~Definition~\ref{def:clone}]
\label{def:non_symm_op}
A \defemph{non-symmetric operad} $\monoid{T}$ consists of:
\begin{description}
\item[(ND1)] a graded set $T=(T_n)_{n\in\NN}$;
\item[(ND2)] an element $\id{}\in T_1$;
\item[(ND3)] for each $k,n_1,\dots,n_k\in \NN$, a function
(we omit the sub- and superscripts)
\[
\circ\colon T_k\times T_{n_1}\times\dots\times T_{n_k}\longrightarrow 
T_{n_1+\dots+ n_k}
\]
whose action we write as $(\phi,\theta_1,\dots,\theta_k)\longmapsto \phi\circ 
(\theta_1,\dots,\theta_k)$
\end{description}
satisfying the following equations:
\begin{description}
\item[(NA1)] for each $n\in\NN$ and $\theta\in T_n$,
\[
\id{}\circ (\theta)=\theta;
\]
\item[(NA2)] for each $n\in\NN$ and $\theta\in T_n$,
\[
\theta\circ (\id{},\dots,\id{}) = \theta;
\] 
\item[(NA3)] for each $l,\ k_1,\dots,k_l,\ 
n_{1,1},\dots,n_{1,k_1},\dots,n_{l,1},
\dots,n_{l,k_l}\in\NN$, \ $\psi\in T_l$, \ $\phi_1\in T_{k_1}, \dots, \phi_l\in 
T_{k_l}$, \ $\theta_{1,1}\in T_{n_{1,1}},\dots,\theta_{1,k_1}\in T_{n_{1,k_1}},
\dots, \theta_{l,1}\in T_{n_{l,1}},\dots,$ $\theta_{l,k_l}\in T_{n_{l, k_l}}$,
\begin{multline*}
\psi\circ\big(\phi_1\circ (\theta_{1,1},\dots,\theta_{1,k_1}),\ \dots\ ,
\phi_l\circ (\theta_{l,1},\dots,\theta_{l,k_l}) \big)\\
=
\big(\psi\circ 
(\phi_1,\dots,\phi_l)\big)\circ(\theta_{1,1},\dots,\theta_{1,k_1},\ \dots,\ 
\theta_{l,1},\dots,\theta_{l,k_l}).
\end{multline*}
\end{description}
Such a non-symmetric operad is written as $\monoid{T}=(T,\id{},\circ)$.
\end{definition}

We can understand the above definition by string diagrams.
Compared to the case of clones, this time we use a rather restricted 
class of diagrams; we no longer allow
the permuting, copying and discarding facilities,
previously drawn as follows:
\begin{equation*}
\begin{tikzpicture}[baseline=-\the\dimexpr\fontdimen22\textfont2\relax ]
      \draw [string] (0,0.3) -- (0.6,0.3) ..controls (0.9,0.3) and  (1.2,-0.3) 
      .. (1.5,-0.3)--(2.1,-0.3);
      \draw [string] (0,-0.3) -- (0.6,-0.3) ..controls (0.9,-0.3) and  
      (1.2,0.3) .. (1.5,0.3)--(2.1,0.3);
\end{tikzpicture}
\qquad\quad
\begin{tikzpicture}[baseline=-\the\dimexpr\fontdimen22\textfont2\relax ]
      \draw [string] (0,0) -- (0.6,0) ..controls (0.9,0) and  (1.2,-0.3) 
      .. (1.5,-0.3)--(2.1,-0.3);
      \draw [string] (0,0) -- (0.6,0) ..controls (0.9,0) and  
      (1.2,0.3) .. (1.5,0.3)--(2.1,0.3);
      \node [circle,draw,inner sep=0.15em,fill] at (0.7,0) {};
\end{tikzpicture}
\qquad\quad
\begin{tikzpicture}[baseline=-\the\dimexpr\fontdimen22\textfont2\relax ]
      \draw [string] (0,0) -- (0.7,0);
      \node [circle,draw,inner sep=0.15em,fill] at (0.7,0) {};
\end{tikzpicture}\ .
\end{equation*}
Without these components, we cannot draw the picture 
(\ref{eqn:clone_composition})
for composition in clones.
The natural alternative would be the picture
\[
\begin{tikzpicture} [baseline={([yshift=-0.5ex]current bounding box.center)}]
      \coordinate (TL) at (0,1.5);
      \coordinate (BL) at (0,-1.5);
      \coordinate (R) at (1.5,0);
      \path[draw,string] (TL)--(BL)--(R)--cycle;
      \node at (0.5,0) {$\phi$};
      \path[draw,string] (TL)+(0,-0.3)-- +(-0.6,-0.3);
      \node at (-0.3,0) {$\rvdots$};
      \path[draw,string] (BL)+(0,0.3)-- +(-0.6,0.3);
      \path[draw,string] (R)-- +(0.6,0);
  \begin{scope}[shift={(-2.1,1.2)}]
      \coordinate (TL) at (0,0.8);
      \coordinate (BL) at (0,-0.8);
      \coordinate (R) at (1.5,0);
      \path[draw,string] (TL)--(BL)--(R)--cycle;
      \node at (0.5,0) {$\theta_1$};
      \node at (0.5,-1.2) {$\rvdots$};
      \node at (-0.3,0) {$\rvdots$};
  \end{scope}
  \begin{scope}[shift={(-2.1,-1.2)}]
      \coordinate (TL) at (0,0.8);
      \coordinate (BL) at (0,-0.8);
      \coordinate (R) at (1.5,0);
      \path[draw,string] (TL)--(BL)--(R)--cycle;
      \node at (0.5,0) {$\theta_{k}$};
      \node at (-0.3,0) {$\rvdots$};
  \end{scope}
  \begin{scope}[shift={(-2.1,0)}]
      \draw [string] (-0.6,1.7)--(0,1.7);
      \draw [string] (-0.6,-0.7)--(0,-0.7);
      \draw [string] (-0.6,0.7)--(0,0.7);
      \draw [string] (-0.6,-1.7)--(0,-1.7);
      \node at (-0.8,0) {$\rvdots$};
      \draw [decorate,decoration={brace,amplitude=5pt}]
      (-0.8,-1.9) -- (-0.8,-0.5) node [midway,xshift=-0.5cm,labelsize] 
      {$n_k$};
      \draw [decorate,decoration={brace,amplitude=5pt}]
      (-0.8,0.5) -- (-0.8,1.9) node [midway,xshift=-0.5cm,labelsize] 
      {$n_1$};
  \end{scope}
\end{tikzpicture}
\]
and this is our interpretation of (ND3).
The element $\id{}$ in (ND2) is represented by the 
only diagram of the form (\ref{eqn:clone_p}) which we can still draw,
namely,
\[
\begin{tikzpicture}[baseline=-\the\dimexpr\fontdimen22\textfont2\relax ]
      \draw [string] (0,0) -- (2.1,0);
\end{tikzpicture}\ .
\]
The axioms (NA1)--(NA3) may be understood in the light of these diagrams.

\medskip

Let us move on to the definition of models of a non-symmetric operad.
As with strongly regular presentations of equational theories,
non-symmetric operads also admits both notions of model,
one based on sets and the other based on abelian groups (and a lot more,
as we shall see later).

\begin{definition}[cf.~Definition~\ref{def:endo_clone}]
Let $A$ be a set. 
Define the non-symmetric operad $\Endopset{A}=(\enrich{A}{A},\id{},\circ)$ 
as follows:
\begin{description}
\item[(ND1)] for each $n\in\NN$, let $\enrich{A}{A}_n$
be the set of all functions from $A^n$ to $A$;
\item[(ND2)] the element $\id{}\in\enrich{A}{A}$ is the identity function on 
$A$;
\item[(ND3)] for each $k,n_1,\dots,n_k\in\NN$, $g\colon A^k\longrightarrow A$,
$f_1\colon A^{n_1}\longrightarrow A,\dots,f_k\colon A^{n_k}\longrightarrow A$,
let $g\circ (f_1,\dots,f_k)$ be the following composite: 
\[
\begin{tikzpicture}[baseline=-\the\dimexpr\fontdimen22\textfont2\relax ]
      \node (1) at (-1,0)  {$A^{n_1+\dots+n_k}$};
      \node (2) at (3,0)  {$A^{k}$};
      \node (3) at (5,0) {$A.$};
      \draw[->] (1) to node[auto,labelsize]{$f_1\times\dots\times 
      f_k$} (2);
      \draw[->] (2) to node[auto,labelsize] {$g$} (3);
\end{tikzpicture} \qedhere
\]
\end{description}
\end{definition}

\begin{definition}
Let $A$ be an abelian group. 
Define the non-symmetric operad $\Endopab{A}=(\enrich{A}{A},\id{},\circ)$ 
as follows:
\begin{description}
\item[(ND1)] for each $n\in\NN$, let $\enrich{A}{A}_n$
be the set of all group homomorphisms from $A^{\otimes n}$ to $A$;
\item[(ND2)] the element $\id{}\in\enrich{A}{A}$ is the identity homomorphism 
on $A$;
\item[(ND3)] for each $k,n_1,\dots,n_k\in\NN$, $g\colon A^{\otimes k} 
\longrightarrow A$,
$f_1\colon A^{\otimes n_1}\longrightarrow A,\dots,
f_k\colon A^{\otimes n_k}\longrightarrow A$,
let $g\circ (f_1,\dots,f_k)$ be the following composite: 
\[
\begin{tikzpicture}[baseline=-\the\dimexpr\fontdimen22\textfont2\relax ]
      \node (1) at (-1,0)  {$A^{\otimes (n_1+\dots+n_k)}$};
      \node (2) at (3,0)  {$A^{\otimes k}$};
      \node (3) at (5,0) {$A.$};
      \draw[->] (1) to node[auto,labelsize]{$f_1\otimes\dots\otimes 
      f_k$} (2);
      \draw[->] (2) to node[auto,labelsize] {$g$} (3);
\end{tikzpicture} \qedhere
\]
\end{description}
\end{definition}
We define the notion of \defemph{non-symmetric operad homomorphism} 
between non-symmetric operads 
just in the same way as that of clone homomorphism 
(Definition~\ref{def:clone_hom}).

\begin{definition}[cf.~Definition~\ref{def:clone_model}]
\label{def:ns_operad_model}
Let $\monoid{T}$ be a non-symmetric operad.
\begin{enumerate}
\item A \defemph{set model of $\monoid{T}$} consists of a set $A$
and a non-symmetric operad homomorphism $\monoid{T}\longrightarrow 
\Endopset{A}$.
\item An \defemph{abelian group model of $\monoid{T}$} consists of an abelian 
group $A$
and a non-symmetric operad homomorphism $\monoid{T}\longrightarrow \Endopab{A}$.
\end{enumerate}
Homomorphisms between set or abelian group models of $\monoid{T}$
are defined just as in Definition~\ref{def:clone_mod_hom}.
\end{definition}

The relationship between strongly regular presentations of 
equational theories and non-symmetric operads is completely parallel to  
the one between presentations of equational theories and clones:
for each graded set $\Sigma$, 
the graded set $T_\streg(\Sigma)$ has the structure of non-symmetric operad,
and is moreover the free such generated by $\Sigma$, 
there is a version of equational logic which can be seen as giving an 
inductive definition of the congruence relation 
for non-symmetric operad, and so on.

\section{Symmetric operads}
\label{sec:sym_operad}
Symmetric operads~\cite{May_loop} are an intermediate notion of algebraic theory
which lie between clones and 
non-symmetric operads, in terms of expressive power as well as 
in terms of range of notions of models.

Let us first discuss the corresponding presentation.
Given a graded set $\Sigma$,  
a $\Sigma$-term (Definition~\ref{def:Sigma_term}) $t\in T(\Sigma)_n$ is called 
\defemph{regular} if in $t$ each of the variables $x^{(n)}_1,\dots,x^{(n)}_n$
appears precisely once.
By way of illustration, consider the graded set $\Sigma^\mon$. 
Every strongly regular $\Sigma^\mon$-term is regular, 
and 
\[
m(x^{(2)}_2,x^{(2)}_1)\in T(\Sigma^\mon)_2
\]
is an example of $\Sigma^\mon$-terms which are regular but not strongly regular.

For any graded set $\Sigma$, let us denote by 
$T_\reg(\Sigma)\subseteq T(\Sigma)$
the graded set of all regular $\Sigma$-terms.
The following definitions should now be straightforward.

\begin{definition}[cf.~Definition~\ref{def:Sigma_eq}]
\label{def:reg_Sigma_eq}
Let $\Sigma$ be a graded set. An element of the graded set 
$T_\reg(\Sigma)\times T_\reg(\Sigma)$ is called a \defemph{regular 
$\Sigma$-equation}.
We write a regular $\Sigma$-equation $(n,(t,s))\in T_\reg(\Sigma)\times 
T_\reg(\Sigma)$ as $t\approx_n s$ or $t\approx s$.
\end{definition}

\begin{definition}[cf.~Definition~\ref{defn:univ_alg_pres_eq_thy}]
A \defemph{regular presentation of an equational theory} $\pres{\Sigma}{E}$
is a pair consisting of:
\begin{itemize}
\item a graded set $\Sigma$ of \defemph{basic operations}, and 
\item a graded set $E\subseteq T_\reg(\Sigma)\times T_\reg(\Sigma)$ of (regular)
\defemph{equational axioms}.\qedhere
\end{itemize} 
\end{definition}
The notion of model of a regular presentation of an equational theory
(based on sets) may be defined just as in Definition~\ref{defn:univ_alg_model},
since any regular presentation of an equational theory can be seen as 
a presentation of an equational theory.

As an example of regular (but not strongly regular) presentations of equational 
theories, 
consider
$\pres{\Sigma^\mon}{E^\cmon}$ defined as follows:
\[
E^\cmon_1=\{\,m(x^{(1)}_1,e)\approx x^{(1)}_1, \quad m(e,x^{(1)}_1)\approx 
x^{(1)}_1 \,\},
\]
\[
E^\cmon_2=\{\,m(x^{(2)}_1,x^{(2)}_2)\approx m(x^{(2)}_2,x^{(2)}_1)\,\},
\]
\[
E^\cmon_3=\{\,m(m(x^{(3)}_1,x^{(3)}_2),x^{(3)}_3)\approx
m(x^{(3)}_1,m(x^{(3)}_2,x^{(3)}_3))\,\},
\]
$E^\cmon_n=\emptyset$ for all $n\in\NN\setminus\{1,2,3\}$.
Models of $\pres{\Sigma^\mon}{E^\cmon}$
are precisely \emph{commutative monoids}.

We can also define the notion of model of a regular presentation of an 
equational 
theory based on abelian groups; we omit the details here.

\medskip

Let us turn to the definition of symmetric operad.
In order to define symmetric operads, we have to give preliminary 
definitions concerning symmetric groups. 

For each natural number $n$, the \defemph{symmetric group of order $n$},
written as $\mathfrak{S}_n$, is defined as the set of all bijections
on the set $[n]=\{1,\dots, n\}$ together with the multiplication $\cdot$
given by composition of functions: 
for $u,v\colon [n]\longrightarrow[n]$,
their multiplication $v\cdot u$ is the composite
\[
\begin{tikzpicture}[baseline=-\the\dimexpr\fontdimen22\textfont2\relax ]
      \node (1) at (0,0)  {$[n]$};
      \node (2) at (2,0)  {$[n]$};
      \node (3) at (4,0) {$[n].$};
      \draw[->] (1) to node[auto,labelsize]{$u$} (2);
      \draw[->] (2) to node[auto,labelsize] {$v$} (3);
\end{tikzpicture} 
\]
The identity function on $[n]$ is written as $e_n\in\mathfrak{S}_n$.

We may visualise elements of $\mathfrak{S}_n$ by string diagrams.
For example, the element $u\in\mathfrak{S}_3$ defined as $u(1)=2$,
$u(2)=3$ and $u(3)=1$ may be drawn as follows:
\begin{equation*}
\begin{tikzpicture}[baseline=-\the\dimexpr\fontdimen22\textfont2\relax ]
      \coordinate (s1) at (0,0.5);
      \coordinate (s2) at (0,0);
      \coordinate (s3) at (0,-0.5);
      \coordinate (t1) at (2.1,0.5);
      \coordinate (t2) at (2.1,0);
      \coordinate (t3) at (2.1,-0.5);
      \draw [string] (0,0.5) -- (0.6,0.5) ..controls (0.9,0.5) and  (1.2,-0.5) 
      .. (1.5,-0.5)--(2.1,-0.5);
      \draw [string] (0,0) -- (0.6,0) ..controls (0.9,0) and  (1.2,0.5) .. 
      (1.5,0.5)--(2.1,0.5);
      \draw [string] (0,-0.5) -- (0.6,-0.5) ..controls (0.9,-0.5) and  
      (1.2,0) .. (1.5,0)--(2.1,0);
      \node [labelsize, left of=s1, node distance=0.7cm] {$1=u(3)$};
      \node [labelsize, left of=s2, node distance=0.7cm] {$2=u(1)$};
      \node [labelsize, left of=s3, node distance=0.7cm] {$3=u(2)$};
      \node [labelsize, right of=t1, node distance=0.3cm] {$1$};
      \node [labelsize, right of=t2, node distance=0.3cm] {$2$};
      \node [labelsize, right of=t3, node distance=0.3cm] {$3$};
\end{tikzpicture}
\end{equation*}
The composition $v\cdot u$ of $u$ with $v\in\mathfrak{S}_3$
such that $v(1)=2$, $v(2)=1$ and $v(3)=3$ is then drawn as:
\begin{equation*}
\begin{tikzpicture}[baseline=-\the\dimexpr\fontdimen22\textfont2\relax ]
      \draw [string] (-1.5,0) -- (-0.9,0) ..controls (-0.6,0) and (-0.3,0.5) 
      ..(0,0.5) -- (0.6,0.5) ..controls (0.9,0.5) and  (1.2,-0.5) 
      .. (1.5,-0.5)--(2.1,-0.5);
      \draw [string] (-1.5,0.5) -- (-0.9,0.5) ..controls (-0.6,0.5) and 
      (-0.3,0) ..(0,0) -- (0.6,0) ..controls (0.9,0) and  (1.2,0.5) .. 
      (1.5,0.5)--(2.1,0.5);
      \draw [string] (-1.5,-0.5) -- (0.6,-0.5) ..controls (0.9,-0.5) and  
      (1.2,0) .. (1.5,0)--(2.1,0);
\end{tikzpicture}
\ =\ 
\begin{tikzpicture}[baseline=-\the\dimexpr\fontdimen22\textfont2\relax ]
      \coordinate (s1) at (0,0.5);
      \coordinate (s2) at (0,0);
      \coordinate (s3) at (0,-0.5);
      \coordinate (t1) at (2.1,0.5);
      \coordinate (t2) at (2.1,0);
      \coordinate (t3) at (2.1,-0.5);
      \draw [string] (0,0.5) -- (2.1,0.5);
      \draw [string] (0,0) -- (0.6,0) ..controls (0.9,0) and  (1.2,-0.5) .. 
      (1.5,-0.5)--(2.1,-0.5);
      \draw [string] (0,-0.5) -- (0.6,-0.5) ..controls (0.9,-0.5) and  (1.2,0) 
      ..  (1.5,0)--(2.1,0);
      \node [labelsize, left of=s1, node distance=0.95cm] {$1=v\circ u(1)$};
      \node [labelsize, left of=s2, node distance=0.95cm] {$2=v\circ u(3)$};
      \node [labelsize, left of=s3, node distance=0.95cm] {$3=v\circ u(2)$};
      \node [labelsize, right of=t1, node distance=0.3cm] {$1$};
      \node [labelsize, right of=t2, node distance=0.3cm] {$2$};
      \node [labelsize, right of=t3, node distance=0.3cm] {$3$};
\end{tikzpicture}
\end{equation*}

For each $k,n_1,\dots,n_k\in\NN$, there is a canonical group homomorphism
\[
\oplus\colon \mathfrak{S}_{n_1}\times\dots\times 
\mathfrak{S}_{n_k}\longrightarrow\mathfrak{S}_{n_1+\dots+n_k}
\]
which, in terms of string diagrams, just ``stacks the diagrams vertically'';
we view the set $[n_1+\dots+n_k]$ as consisting of $k$ blocks,
and perform permutation {inside} each block. 
Formally, $\oplus$
maps $(u_1,\dots,u_k)\in \mathfrak{S}_{n_1}\times\dots\times 
\mathfrak{S}_{n_k}$ to  
$u_1\oplus \cdots\oplus u_k\in \mathfrak{S}_{n_1+\dots+n_k}$,
the bijection on $[n_1+\dots+n_k]$ 
mapping an element $j\in[n_1+\dots+n_k]$ 
with $j=n_1+\dots+n_{i-1}+j'$ for $1\leq i\leq k$ and $1\leq j'\leq n_i$ 
(the $j'$-th element in the $i$-th block) to
\[
(u_1\oplus \dots\oplus u_k)(j)=n_1+\dots+n_{i-1}+u_i(j')
\]
(the $u_i(j')$-th element in the $i$-th block).

Still letting $k,n_1,\dots,n_k$ be arbitrary natural numbers, we 
have another function (not a group homomorphism in general)
\[
(-)_{n_1,\dots,n_k}\colon
\mathfrak{S}_k\longrightarrow \mathfrak{S}_{n_1+\dots+n_k}.
\]
We again view the set $[n_1+\dots+n_k]$
as consisting of $k$ blocks, but this time we permute these blocks. 
As an example, take $k=3$, $n_1=3$, $n_2=2$ and $n_3=2$,
and consider $v\in\mathfrak{S}_3$ defined above:
\begin{equation*}
v=
\begin{tikzpicture}[baseline=-\the\dimexpr\fontdimen22\textfont2\relax ]
      \draw [string] (0,0.5) -- (0.6,0.5) ..controls (0.9,0.5) and  (1.2,0) 
      .. (1.5,0)--(2.1,0);
      \draw [string] (0,0) -- (0.6,0) ..controls (0.9,0) and  (1.2,0.5) .. 
      (1.5,0.5)--(2.1,0.5);
      \draw [string] (0,-0.5) -- (2.1,-0.5);
\end{tikzpicture}
\ \longmapsto\ 
v_{3,2,2}=
\begin{tikzpicture}[baseline=-\the\dimexpr\fontdimen22\textfont2\relax ]
      \draw [string] (0,0.9) -- (0.6,0.9) ..controls (0.9,0.9) and  (1.2,0) 
      .. (1.5,0)--(2.1,0);
      \draw [string] (0,0.6) -- (0.6,0.6) ..controls (0.9,0.6) and  (1.2,-0.3) 
      .. (1.5,-0.3)--(2.1,-0.3);
      \draw [string] (0,0.3) -- (0.6,0.3) ..controls (0.9,0.3) and  (1.2,0.9) 
      ..(1.5,0.9)--(2.1,0.9);
      \draw [string] (0,0) -- (0.6,0) ..controls (0.9,0) and  (1.2,0.6) .. 
      (1.5,0.6)--(2.1,0.6);
      \draw [string] (0,-0.3) -- (0.6,-0.3) ..controls (0.9,-0.3) and  
      (1.2,0.3) .. (1.5,0.3)--(2.1,0.3);
      \draw [string] (0,-0.6) -- (2.1,-0.6);
      \draw [string] (0,-0.9) -- (2.1,-0.9);
\end{tikzpicture}
\end{equation*}
Formally, given any $v\in\mathfrak{S}_k$, the bijection $v_{n_1,\dots,n_k}\in
\mathfrak{S}_{n_1+\dots+n_k}$ maps an element $j\in[n_1+\dots+n_k]$
with $j=n_1+\dots+n_{i-1}+j'$ for $1\leq i\leq k$ and $1\leq j'\leq n_i$
(the $j'$-th element in the $i$-th block) to 
\[
v_{n_1,\dots,n_k}(j)=n_{v^{-1}(1)}+\dots+n_{v^{-1}(v(i)-1)}+j'
\]
(the $j'$-th element in the $v(i)$-th block).

\begin{definition}[cf.~Definition~\ref{def:clone}]
\label{def:symm_op}
A \defemph{symmetric operad} $\monoid{T}$ consists of:
\begin{description}
\item[(SD1)] a graded set $T=(T_n)_{n\in\NN}$;
\item[(SD2)] an element $\id{}\in T_1$;
\item[(SD3)] for each $k,n_1,\dots,n_k\in\NN$, a function
(we omit the sub- and superscripts)
\[
\circ\colon T_k\times T_{n_1}\times\dots\times T_{n_k}\longrightarrow
T_{n_1+\dots+n_k}
\]
whose action we write as $(\phi,\theta_1,\dots,\theta_k)\longmapsto
\phi\circ(\theta_1,\dots,\theta_k)$;
\item[(SD4)] for each $n\in\NN$, a function
\[
\bullet\colon \mathfrak{S}_n\times T_n\longrightarrow T_n
\]
\end{description}
satisfying the following equations:
\begin{description}
\item[(SA1)] for each $n\in\NN$ and $\theta\in T_n$,
\[
\id{}\circ (\theta)=\theta;
\]
\item[(SA2)] for each $n\in\NN$ and $\theta\in T_n$,
\[
\theta\circ (\id{},\dots,\id{}) = \theta;
\] 
\item[(SA3)] for each $l,\ k_1,\dots,k_l,\ 
n_{1,1},\dots,n_{1,k_1},\dots,n_{l,1},
\dots,n_{l,k_l}\in\NN$, \ $\psi\in T_l$, \ $\phi_1\in T_{k_1}, \dots, \phi_l\in 
T_{k_l}$, \ $\theta_{1,1}\in T_{n_{1,1}},\dots,\theta_{1,k_1}\in T_{n_{1,k_1}},
\dots, \theta_{l,1}\in T_{n_{l,1}},\dots, \theta_{l,k_l}\in T_{n_{l, k_l}}$,
\begin{multline*}
\psi\circ\big(\phi_1\circ (\theta_{1,1},\dots,\theta_{1,k_1}),\ \dots\ ,
\phi_l\circ (\theta_{l,1},\dots,\theta_{l,k_l}) \big)\\
=
\big(\psi\circ 
(\phi_1,\dots,\phi_l)\big)\circ(\theta_{1,1},\dots,\theta_{1,k_1},\ \dots\ ,
\theta_{l,1},\dots,\theta_{l,k_l});
\end{multline*}
\item[(SA4)] for each $n\in\NN$, the function $\bullet\colon 
\mathfrak{S}_n\times T_n\longrightarrow T_n$ is a left group action, 
that is, for each $\theta\in T_n$ and $u,v\in \mathfrak{S}_n$, 
\[
e_n\bullet \theta =\theta, \quad
(v\cdot u)\bullet \theta=v\bullet(u\bullet\theta);
\]
\item[(SA5)] for each $k,n_1,\dots,n_k\in\NN$, $\phi\in T_k$, $\theta_1\in 
T_{n_1},\dots,\theta_k\in T_{n_k}$ and $u_1\in\mathfrak{S}_{n_1},\dots,
u_k\in\mathfrak{S}_{n_k}$,
\[
\phi\circ(u_1\bullet\theta_1,\dots,u_k\bullet\theta_k)
=(u_1\oplus \dots\oplus u_k)\bullet\big(\phi\circ(\theta_1,\dots,\theta_k)\big);
\]
\item[(SA6)] for each $k,n_1,\dots,n_k\in\NN$, $\phi\in T_k$, $\theta_1\in 
T_{n_1},\dots,\theta_k\in T_{n_k}$ and $v\in\mathfrak{S}_k$,
\[
(v\bullet \phi)\circ(\theta_{v^{-1}(1)},\dots,\theta_{v^{-1}(k)})
=v_{n_1,\dots,n_k}\bullet \big(\phi\circ(\theta_1,\dots,\theta_k)\big).\qedhere
\]
\end{description}
\end{definition}

In terms of string diagrams,
symmetric operads correspond to the intermediate class of the diagrams
in which we can use the component
\begin{equation*}
\begin{tikzpicture}[baseline=-\the\dimexpr\fontdimen22\textfont2\relax ]
      \draw [string] (0,0.3) -- (0.6,0.3) ..controls (0.9,0.3) and  (1.2,-0.3) 
      .. (1.5,-0.3)--(2.1,-0.3);
      \draw [string] (0,-0.3) -- (0.6,-0.3) ..controls (0.9,-0.3) and  
      (1.2,0.3) .. (1.5,0.3)--(2.1,0.3);
\end{tikzpicture}
\end{equation*}
for permutation, but not 
\begin{equation*}
\begin{tikzpicture}[baseline=-\the\dimexpr\fontdimen22\textfont2\relax ]
      \draw [string] (0,0) -- (0.6,0) ..controls (0.9,0) and  (1.2,-0.3) 
      .. (1.5,-0.3)--(2.1,-0.3);
      \draw [string] (0,0) -- (0.6,0) ..controls (0.9,0) and  
      (1.2,0.3) .. (1.5,0.3)--(2.1,0.3);
      \node [circle,draw,inner sep=0.15em,fill] at (0.7,0) {};
\end{tikzpicture}
\quad\text{and}\quad
\begin{tikzpicture}[baseline=-\the\dimexpr\fontdimen22\textfont2\relax ]
      \draw [string] (0,0) -- (0.7,0);
      \node [circle,draw,inner sep=0.15em,fill] at (0.7,0) {};
\end{tikzpicture}\ .
\end{equation*}

Once again, there is a completely parallel story for symmetric operads
as the ones for clones and non-symmetric operads.
Indeed, Curien~\cite{Curien_operad} and Hyland~\cite{Hyland_elts}
have developed a unified framework for 
clones, symmetric operads and non-symmetric operads.

\section{Monads}
Monads are introduced in category theory, and 
the language of categories is the best way to present them.
Hence from now on we shall assume the reader is familiar with the basics of 
category theory \cite{MacLane_CWM}.

The definition of monad is quite simple, so we begin with it.
\begin{definition}
\label{def:monad}
Let $\cat{C}$ be a large category. 
\begin{enumerate}
\item 
A \defemph{monad on $\cat{C}$} consists of:
\begin{itemize}
\item a functor $T\colon \cat{C}\longrightarrow \cat{C}$;
\item a natural transformation $\eta\colon \id{\cat{C}}\longrightarrow T$ 
(called the \defemph{unit});
\item a natural transformation $\mu\colon T\circ T\longrightarrow T$
(called the \defemph{multiplication}),
\end{itemize}
making the following diagrams commute:
\begin{equation*}
\begin{tikzpicture}[baseline=-\the\dimexpr\fontdimen22\textfont2\relax ]
      \node (TL) at (0,2)  {$\id{\cat{C}}\circ T$};
      \node (TR) at (2,2)  {$T\circ T$};
      \node (BR) at (2,0) {$T$};
      \draw[->] (TL) to node[auto,labelsize](T) {$\eta\circ T$} (TR);
      \draw[->] (TR) to node[auto,labelsize](R) {$\mu$} (BR);
      \draw[->]  (TL) to node[auto,labelsize,swap] (L) {$\id{T}$} (BR);
\end{tikzpicture} 
\qquad
\begin{tikzpicture}[baseline=-\the\dimexpr\fontdimen22\textfont2\relax ]
      \node (TL) at (0,2)  {$T\circ \id{\cat{C}}$};
      \node (TR) at (2,2)  {$T\circ T$};
      \node (BR) at (2,0) {$T$};
      \draw[->] (TL) to node[auto,labelsize](T) {$T\circ \eta$} (TR);
      \draw[->] (TR) to node[auto,labelsize](R) {$\mu$} (BR);
      \draw[->]  (TL) to node[auto,labelsize,swap] (L) {$\id{T}$} (BR);
\end{tikzpicture} 
\qquad
\begin{tikzpicture}[baseline=-\the\dimexpr\fontdimen22\textfont2\relax ]
      \node (TL) at (0,2)  {$T\circ T\circ T$};
      \node (TR) at (3,2)  {$T\circ T$};
      \node (BL) at (0,0) {$T\circ T$};
      \node (BR) at (3,0) {$T$};
      \draw[->] (TL) to node[auto,labelsize](T) {$\mu\circ T$} (TR);
      \draw[->]  (TR) to node[auto,labelsize] {$\mu$} (BR);
      \draw[->]  (TL) to node[auto,swap,labelsize] {$T\circ \mu$} (BL);
      \draw[->] (BL) to node[auto,labelsize](B) {$\mu$} (BR);
\end{tikzpicture} 
\end{equation*}
\item Let $\monoid{T}=(T,\eta,\mu)$ and $\monoid{T'}=(T',\eta',\mu')$
be monads on $\cat{C}$.
A \defemph{morphism of monads on $\cat{C}$} from $\monoid{T}$ to $\monoid{T'}$
is a natural transformation $\alpha\colon T\longrightarrow T'$
which commutes with the units and multiplications. 
\end{enumerate}
We denote the category of monads on $\cat{C}$ by $\Mnd{\cat{C}}$.
\end{definition}
Next we introduce
models of a monad, usually called \emph{Eilenberg--Moore 
algebras}.

\begin{definition}[\cite{Eilenberg_Moore}]
\label{def:EM_alg}
Let $\cat{C}$ be a large category and $\monoid{T}=(T,\eta,\mu)$ be a monad on 
$\cat{C}$.
\begin{enumerate}
\item 
An \defemph{Eilenberg--Moore algebra of $\monoid{T}$}
consists of:
\begin{itemize}
\item an object $C\in\cat{C}$;
\item a morphism $\gamma\colon TC\longrightarrow C$ in $\cat{C}$,
\end{itemize}
making the following diagrams commute:
\begin{equation*}
\begin{tikzpicture}[baseline=-\the\dimexpr\fontdimen22\textfont2\relax ]
      \node (TL) at (0,2)  {$C$};
      \node (TR) at (2,2)  {$TC$};
      \node (BR) at (2,0)  {$C$};
      \draw[->] (TL) to node[auto,labelsize](T) {$\eta_C$} (TR);
      \draw[->] (TR) to node[auto,labelsize](R) {$\gamma$} (BR);
      \draw[->]  (TL) to node[auto,labelsize,swap] (L) {$\id{C}$} (BR);
\end{tikzpicture} 
\qquad
\begin{tikzpicture}[baseline=-\the\dimexpr\fontdimen22\textfont2\relax ]
      \node (TL) at (0,2)  {$TTC$};
      \node (TR) at (2,2)  {$TC$};
      \node (BL) at (0,0) {$TC$};
      \node (BR) at (2,0) {$C.$};
      \draw[->] (TL) to node[auto,labelsize](T) {$\mu_C$} (TR);
      \draw[->]  (TR) to node[auto,labelsize] {$\gamma$} (BR);
      \draw[->]  (TL) to node[auto,swap,labelsize] {$T\gamma$} (BL);
      \draw[->] (BL) to node[auto,labelsize](B) {$\gamma$} (BR);
\end{tikzpicture} 
\end{equation*}
\item Let $(C,\gamma)$ and $(C',\gamma')$ be Eilenberg--Moore algebras of 
$\monoid{T}$.
A \defemph{homomorphism} from $(C,\gamma)$ to $(C',\gamma')$ 
is a morphism $f\colon C\longrightarrow C'$ in $\cat{C}$
making the following diagram commute:
\[
\begin{tikzpicture}[baseline=-\the\dimexpr\fontdimen22\textfont2\relax ]
      \node (TL) at (0,2)  {$TC$};
      \node (TR) at (2,2)  {$TC'$};
      \node (BL) at (0,0) {$C$};
      \node (BR) at (2,0) {$C'.$};
      \draw[->] (TL) to node[auto,labelsize](T) {$Tf$} (TR);
      \draw[->]  (TR) to node[auto,labelsize] {$\gamma'$} (BR);
      \draw[->]  (TL) to node[auto,swap,labelsize] {$\gamma$} (BL);
      \draw[->] (BL) to node[auto,labelsize](B) {$f$} (BR);
\end{tikzpicture}
\]
\end{enumerate}
The category of all Eilenberg--Moore algebras of $\monoid{T}$ and their
homomorphism is celled the \defemph{Eilenberg--Moore category of $\monoid{T}$},
and is denoted by $\cat{C}^\monoid{T}$.
\end{definition}
Excellent introductions to monads abound (see e.g., 
\cite[Chapter~VI]{MacLane_CWM}). 
Here, we simply remark that monads typically arise from free constructions.
For example, there is a monad $\monoid{T}$ on the category $\Set$ of (small) 
sets which maps any (small) set $X$ to the underlying set of the free group 
generated by $X$ (with the canonical unit and multiplication),
and the Eilenberg--Moore algebras of $\monoid{T}$ are precisely groups.

We interpret that for each large category $\cat{C}$,
the monads on $\cat{C}$ form a single notion of algebraic theory;
hence in this section we have actually introduced 
a family of notions of algebraic theory, one for each large category. 
This is in contrast to the previous sections 
(Sections~\ref{sec:clone}--\ref{sec:sym_operad}),
where a single notion of algebraic theory was introduced in each section.

\section{Generalised operads}
\label{sec:S-operad}
Just like monads are a family of notions of algebraic theory 
parameterised by a large category, the term
\emph{generalised operads} 
\cite{Burroni_T_cats,Kelly_club_data_type,Hermida_representable,Leinster_book}
also refer to a family of notions of algebraic 
theory, this time parameterised by a large category with finite limits and a 
\emph{cartesian monad} thereon.
We start with the definition of cartesian monad.

\begin{definition}
\begin{enumerate}
\item Let $\cat{C}$ and $\cat{D}$ be categories, and $F,G\colon 
\cat{C}\longrightarrow \cat{D}$ be functors.
A natural transformation $\alpha\colon F\longrightarrow G$
is called \defemph{cartesian} if and only if all naturality squares of $\alpha$
are pullback squares;
that is, if and only if 
for any morphism $f\colon C\longrightarrow C'$ in $\cat{C}$, 
the square 
\[
\begin{tikzpicture}[baseline=-\the\dimexpr\fontdimen22\textfont2\relax ]
      \node (TL) at (0,2)  {$FC$};
      \node (TR) at (2,2)  {$GC$};
      \node (BL) at (0,0) {$FC'$};
      \node (BR) at (2,0) {$GC'$};
      \draw[->] (TL) to node[auto,labelsize](T) {$\alpha_C$} (TR);
      \draw[->]  (TR) to node[auto,labelsize] {$Gf$} (BR);
      \draw[->]  (TL) to node[auto,swap,labelsize] {$Ff$} (BL);
      \draw[->] (BL) to node[auto,labelsize](B) {$\alpha_{C'}$} (BR);
\end{tikzpicture}
\]
is a pullback of $Gf$ and $\alpha_{C'}$.
\item Let $\cat{C}$ be a category with all pullbacks.
A monad $\monoid{S}=(S,\eta,\mu)$ on $\cat{C}$ is called 
\defemph{cartesian} if and only if the functor $S$ preserves pullbacks,
and $\eta$ and $\mu$ are cartesian.\qedhere
\end{enumerate}
\end{definition}

For each cartesian monad $\monoid{S}=(S,\eta,\mu)$ on a large category $\cat{C}$
with all finite limits
we now introduce \emph{$\monoid{S}$-operads}, which
form a single notion of algebraic theory.

The crucial observation is that under this assumption,
the slice category $\cat{C}/S1$ (where $1$ is the terminal object of $\cat{C}$) 
acquires a canonical monoidal  
structure (see \cite[Chapter~VII]{MacLane_CWM} or 
\cite[Section~1.1]{Kelly:enriched} for the definition of monoidal category).
We write an object of $\cat{C}/S1$ either as $p\colon P\longrightarrow S1$
or $(P,p)$.
\begin{itemize}
\item The unit object is given by $I=(1,\eta_1\colon 1\longrightarrow S1)$.
\item Given a pair of objects $p\colon P\longrightarrow S1$ and $q\colon 
Q\longrightarrow S1$ in $\cat{C}/S1$,
first form the pullback
\begin{equation}
\label{eqn:pb_over_S1}
\begin{tikzpicture}[baseline=-\the\dimexpr\fontdimen22\textfont2\relax ]
      \node(0) at (0,1) {$(Q,q)\ast P$};
      \node(1) at (2,1) {$SP$};
      \node(2) at (0,-1) {$Q$};
      \node(3) at (2,-1) {$S1,$};
      
      \draw [->] 
            (0) to node (t)[auto,labelsize] {$\pi_2$} 
            (1);
      \draw [->] 
            (1) to node (r)[auto,labelsize] {$S!$} 
            (3);
      \draw [->] 
            (0) to node (l)[auto,swap,labelsize] {$\pi_1$} 
            (2);
      \draw [->] 
            (2) to node (b)[auto,swap,labelsize] {$q$} 
            (3);  

     \draw (0.2,0.4) -- (0.6,0.4) -- (0.6,0.8);
\end{tikzpicture}
\end{equation}
where $!\colon P\longrightarrow 1$ is the unique morphism to the terminal 
object.
The monoidal product $(Q,q)\otimes(P,p)\in\cat{C}/S1$
is $((Q,q)\ast P, \mu_1\circ Sp\circ \pi_2)$:
\[
\begin{tikzpicture}[baseline=-\the\dimexpr\fontdimen22\textfont2\relax ]
      \node(0) at (-0.5,0) {$(Q,q)\ast P$};
      \node(1) at (2,0) {$SP$};
      \node(2) at (4,0) {$SS1$};
      \node(3) at (6,0) {$S1.$};
      
      \draw [->] 
            (0) to node (t)[auto,labelsize] {$\pi_2$} 
            (1);
      \draw [->] 
            (1) to node (r)[auto,labelsize] {$Sp$} 
            (2);
      \draw [->] 
            (2) to node (l)[auto,labelsize] {$\mu_1$} 
            (3); 
\end{tikzpicture}
\]
\end{itemize}
We remark that this monoidal category arises as a restriction of Burroni's 
bicategory of $\monoid{S}$-spans~\cite{Burroni_T_cats}.

\begin{definition}
\label{def:S_operad}
Let $\cat{C}$ be a large category with all finite limits 
and $\monoid{S}=(S,\eta,\mu)$ a cartesian monad on $\cat{C}$.
\begin{enumerate}
\item An \defemph{$\monoid{S}$-operad} is a monoid object in the monoidal 
category
$(\cat{C}/S1,I,\otimes)$ introduced above; see Definition~\ref{def:monoid_obj}
for the definition of monoid object in a monoidal category.
\item A \defemph{morphism of $\monoid{S}$-operads} is a homomorphism 
of monoid objects; see Definition~\ref{def:monoid_obj} again.
\end{enumerate}
We denote the category of $\monoid{S}$-operads by $\Operad{\monoid{S}}$;
by definition it is identical to the category $\Mon{\cat{C}/S1}$ of monoid
objects in $\cat{C}/S1$.
\end{definition}
We normally write an $\monoid{S}$-operad as $\monoid{T}=((\arity{T}\colon 
T\longrightarrow S1),e,m)$, where $e\colon 1\longrightarrow T$ and $m\colon 
(T,\arity{T})\ast T\longrightarrow T$ are morphisms in $\cat{C}$.
The reason for the notation $\ar{T}$ is that often the object $S1$ in $\cat{C}$
may be interpreted as the object of arities, $T$ as the object of all
(derived) operations of the algebraic theory expressed by $\monoid{T}$, 
and $\ar{T}$ as assigning the arity to each operation.
Sometimes we also write an $\monoid{S}$-operad simply as $\monoid{T}=(T,e,m)$,
and in this case $T$ refers to an object of $\cat{C}/S1$ (rather than 
$\cat{C}$).

\begin{example}[{\cite[Example~4.2.7]{Leinster_book}}]
\label{ex:non_symm_op_as_gen_op}
If we let $\cat{C}=\Set$ and $\monoid{S}$ be the free monoid monad
(which is cartesian), 
then $\monoid{S}$-operads are equivalent to {non-symmetric operads}.
The arities are the natural numbers: $S1\cong\mathbb{N}$.

In more detail, the data of an $\monoid{S}$-operad in this case
consist of a set $T$, and functions 
$\arity{T}\colon T\longrightarrow \N$, $e\colon 1\longrightarrow T$
and $m\colon (T,\arity{T})\ast T\longrightarrow T$. 
Unravelling this, we obtain a graded set $(T_n)_{n\in\N}$, 
an element $\id{}\in T_1$ and 
a family of functions $(m_{k,n_1,\dots n_k}\colon T_k\times T_{n_1}\times\dots
\times T_{n_k}
\longrightarrow 
T_{n_1+\dots +n_k})_{k,n_1,\dots,n_k\in\N}$,
agreeing with Definition~\ref{def:non_symm_op}.
Note that indeed $T_n$ may be interpreted as the set of all (derived) 
operations of arity $n$.
\end{example}

\begin{example}
If we set $\cat{C}=\enGph{n}$, the category of $n$-graphs for 
$n\in\NN\cup\{\omega\}$ and $\monoid{S}$ be the free strict $n$-category monad, 
then 
$\monoid{S}$-operads are called \emph{$n$-globular operads};
see \cite[Chapter~8]{Leinster_book} for illustrations.
These generalised operads have been used to give a definition of 
weak $n$-categories, and they (and their generalisations) will
play a central role in the second part of this thesis.
\end{example}

Next we define models of an $\monoid{S}$-operad.
For this, we first show that the monoidal category $\cat{C}/S1$ has a canonical
pseudo action on $\cat{C}$.
Pseudo actions of monoidal categories are a category version of actions of 
monoids.
The precise definition of pseudo action is a variant of 
Definition~\ref{def:oplax_action}, obtained by replacing the term ``natural 
transformation'' there by ``natural isomorphism''.
The functor 
\[
\ast\colon(\cat{C}/S1)\times\cat{C}\longrightarrow\cat{C}
\]
defining this pseudo action is given by mapping $((Q,q),P)\in 
(\cat{C}/S1)\times\cat{C}$ to
$(Q,q)\ast P\in\cat{C}$ defined as the pullback (\ref{eqn:pb_over_S1}).
\begin{definition}
\label{def:S-operad_model}
Let $\cat{C}$ be a large category with finite limits, $\monoid{S}=(S,\eta,\mu)$
a cartesian monad on $\cat{C}$, and $\monoid{T}=(T,e,m)$ an 
$\monoid{S}$-operad.
\begin{enumerate}
\item A \defemph{model of $\monoid{T}$} consists of:
\begin{itemize}
\item an object $C\in\cat{C}$;
\item a morphism $\gamma\colon T\ast C\longrightarrow C$ in $\cat{C}$,
\end{itemize}
making the following diagrams commute:
\begin{equation*}
\begin{tikzpicture}[baseline=-\the\dimexpr\fontdimen22\textfont2\relax ]
      \node (TL) at (0,2)  {$I\ast C$};
      \node (TR) at (2,2)  {$T\ast C$};
      \node (BR) at (2,0) {${C}$};
      \draw[->]  (TL) to node[auto,labelsize] {$e\ast C$} (TR);
      \draw[->]  (TR) to node[auto,labelsize] {$\gamma$} (BR);
      \draw[->] (TL) to node[auto,swap,labelsize](B) {$\cong$} (BR);
\end{tikzpicture} 
\qquad
\begin{tikzpicture}[baseline=-\the\dimexpr\fontdimen22\textfont2\relax ]
      \node (TL) at (0,2)  {$(T\otimes T)\ast C$};
      \node (TR) at (5,2)  {$T\ast C$};
      \node (BL) at (0,0) {$T\ast (T\ast C)$};
      \node (BM) at (3,0) {$T\ast C$};
      \node (BR) at (5,0) {${C},$};
      \draw[->]  (TL) to node[auto,swap,labelsize] {$\cong$} (BL);
      \draw[->]  (TL) to node[auto,labelsize] {$m\ast C$} (TR);
      \draw[->]  (BL) to node[auto,labelsize] {$T\ast \gamma$} (BM);
      \draw[->]  (BM) to node[auto,labelsize] {$\gamma$} (BR);
      \draw[->]  (TR) to node[auto,labelsize](B) {$\gamma$} (BR);
\end{tikzpicture} 
\end{equation*}
where the arrows labelled with $\cong$ refer to the isomorphisms 
provided by the pseudo action.
\item Let $(C,\gamma)$ and $(C',\gamma')$ be models of $\monoid{T}$. A 
\defemph{homomorphism from $(C,\gamma)$ to 
$(C',\gamma')$}
is a morphism $f\colon C\longrightarrow C'$ in $\cat{C}$ making the 
following diagram commute:
\[
\begin{tikzpicture}[baseline=-\the\dimexpr\fontdimen22\textfont2\relax ]
      \node (TL) at (0,2)  {$T\ast C$};
      \node (TR) at (3,2)  {$T\ast C'$};
      \node (BL) at (0,0) {$C$};
      \node (BR) at (3,0) {$C'.$};
      \draw[->]  (TL) to node[auto,swap,labelsize] {$\gamma$} (BL);
      \draw[->]  (TL) to node[auto,labelsize] {$T\ast f$} (TR);
      \draw[->]  (BL) to node[auto,labelsize] {${f}$} (BR);
      \draw[->]  (TR) to node[auto,labelsize](B) {$\gamma'$} (BR);
\end{tikzpicture} \qedhere
\] 
\end{enumerate}
\end{definition}

We remark that in the setting of Example~\ref{ex:non_symm_op_as_gen_op},
the models defined by the above definition coincide with the set models of  
Definition~\ref{def:ns_operad_model}.

\section{Other examples}
\label{sec:other_ex}
As our principal aim in the first part of this thesis 
is to develop a formal framework and not to study a variety of concrete 
examples of notions of algebraic theory in detail, we briefly mention 
other examples of notions of algebraic theory and conclude the chapter.

First, there are PROPs and PROs \cite{MacLane_cat_alg},
which are the ``many-in, many-out'' versions of symmetric operads and 
non-symmetric operads respectively. 
In contrast to operations in a symmetric or non-symmetric operad,
which we have drawn in string diagrams as 
\begin{equation*}
\begin{tikzpicture}[baseline=-\the\dimexpr\fontdimen22\textfont2\relax ]
      \coordinate (TL) at (0,1);
      \coordinate (BL) at (0,-1);
      \coordinate (R) at (1.5,0);
      \path[draw,string] (TL)--(BL)--(R)--cycle;
      \node at (0.5,0) {$\theta$};
      \path[draw,string] (TL)+(0,-0.3)-- +(-0.6,-0.3);
      \node at (-0.3,0) {$\rvdots$};
      \path[draw,string] (BL)+(0,0.3)-- +(-0.6,0.3);
      \draw [decorate,decoration={brace,amplitude=5pt,mirror}]
      (TL)++(-0.8,-0.1) -- +(0,-1.8) node [midway,xshift=-0.5cm,labelsize] 
      {$n$};
      \path[draw,string] (R)-- +(0.6,0);
\end{tikzpicture},
\end{equation*}
operations in a PROP or PRO may be drawn as 
\begin{equation*}
\begin{tikzpicture}[baseline=-\the\dimexpr\fontdimen22\textfont2\relax ]
      \coordinate (TL) at (0,1);
      \coordinate (BL) at (0,-1);
      \coordinate (R) at (1.5,0);
      \coordinate (TR) at (1.5,1);
      \coordinate (BR) at (1.5,-1);
      \path[draw,string] (TL)--(BL)--(BR)--(TR)--cycle;
      \node at (0.75,0) {$\theta$};
      \path[draw,string] (TL)+(0,-0.3)-- +(-0.6,-0.3);
      \node at (-0.3,0) {$\rvdots$};
      \path[draw,string] (BL)+(0,0.3)-- +(-0.6,0.3);
      \draw [decorate,decoration={brace,amplitude=5pt,mirror}]
      (TL)++(-0.8,-0.1) -- +(0,-1.8) node [midway,xshift=-0.5cm,labelsize] 
      {$n$};
      \path[draw,string] (TR)+(0,-0.3)-- +(0.6,-0.3);
      \node at (1.8,0) {$\rvdots$};
      \path[draw,string] (BR)+(0,0.3)-- +(0.6,0.3);
      \draw [decorate,decoration={brace,amplitude=5pt}]
      (TR)++(0.8,-0.1) -- +(0,-1.8) node [midway,xshift=0.5cm,labelsize] 
      {$m$};
\end{tikzpicture}.
\end{equation*}

Another class of examples 
would be the multi-sorted versions of clones, symmetric and 
non-symmetric operads, known as \emph{multicategories}.
They are included in the work by Curien~\cite{Curien_operad} and 
Hyland~\cite{Hyland_elts}.

Finally we mention enriched algebraic theories, such as 
enriched Lawvere theories 
\cite{Power_enriched_Lawvere}, the enriched versions of symmetric and 
non-symmetric operads \cite{May_loop,Kelly_operad},
and enriched monads \cite{Dubuc_Kan}.

We expect that these examples can also be incorporated into our framework
without much difficulty (for the enriched algebraic theories, we would have to 
develop the enriched version of our framework), but will not treat them further 
in this thesis.

\chapter{The framework}
\label{chap_framework}


In the previous chapter we have seen several examples of notions of algebraic 
theory,
in which the corresponding types of {algebraic theories} are called
under various names, such as \emph{clones}, \emph{non-symmetric operads} and 
\emph{monads (on $\cat{C}$)}.
Being a background theory for a type of algebraic theories,
each notion of algebraic theory has definitions of algebraic theory, 
of model of an algebraic theory
and of homomorphism between models.
Nevertheless, different notions of algebraic theory take 
different approaches to define these concepts, and the 
resulting definitions (say, of algebraic theory) can look quite remote.

The aim of this chapter is to provide a 
\emph{unified framework for notions of algebraic theory}
which includes all of the notions
of algebraic theory reviewed in the main body of the previous chapter
(Sections~\ref{sec:clone}--\ref{sec:S-operad}) 
as instances.
To the best of our knowledge, this is the first framework for notions of 
algebraic theory attaining such generality. 
Due to the diversity of notions of algebraic theory
we aim to capture, we take a very simple approach.
The basic idea is that we identify \emph{notions of algebraic theory}
with (large) \emph{monoidal categories},
and \emph{algebraic theories} with \emph{monoid objects} therein.
We also give definitions of models of an algebraic theory 
and of their homomorphisms
(relative to a notion of model).
Further consequences of this framework 
will be investigated in the subsequent chapters.

In Section~\ref{sec:framework_prelude}, we 
motivate our framework by reformulating the notions of algebraic theories
reviewed in the previous chapter using the structure of monoidal category.
We expect that the contents of this section are mostly known to the 
specialists, and try to refer to related papers that
have come to our attention.
The main body of our framework, developed from 
Section~\ref{sec:framework_basic} on, is our original contribution.

\section[Prelude]{Prelude: monoidal categorical perspectives on notions of 
algebraic theory}
\label{sec:framework_prelude}
In this section we motivate our framework by illuminating the 
key role that certain monoidal categories play in 
both syntax and semantics of various notions of algebraic theory.

\subsection{Algebraic theories as monoid objects}
\label{subsec:alg_thy_as_monoids}
We begin with the observation that algebraic theories in each of the notions
of algebraic theory reviewed in Chapter~\ref{chap_notions_ex}
may be understood as monoid objects in a certain monoidal category.

See \cite[Chapter~VII]{MacLane_CWM} or \cite[Section~1.1]{Kelly:enriched}
for the definition of monoidal category.
We normally write the unit object of a monoidal category as $I$
and the monoidal product as $\otimes$.
We will denote the coherent structural isomorphisms (obtained from 
associativity, and left and right unit isomorphisms)
by arrows labelled with $\cong$ (see below).

\begin{definition}
\label{def:monoid_obj}
Let $\cat{M}=(\cat{M},I,\otimes)$ be a large monoidal category.
\begin{enumerate}
\item 
A \defemph{monoid object in $\cat{M}$} (or simply a \defemph{monoid in 
$\cat{M}$}) is a triple $\monoid{T}=(T,e,m)$ 
consisting of
an object $T$ in $\cat{M}$, and morphisms $e\colon I\longrightarrow T$
and $m\colon T\otimes T\longrightarrow T$ in $\cat{M}$,
such that the following diagrams commute:
\begin{equation*}
\begin{tikzpicture}[baseline=-\the\dimexpr\fontdimen22\textfont2\relax ]
      \node (TL) at (0,1)  {$I\otimes T$};
      \node (TR) at (2,1)  {$T\otimes T$};
      \node (BR) at (2,-1) {$T$};
      \draw[->] (TL) to node[auto,labelsize](T) {$e\otimes T$} (TR);
      \draw[->] (TR) to node[auto,labelsize](R) {$m$} (BR);
      \draw[->]  (TL) to node[auto,labelsize,swap] (L) {$\cong$} (BR);
\end{tikzpicture} 
\qquad
\begin{tikzpicture}[baseline=-\the\dimexpr\fontdimen22\textfont2\relax ]
      \node (TL) at (0,1)  {$T\otimes I$};
      \node (TR) at (2,1)  {$T\otimes T$};
      \node (BR) at (2,-1) {$T$};
      \draw[->] (TL) to node[auto,labelsize](T) {$T\otimes e$} (TR);
      \draw[->] (TR) to node[auto,labelsize](R) {$m$} (BR);
      \draw[->]  (TL) to node[auto,labelsize,swap] (L) {$\cong$} (BR);
\end{tikzpicture} 
\end{equation*}
\begin{equation*}
\begin{tikzpicture}[baseline=-\the\dimexpr\fontdimen22\textfont2\relax ]
      \node (TL) at (0,2)  {$(T\otimes T)\otimes T$};
      \node (TR) at (5,2)  {$T\otimes T$};
      \node (BL) at (0,0) {$T\otimes (T\otimes T)$};
      \node (BM) at (3,0) {$T\otimes T$};
      \node (BR) at (5,0) {$T$};
      \draw[->] (TL) to node[auto,labelsize](T) {$m\otimes T$} (TR);
      \draw[->]  (TR) to node[auto,labelsize] {$m$} (BR);
      \draw[->]  (TL) to node[auto,swap,labelsize] {$\cong$} (BL);
      \draw[->]  (BL) to node[auto,labelsize] {$T\otimes m$} (BM);
      \draw[->] (BM) to node[auto,labelsize](B) {$m$} (BR);
\end{tikzpicture} 
\end{equation*}
(Recall that the arrows labelled with $\cong$ are the 
suitable instances of structural isomorphisms 
of $\cat{M}$.)
\item Let $\monoid{T}=(T,e,m)$ and $\monoid{T'}=(T',e',m')$ be monoid objects in
$\cat{M}$. 
A \defemph{homomorphism} from $\monoid{T}$ to $\monoid{T'}$ is a morphism
$f\colon T\longrightarrow T'$ in $\cat{M}$ such that the following diagrams
commute:
\begin{equation*}
\begin{tikzpicture}[baseline=-\the\dimexpr\fontdimen22\textfont2\relax ]
      \node (TL) at (1,2)  {$I$};
      \node (BL) at (0,0) {$T$};
      \node (BR) at (2,0) {$T'$};
      \draw[->]  (TL) to node[auto,swap,labelsize] {$e$} (BL);
      \draw[->]  (BL) to node[auto,labelsize] {$f$} (BR);
      \draw[->] (TL) to node[auto,labelsize](B) {$e'$} (BR);
\end{tikzpicture} 
\qquad
\begin{tikzpicture}[baseline=-\the\dimexpr\fontdimen22\textfont2\relax ]
      \node (TL) at (0,2)  {$T\otimes T$};
      \node (TR) at (3,2)  {$T'\otimes T'$};
      \node (BL) at (0,0) {$T$};
      \node (BR) at (3,0) {$T'$};
      \draw[->]  (TL) to node[auto,swap,labelsize] {$m$} (BL);
      \draw[->]  (TL) to node[auto,labelsize] {$f\otimes f$} (TR);
      \draw[->]  (BL) to node[auto,labelsize] {$f$} (BR);
      \draw[->]  (TR) to node[auto,labelsize](B) {$m'$} (BR);
\end{tikzpicture} 
\end{equation*}
\end{enumerate}
The category of all monoid objects in $\cat{M}$ and homomorphisms 
is denoted by $\Mon{\cat{M}}$.
\end{definition}

\subsubsection{Clones as monoid objects in $[\F,\Set]$}
Clones (Definition~\ref{def:clone}) may be identified with monoid objects
in a certain monoidal category.
We first describe the underlying category.

\begin{definition}
Let $\F$ be the category defined as follows:
\begin{itemize}
\item The set of objects is $\ob{\F}=\{\,[n]\mid n\in\NN\,\}$,
where for each natural number $n\in\NN$, $[n]$ is 
defined to be the $n$-element set $\{1,\dots,n\}$.
\item A morphism is any function between these sets.\qedhere
\end{itemize}
\end{definition}
So the category $\F$ is a skeleton of the category $\FinSet$
of all (small) finite sets and functions.
The underlying category of the monoidal category for clones 
is the category $[\F,\Set]$ of 
all functors from $\F$ to $\Set$ and natural transformations.
For $X\in[\F,\Set]$ and $[n]\in\F$,
we write the set $X([n])$ as $X_n$.

We view an object $X\in[\F,\Set]$ as a functional signature,
just as we viewed a graded set as a functional signature in 
Section~\ref{sec:univ_alg}. 
However, objects in $[\F,\Set]$ have richer structure than
graded sets, namely the action of morphisms in $\F$.
We can understand this additional structure as certain
basic operations on function symbols
in the signature.
For instance, given a morphism $u\colon [3]\longrightarrow [4]$ in $\F$
with $u(1)=4, u(2)=2, u(3)=4$ and 
an element $\theta\in X_3$, 
the element $X_u(\theta)\in X_4$ may be drawn as
\[
\begin{tikzpicture}[baseline=-\the\dimexpr\fontdimen22\textfont2\relax ]
      \coordinate (TL) at (0,1.2);
      \coordinate (BL) at (0,-1.2);
      \coordinate (R) at (1.5,0);
      \path[draw,string] (TL)--(BL)--(R)--cycle;
      \node at (0.65,0) {$X_u(\theta)$};
            \path[draw,string] (R)-- +(0.6,0);
      \draw [string] (0,0.9) -- (-0.6,0.9);
      \draw [string] (0,0.3) -- (-0.6,0.3);
      \draw [string] (0,-0.3) -- (-0.6,-0.3);
      \draw [string] (0,-0.9) -- (-0.6,-0.9);
\end{tikzpicture}
\quad=\quad
\begin{tikzpicture}[baseline=-\the\dimexpr\fontdimen22\textfont2\relax ]
      \coordinate (TL) at (0,0.9);
      \coordinate (BL) at (0,-0.9);
      \coordinate (R) at (1.5,0);
      \path[draw,string] (TL)--(BL)--(R)--cycle;
      \node at (0.5,0) {$\theta$};
            \path[draw,string] (R)-- +(0.6,0);
      \node [circle,draw,inner sep=0.15em,fill] at (-1.3,-0.9) {};
      \node [circle,draw,inner sep=0.15em,fill] at (-1.3,0.9) {};
      \node [circle,draw,inner sep=0.15em,fill] at (-1.3,-0.3) {};
      \draw [string] (-2,0.3) -- (-1.4,0.3) ..controls (-1,0.3) and  (-0.9,0) 
      .. (-0.6,0)--(0,0);
      \draw [string] (-2,-0.9) -- (-1.4,-0.9) ..controls (-1,-0.9) and  
      (-0.9,0.6) .. (-0.6,0.6)--(0,0.6);
      \draw [string] (-2,-0.9) -- (-1.4,-0.9) ..controls (-1,-0.9) and  
      (-0.9,-0.6) .. (-0.6,-0.6)--(0,-0.6);      
      \draw [string] (-2,0.9) -- (-1.3,0.9);
      \draw [string] (-2,-0.3) -- (-1.3,-0.3);
\end{tikzpicture}
\]
(we are using the string diagram notation introduced in 
Section~\ref{sec:univ_alg}).
In the symbolic notation, 
\[
(X_u(\theta))(x_1,x_2,x_3,x_4)=\theta(x_4,x_2,x_4).
\]

The monoidal structure on the category $[\F,\Set]$
we shall consider is known as the \emph{substitution monoidal 
structure}.
\begin{definition}[\cite{Kelly_Power_coeq,FPT99}]
The \defemph{substitution monoidal structure} on $[\F,\Set]$ is defined as 
follows:
\begin{itemize}
\item The unit object is $I=\F([1],-)\colon \F\longrightarrow\Set$.
\item Given $X,Y\colon \F\longrightarrow\Set$, their monoidal product 
$Y\otimes X\colon \F\longrightarrow\Set$ maps $[n]\in\F$ to 
\begin{equation}
\label{eqn:subst_tensor_F}
(Y\otimes X)_n=\int^{[k]\in\F} Y_k\times (X_n)^k.
\end{equation}
\end{itemize}
\end{definition}
The integral sign with a superscript in 
(\ref{eqn:subst_tensor_F}) stands for a \emph{coend}
(dually, we will denote an \emph{end} by the integral sign with a subscript);
see \cite[Section~IX.~6]{MacLane_CWM}.
By definition, this coend
is a suitable quotient of the set 
\[
\coprod_{[k]\in\F} Y_k\times (X_n)^k,
\]
whose element we may draw as (\ref{eqn:clone_composition}),
assuming $\phi\in Y_k$ and $\theta_1,\dots,\theta_k\in X_n$;
the idea is that $\otimes$ performs a ``sequential composition''
of signatures.
Note that symbolically this indeed amounts to a (simultaneous) 
\emph{substitution}.

\medskip

We claim that clones are essentially the same as 
monoids in $[\F,\Set]$
(with respect to the substitution monoidal structure). 
A monoid in $[\F,\Set]$ consists of
\begin{itemize}
\item a functor $T\colon \F\longrightarrow \Set$;
\item a natural transformation $e\colon I\longrightarrow T$;
\item a natural transformation $m\colon T\otimes T\longrightarrow T$
\end{itemize}
satisfying the monoid axioms.
By the Yoneda lemma, $e$ corresponds to an element $\overline{e}\in T_1$,
and by the universality of coends, $m$ corresponds to
a natural transformation\footnote{In more detail, the relevant naturality
here may also be phrased as 
``natural in $[n]\in\F$ and \emph{extranatural} in $[k]\in\F$''; 
see \cite[Section~IX.~4]{MacLane_CWM}.
Following \cite{Kelly:enriched}, in this thesis we shall not distinguish 
(terminologically)
extranaturality from naturality, using the latter term for both.}
\[
(\overline{m}_{n,k}\colon T_k\times (T_n)^k\longrightarrow T_n)_{n,k\in \NN}.
\]
Hence given a monoid $(T,e,m)$ in $[\F,\Set]$, we can construct a 
clone with the underlying graded set $(T_n)_{n\in\NN}$ by setting 
$p^{(n)}_i=T_\name{i}(\overline{e})$ (here, 
$\name{i}\colon[1]\longrightarrow[n]$
is the morphism in $\F$ defined as $\name{i}(1)=i$) and 
$\circ^{(n)}_k=\overline{m}_{n,k}$.
Conversely, given a clone $(T,p,\circ)$,
we can construct a monoid in $[\F,\Set]$ as follows.
First we extend the graded set $T$ 
to a functor $T\colon \F\longrightarrow\Set$
by setting, for any $u\colon [m]\longrightarrow [n]$ in $\F$,
\[
T_u (\theta)= \theta \circ^{(n)}_m (p^{(m)}_{u(1)},\dots, p^{(m)}_{u(m)}).
\]
Then we may set $\overline{e}=p^{(1)}_1$ and $\overline{m}_{n,k}=\circ^{(n)}_k$.

\begin{proposition}[cf.~\cite{Curien_operad,Hyland_elts}]
The above constructions establish an isomorphism of categories 
between the category of clones and $\Mon{[\F,\Set]}$.
\end{proposition}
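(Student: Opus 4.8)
The plan is to show that the two assignments described above are mutually inverse bijections on objects, that each respects the appropriate notion of morphism, and that together they assemble into an isomorphism of categories; the heart of the matter is the translation between the coend-theoretic description of the monoid multiplication and the clone composition. First I would pin down the object-level data. Given a monoid $(T,e,m)$ in $[\F,\Set]$, the Yoneda lemma identifies $e\colon I\longrightarrow T$ with the element $\bar e\in T_1$, and the universal property of the coend defining $(T\otimes T)_n=\int^{[k]}T_k\times(T_n)^k$ identifies $m$ with a family $\bar m_{n,k}\colon T_k\times(T_n)^k\longrightarrow T_n$, natural in $[n]$ and extranatural in $[k]$. Setting $p^{(n)}_i=T_{\name{i}}(\bar e)$ and $\circ^{(n)}_k=\bar m_{n,k}$ then produces candidate clone data on the graded set $(T_n)_{n\in\NN}$, and I would verify that the monoid unit and associativity laws, together with the extranaturality of $\bar m$ in $[k]$, translate into the clone axioms (CA1)--(CA3).

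The reverse construction is where more care is needed. Starting from a clone $(T,p,\circ)$, I would first check that $T_u(\theta)=\theta\circ^{(n)}_m(p^{(n)}_{u(1)},\dots,p^{(n)}_{u(m)})$ is functorial in $u\colon[m]\longrightarrow[n]$: preservation of identities is exactly (CA2), while $T_{v\circ u}=T_v\circ T_u$ follows by one application of (CA3) to reassociate, followed by (CA1) to collapse each selection $p^{(l)}_{v(i)}$. I would then verify that the family $\circ^{(n)}_k$ descends to a well-defined natural transformation $m\colon T\otimes T\longrightarrow T$ out of the coend; concretely, for $f\colon[k]\longrightarrow[k']$ the required extranaturality equation $(T_f\phi)\circ^{(n)}_{k'}(\theta_1,\dots,\theta_{k'})=\phi\circ^{(n)}_k(\theta_{f(1)},\dots,\theta_{f(k)})$ unfolds, using the definition of $T_f$, into an instance of (CA3) whose inner terms are simplified by (CA1). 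That the resulting $(T,e,m)$ is a monoid is then a repackaging of the clone axioms.

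To see the two constructions are mutually inverse on objects, the only nontrivial check is the round trip starting from a monoid: one must confirm that the $\F$-action reconstructed by the formula for $T_u$ agrees with the original functor $T$. This is precisely the content of the right unit law. Chasing $(\phi;\nu_1,\dots,\nu_k)\in T_k\times(I_n)^k$ through $m\circ(T\otimes e)$, using the identification $(I_n)^k\cong\F([k],[n])$ and the co-Yoneda isomorphism $\int^{[k]}T_k\times\F([k],[n])\cong T_n$, $(\phi,g)\mapsto T_g(\phi)$, shows that the right unitor sends $[\phi;g]$ to $T_g(\phi)$; hence the unit law reads $T_g(\phi)=\phi\circ^{(n)}_k(p^{(n)}_{g(1)},\dots,p^{(n)}_{g(k)})$ for every $g\colon[k]\longrightarrow[n]$ --- exactly the reconstruction formula. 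I expect this identification of the monoid multiplication-with-units against the clone operations, rather than any single axiom verification, to be the main obstacle, since it is what forces the extranatural (coend) structure to be encoded by the projections and composition alone.

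Finally, for the morphisms I would observe that a morphism of graded sets $h\colon T\longrightarrow T'$ preserving $p$ and $\circ$ is automatically natural for the $\F$-actions: naturality at $u\colon[m]\longrightarrow[n]$ follows by expanding $T_u$ and $T'_u$ and invoking preservation of the projections $p^{(n)}_{u(i)}$ and of $\circ$. Conversely, a natural transformation commuting with $e$ and $m$ preserves the projections, since $p^{(n)}_i=T_{\name{i}}(\bar e)$, as well as the composition. Hence clone homomorphisms and monoid homomorphisms coincide as underlying families of functions; the bijection on objects is then compatible with identities and composition on the nose, and the two functors so obtained are mutually inverse, giving the asserted isomorphism of categories.
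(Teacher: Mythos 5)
Your proposal is correct and takes the same route as the paper, which states this proposition without proof (citing Curien and Hyland) immediately after sketching exactly the two constructions you verify. Your identification of the right unit law with the reconstruction formula $T_g(\phi)=\phi\circ^{(n)}_k(p^{(n)}_{g(1)},\dots,p^{(n)}_{g(k)})$ is indeed the crux of the round-trip check, and the remaining verifications you outline (extranaturality via (CA3) and (CA1), functoriality via (CA2) and (CA3), and the coincidence of clone homomorphisms with monoid morphisms) all go through as stated.
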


\subsubsection{Symmetric operads as monoid objects in $[\Pcat,\Set]$}
Symmetric operads (Definition~\ref{def:symm_op}) can be similarly
seen as monoids.
The main difference from the case of clones is that,
instead of the category $\F$, we use the following category.

\begin{definition}
Let $\Pcat$ be the category defined as follows:
\begin{itemize}
\item The set of objects is the same as $\F$: $\ob{\Pcat}=\{\,[n]\mid 
n\in\NN\,\}$ where $[n]=\{1,\dots,n\}$.
\item A morphism is any \emph{bijective} function.\qedhere
\end{itemize}
\end{definition}
So $\Pcat$ is the subcategory of $\F$ consisting of all isomorphisms.
For any $[n]\in\Pcat$, the monoid $\Pcat([n],[n])$ of endomorphisms on $[n]$
is isomorphic to the symmetric group $\mathfrak{S}_n$.

Symmetric operads are
monoids in a monoidal category whose underlying category is 
the functor category $[\Pcat,\Set]$.
We again interpret $[\Pcat,\Set]$ as a category of functional signatures, but 
this time a signature 
$X\in[\Pcat,\Set]$
is only equipped with action of morphisms in $\Pcat$.
In terms of string diagrams, this amounts to restricting 
the class of diagrams by 
prohibiting the use of
\begin{equation*}
\begin{tikzpicture}[baseline=-\the\dimexpr\fontdimen22\textfont2\relax ]
      \draw [string] (0,0) -- (0.6,0) ..controls (0.9,0) and  (1.2,-0.3) 
      .. (1.5,-0.3)--(2.1,-0.3);
      \draw [string] (0,0) -- (0.6,0) ..controls (0.9,0) and  
      (1.2,0.3) .. (1.5,0.3)--(2.1,0.3);
      \node [circle,draw,inner sep=0.15em,fill] at (0.7,0) {};
\end{tikzpicture}
\quad\text{and}\quad
\begin{tikzpicture}[baseline=-\the\dimexpr\fontdimen22\textfont2\relax ]
      \draw [string] (0,0) -- (0.7,0);
      \node [circle,draw,inner sep=0.15em,fill] at (0.7,0) {};
\end{tikzpicture}\ ,
\end{equation*}
but not 
\[
\begin{tikzpicture}[baseline=-\the\dimexpr\fontdimen22\textfont2\relax ]
      \draw [string] (0,0.3) -- (0.6,0.3) ..controls (0.9,0.3) and  (1.2,-0.3) 
      .. (1.5,-0.3)--(2.1,-0.3);
      \draw [string] (0,-0.3) -- (0.6,-0.3) ..controls (0.9,-0.3) and  
      (1.2,0.3) .. (1.5,0.3)--(2.1,0.3);
\end{tikzpicture}\ ;
\]
in terms of symbolic representations, we are
restricting $\Sigma$-terms to regular $\Sigma$-terms.

\medskip

The monoidal structure on $[\Pcat,\Set]$ we shall use is 
also called the substitution monoidal structure.
\begin{definition}[\cite{Kelly_operad}]
\label{def:subst_monoidal_P}
The \defemph{substitution monoidal structure} on $[\Pcat,\Set]$ is defined as 
follows:
\begin{itemize}
\item The unit object is $I=\Pcat([1],-)\colon\Pcat\longrightarrow\Set$.
\item Given $X,Y\colon\Pcat\longrightarrow\Set$, their monoidal product 
$Y\otimes X\colon \Pcat\longrightarrow\Set$ maps $[n]\in\Pcat$ to
\begin{equation*}
(Y\otimes X)_n
=\int^{[k]\in\Pcat} 
Y_k\times 
(X^{\circledast k})_n,
\end{equation*}
where 
\[
(X^{\circledast k})_n=\int^{[n_1],\dots,[n_k]\in\Pcat} 
\Pcat([n_1+\dots+n_k],[n])\times 
X_{n_1}\times\dots\times X_{n_k}.\qedhere
\]
\end{itemize}
\end{definition}

\begin{proposition}[cf.~\cite{Curien_operad,Hyland_elts}]
The category of symmetric operads is isomorphic to $\Mon{[\Pcat,\Set]}$.
\end{proposition}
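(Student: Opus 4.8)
The plan is to follow exactly the template already laid out for clones and $\Mon{[\F,\Set]}$, replacing the substitution monoidal structure on $[\F,\Set]$ by the one on $[\Pcat,\Set]$ from Definition~\ref{def:subst_monoidal_P}. The first step is to unwind what a mere \emph{object} $T\in[\Pcat,\Set]$ is. Since $\Pcat$ is a groupoid whose connected components are the one-object groupoids on the symmetric groups $\mathfrak{S}_n$ (recall $\Pcat([n],[n])\cong\mathfrak{S}_n$, and there are no morphisms $[n]\to[m]$ for $n\neq m$), a functor $T\colon\Pcat\longrightarrow\Set$ is precisely a graded set $(T_n)_{n\in\NN}$ together with, for each $n$, a left $\mathfrak{S}_n$-action on $T_n$. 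This already accounts for the data (SD1) and (SD4) and for the action axiom (SA4) of Definition~\ref{def:symm_op} in a single stroke. Next, by the Yoneda lemma applied to $I=\Pcat([1],-)$, a natural transformation $e\colon I\longrightarrow T$ is the same as an element $\overline{e}\in T_1$, which supplies the element $\id{}\in T_1$ of (SD2); one checks the $\mathfrak{S}_1$-equivariance is vacuous since $\mathfrak{S}_1$ is trivial.

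The heart of the argument is the multiplication. Using the universal property of the coend and the explicit formula for $(T\otimes T)_n=\int^{[k]\in\Pcat}T_k\times(T^{\circledast k})_n$ from Definition~\ref{def:subst_monoidal_P}, I would show that a natural transformation $m\colon T\otimes T\longrightarrow T$ corresponds to a family of functions $\circ\colon T_k\times T_{n_1}\times\dots\times T_{n_k}\longrightarrow T_{n_1+\dots+n_k}$, i.e.\ the data (SD3), subject to precisely two equivariance conditions arising from the two layers of coend. The inner coend $(T^{\circledast k})_n=\int^{[n_1],\dots,[n_k]}\Pcat([n_1+\dots+n_k],[n])\times T_{n_1}\times\dots\times T_{n_k}$ coequalises the action of bijections $u_i$ on the individual $T_{n_i}$ against their induced action on the index $[n_1+\dots+n_k]$, which is the block-sum homomorphism $\oplus$; tracing this identification through yields exactly axiom (SA5). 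The outer coend over $[k]$ coequalises the action of $v\in\mathfrak{S}_k$ on the factor $T_k$ against the simultaneous permutation of the arguments $\theta_i$ and their arities, whose effect on $[n_1+\dots+n_k]$ is the block permutation $(-)_{n_1,\dots,n_k}$; this yields exactly axiom (SA6). It then remains to translate the monoid laws into the remaining operad axioms: the two unit triangles for $(T,e,m)$ become (SA1) and (SA2), and the associativity pentagon becomes (SA3). Finally, a homomorphism of monoids is a natural transformation $f\colon T\longrightarrow T'$ compatible with $e$ and $m$; naturality makes each $f_n\colon T_n\to T'_n$ equivariant, and compatibility with $e,m$ makes $f$ preserve $\id{}$ and $\circ$, so monoid homomorphisms coincide with symmetric operad homomorphisms, giving the isomorphism of categories.

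The main obstacle I anticipate is the bookkeeping in the double coend: one must verify that the two coend quotientings match the two equivariance axioms on the nose, and in the correct variances. Concretely, this means checking that the identification induced by a bijection $u_i\colon[n_i]\to[n_i']$ reproduces the occurrence of $u_1\oplus\dots\oplus u_k$ in (SA5), and that the identification induced by $v\colon[k]\to[k']$ reproduces $v_{n_1,\dots,n_k}$ together with the reindexing $\theta_{v^{-1}(i)}$ in (SA6); both rely on the explicit formulas for $\oplus$ and $(-)_{n_1,\dots,n_k}$ recorded just before Definition~\ref{def:symm_op}. Once these two correspondences are pinned down, the verification of (SA1)--(SA3) from the monoid axioms is a routine unwinding, entirely parallel to the clone case, and I would only sketch it.
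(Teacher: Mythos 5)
Your proposal is correct and follows exactly the route the paper indicates: as in the clone case, the unit is identified via the Yoneda lemma with an element of $T_1$, the multiplication via the universal property of the (double) coend with the composition maps of (SD3), and the two coend quotientings with the equivariance axioms (SA5) and (SA6), the monoid laws then giving (SA1)--(SA3). One cosmetic remark: in $\Pcat$ there are no morphisms $[n_i]\to[n_i']$ for $n_i\neq n_i'$, so the identifications you describe are induced by $u_i\in\mathfrak{S}_{n_i}$ and $v\in\mathfrak{S}_k$ (i.e.\ take $n_i'=n_i$ and $k'=k$); with that reading your argument is exactly the intended one.
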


\subsubsection{Non-symmetric operads as monoid objects in $[\Ncat,\Set]$}
For non-symmetric operads (Definition~\ref{def:non_symm_op}), 
we use the following category.

\begin{definition}
Let $\Ncat$ be the category defined as follows:
\begin{itemize}
\item The set of objects is the same as $\F$ and $\Pcat$.
\item There are only identity morphisms in $\Ncat$.\qedhere
\end{itemize}
\end{definition}
$\Ncat$ is the discrete category with the same objects as $\F$ and $\Pcat$.

We consider the functor category $[\Ncat,\Set]$,
which is nothing but the category of graded sets and their morphisms 
(Definition~\ref{def:graded_set}).

\begin{definition}
The \defemph{substitution monoidal structure} on $[\Ncat,\Set]$ is defined as 
follows:
\begin{itemize}
\item The unit object is $I=\Ncat([1],-)\colon \Ncat\longrightarrow \Set$.
\item Given $X,Y\colon \Ncat\longrightarrow\Set$, their monoidal product
$Y\otimes X\colon\Ncat\longrightarrow\Set$ maps $[n]\in\Ncat$ to 
\[
(Y\otimes X)_n=\coprod_{[k]\in\Ncat} 
Y_k\times\bigg(\coprod_{\substack{[n_1],\dots,[n_k]\in\Ncat\\n_1+\dots+n_k=n}}
X_{n_1}\times\dots\times X_{n_k}\bigg).\qedhere
\]
\end{itemize}
\end{definition}

\begin{proposition}[cf.~\cite{Curien_operad,Hyland_elts}]
The category of non-symmetric operads is isomorphic to $\Mon{[\Ncat,\Set]}$.
\end{proposition}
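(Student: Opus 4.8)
The plan is to follow exactly the pattern established earlier for clones as monoid objects in $[\F,\Set]$, only now everything is simpler because $\Ncat$ is discrete. First I would observe that, since the only morphisms in $\Ncat$ are identities, a functor $\Ncat\longrightarrow\Set$ is nothing but a graded set, and a natural transformation between two such is merely a family of functions subject to no naturality condition. Hence the underlying graded set of a monoid in $[\Ncat,\Set]$ and of a non-symmetric operad may be taken to be literally the same, and the two notions of homomorphism will coincide on the nose. Once the data and axioms are matched, this will yield an \emph{isomorphism} of categories rather than a mere equivalence.

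Next I would set up the dictionary between the remaining data. For the unit, the Yoneda lemma (or directly, computing $I_n=\Ncat([1],[n])$) shows that $I$ is the graded set with $I_1=\{\ast\}$ and $I_n=\emptyset$ otherwise, so a natural transformation $e\colon I\longrightarrow T$ is determined by the single element $\overline{e}=e_1(\ast)\in T_1$, which I would identify with the element $\id{}$ of (ND2). For the multiplication, the formula for $Y\otimes X$ in $[\Ncat,\Set]$ is an honest coproduct of products (no coend is needed here, precisely because $\Ncat$ carries no nontrivial morphisms), so by the universal property of coproducts a map $m_n\colon (T\otimes T)_n\longrightarrow T_n$ is the same as a family of maps $T_k\times T_{n_1}\times\dots\times T_{n_k}\longrightarrow T_{n_1+\dots+n_k}$, one for each $k\in\NN$ and each $(n_1,\dots,n_k)$ with $n_1+\dots+n_k=n$. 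This family is precisely the composition $\circ$ of (ND3).

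With the dictionary in place, it remains to verify that the monoid axioms of Definition~\ref{def:monoid_obj} translate into the operad axioms of Definition~\ref{def:non_symm_op}. The left and right unit triangles should unwind directly into (NA1) and (NA2) respectively, the structural isomorphisms $I\otimes T\cong T\cong T\otimes I$ being the evident reindexing bijections. The associativity square is where the real work lies: I would compute both $((T\otimes T)\otimes T)_n$ and $(T\otimes(T\otimes T))_n$ as iterated coproducts of products, obtaining in each case a family indexed by $l$, a tuple $(k_1,\dots,k_l)$, and arities $(n_{i,j})$, and then check that the monoid associativity equation becomes exactly the nested-composition law (NA3).

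I expect the main obstacle to be precisely this last piece of bookkeeping: one must write down the associator of the substitution monoidal structure explicitly, i.e. the canonical bijection reshuffling the two ways of iterating the coproduct-of-products, and confirm that it matches the grouping of the arities $n_{i,j}$ on the two sides of (NA3). Because the substitution product here is assembled from genuine coproducts and finite products in $\Set$, this associator is a concrete bijection, so the verification—while notationally heavy—is routine, and the proof concludes by remarking that the data and the axioms correspond bijectively and that morphisms on both sides are the same families of functions, whence the stated isomorphism of categories.
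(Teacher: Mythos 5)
Your proposal is correct and follows essentially the same route as the paper: the paper states this proposition without proof (deferring to Curien and Hyland), but the dictionary you set up — Yoneda for the unit, the universal property of the coproduct-of-products for the multiplication, and the translation of the monoid axioms into (NA1)–(NA3) — is exactly the argument the paper itself spells out in the analogous clone case, simplified here by the discreteness of $\Ncat$. Nothing is missing; the associativity bookkeeping you flag is indeed the only nontrivial verification and is routine.
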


\medskip

For unified studies of various substitution monoidal structures,
see \cite{Tanaka_Power_psd_dist,Fiore_et_al_rel_psdmnd}, as well as the 
aforementioned \cite{Curien_operad,Hyland_elts}.

\subsubsection{Monads on $\cat{C}$ as monoid objects in $[\cat{C},\cat{C}]$}
Monads on a large category $\cat{C}$ (Definition~\ref{def:monad}) are also 
monoid objects, this time rather immediately from the definition.
\begin{definition}
Let $\cat{C}$ be a large category.
Define the monoidal category 
$[\cat{C},\cat{C}]=([\cat{C},\cat{C}],\id{\cat{C}},\circ)$ of endofunctors on 
$\cat{C}$ as follows:
\begin{itemize}
\item The underlying category is the category $[\cat{C},\cat{C}]$ of all
functors $\cat{C}\longrightarrow\cat{C}$ and natural transformations.
\item The unit object is the identity functor $\id{C}$ on $\cat{C}$.
\item The monoidal product is given by composition of functors.\qedhere
\end{itemize}
\end{definition}
The category $\Mnd{\cat{C}}$ of monads on $\cat{C}$ is clearly identical to 
$\Mon{[\cat{C},\cat{C}]}$.

\subsubsection{$\monoid{S}$-operads as monoid objects in $\cat{C}/S1$}
Finally, we recall that 
generalised operads ($\monoid{S}$-operads for a cartesian monad 
$\monoid{S}$ on a large category $\cat{C}$ with finite limits; 
Definition~\ref{def:S_operad})
were introduced as monoid objects in the first place.

\subsection{Notions of model as enrichments}
\label{subsec:enrichment}
In this section and next, we shall rephrase definitions of \emph{model} of 
an algebraic theory via monoidal categorical structures.

We start with a discussion on \emph{notions of model}.
An important feature of several notions of algebraic 
theory---most notably clones, symmetric operads and non-symmetric 
operads---is that we may consider 
models of an algebraic theory in more than one category.
For example, it is known that 
models of a clone can be taken in any category with 
finite products~\cite{Lawvere_thesis} (or even with finite powers).
We may phrase this fact by saying that clones admit many {notions of 
model}, one for each category with finite products.

Informally, a \emph{notion of model} for a notion of algebraic theory 
is a definition of model of an algebraic theory in that notion of algebraic 
theory.
Hence whenever we consider actual \emph{models} of an algebraic theory, we 
must specify in advance a notion of model with respect to which the models are 
taken.
Our framework emphasises the inevitable fact that
\emph{models are always relative to notions of model},
by treating notions of model as independent mathematical structures.

But how can we formalise such notions of model?
Below we show that the standard notions of model for clones, symmetric operads 
and non-symmetric operads can be captured by a categorical structure
which we call \emph{enrichment}.
Recall that we identify notions of algebraic theory
with large monoidal categories.

\begin{definition}
\label{def:enrichment}
Let $\cat{M}=(\cat{M},I,\otimes)$ be a large monoidal category.
An \defemph{enrichment over $\cat{M}$} consists of:
\begin{itemize}
\item  a large category $\cat{C}$;
\item a functor 
$\enrich{-}{-}\colon\cat{C}^\op\times\cat{C}\longrightarrow\cat{M}$;
\item a natural transformation $(j_C\colon 
I\longrightarrow\enrich{C}{C})_{C\in\cat{C}}$;
\item a natural transformation 
$(M_{A,B,C}\colon\enrich{B}{C}\otimes\enrich{A}{B}
\longrightarrow\enrich{A}{C})_{A,B,C\in\cat{C}}$,
\end{itemize}
making the following diagrams commute for all $A,B,C,D\in\cat{C}$:
\begin{equation*}
\begin{tikzpicture}[baseline=-\the\dimexpr\fontdimen22\textfont2\relax ]
      \node (TL) at (0,1)  {$I\otimes\enrich{A}{B}$};
      \node (TR) at (4,1)  {$\enrich{B}{B}\otimes\enrich{A}{B}$};
      \node (BR) at (4,-1) {$\enrich{A}{B}$};
      \draw[->] (TL) to node[auto,labelsize](T) {$j_B\otimes\enrich{A}{B}$} 
      (TR);
      \draw[->] (TR) to node[auto,labelsize](R) {$M_{A,B,B}$} (BR);
      \draw[->]  (TL) to node[auto,labelsize,swap] (L) {$\cong$} (BR);
\end{tikzpicture} 
\quad
\begin{tikzpicture}[baseline=-\the\dimexpr\fontdimen22\textfont2\relax ]
      \node (TL) at (0,1)  {$\enrich{A}{B}\otimes I$};
      \node (TR) at (4,1)  {$\enrich{A}{B}\otimes\enrich{A}{A}$};
      \node (BR) at (4,-1) {$\enrich{A}{B}$};
      \draw[->] (TL) to node[auto,labelsize](T) {$\enrich{A}{B}\otimes j_A$} 
      (TR);
      \draw[->] (TR) to node[auto,labelsize](R) {$M_{A,A,B}$} (BR);
      \draw[->]  (TL) to node[auto,labelsize,swap] (L) {$\cong$} (BR);
\end{tikzpicture} 
\end{equation*}
\begin{equation*}
\begin{tikzpicture}[baseline=-\the\dimexpr\fontdimen22\textfont2\relax ]
      \node (TL) at (0,1)  {$(\enrich{C}{D}\otimes \enrich{B}{C})\otimes 
      \enrich{A}{B}$};
      \node (TR) at (9,1)  {$\enrich{B}{D}\otimes\enrich{A}{B}$};
      \node (BL) at (0,-1) 
      {$\enrich{C}{D}\otimes(\enrich{B}{C}\otimes\enrich{A}{B})$};
      \node (BM) at (6,-1) {$\enrich{C}{D}\otimes\enrich{A}{C}$};
      \node (BR) at (9,-1) {$\enrich{A}{D}.$};
      \draw[->] (TL) to node[auto,labelsize](T) 
      {$M_{B,C,D}\otimes\enrich{A}{B}$} 
      (TR);
      \draw[->]  (TR) to node[auto,labelsize] {$M_{A,B,D}$} (BR);
      \draw[->]  (TL) to node[auto,swap,labelsize] {$\cong$} (BL);
      \draw[->]  (BL) to node[auto,labelsize] {$\enrich{C}{D}\otimes 
      M_{A,B,C}$} (BM);
      \draw[->] (BM) to node[auto,labelsize](B) {$M_{A,C,D}$} (BR);
\end{tikzpicture} 
\end{equation*}
We say that $(\cat{C},\enrich{-}{-},j,M)$ is an enrichment over $\cat{M}$,
or that $(\enrich{-}{-},j,M)$ is an enrichment of $\cat{C}$ over $\cat{M}$.
\end{definition}

An enrichment over $\cat{M}$ is 
not the same as a (large) \emph{$\cat{M}$-category} in enriched category 
theory~\cite{Kelly:enriched}.
It is rather a triple consisting of a large category $\cat{C}$, 
a large $\cat{M}$-category $\cat{D}$,
and an identity-on-objects functor $J\colon \cat{C}\longrightarrow\cat{D}_0$,
where $\cat{D}_0$ is the underlying category of $\cat{D}$.

In detail, given an enrichment $(\enrich{-}{-},j,M)$ of $\cat{C}$ in $\cat{M}$,
we may define the $\cat{M}$-category $\cat{D}$ with $\ob{\cat{D}}=\ob{\cat{C}}$
using the data $(\enrich{-}{-},j,M)$ of the enrichment 
(that is, $\cat{D}(A,B)=\enrich{A}{B}$ and so on).
The identity-on-objects functor $J\colon \cat{C}\longrightarrow\cat{D}_0$
may be defined by mapping a morphism $f\colon A\longrightarrow B$
in $\cat{C}$ to
the composite $\enrich{A}{f}\circ j_A$, or equivalently, $\enrich{f}{B}\circ 
j_B$:
\[
\begin{tikzpicture}[baseline=-\the\dimexpr\fontdimen22\textfont2\relax ]
      \node (TL) at (0,2)  {$I$};
      \node (TR) at (3,2)  {$\enrich{B}{B}$};
      \node (BL) at (0,0) {$\enrich{A}{A}$};
      \node (BR) at (3,0) {$\enrich{A}{B}.$};
      \draw[->]  (TL) to node[auto,swap,labelsize] {$j_A$} (BL);
      \draw[->]  (TL) to node[auto,labelsize] {$j_B$} (TR);
      \draw[->]  (BL) to node[auto,labelsize] {$\enrich{A}{f}$} (BR);
      \draw[->]  (TR) to node[auto,labelsize](B) {$\enrich{f}{B}$} (BR);
\end{tikzpicture} 
\]
We say that an enrichment is \defemph{normal} if the corresponding 
identity-on-objects functor $J$ is an isomorphism of categories.
We shall return to the relationship to enriched category theory
at the end of this section.

\medskip

From an enrichment, we now derive a definition of model 
of an algebraic theory.
First observe that, given an enrichment $\enrich{-}{-}=(\enrich{-}{-},j,M)$ of 
a large category $\cat{C}$ over a large monoidal category $\cat{M}$
and an object $C\in\cat{C}$,
we have a monoid object $\End_{\enrich{-}{-}}(C)=(\enrich{C}{C},j_C,M_{C,C,C})$
in $\cat{M}$; that these data define a monoid object 
may be seen immediately from Definition~\ref{def:enrichment}.
Because we identify algebraic theories
with monoid objects,
we give a definition of model of a monoid object $\monoid{T}$ in $\cat{M}$.

\begin{definition}
\label{def:enrich_model}
Let $\cat{M}=(\cat{M},I,\otimes)$ be a large monoidal category, 
$\monoid{T}=(T,e,m)$ be a monoid object in $\cat{M}$, 
$\cat{C}$ be a large category, and $\enrich{-}{-}=(\enrich{-}{-},j,M)$ be an
enrichment of $\cat{C}$ over $\cat{M}$.
\begin{enumerate}
\item A \defemph{model of $\monoid{T}$ in $\cat{C}$ with respect to 
$\enrich{-}{-}$}
is a pair $(C,\chi)$ consisting of an object $C$ of $\cat{C}$
and a monoid homomorphism $\chi\colon 
\monoid{T}\longrightarrow\End_\enrich{-}{-}(C)$; that is, 
a morphism $\chi\colon T\longrightarrow \enrich{C}{C}$
in $\cat{M}$ making the following diagrams commute:
\begin{equation*}
\begin{tikzpicture}[baseline=-\the\dimexpr\fontdimen22\textfont2\relax ]
      \node (TL) at (0,2)  {$I$};
      \node (TR) at (2,2)  {$T$};
      \node (BR) at (2,0) {$\enrich{C}{C}$};
      \draw[->]  (TL) to node[auto,labelsize] {$e$} (TR);
      \draw[->]  (TR) to node[auto,labelsize] {$\chi$} (BR);
      \draw[->] (TL) to node[auto,swap,labelsize](B) {$j_C$} (BR);
\end{tikzpicture} 
\qquad
\begin{tikzpicture}[baseline=-\the\dimexpr\fontdimen22\textfont2\relax ]
      \node (TL) at (0,2)  {$T\otimes T$};
      \node (TR) at (4,2)  {$T$};
      \node (BL) at (0,0) {$\enrich{C}{C}\otimes\enrich{C}{C}$};
      \node (BR) at (4,0) {$\enrich{C}{C}.$};
      \draw[->]  (TL) to node[auto,swap,labelsize] {$\chi\otimes\chi$} (BL);
      \draw[->]  (TL) to node[auto,labelsize] {$m$} (TR);
      \draw[->]  (BL) to node[auto,labelsize] {$M_{C,C,C}$} (BR);
      \draw[->]  (TR) to node[auto,labelsize](B) {$\chi$} (BR);
\end{tikzpicture} 
\end{equation*}
\item Let $(C,\chi)$ and $(C',\chi')$ be models of $\monoid{T}$ in $\cat{C}$
with respect to $\enrich{-}{-}$. A \defemph{homomorphism from $(C,\chi)$ to 
$(C',\chi')$}
is a morphism $f\colon C\longrightarrow C'$ in $\cat{C}$ making the 
following diagram commute:
\[
\begin{tikzpicture}[baseline=-\the\dimexpr\fontdimen22\textfont2\relax ]
      \node (TL) at (0,2)  {$T$};
      \node (TR) at (3,2)  {$\enrich{C'}{C'}$};
      \node (BL) at (0,0) {$\enrich{C}{C}$};
      \node (BR) at (3,0) {$\enrich{C}{C'}.$};
      \draw[->]  (TL) to node[auto,swap,labelsize] {$\chi$} (BL);
      \draw[->]  (TL) to node[auto,labelsize] {$\chi'$} (TR);
      \draw[->]  (BL) to node[auto,labelsize] {$\enrich{C}{f}$} (BR);
      \draw[->]  (TR) to node[auto,labelsize](B) {$\enrich{f}{C'}$} (BR);
\end{tikzpicture} 
\] 
\end{enumerate}
We denote the (large) category of models of $\monoid{T}$ in $\cat{C}$ with 
respect to 
$\enrich{-}{-}$ by $\Mod{\monoid{T}}{(\cat{C},\enrich{-}{-})}$.
\end{definition}

The above definitions of model and homomorphism are reminiscent of 
ones for clones (Definitions~\ref{def:clone_model} and \ref{def:clone_mod_hom}),
symmetric operads and non-symmetric operads
(Definition~\ref{def:ns_operad_model}).
Indeed, we can restore the standard notions of model
for these notions of algebraic theory via suitable enrichments.

\begin{example}
\label{ex:clone_enrichment}
Recall that clones may be identified with 
monoids in $[\F,\Set]$ with the substitution monoidal structure.
Let $\cat{C}$ be a locally small\footnote{Recall that by 
Convention~\ref{conv:size}, ``locally small'' implies ``large''.} 
category with all finite powers.
We have an enrichment of $\cat{C}$ over $[\F,\Set]$ defined as follows:
\begin{itemize}
\item The functor $\enrich{-}{-}\colon \cat{C}^\op\times\cat{C}
\longrightarrow[\F,\Set]$
maps $A,B\in\cat{C}$ and $[n]\in\F$ to the set 
\[
\enrich{A}{B}_n=\cat{C}(A^n,B).
\]
\item The natural transformation $(j_C\colon I\longrightarrow 
\enrich{C}{C})_{C\in\cat{C}}$ corresponds by the Yoneda lemma
(recall that $I=\F([1],-)$) to the family 
\[
(\overline{j}_C=\id{C}\in \enrich{C}{C}_1)_{C\in\cat{C}}.
\] 
\item The natural transformation $(M_{A,B,C}\colon \enrich{B}{C}\otimes
\enrich{A}{B}\longrightarrow\enrich{A}{C})_{A,B,C\in\cat{C}}$
corresponds by the universality of coends (recall
that $(Y\otimes X)_{n}=\int^{[k]\in\F} Y_k\times (X_n)^k$) to 
the family whose $(A,B,C)$-th component is given by
\[
(\overline{M}_{A,B,C})_{n,k}\colon \enrich{B}{C}_k\times (\enrich{A}{B}_n)^k
\longrightarrow\enrich{A}{B}_n
\]
mapping $(g,f_1,\dots,f_k)\in \cat{C}(B^k,C)\times \cat{C}(A^n,B)^k$
to $g\circ\langle f_1,\dots,f_k\rangle\in\cat{C}(A^n,B)$.
\end{itemize}

Clearly the definition of the clone $\Endcl{A}$ from a set $A$ 
(Definition~\ref{def:endo_clone}) is derived from the above enrichment,
by setting $\cat{C}=\Set$.
Consequently, we restore the classical definitions of model 
(Definition~\ref{def:clone_model})
and homomorphism between models (Definition~\ref{def:clone_mod_hom}) for clones
as instances of Definition~\ref{def:enrich_model}.
\end{example}

\begin{example}
Symmetric operads may be identified with 
monoids in $[\Pcat,\Set]$ with the substitution monoidal structure.
Let $\cat{C}=(\cat{C},I',\otimes')$ 
be a locally small symmetric monoidal category.
We have an enrichment of $\cat{C}$ over $[\Pcat,\Set]$ defined as follows:
\begin{itemize}
\item The functor $\enrich{-}{-}\colon \cat{C}^\op\times\cat{C}
\longrightarrow[\Pcat,\Set]$
maps $A,B\in\cat{C}$ and $[n]\in\Pcat$ to the set 
\[
\enrich{A}{B}_n=\cat{C}(A^{\otimes' n},B),
\]
where $A^{\otimes' n}$ is the monoidal product of $n$ many copies of $A$.
\item The natural transformation $(j_C\colon I\longrightarrow 
\enrich{C}{C})_{C\in\cat{C}}$ corresponds by the Yoneda lemma
(recall that $I=\Pcat([1],-)$) to the family 
\[
(\overline{j}_C=\id{C}\in \enrich{C}{C}_1)_{C\in\cat{C}}.
\] 
\item The natural transformation $(M_{A,B,C}\colon \enrich{B}{C}\otimes
\enrich{A}{B}\longrightarrow\enrich{A}{C})_{A,B,C\in\cat{C}}$
corresponds by the universality of coends (recall 
Definition~\ref{def:subst_monoidal_P}) to 
the family whose $(A,B,C)$-th component is given by
\begin{multline*}
(\overline{M}_{A,B,C})_{n,k,n_1,\dots,n_k}\colon 
\enrich{B}{C}_k\times \Pcat([n_1+\dots+n_k],[n])\\ \times
\enrich{A}{B}_{n_1}\times\dots\times\enrich{A}{B}_{n_k}
\longrightarrow\enrich{A}{B}_n,
\end{multline*}
which is the unique function from the empty set if $n\neq n_1+\dots+n_k$
and, if $n= n_1+\dots+n_k$, maps 
$(g,u,f_1,\dots,f_k)\in \cat{C}(B^{\otimes' k},C)\times 
\Pcat([n_1+\dots+n_k],[n])\times \cat{C}(A^{\otimes' n_1},B)
\times\dots\times \cat{C}(A^{\otimes' n_k},B)$
to $g\circ (f_1\otimes'\dots\otimes' f_k)\circ A^{\otimes' u}$.
\end{itemize}

Via the above enrichment, we restore the classical definitions of model and 
homomorphism between models for symmetric operads.
\end{example}

\begin{example}
Non-symmetric operads may be identified with 
monoids in $[\Ncat,\Set]$ with the substitution monoidal structure.
Let $\cat{C}=(\cat{C},I',\otimes')$ 
be a locally small monoidal category.
We have an enrichment of $\cat{C}$ over $[\Ncat,\Set]$ which is
similar to, and simpler than, the one in the previous example.

This enrichment restores the classical definitions of model and 
homomorphism between models for non-symmetric operads, including
Definition~\ref{def:ns_operad_model}
(take $\cat{C}=(\Set,1,\times)$ for set models
and $\cat{C}=(\mathbf{Ab},\mathbb{Z},\otimes)$ for abelian group models).
\end{example}

\begin{example}
\label{ex:relative_alg}
We may also consider infinitary variants of Example~\ref{ex:clone_enrichment}.
Here we take an extreme. 
Let $\cat{C}$ be a locally small category with all small powers. 
Then we obtain an enrichment of $\cat{C}$ over $[\Set,\Set]$,
the category of endofunctors on $\Set$ with composition as the monoidal product.
\begin{itemize}
\item The functor $\enrich{-}{-}\colon \cat{C}^\op\times 
\cat{C}\longrightarrow[\Set,\Set]$ maps $A,B\in\cat{C}$
and $X\in \Set$ to the set 
\[
\enrich{A}{B}(X)=\cat{C}(A^X,B),
\]
where $A^X$ is the $X$-th power of $A$.
\item The natural transformation $(j_C\colon \id{\Set}\longrightarrow 
\enrich{C}{C})_{C\in\cat{C}}$ corresponds by the Yoneda lemma
(note that $\id{\Set}\cong\Set(1,-)$, where $1$ is a singleton)
to the family
\[
(\overline{j}_C=\id{C}\in\enrich{C}{C}(1))_{C\in\cat{C}}.
\]
\item The natural transformation $(M_{A,B,C}\colon 
\enrich{B}{C}\circ\enrich{A}{B}\longrightarrow\enrich{A}{C})_{A,B,C\in\cat{C}}$
has the $X$-th component ($X\in\Set$)
\[
\enrich{B}{C}\circ\enrich{A}{B}(X)=\cat{C}(B^{\cat{C}(A^X,B)},C)\longrightarrow
\cat{C}(A^X,C)=\enrich{A}{C}(X)
\]
the function induced from the canonical morphism $A^X\longrightarrow 
B^{\cat{C}(A^X,B)}$ in $\cat{C}$.
\end{itemize}

Since monoids in $[\Set,\Set]$ are precisely monads on $\Set$,
this enrichment gives us a definition of model of a monad $\monoid{T}$
on $\Set$ in $\cat{C}$. 
To spell this out, first note that 
for any object $C\in\cat{C}$, the functor $\enrich{C}{C}\colon 
\Set\longrightarrow\Set$ which maps $X\in\Set$ to $\cat{C}(C^X,C)$
acquires the monad structure, giving rise to the monad $\End_{\enrich{-}{-}}(C)$
on $\Set$.
A model of $\monoid{T}$ is then an object $C\in\cat{C}$ together with
a monad morphism $\monoid{T}\longrightarrow\End_{\enrich{-}{-}}(C)$.
This is the definition of \emph{relative algebra} of a monad on $\Set$ by 
Hino, Kobayashi, Hasuo and Jacobs~\cite{Hino}.
As noted in \cite{Hino}, in the case where $\cat{C}=\Set$,
relative algebras of a monad $\monoid{T}$ on $\Set$ agree with Eilenberg--Moore 
algebras of $\monoid{T}$; we shall later show this fact
in Example~\ref{ex:monad_on_cat_w_powers}.
\end{example}

\begin{example}
Let $\cat{S}$ be a large category and consider the monoidal
category $[\cat{S},\cat{S}]$ of endofunctors on $\cat{S}$,
with composition as the monoidal product.
Then an enrichment over $[\cat{S},\cat{S}]$ is the same thing as 
an \emph{$\cat{S}$-parameterised monad} (without strength) in the sense of 
Atkey~\cite[Definition~1]{Atkey_parameterised}, introduced
in the study of computational effects.
\end{example}

\medskip

Having reformulated semantics of notions of algebraic theory
in terms of enrichments, let us 
investigate some of its immediate consequences.

\subsubsection{$\Mod{-}{-}$ as a 2-functor}
It is well-known that given clones $\monoid{T}$ and $\monoid{T'}$,
a clone homomorphism $f\colon \monoid{T}\longrightarrow\monoid{T'}$,
and a locally small category $\cat{C}$ with finite products, 
we have the induced functor 
\[
\Mod{f}{\cat{C}}\colon\Mod{\monoid{T'}}{\cat{C}}\longrightarrow
\Mod{\monoid{T}}{\cat{C}}
\]
between the categories of models.
For instance, we can take $\monoid{T}$ to be the clone for monoids
and $\monoid{T'}$ to be the clone for groups,
with $f\colon \monoid{T}\longrightarrow \monoid{T'}$
the canonical clone map easily obtained from 
the standard presentations of monoids and of groups.
Then $\Mod{f}{\cat{C}}$ is the natural embedding of the 
category of group objects in $\cat{C}$ to the category of monoid objects
in $\cat{C}$; in particular, if we let $\cat{C}=\Set$, we get
the embedding of the category of groups into the category of monoids
(in words, ``groups are a special case of monoids'').

On the other hand, given a clone $\monoid{T}$, locally small 
categories $\cat{C}$ and 
$\cat{C'}$ with finite products, and a functor $G\colon\cat{C}\longrightarrow 
\cat{C'}$ preserving finite products, we obtain a functor 
\[
\Mod{\monoid{T}}{G}\colon\Mod{\monoid{T}}{\cat{C}}\longrightarrow
\Mod{\monoid{T}}{\cat{C'}}.
\]
As a concrete example, let $\monoid{T}$ be the clone for groups,
$\cat{C}=\mathbf{Top}$ (the category of topological spaces),
$\cat{C'}=\Set$ and $G\colon\mathbf{Top}\longrightarrow \Set$
be the functor mapping a topological space to its underlying set.
Then we obtain a functor from the category of topological groups to
the category of groups, which simply forgets the topology.

\medskip

In order to formulate such functoriality of $\Mod{-}{-}$,
we introduce a 2-category of enrichments.

\begin{definition}
\label{def:2-cat_of_enrichments}
Let $\cat{M}=(\cat{M},I,\otimes)$ be a large monoidal category.
The (locally large) 2-category $\Enrich{\cat{M}}$ of enrichments over $\cat{M}$ 
is defined as follows:
\begin{itemize}
\item An object is an enrichment $(\cat{C},\enrich{-}{-},j,M)$ over $\cat{M}$.
\item A 1-cell from $(\cat{C},\enrich{-}{-},j,M)$ to
$(\cat{C'},\enrich{-}{-}',j',M')$ is a functor 
$G\colon\cat{C}\longrightarrow\cat{C'}$ together with a natural transformation
$(g_{A,B}\colon\enrich{A}{B}\longrightarrow\enrich{GA}{GB}')_{A,B\in\cat{C}}$ 
making the 
following diagrams commute for all $A,B,C\in\cat{C}$:
\begin{equation*}
\begin{tikzpicture}[baseline=-\the\dimexpr\fontdimen22\textfont2\relax ]
      \node (TL) at (0,1)  {$I$};
      \node (TR) at (4,1)  {$\enrich{C}{C}$};
      \node (BR) at (4,-1) {$\enrich{GC}{GC}'$};
      \draw[->] (TL) to node[auto,labelsize](T) {$j_C$} (TR);
      \draw[->] (TR) to node[auto,labelsize](R) {$g_{C,C}$} (BR);
      \draw[->]  (TL) to node[auto,labelsize,swap] (L) {$j'_{GC}$} 
      (BR);
\end{tikzpicture} 
\end{equation*}
\begin{equation*}
\begin{tikzpicture}[baseline=-\the\dimexpr\fontdimen22\textfont2\relax ]
      \node (TL) at (0,1)  {$\enrich{B}{C}\otimes\enrich{A}{B}$};
      \node (TR) at (6,1)  {$\enrich{A}{C}$};
      \node (BL) at (0,-1) {$\enrich{GB}{GC}'\otimes\enrich{GA}{GB}'$};
      \node (BR) at (6,-1) {$\enrich{GA}{GC}'.$};
      \draw[->] (TL) to node[auto,labelsize](T) {$M_{A,B,C}$} 
      (TR);
      \draw[->]  (TR) to node[auto,labelsize] {$g_{A,C}$} (BR);
      \draw[->]  (TL) to node[auto,swap,labelsize] {$g_{B,C}\otimes g_{A,B}$} 
      (BL);
      \draw[->]  (BL) to node[auto,labelsize] {$M'_{GA,GB,GC}$} (BR);
\end{tikzpicture} 
\end{equation*}
\item A 2-cell from $(G,g)$ to $(G',g')$, both from 
$(\cat{C},\enrich{-}{-},j,M)$ to $(\cat{C'},\enrich{-}{-}',j',$ $M')$, is a 
natural transformation $\theta\colon G\Longrightarrow G'$
making the following diagram commute for all $A,B\in\cat{C}$:
\begin{equation*}
\begin{tikzpicture}[baseline=-\the\dimexpr\fontdimen22\textfont2\relax ]
      \node (TL) at (0,2)  {$\enrich{A}{B}$};
      \node (TR) at (4,2)  {$\enrich{GA}{GB}'$};
      \node (BL) at (0,0) {$\enrich{G'A}{G'B}'$};
      \node (BR) at (4,0) {$\enrich{GA}{G'B}'.$};
      \draw[->] (TL) to node[auto,labelsize](T) {$g_{A,B}$} 
      (TR);
      \draw[->]  (TR) to node[auto,labelsize] {$\enrich{GA}{\theta_B}'$} (BR);
      \draw[->]  (TL) to node[auto,swap,labelsize] {$g'_{A,B}$} 
      (BL);
      \draw[->]  (BL) to node[auto,labelsize] {$\enrich{\theta_A}{G'B}'$} (BR);
\end{tikzpicture} \qedhere
\end{equation*}
\end{itemize}
\end{definition}

\begin{example}
Let $\FPow$ be the 2-category of locally small categories with chosen finite 
powers,
functors preserving finite powers (in the usual sense\footnote{That is,
we do not require these functors to preserve the chosen finite powers on the 
nose.}) 
and all natural transformations.
We have a canonical 2-functor 
\[
\FPow\longrightarrow\Enrich{[\F,\Set]}
\]
which is fully faithful (see Section~\ref{sec:2-cat_notions} for 
the definition of full faithfulness for 2-functors).

Let $\FProd$ be the 2-category of locally small categories with chosen finite 
products,
functors preserving finite products (in the usual sense) 
and all natural transformations.
We have a canonical 2-functor 
\[
\FProd\longrightarrow\Enrich{[\F,\Set]}
\]
which is locally fully faithful.

Hence we may restore the classical functoriality of $\Mod{\monoid{T}}{-}$
for a clone $\monoid{T}$, recalled above, if we could show that it is 
functorial with respect to morphisms in $\Enrich{[\F,\Set]}$.
\end{example}

We also have canonical (locally faithful) 2-functors
\[
\SymMonCATls\longrightarrow\Enrich{[\Pcat,\Set]},
\]
where the domain is the 2-category of locally small symmetric monoidal 
categories, symmetric lax monoidal functors  and monoidal natural 
transformations, and
\[
\MonCATls\longrightarrow\Enrich{[\Ncat,\Set]},
\]
where the domain is the 2-category of locally small monoidal categories,
lax monoidal functors and monoidal natural transformations.

\medskip
Now the functoriality of $\Mod{-}{-}$
may be expressed by saying that it is a 2-functor
\begin{equation}
\label{eqn:enrich_Mod_bifunctorial}
\Mod{-}{-}\colon \Mon{\cat{M}}^\op\times\Enrich{\cat{M}}\longrightarrow \tCAT
\end{equation}
(when we say that (\ref{eqn:enrich_Mod_bifunctorial}) is a 
2-functor, we are identifying the category $\Mon{\cat{M}}$ with the 
corresponding locally discrete 2-category).
Actually, the 2-functor~(\ref{eqn:enrich_Mod_bifunctorial}) arises
immediately from the structure of the locally large 2-category 
$\Enrich{\cat{M}}$.
Observe that we may identify a monoid object in $\cat{M}$ with 
an enrichment of the terminal category $1$ over $\cat{M}$.
The full sub-2-category of $\Enrich{\cat{M}}$ consisting of 
all enrichments over the (fixed) terminal category $1$ is in fact 
locally discrete, and is isomorphic to $\Mon{\cat{M}}$.
This way we obtain a fully faithful inclusion 2-functor
$\Mon{\cat{M}}\longrightarrow\Enrich{\cat{M}}$.
It is straightforward to see that the appropriate 2-functor 
(\ref{eqn:enrich_Mod_bifunctorial}) is given by the composite
\[
\begin{tikzpicture}[baseline=-\the\dimexpr\fontdimen22\textfont2\relax ]
      \node (1) at (0,0)  {$\Mon{\cat{M}}^\op\times \Enrich{\cat{M}}$};
      \node (2) at (0,-1.5)  {$\Enrich{\cat{M}}^\op\times \Enrich{\cat{M}}$};
      \node (3) at (0,-3) {$\tCAT$,};
      \draw[->] (1) to node[auto,labelsize]{inclusion} (2);
      \draw[->] (2) to node[auto,labelsize] {$\Enrich{\cat{M}}(-,-)$} (3);
\end{tikzpicture}
\]
where $\Enrich{\cat{M}}(-,-)$ is the hom-2-functor for $\Enrich{\cat{M}}$.

\subsubsection{Comparing different notions of algebraic theory}
So far we have been working within a fixed notion of algebraic theory.
We now turn to the question of comparing different notions of algebraic 
theory.

By way of illustration,
let us consider the relationship of 
clones, symmetric operads and non-symmetric operads.
On the ``syntactical'' side, we have inclusions
of algebraic theories
\begin{equation}
\label{eqn:chain_alg_thy}
\{\text{non-sym.~operads}\}\subseteq\{\text{sym.~operads}\}\subseteq
\{\text{clones}\},
\end{equation}
in the sense that every symmetric operad may be derived from 
a regular presentation of an equational theory,
which is at the same time a presentation of an equational theory
and therefore defines a clone, etc.
On the ``semantical'' side, in contrast, we have 
inclusions of (standard) notions of models in the other direction,
namely:
\begin{equation}
\label{eqn:chain_semantics}
\{\text{mon.~cat.}\}
\supseteq\{\text{sym.~mon.~cat.}\}
\supseteq\{\text{cat.~with fin.~prod.}\}.
\end{equation}

Furthermore,
suppose we take the algebraic theory $\monoid{T}$ of monoids 
(which is expressible as a non-symmetric operad)
and the category $\Set$ (which has finite products).
Then we can consider the category of models $\Mod{\monoid{T}}{\Set}$
in three different ways:
either thinking of $\monoid{T}$ as a clone and $\Set$ as a category with finite 
products,
$\monoid{T}$ as a symmetric operad and $\Set$ as a symmetric monoidal category,
or $\monoid{T}$ as a non-symmetric operad and $\Set$ as a monoidal category.
It turns out that the resulting three categories of models are isomorphic to 
each other,
indicating certain compatibility between the three notions of algebraic theory.

\medskip

The key to understand these phenomena in our framework is 
the functoriality of the $\Enrich{-}$ construction. 
That is, we may extend (just like base change of
enriched categories) $\Enrich{-}$ to a 2-functor
\begin{equation}
\label{eqn:Enrich_as_2-functor_1st}
\Enrich{-}\colon \MonCAT\longrightarrow \twoCAT
\end{equation}
from the 2-category $\MonCAT$ 
of large monoidal categories, lax monoidal functors 
and monoidal natural transformations
to the 2-category $\twoCAT$ of huge 2-categories, 2-functors and 
2-natural transformations.
We just describe the action of a lax monoidal functor on an enrichment,
as the rest of the data for the 2-functor (\ref{eqn:Enrich_as_2-functor_1st}) 
follows from that rather routinely.
\begin{definition}
\label{def:action_of_lax_on_enrichment}
Let $\cat{M}=(\cat{M},I_\cat{M},\otimes_\cat{M})$ and 
$\cat{N}=(\cat{N},I_\cat{N},\otimes_\cat{N})$ be large monoidal categories,
$F=(F,f_\cdot,f)\colon\cat{M}\longrightarrow\cat{N}$ be a lax monoidal 
functor,\footnote{In more detail, such a lax monoidal functor 
(also called monoidal functors in e.g.,~\cite{MacLane_CWM})
$(F,f_\cdot,f)$ consists of a functor $F\colon\cat{M}\longrightarrow\cat{N}$,
a morphism $f_\cdot\colon I_\cat{N}\longrightarrow FI_\cat{M}$
and a natural transformation $(f_{X,Y}\colon FY\otimes_\cat{N}FX\longrightarrow 
F(Y\otimes_\cat{M} X))_{X,Y\in\cat{M}}$ satisfying the suitable axioms.} 
$\cat{C}$ be a large category and $\enrich{-}{-}=(\enrich{-}{-},j,M)$ be an 
enrichment of $\cat{C}$ over $\cat{M}$.
We define the enrichment $F_\ast(\enrich{-}{-})=(\enrich{-}{-}',j',M')$ 
of $\cat{C}$ over $\cat{N}$ as follows:
\begin{itemize}
\item The functor 
$\enrich{-}{-}'\colon\cat{C}^\op\times\cat{C}\longrightarrow\cat{N}$
maps $(A,B)\in\cat{C}^\op\times\cat{C}$ to $F\enrich{A}{B}$.
\item The natural transformation $(j'_C\colon 
I_\cat{N}\longrightarrow\enrich{C}{C}')_{C\in\cat{C}}$
is defined by $j'_C=Fj_C\circ f_\cdot$:
\[
\begin{tikzpicture}[baseline=-\the\dimexpr\fontdimen22\textfont2\relax ]
      \node (L) at (0,0)  {$I_\cat{N}$};
      \node (M) at (2,0)  {$FI_\cat{M}$};
      \node (R) at (4.5,0)  {$F\enrich{C}{C}$.};
      \draw[->]  (L) to node[auto,labelsize] {$f_\cdot$} (M);
      \draw[->]  (M) to node[auto,labelsize] {$Fj_C$} (R);
\end{tikzpicture} 
\]
\item The natural transformation $(M'_{A,B,C}\colon 
\enrich{B}{C}'\otimes_\cat{N}\enrich{A}{B}'\longrightarrow\enrich{A}{C}')_{A,
B,C\in\cat{C}}$
is defined by $M'_{A,B,C}=FM_{A,B,C}\circ f_{\enrich{A}{B},\enrich{B}{C}}$:
\[
\begin{tikzpicture}[baseline=-\the\dimexpr\fontdimen22\textfont2\relax ]
      \node (L) at (0,0)  {$F\enrich{B}{C}\otimes_\cat{N}F\enrich{A}{B}$};
      \node (M) at (5.5,0)  {$F(\enrich{B}{C}\otimes_\cat{M}\enrich{A}{B})$};
      \node (R) at (9.5,0)  {$F\enrich{A}{C}$.};
      \draw[->]  (L) to node[auto,labelsize] 
      {$f_{\enrich{A}{B},\enrich{B}{C}}$} (M);
      \draw[->]  (M) to node[auto,labelsize] {$FM_{A,B,C}$} (R);
\end{tikzpicture} \qedhere
\]
\end{itemize}
\end{definition}
As an immediate consequence of the 2-functoriality 
(\ref{eqn:Enrich_as_2-functor_1st}),
it follows that whenever we have a monoidal adjunction (adjunction in 
$\MonCAT$) 
\[
\begin{tikzpicture}[baseline=-\the\dimexpr\fontdimen22\textfont2\relax ]
      \node (L) at (0,0)  {$\cat{M}$};
      \node (R) at (3,0)  {$\cat{N}$,};
      \draw[->,transform canvas={yshift=5pt}]  (L) to node[auto,labelsize] 
      {$L$} (R);
      \draw[<-,transform canvas={yshift=-5pt}]  (L) to 
      node[auto,swap,labelsize] {$R$} (R);
      \node[rotate=-90,labelsize] at (1.5,0)  {$\dashv$};
\end{tikzpicture} 
\] 
we obtain a 2-adjunction
\[
\begin{tikzpicture}[baseline=-\the\dimexpr\fontdimen22\textfont2\relax ]
      \node (L) at (0,0)  {$\Enrich{\cat{M}}$};
      \node (R) at (4.5,0)  {$\Enrich{\cat{N}}$.};
      \draw[->,transform canvas={yshift=5pt}]  (L) to node[auto,labelsize] 
      {$\Enrich{L}$} (R);
      \draw[<-,transform canvas={yshift=-5pt}]  (L) to 
      node[auto,swap,labelsize] {$\Enrich{R}$} 
      (R);
      \node[rotate=-90,labelsize] at (2.25,0)  {$\dashv$};
\end{tikzpicture} 
\] 
Therefore, if we take 
$\monoid{T}\in\Mon{\cat{M}}\subseteq\Enrich{\cat{M}}$
and $(\cat{C},\enrich{-}{-})\in \Enrich{\cat{N}}$ in this situation, 
then 
\begin{multline}
\label{eqn:iso_adj_enrich}
\Enrich{\cat{M}}(\monoid{T},\Enrich{R}(\cat{C},\enrich{-}{-}))\\
\cong\Enrich{\cat{N}}(\Enrich{L}(\monoid{T}),(\cat{C},\enrich{-}{-})).
\end{multline}
Since the action of $\Enrich{-}$ preserves the underlying categories,
we may assume $\Enrich{L}(\monoid{T})\in\Mon{\cat{N}}$.
Therefore 
(\ref{eqn:iso_adj_enrich}) may be seen as an isomorphism between
the category of models of $\monoid{T}$ in $\cat{C}$ with 
respect to $R_\ast(\enrich{-}{-})$
and the category of models of $\Enrich{L}(\monoid{T})$
in $\cat{C}$ with respect to $\enrich{-}{-}$.

The relationship between clones, symmetric operads and non-symmetric operads
mentioned above can be explained in this way.
First note that there is a chain of inclusions
\[
\begin{tikzpicture}[baseline=-\the\dimexpr\fontdimen22\textfont2\relax ]
      \node (L) at (0,0)  {$\Ncat$};
      \node (M) at (2,0)  {$\Pcat$};
      \node (R) at (4,0)  {$\F$.};
      \draw[->]  (L) to node[auto,labelsize] {$J$} (M);
      \draw[->]  (M) to node[auto,labelsize] {$J'$} (R);
\end{tikzpicture} 
\]
Therefore, precomposition and left Kan extensions 
induce a chain of adjunctions
\[
\begin{tikzpicture}[baseline=-\the\dimexpr\fontdimen22\textfont2\relax ]
      \node (L) at (0,0)  {$[\Ncat,\Set]$};
      \node (M) at (3.5,0)  {$[\Pcat,\Set]$};
      \node (R) at (7,0)  {$[\F,\Set]$.};
      \draw[->,transform canvas={yshift=5pt}]  (L) to node[auto,labelsize] 
      {$\Lan_J$} (M);
      \draw[<-,transform canvas={yshift=-5pt}] (L) to node[auto,swap,labelsize] 
      {$[J,\Set]$} (M);
      \draw[->,transform canvas={yshift=5pt}]  (M) to node[auto,labelsize] 
      {$\Lan_{J'}$} (R);
      \draw[<-,transform canvas={yshift=-5pt}] (M) to node[auto,swap,labelsize] 
      {$[J',\Set]$} 
      (R);
      \node[rotate=-90,labelsize] at (1.75,0)  {$\dashv$};
      \node[rotate=-90,labelsize] at (5.25,0)  {$\dashv$};
\end{tikzpicture} 
\] 
It turns out that these adjunctions acquire natural structures of 
monoidal adjunctions.
Hence in our framework, the inclusions 
(\ref{eqn:chain_alg_thy}) are expressed as the functors
\[
\begin{tikzpicture}[baseline=-\the\dimexpr\fontdimen22\textfont2\relax ]
      \node (L) at (0,0)  {$\Mon{[\Ncat,\Set]}$};
      \node (M) at (4.5,0)  {$\Mon{[\Pcat,\Set]}$};
      \node (R) at (9,0)  {$\Mon{[\F,\Set]}$};
      \draw[->]  (L) to node[auto,labelsize] {$\Mon{\Lan_J}$} (M);
      \draw[->]  (M) to node[auto,labelsize] {$\Mon{\Lan_{J'}}$} (R);
\end{tikzpicture} 
\]
between the categories of monoids,
whereas the inclusions (\ref{eqn:chain_semantics}) are restrictions of 
the 2-functors 
\[
\begin{tikzpicture}[baseline=-\the\dimexpr\fontdimen22\textfont2\relax ]
      \node (L) at (0,0)  {$\Enrich{[\Ncat,\Set]}$};
      \node (M) at (5,0)  {$\Enrich{[\Pcat,\Set]}$};
      \node (R) at (10,0)  {$\Enrich{[\F,\Set]}$};
      \draw[<-]  (L) to node[auto,labelsize] {$\Enrich{[J,\Set]}$} (M);
      \draw[<-]  (M) to node[auto,labelsize] {$\Enrich{[{J'},\Set]}$} (R);
\end{tikzpicture} 
\]
between the 2-categories of enrichments.

\subsubsection{Relation to enriched category theory}
Before concluding this section, we shall remark on the relationship between
our notion of {enrichment} and the standard notions in enriched category 
theory~\cite{Kelly:enriched}.
The reader may move on to the next section on oplax actions, since
the results obtained in the following discussion will not be used 
in this thesis, though they explain how our approach relates to an
enriched categorical approach to clones
(= Lawvere theories = finitary monads on $\Set$) 
by Garner~\cite{Garner_Lawvere_Cauchy}.

We have mentioned that an enrichment of $\cat{C}$ over $\cat{M}$
can be equivalently given as an $\cat{M}$-category $\cat{D}$
and an identity-on-objects functor $J\colon\cat{C}\longrightarrow\cat{D}_0$. 
Let us first make the relation of these two formulations precise.
In order to compare them, we introduce a natural 2-category
having the latter as objects.
\begin{definition}
Let $\cat{M}$ be a large monoidal category.
The 2-category $\Enrichtwo{\cat{M}}$ is defined as follows:
\begin{itemize}
\item An object is a triple $(\cat{C},\cat{D},J)$ consisting of a large
category $\cat{C}$, a large $\cat{M}$-category $\cat{D}$ and an 
identity-on-objects
functor $J\colon\cat{C}\longrightarrow \cat{D}_0$.
\item A 1-cell from $(\cat{C},\cat{D},J)$ to $(\cat{C'},\cat{D'},J')$
is given by a functor $G\colon \cat{C}\longrightarrow\cat{C'}$
and an $\cat{M}$-functor $H\colon \cat{D}\longrightarrow\cat{D'}$
such that $H_0\circ J=J'\circ G$.
\item A 2-cell from $(G,H)$ to $(G',H')$, both from $(\cat{C},\cat{D},J)$ to 
$(\cat{C'},\cat{D'},J')$, is given by a natural transformation $\theta\colon 
G\Longrightarrow G'$ and an $\cat{M}$-natural transformation $\phi\colon 
H\Longrightarrow H'$ such that $\phi_0\circ J=J'\circ \theta$.\qedhere
\end{itemize}
\end{definition}
Hence $\Enrichtwo{\cat{M}}$ is a full sub-2-category of 
the comma 2-category defined by the diagram
\[
\begin{tikzpicture}[baseline=-\the\dimexpr\fontdimen22\textfont2\relax ]
      \node (L) at (0,0) {$\tCAT$};
      \node (M) at (2.5,0) {$\tCAT$};
      \node (R) at (5.3,0) {$\entCAT{\cat{M}}$,};
      \draw[->]  (L) to node[auto,labelsize] {$\id{\tCAT}$} (M);
      \draw[->]  (R) to node[auto,swap,labelsize] {$(-)_0$} (M);
\end{tikzpicture}  
\]
where $\entCAT{\cat{M}}$ is the 2-category of large
$\cat{M}$-categories, $\cat{M}$-functors and $\cat{M}$-natural transformations,
and $(-)_0$ is the forgetful 2-functor described in  
\cite[Section~1.3]{Kelly:enriched}.

It is routine to check that the obvious
construction (sketched just after Definition~\ref{def:enrichment}) from 
$(\cat{C},\enrich{-}{-},j,M)\in\Enrich{\cat{M}}$
to $(\cat{C},\cat{D},J)\in\Enrichtwo{\cat{M}}$
extends to an isomorphism of the 2-categories $\Enrich{\cat{M}}$
and $\Enrichtwo{\cat{M}}$.
%
%
Therefore we may identify $\Enrich{\cat{M}}$ with $\Enrichtwo{\cat{M}}$
via this isomorphism;
$\Enrichtwo{\cat{M}}$ is better suited to establish connections to 
enriched category theory.

We may embed (fully faithfully) both $\entCAT{\cat{M}}$
and the underlying category $\entCAT{\cat{M}}_0$ of $\entCAT{\cat{M}}$
into $\Enrichtwo{\cat{M}}$.
The embedding
\[
\normalemb\colon
\entCAT{\cat{M}}\longrightarrow \Enrichtwo{\cat{M}}
\]
maps an $\cat{M}$-category $\cat{A}$ to
the normal enrichment $(\cat{A}_0,\cat{A},\id{\cat{A}_0})$ over $\cat{M}$
(recall that an enrichment $(\cat{C},\cat{D},J)$ is called 
\emph{normal} iff $J$ is an isomorphism).
Clearly, an enrichment is normal precisely when it is isomorphic 
to an enrichment of the form $\normalemb \cat{A}$ for some 
$\cat{A}\in\entCAT{\cat{M}}$ (i.e., when it is in the essential image of 
$\normalemb$).
The embedding
\[
\discemb\colon
\entCAT{\cat{M}}_0\longrightarrow\Enrichtwo{\cat{M}}
\]
maps an $\cat{M}$-category $\cat{A}$ to
the enrichment $(\ob{\cat{A}},\cat{A},J)$ over $\cat{M}$ of the set 
$\ob{\cat{D}}$ seen as a discrete category ($J$ is the unique
identity-on-objects functor $\ob{\cat{A}}\longrightarrow\cat{A}_0$).
It is clear from the definition of $\Enrichtwo{\cat{M}}$
that both $\normalemb$ and $\discemb$ are fully faithful 2-functors.
The fully faithful embedding $\Mon{\cat{M}}\longrightarrow \Enrich{\cat{M}}$
mentioned above is a restriction of $\discemb$.

The 2-functor $\normalemb$ admits a left adjoint 2-functor
\[
L\colon \Enrichtwo{\cat{M}}\longrightarrow\entCAT{\cat{M}}
\]
mapping $(\cat{C},\cat{D},J)\in \Enrichtwo{\cat{M}}$ to $\cat{D}\in
\entCAT{\cat{M}}$ and so on.
Therefore for a monoid
$\monoid{T}$ in $\cat{M}$ and a 
\emph{normal} enrichment $(\cat{C},\cat{D},J)$ over $\cat{M}$,
the category of models $\Mod{\monoid{T}}{\cat{C}}
=\Enrich{\cat{M}}(\monoid{T},(\cat{C},\cat{D},J))\cong
\Enrich{\cat{M}}(\monoid{T},\normalemb\cat{D})$
is isomorphic to $\entCAT{\cat{M}}(L\monoid{T},\cat{D})$,
where $L\monoid{T}$ is just a monoid $\monoid{T}$ seen as 
a one-object $\cat{M}$-category.

The enrichments corresponding to the standard notions of model
for clones, symmetric operads and non-symmetric operads
are all normal, hence in order to capture the categories of models
relative to these notions of model, we may work 
entirely within the 2-category $\entCAT{\cat{M}}$,
as already observed (in the case of clones) in 
\cite{Garner_Lawvere_Cauchy}.

\subsection{Notions of model as oplax actions}
\label{subsec:oplax_action}
In order to capture models of monads and generalised operads, 
enrichments do not suffice in general.
A suitable structure is \emph{oplax action}, defined as follows.

\begin{definition}
\label{def:oplax_action}
Let $\cat{M}=(\cat{M},I,\otimes)$ be a large monoidal category.
An \defemph{oplax action of $\cat{M}$} consists of:
\begin{itemize}
\item a large category $\cat{C}$;
\item a functor $\ast \colon \cat{M}\times\cat{C}\longrightarrow\cat{C}$;
\item a natural transformation $(\varepsilon_C\colon I\ast C\longrightarrow 
C)_{C\in\cat{C}}$;
\item a natural transformation 
$(\delta_{X,Y,C}\colon (Y\otimes X)\ast C\longrightarrow Y\ast (X\ast 
C))_{X,Y\in\cat{M},C\in\cat{C}},$\footnote{We have chosen to set 
$\delta_{X,Y,C}\colon (Y\otimes X)\ast C
\longrightarrow Y\ast (X\ast C)$ and not 
$\delta_{X,Y,C}\colon (X\otimes Y)\ast C
\longrightarrow X\ast (Y\ast C)$,
because the former agrees with 
the convention to write composition of morphisms in the
anti-diagrammatic order, which we adopt throughout 
this thesis.}
\end{itemize} 
making the following diagrams commute for all $X,Y,Z\in\cat{M}$
and $C\in\cat{C}$:
\begin{equation*}
\begin{tikzpicture}[baseline=-\the\dimexpr\fontdimen22\textfont2\relax ]
      \node (TL) at (0,1)  {$(I\otimes X)\ast C$};
      \node (TR) at (4,1)  {$I\ast(X\ast C)$};
      \node (BR) at (4,-1) {$X\ast C$};
      \draw[->] (TL) to node[auto,labelsize](T) {$\delta_{X,I,C}$} (TR);
      \draw[->] (TR) to node[auto,labelsize](R) {$\varepsilon_{X\ast C}$} (BR);
      \draw[->]  (TL) to node[auto,labelsize,swap] (L) {$\cong$} (BR);
\end{tikzpicture} 
\quad
\begin{tikzpicture}[baseline=-\the\dimexpr\fontdimen22\textfont2\relax ]
      \node (TL) at (0,1)  {$(X\otimes I)\ast C$};
      \node (TR) at (4,1)  {$X\ast(I\ast C)$};
      \node (BR) at (4,-1) {$X\ast C$};
      \draw[->] (TL) to node[auto,labelsize](T) {$\delta_{I,X,C}$} (TR);
      \draw[->] (TR) to node[auto,labelsize](R) {$X\ast \varepsilon_{C}$} (BR);
      \draw[->]  (TL) to node[auto,labelsize,swap] (L) {$\cong$} (BR);
\end{tikzpicture} 
\end{equation*}
\begin{equation*}
\begin{tikzpicture}[baseline=-\the\dimexpr\fontdimen22\textfont2\relax ]
      \node (TL) at (0,1)  {$((Z\otimes Y)\otimes X)\ast C$};
      \node (TR) at (9,1)  {$(Z\otimes Y)\ast (X\ast C)$};
      \node (BL) at (0,-1) {$(Z\otimes (Y\otimes X))\ast C$};
      \node (BM) at (4.5,-1) {$Z\ast ((Y\otimes X)\ast C)$};
      \node (BR) at (9,-1) {$Z\ast (Y\ast (X\ast C)).$};
      \draw[->] (TL) to node[auto,labelsize](T) {$\delta_{X,Z\otimes Y,C}$} 
      (TR);
      \draw[->]  (TR) to node[auto,labelsize] {$\delta_{Y,Z,X\ast C}$} (BR);
      \draw[->]  (TL) to node[auto,swap,labelsize] {$\cong$} (BL);
      \draw[->]  (BL) to node[auto,labelsize] {$\delta_{Y\otimes X,Z,C}$} (BM);
      \draw[->] (BM) to node[auto,labelsize](B) {$Z\ast \delta_{X,Y,C}$} (BR);
\end{tikzpicture} 
\end{equation*}
We say that $(\cat{C},\ast,\varepsilon,\delta)$ is 
an oplax action of $\cat{M}$, or that $(\ast,\varepsilon,\delta)$ is 
an oplax action of $\cat{M}$ on $\cat{C}$.
\end{definition}

An oplax action $(\ast,\varepsilon,\delta)$ of $\cat{M}$
on $\cat{C}$ is 
called a \defemph{pseudo action}
(resp.~\defemph{strict action})
if both $\varepsilon$ and $\delta$ are natural isomorphisms
(resp.~identities).

The definition of model we derive from an oplax action is 
the following.

\begin{definition}
\label{def:action_model}
Let $\cat{M}=(\cat{M},I,\otimes)$ be a large monoidal category,
$\monoid{T}=(T,e,m)$ be a monoid object in $\cat{M}$,
$\cat{C}$ be a large category, and $\ast=(\ast,\varepsilon,\delta)$
be an oplax action of $\cat{M}$ on $\cat{C}$.
\begin{enumerate}
\item A \defemph{model of $\monoid{T}$ in $\cat{C}$ with respect to $\ast$}
is a pair $(C,\gamma)$ consisting of an object $C\in\cat{C}$
and a morphism $\gamma\colon T\ast C\longrightarrow C$ in $\cat{C}$
making the following diagrams commute:
\begin{equation*}
\begin{tikzpicture}[baseline=-\the\dimexpr\fontdimen22\textfont2\relax ]
      \node (TL) at (0,2)  {$I\ast C$};
      \node (TR) at (2,2)  {$T\ast C$};
      \node (BR) at (2,0) {${C}$};
      \draw[->]  (TL) to node[auto,labelsize] {$e\ast C$} (TR);
      \draw[->]  (TR) to node[auto,labelsize] {$\gamma$} (BR);
      \draw[->] (TL) to node[auto,swap,labelsize](B) {$\varepsilon_C$} (BR);
\end{tikzpicture} 
\qquad
\begin{tikzpicture}[baseline=-\the\dimexpr\fontdimen22\textfont2\relax ]
      \node (TL) at (0,2)  {$(T\otimes T)\ast C$};
      \node (TR) at (5,2)  {$T\ast C$};
      \node (BL) at (0,0) {$T\ast (T\ast C)$};
      \node (BM) at (3,0) {$T\ast C$};
      \node (BR) at (5,0) {${C}.$};
      \draw[->]  (TL) to node[auto,swap,labelsize] {$\delta_{T,T,C}$} (BL);
      \draw[->]  (TL) to node[auto,labelsize] {$m\ast C$} (TR);
      \draw[->]  (BL) to node[auto,labelsize] {$T\ast \gamma$} (BM);
      \draw[->]  (BM) to node[auto,labelsize] {$\gamma$} (BR);
      \draw[->]  (TR) to node[auto,labelsize](B) {$\gamma$} (BR);
\end{tikzpicture} 
\end{equation*}
\item Let $(C,\gamma)$ and $(C',\gamma')$ be models of $\monoid{T}$ in $\cat{C}$
with respect to $\ast$. A \defemph{homomorphism from $(C,\gamma)$ to 
$(C',\gamma')$}
is a morphism $f\colon C\longrightarrow C'$ in $\cat{C}$ making the 
following diagram commute:
\[
\begin{tikzpicture}[baseline=-\the\dimexpr\fontdimen22\textfont2\relax ]
      \node (TL) at (0,2)  {$T\ast C$};
      \node (TR) at (3,2)  {$T\ast C'$};
      \node (BL) at (0,0) {$C$};
      \node (BR) at (3,0) {$C'.$};
      \draw[->]  (TL) to node[auto,swap,labelsize] {$\gamma$} (BL);
      \draw[->]  (TL) to node[auto,labelsize] {$T\ast f$} (TR);
      \draw[->]  (BL) to node[auto,labelsize] {${f}$} (BR);
      \draw[->]  (TR) to node[auto,labelsize](B) {$\gamma'$} (BR);
\end{tikzpicture} 
\] 
\end{enumerate}
We denote the (large) category of models of $\monoid{T}$ in $\cat{C}$ with 
respect to $\ast$ by $\Mod{\monoid{T}}{(\cat{C},\ast)}$.
\end{definition}
The above definition is standard;
see e.g., \cite[Section~2.2]{Baez_Dolan_HDA3}.

\begin{example}
\label{ex:monad_standard_action}
Let $\cat{C}$ be a large category.
Recall that monads on $\cat{C}$ are precisely monoids in the monoidal category
$[\cat{C},\cat{C}]$
whose monoidal product is given by composition.
We have a strict action 
\[
\ast\colon [\cat{C},\cat{C}]\times\cat{C}\longrightarrow\cat{C}
\]
given by evaluation: $(X,C)\longmapsto XC$.

This clearly generates the definitions of 
Eilenberg--Moore algebra and 
homomorphism (Definition~\ref{def:EM_alg}).
\end{example}

\begin{example}
\label{ex:metamodel_S-operad}
Let $\cat{C}$ be a large category with finite limits and 
$\monoid{S}=(S,\eta,\mu)$
be a cartesian monad on $\cat{C}$. 
Recall that under these assumptions the slice category $\cat{C}/S1$
acquires a structure $(I,\otimes)$ of monoidal category, and
an $\monoid{S}$-operad is a monoid in $(\cat{C}/S1,I,\otimes)$.
Models of an $\monoid{S}$-operad and their homomorphisms
(Definition~\ref{def:S-operad_model}) were
introduced by using the pseudo action
\[
\ast\colon(\cat{C}/S1)\times \cat{C}\longrightarrow\cat{C}
\]
in the first place, and therefore are immediately an instance of the above
general definitions.
\end{example}

\subsubsection{The 2-category of oplax actions of $\cat{M}$}
For a monoidal category $\cat{M}$, 
we can define the 2-category of oplax actions of $\cat{M}$
(cf.~Definition~\ref{def:2-cat_of_enrichments}).

\begin{definition}
Let $\cat{M}=(\cat{M},I,\otimes)$ be a large monoidal category.
The (locally large) 2-category $\olAct{\cat{M}}$ of oplax actions of $\cat{M}$
is defined as follows:
\begin{itemize}
\item An object is an oplax action $(\cat{C},\ast,\varepsilon,\delta)$ of 
$\cat{M}$.
\item A 1-cell from $(\cat{C},\ast,\varepsilon,\delta)$ to 
$(\cat{C'},\ast',\varepsilon',\delta')$ is a functor $G\colon \cat{C}
\longrightarrow\cat{C'}$ together with a natural transformation
$(g_{X,C}\colon X\ast' GC\longrightarrow G(X\ast C))_{X\in\cat{M},C\in\cat{C}}$
making the following diagrams commute for all $X,Y\in\cat{M}$ and $C\in\cat{C}$:
\begin{equation*}
\begin{tikzpicture}[baseline=-\the\dimexpr\fontdimen22\textfont2\relax ]
      \node (TL) at (0,1)  {$I\ast' GC$};
      \node (TR) at (4,1)  {$G(I\ast C)$};
      \node (BR) at (4,-1) {$GC$};
      \draw[->] (TL) to node[auto,labelsize](T) {$g_{I,C}$} (TR);
      \draw[->] (TR) to node[auto,labelsize](R) {$G\varepsilon_C$} (BR);
      \draw[->]  (TL) to node[auto,labelsize,swap] (L) {$\varepsilon'_{GC}$} 
      (BR);
\end{tikzpicture} 
\end{equation*}
\begin{equation*}
\begin{tikzpicture}[baseline=-\the\dimexpr\fontdimen22\textfont2\relax ]
      \node (TL) at (0,1)  {$(Y\otimes X)\ast' GC$};
      \node (TR) at (8,1)  {$G((Y\otimes X)\ast C)$};
      \node (BL) at (0,-1) {$Y\ast'(X\ast'GC)$};
      \node (BM) at (4,-1) {$Y\ast'G(X\ast C)$};
      \node (BR) at (8,-1) {$G(Y\ast (X\ast C)).$};
      \draw[->] (TL) to node[auto,labelsize](T) {$g_{Y\otimes X,C}$} 
      (TR);
      \draw[->]  (TR) to node[auto,labelsize] {$G\delta_{X,Y,C}$} (BR);
      \draw[->]  (TL) to node[auto,swap,labelsize] {$\delta'_{X,Y,GC}$} (BL);
      \draw[->]  (BL) to node[auto,labelsize] {$Y\ast' g_{X,C}$} (BM);
      \draw[->] (BM) to node[auto,labelsize](B) {$g_{Y,X\ast C}$} (BR);
\end{tikzpicture} 
\end{equation*}
\item A 2-cell from $(G,g)$ to $(G',g')$, both from 
$(\cat{C},\ast,\varepsilon,\delta)$ to 
$(\cat{C'},\ast',\varepsilon',\delta')$,
is a natural transformation $\theta\colon G\Longrightarrow G'$
making the following diagram commute for all $X\in\cat{M}$
and $C\in\cat{C}$:
\begin{equation*}
\begin{tikzpicture}[baseline=-\the\dimexpr\fontdimen22\textfont2\relax ]
      \node (TL) at (0,2)  {$X\ast' GC$};
      \node (TR) at (4,2)  {$G(X\ast C)$};
      \node (BL) at (0,0) {$X\ast' G'C$};
      \node (BR) at (4,0) {$G'(X\ast C).$};
      \draw[->] (TL) to node[auto,labelsize](T) {$g_{X,C}$} 
      (TR);
      \draw[->]  (TR) to node[auto,labelsize] {$\theta_{X\ast C}$} (BR);
      \draw[->]  (TL) to node[auto,swap,labelsize] {$X\ast' \theta_C$} 
      (BL);
      \draw[->]  (BL) to node[auto,labelsize] {$g'_{X,C}$} (BR);
\end{tikzpicture} \qedhere
\end{equation*}
\end{itemize}
\end{definition}

Similarly as the case of enrichments, we may
extend the $\Mod{-}{-}$ construction into a 2-functor
\[
\Mod{-}{-}\colon \Mon{\cat{M}}^\op\times \olAct{\cat{M}}
\longrightarrow\tCAT.
\]

On the other hand, $\olAct{-}$ extends to a 2-functor 
in an apparently different manner than $\Enrich{-}$.
Namely, it is a 2-functor of type
\[
\olAct{-}\colon (\MonCATol)^\coop\longrightarrow \twoCAT,
\]
where $\MonCATol$ is the 2-category of large monoidal categories,
oplax monoidal functors and monoidal natural transformations.
The apparent discrepancy between functoriality of $\olAct{-}$
and $\Enrich{-}$
will be solved in Section~\ref{subsec:morphism_metatheory}.

We sketch the action of an oplax monoidal functor on an oplax action.
\begin{definition}
\label{def:action_of_oplax_on_oplax_action}
Let $\cat{M}=(\cat{M},I_\cat{M},\otimes_\cat{M})$ and 
$\cat{N}=(\cat{N},I_\cat{N},\otimes_\cat{N})$ be large monoidal categories,
$G=(G,g_\cdot,g)\colon\cat{N}\longrightarrow\cat{M}$ be an oplax monoidal 
functor,\footnote{Such an oplax functor consists of a functor 
$G\colon\cat{N}\longrightarrow\cat{M}$,
a morphism $g_\cdot\colon GI_\cat{N}\longrightarrow I_\cat{M}$
and a natural transformation $(g_{X,Y}\colon G(Y\otimes_\cat{N}X)
\longrightarrow GY\otimes_\cat{M} GX)_{X,Y\in\cat{N}}$ satisfying the suitable 
axioms.} 
$\cat{C}$ be a large category and $\ast=(\ast,\varepsilon,\delta)$ be an 
oplax action of $\cat{M}$ on $\cat{C}$.
We define the oplax action $G^\ast(\ast)=(\ast',\varepsilon',\delta')$ 
of $\cat{N}$ on $\cat{C}$ as follows:
\begin{itemize}
\item The functor 
$\ast'\colon\cat{N}\times\cat{C}\longrightarrow\cat{C}$
maps $(X,C)\in\cat{N}\times\cat{C}$ to $(GX)\ast C$.
\item The natural transformation $(\varepsilon'_C\colon 
I_\cat{N}\ast' C\longrightarrow C)_{C\in\cat{C}}$
is defined by $\varepsilon'_C=\varepsilon_C\circ (g_\cdot\ast C)$:
\[
\begin{tikzpicture}[baseline=-\the\dimexpr\fontdimen22\textfont2\relax ]
      \node (L) at (0,0)  {$GI_\cat{N}\ast C$};
      \node (M) at (2.5,0)  {$I_\cat{M}\ast C$};
      \node (R) at (4.5,0)  {${C}$.};
      \draw[->]  (L) to node[auto,labelsize] {$g_\cdot\ast C$} (M);
      \draw[->]  (M) to node[auto,labelsize] {$\varepsilon_C$} (R);
\end{tikzpicture} 
\]
\item The natural transformation $(\delta'_{X,Y,C}\colon 
(Y\otimes_\cat{N}X)\ast' C\longrightarrow Y\ast'(X\ast' 
C))_{X,Y\in\cat{N},C\in\cat{C}}$
is defined by $\delta'_{X,Y,C}=\delta_{GX,GY,C}\circ (g_{X,Y}\ast C)$:
\[
\begin{tikzpicture}[baseline=-\the\dimexpr\fontdimen22\textfont2\relax ]
      \node (L) at (0,0)  {$G(Y\otimes_\cat{N}X)\ast C$};
      \node (M) at (4.5,0)  {$(GY\otimes_\cat{M}GX)\ast C$};
      \node (R) at (9,0)  {$GY\ast(GX\ast{C})$.};
      \draw[->]  (L) to node[auto,labelsize] 
      {$g_{X,Y}\ast C$} (M);
      \draw[->]  (M) to node[auto,labelsize] {$\delta_{GX,GY,C}$} (R);
\end{tikzpicture} \qedhere
\]
\end{itemize}
\end{definition}

\subsection{The relation between enrichments and oplax actions}
\label{subsec:enrich_action_adjunction}
We have introduced two types of structures---enrichment and oplax action---to
formalise notions of model. 
The former captures the standard notions of model 
for clones, symmetric operads and non-symmetric operads,
whereas the latter captures those for
monads and generalised operads.
We will unify enrichment and oplax action by  
the notion of \emph{metamodel} in Section~\ref{subsec:metamodel},
but before doing so we remark on the relationship 
between them.
Though the results in this section will be subsumed by the theory of metamodels,
we believe that the following direct comparison of enrichments and oplax actions
would be more accessible to some readers.
We also explain why in some good cases we can give definition of model 
both in terms of enrichment and oplax actions;
for instances of this phenomenon in the literature,
see e.g., \cite[Section~3]{Kelly_coherence_lax_dist} and 
\cite[Section~6.4]{Leinster_book}.


\medskip

Let $\cat{M}=(\cat{M},I,\otimes)$ be a large monoidal category and 
$\cat{C}$ be a large category.
The relationship between enrichment and oplax action is summarised in
the adjunction
\begin{equation}
\label{eqn:adj_act_enrich}
\begin{tikzpicture}[baseline=-\the\dimexpr\fontdimen22\textfont2\relax ]
      \node (L) at (0,0)  {$\cat{M}$};
      \node (R) at (3,0)  {$\cat{C}$.};
      \draw[->,transform canvas={yshift=5pt}]  (L) to node[auto,labelsize] 
      {$-\ast C$} (R);
      \draw[<-,transform canvas={yshift=-5pt}]  (L) to 
      node[auto,swap,labelsize] 
      {$\enrich{C}{-}$} (R);
      \node[rotate=-90,labelsize] at (1.5,0)  {$\dashv$};
\end{tikzpicture} 
\end{equation}
In more detail, what we mean is the following.
Suppose that we have an enrichment $(\enrich{-}{-},j,M)$
of $\cat{C}$ over $\cat{M}$.
If, in addition, for each $C\in\cat{C}$ the functor $\enrich{C}{-}$
has a left adjoint as in (\ref{eqn:adj_act_enrich}),
then---by the parameter theorem 
for adjunctions; see \cite[Section~IV.7]{MacLane_CWM}---the left adjoints
canonically extend to a 
bifunctor $\ast\colon \cat{M}\times\cat{C}\longrightarrow\cat{C}$,
and $j$ and $M$ define appropriate natural transformations $\varepsilon$
and $\delta$, giving rise to an oplax action $(\ast,\varepsilon,\delta)$
of $\cat{M}$ on $\cat{C}$.
And vice versa, if we start from an oplax action.

To make this idea into a precise mathematical statement, let us introduce the 
following 2-categories.
\begin{definition}
\label{def:enrichr_olactl}
Let $\cat{M}$ be a large monoidal category. 
\begin{enumerate}
\item Let $\Enrichr{\cat{M}}$ be the full sub-2-category of $\Enrich{\cat{M}}$
consisting of all enrichments $(\cat{C},\enrich{-}{-},j,M)$ such that 
for each $C\in\cat{C}$, $\enrich{C}{-}$ is a right adjoint.
\item Let $\olActl{\cat{M}}$ be the full sub-2-category of $\olAct{\cat{M}}$
consisting of all oplax actions $(\cat{C},\ast,\varepsilon,\delta)$
such that for each $C\in\cat{C}$, $-\ast C$ is a left adjoint.\qedhere
\end{enumerate}
\end{definition}
The above discussion can be summarised into the
statement that the two 2-categories $\Enrichr{\cat{M}}$ and $\olActl{\cat{M}}$
are equivalent.
A direct proof of this equivalence would be essentially routine, 
but seems to involve rather lengthy calculation.
We shall defer a proof to Corollary~\ref{cor:enrichr_olactl_equiv}.

This observation is a variant of well-known categorical folklore.
In the literature, it is usually stated in a slightly more restricted form than 
the above, for example as a correspondence between tensored $\cat{M}$-categories
and closed pseudo actions of 
$\cat{M}$~\cite{Kelly_coherence_lax_dist,Gordon_Power,Lindner_enrich_module,Janelidze_Kelly}.
 
\medskip

Furthermore, the above correspondence is
compatible with the definitions of model (Definitions~\ref{def:enrich_model}
and \ref{def:action_model}).
Suppose that $(\cat{C},\enrich{-}{-},j,M)$ and 
$(\cat{C},\ast,\varepsilon,\delta)$
form a pair of an enrichment over $\cat{M}$ and an oplax action of $\cat{M}$
connected by the adjunctions (\ref{eqn:adj_act_enrich})
(in a way compatible with the natural transformations
$j,M,\varepsilon$ and $\delta$).
Then for any monoid object $\monoid{T}=(T,e,m)$ in $\cat{M}$ and any
object $C\in\cat{C}$,
a morphism 
\[
\chi\colon T\longrightarrow \enrich{C}{C}
\]
is a model of $\monoid{T}$ in $\cat{C}$ with respect to $\enrich{-}{-}$
(Definition~\ref{def:enrich_model})
if and only if its transpose under the adjunction $-\ast C\dashv \enrich{C}{-}$
\[
\gamma\colon T\ast C\longrightarrow C
\]
is a model of $\monoid{T}$ in $\cat{C}$ with respect to $\ast$
(Definition~\ref{def:action_model}),
and similarly for homomorphism between models of $\monoid{T}$.
Hence we obtain an isomorphism of categories 
\[
\Mod{\monoid{T}}{(\cat{C},\enrich{-}{-})}\cong\Mod{\monoid{T}}{(\cat{C},\ast)}
\]
commuting with the forgetful functors into $\cat{C}$.

Some of the enrichments and oplax actions we have introduced so far
are good enough to obtain the corresponding oplax actions or enrichments,
giving rise to alternative definitions of model.

\begin{example}
\label{ex:monad_on_cat_w_powers}
Let $\cat{C}$ be a locally small category with all small powers.
Recall the strict action
\[
\ast\colon[\cat{C},\cat{C}]\times\cat{C}\longrightarrow\cat{C}
\]
of the monoidal category $[\cat{C},\cat{C}]$ of endofunctors on $\cat{C}$
on $\cat{C}$, used to capture Eilenberg--Moore algebras of monads
on $\cat{C}$.
For any object $C\in\cat{C}$, write by $\name{C}\colon 1\longrightarrow\cat{C}$
the functor from the terminal category $1$ which maps the unique object of $1$
to $C\in\cat{C}$ ($\name{C}$ is sometimes called the \emph{name of $C$}).

By the assumptions on $\cat{C}$,
for any object $A\in\cat{C}$ the functor $-\ast A$ (which may be seen as
the precomposition by $\name{A}\colon 1\longrightarrow\cat{C}$) admits
a right adjoint $\enrich{A}{-}$, which maps any $B\in\cat{C}$ (equivalently,
$\name{B}\colon 1\longrightarrow\cat{C}$) to the right Kan extension 
$\enrich{A}{B}=\Ran_{\name{A}}\name{B}$ of $\name{B}$ along $\name{A}$.
The functor $\Ran_{\name{A}}\name{B}\colon\cat{C}\longrightarrow\cat{C}$
maps $C\in\cat{C}$ to $\Ran_{\name{A}}\name{B}(C)=B^{\cat{C}(C,A)}$.

For any object $C\in\cat{C}$, $\Ran_{\name{C}}\name{C}$
exists and becomes a monad on $\cat{C}$ in a canonical way (the \emph{codensity 
monad of $\name{C}$}).
For any monad $\monoid{T}$ on $\cat{C}$, to give a structure of an 
Eilenberg--Moore algebra on $C\in\cat{C}$ is equivalent to 
give a monad morphism from $\monoid{T}$ to $\Ran_{\name{C}}\name{C}$.
This observation is in e.g., \cite[Section~3]{Kelly_coherence_lax_dist}.

In particular, if we take $\cat{C}=\Set$,
we see that the above enrichment agrees with the one given 
in Example~\ref{ex:relative_alg}.
Hence the notion of relative algebra \cite{Hino} agrees with
that of Eilenberg--Moore algebra in this case. 
\end{example}

\section{Basic concepts}
\label{sec:framework_basic}
In the previous section, we have seen that for each notion of algebraic theory 
there exists a suitable monoidal category $\cat{M}$,
and algebraic theories in that notion of algebraic 
theory corresponds to monoid objects in $\cat{M}$.
We have also observed that suitable categorical structures to give 
definitions of model of algebraic theories (notions of model) 
may be formulated in terms of $\cat{M}$, 
either as enrichment over $\cat{M}$ or as
oplax action of $\cat{M}$.

Motivated by these observations, in this section
we shall define basic concepts of our unified framework for notions of 
algebraic theory. 

\subsection{Metatheories and theories}
\begin{definition}
A \defemph{metatheory} is a large
monoidal category $\cat{M}=(\cat{M},I,\otimes)$.
%
\end{definition}

Metatheories are intended to formalise notions of algebraic theory.
We remark that, in this thesis,
we leave the term \emph{notion of algebraic theory} informal
and will not give any mathematical definitions to it.

\begin{definition}
\label{def:theory}
Let $\cat{M}$ be a metatheory. 
A \defemph{theory in $\cat{M}$} is a monoid object
$\monoid{T}=(T,e,m)$ in $\cat{M}$.

We denote the category of theories in $\cat{M}$ by $\Th{\cat{M}}$,
which we define to be the same as $\Mon{\cat{M}}$,
the category of monoid objects in $\cat{M}$. 
\end{definition}

Theories formalise what we have been calling \emph{algebraic theories}.

\medskip

The above definitions simply renames well-known concepts.
Our hope is that, by using the terms which reflect our intention,
statements and discussions become easier to follow;
think of the terms such as \emph{generalised element} (which is synonymous
to morphism in a category) or \emph{map} (used by some authors to mean 
left adjoint in a bicategory)
which have been used with great benefit in the literature.

\subsection{Metamodels and models}
\label{subsec:metamodel}
In Sections~\ref{subsec:enrichment} and \ref{subsec:oplax_action},
we have seen that the standard notions of model for various notions of 
algebraic theory can be formalised either as enrichments or
as oplax actions. 
With \emph{two} definitions, however, we cannot claim to have 
formalised notions of model in a satisfactory way.
We now unify enrichments and oplax actions by introducing
a more general structure of 
\emph{metamodel} (of a metatheory).
We also derive a definition of models of theories and their homomorphisms
from a metamodel, and show that they generalise the corresponding 
definitions for enrichments and oplax actions.

\medskip

We may approach 
the concept of metamodel of a metatheory $\cat{M}$
in two different ways, one by generalising
enrichments over $\cat{M}$ and the other 
by generalising oplax actions of $\cat{M}$.
Before giving a formal (and neutral) definition of metamodel, we describe
these two perspectives.

\subsubsection{Metamodels as generalised enrichments}
Let us first discuss how a generalisation of enrichments over $\cat{M}$
leads to the notion of metamodel.
For this, we use a construction known as the 
\emph{Day convolution}~\cite{Day_thesis}.
Given any large monoidal category $\cat{M}=(\cat{M},I,\otimes)$,
this construction endows the presheaf category 
$\widehat{\cat{M}}=[\cat{M}^\op,\SET]$
with a (biclosed) monoidal structure $(\DayI,\Dayo)$,
in such a way that the Yoneda embedding 
$\cat{M}\longrightarrow\widehat{\cat{M}}$ canonically becomes strong monoidal.

\begin{definition}[\cite{Day_thesis}]
\label{def:Day_conv}
Let $\cat{M}=(\cat{M},I,\otimes)$ be a large monoidal category.
The \defemph{convolution monoidal structure} 
$(\DayI,\Dayo)$ on the presheaf category $\widehat{\cat{M}}
=[\cat{M}^\op,\SET]$ is defined as follows.
\begin{itemize}
\item The unit object $\DayI$ is the representable functor 
$\cat{M}(-,I)\colon \cat{M}^\op\longrightarrow\SET$.
\item Given $P,Q\in\widehat{\cat{M}}$, their monoidal product
$Q\Dayo P\colon \cat{M}^\op\longrightarrow\SET$ maps $Z\in\cat{M}$
to 
\begin{equation}
\label{eqn:Day_convolution}
(Q\Dayo P)(Z)=\int^{X,Y\in\cat{M}} \cat{M}(Z,Y\otimes X)
\times Q(Y)\times P(X).
\end{equation}
\qedhere
\end{itemize}
\end{definition}

For a metatheory $\cat{M}$, 
a \emph{metamodel of $\cat{M}$} is simply an enrichment over 
$\widehat{\cat{M}}=(\widehat{\cat{M}},\DayI,\Dayo)$\footnote{Although 
we have defined enrichment (Definition~\ref{def:enrichment})
only for large monoidal categories,
the definition does not depend on any size condition
and it is clear what we mean by enrichments over non-large monoidal categories, 
such as $\widehat{\cat{M}}$.}.
Thanks to the Yoneda embedding, it is immediate that 
every enrichment over $\cat{M}$ induces a metamodel of $\cat{M}$.

We can find several uses of $\widehat{\cat{M}}$-categories 
(in the sense of enriched category theory) in the literature.
In particular, \cite[Section~6]{Kelly_et_al_two_sides} and
\cite{Mellies_enriched_adjunction} contain 
discussions on relationship between $\widehat{\cat{M}}$-categories
and various actions of $\cat{M}$.

\subsubsection{Metamodels as generalised oplax actions}
Let us move on to the second perspective on metamodels,
namely as generalised oplax actions.
First note that an oplax action $(\cat{C},\ast,\varepsilon,\delta)$ 
of a large monoidal category $\cat{M}$
can be equivalently given as an oplax monoidal functor 
\begin{equation*}
\cat{M}\longrightarrow[\cat{C},\cat{C}]
\end{equation*}
defined by $X\longmapsto X\ast-$,
or as a colax functor
\begin{equation}
\label{eqn:oplax_action_as_oplax_functor}
\Sigma\cat{M}\longrightarrow\tCAT,
\end{equation}
where $\Sigma\cat{M}$ denotes $\cat{M}$ seen as a one-object 
bicategory~\cite{Benabou_bicat}.

To generalise this, 
we use the bicategory $\PROF$ of \emph{profunctors}
(also called \emph{distributors} or 
\emph{bimodules})~\cite{Benabou:dist-at-work,Lawvere_metric}.
The notion of profunctor will recur in this thesis.
\begin{definition}[\cite{Benabou:dist-at-work}]
\label{def:profunctor}
We define the bicategory $\PROF$ as follows.
\begin{itemize}
\item An object is a large category.
\item A 1-cell from $\cat{A}$ to $\cat{B}$ is a \defemph{profunctor from 
$\cat{A}$ to $\cat{B}$}, which we define to be 
a functor 
\[
H\colon \cat{B}^\op\times\cat{A}\longrightarrow\SET.
\]
We write $H\colon \cat{A}\pto\cat{B}$ if $H$ is a profunctor from $\cat{A}$
to $\cat{B}$.
The identity 1-cell on a large category $\cat{C}$
is the hom-functor $\cat{C}(-,-)$.
Given profunctors $H\colon \cat{A}\pto\cat{B}$ and $K\colon \cat{B}\pto\cat{C}$,
their composite 
$K\ptensor H\colon\cat{A}\pto\cat{C}$ maps $(C,A)\in\cat{C}^\op\times\cat{A}$
to 
\begin{equation}
\label{eqn:profunctor_composition}
(K\ptensor H)(C,A)=\int^{B\in\cat{B}} K(C,B)\times H(B,A).
\end{equation}
\item A 2-cell from $H$ to $H'$, both from $\cat{A}$ to $\cat{B}$,
is a natural transformation $\alpha\colon H\Longrightarrow H'
\colon\cat{B}^\op\times\cat{A}\longrightarrow\SET$.\qedhere
\end{itemize}
\end{definition}

It is well-known that 
both $\tCAT$ and $\tCAT^\coop$ canonically
embed into $\PROF$.
Both embeddings are identity-on-objects and locally fully faithful 
pseudofunctors.
The embedding
\[
(-)_\ast\colon\tCAT\longrightarrow\PROF
\]
maps a functor $F\colon\cat{A}\longrightarrow\cat{B}$
to the profunctor $F_\ast\colon\cat{A}\pto\cat{B}$ defined by 
$F_\ast(B,A)=\cat{B}(B,FA)$.
Note that, given functors $F\colon\cat{A}\longrightarrow\cat{B}$
and $G\colon\cat{B}\longrightarrow\cat{C}$,
\begin{align*}
(G_\ast \ptensor F_\ast)(C,A) &=\int^{B\in\cat{B}}\cat{C}(C,GB)\times
\cat{B}(B,FA) \\
&\cong \cat{C}(C,GFA)\\
&=(G\circ F)_\ast(C,A)
\end{align*}
by the Yoneda lemma.
The embedding
\[
(-)^\ast\colon \tCAT^\coop\longrightarrow\PROF
\]
maps a functor $F\colon\cat{A}\longrightarrow\cat{B}$ to 
the profunctor $F^\ast\colon\cat{B}\pto\cat{A}$
with $F^\ast(A,B)=\cat{B}(FA,B)$.
For any functor $F\colon\cat{A}\longrightarrow\cat{B}$,
we have an adjunction $F_\ast\dashv F^\ast$ in $\PROF$.

A \emph{metamodel of $\cat{M}$} is a colax functor 
\[
\Sigma\cat{M}\longrightarrow \PROF^\coop,
\]
or equivalently a lax functor 
\begin{equation}
\label{eqn:metamodel_as_lax_functor}
(\Sigma\cat{M})^\co=\Sigma(\cat{M}^\op)\longrightarrow \PROF^\op.
\end{equation}
Clearly, oplax actions of $\cat{M}$, in the form
(\ref{eqn:oplax_action_as_oplax_functor}),
give rise to metamodels of $\cat{M}$ by postcomposing the pseudofunctor 
$(-)^\ast$.

Let us restate what a lax functor of type (\ref{eqn:metamodel_as_lax_functor})
amounts to, in monoidal categorical terms. 
\begin{definition}
Let $\cat{C}$ be a large category. 
Define the monoidal category $[\cat{C}^\op\times \cat{C},\SET]
=([\cat{C}^\op\times\cat{C},\SET],\cat{C}(-,-),\ptensorrev)$ of
\defemph{endo-profunctors on $\cat{C}$}
to be the endo-hom-category $\PROF^\op(\cat{C},\cat{C})$.
More precisely:
\begin{itemize}
\item The unit object is the hom-functor
$\cat{C}(-,-)\colon \cat{C}^\op\times\cat{C}\longrightarrow\SET$.
\item Given $H,K\colon\cat{C}^\op\times\cat{C}\longrightarrow\SET$,
define their monoidal product $H\ptensorrev K$
to be the functor which maps $(A,C)\in\cat{C}^\op\times\cat{C}$ to
\[
(H\ptensorrev K)(A,C)=\int^{B\in\cat{C}}H(B,C)\times K(A,B).\qedhere
\]
\end{itemize}
\end{definition}
Note that $H\ptensorrev K\cong K\ptensor H$ (i.e., $\ptensorrev$ is 
``$\ptensor$ reversed'').

Using this monoidal structure on $[\cat{C}^\op\times\cat{C},\SET]$, a 
metamodel of $\cat{M}$ in a large category $\cat{C}$
may be written as a lax monoidal functor
\[
\cat{M}^\op\longrightarrow[\cat{C}^\op\times\cat{C},\SET].
\]

\subsubsection{The definition of metamodel}

\begin{definition}
\label{def:metamodel}
Let $\cat{M}=(\cat{M},\otimes, I)$ be a metatheory.
A \defemph{metamodel of $\cat{M}$} consists of:
\begin{itemize}
\item a large category $\cat{C}$;
\item a functor 
$\Phi\colon \cat{M}^\op\times\cat{C}^\op\times\cat{C}\longrightarrow\SET$
(whose action we write as $(X,A,B)\longmapsto \Phi_X(A,B)$);
\item a natural transformation $((\overline{\phi}_\cdot)_C\colon 
1\longrightarrow\Phi_I(C,C))_{C\in\cat{C}}$;
\item a natural transformation 
\[
((\overline{\phi}_{X,Y})_{A,B,C}\colon 
\Phi_Y(B,C)\times\Phi_X(A,B)
\longrightarrow\Phi_{Y\otimes X}(A,C))_{X,Y\in\cat{M},A,B,C\in\cat{C}},
\]
\end{itemize}
making the following diagrams commute 
for all $X,Y,Z\in\cat{M}$ and $A,B,C,D\in\cat{C}$:
\begin{equation*}
\begin{tikzpicture}[baseline=-\the\dimexpr\fontdimen22\textfont2\relax ]
      \node (TL) at (0,1)  {$1\times  \Phi_X(A,B)$};
      \node (TR) at (6,1)  {$\Phi_I(B,B)\times \Phi_X(A,B)$};
      \node (BL) at (0,-1) {$\Phi_X(A,B)$};
      \node (BR) at (6,-1) {$\Phi_{I\otimes X}(A,B)$};
      \draw[->] (TL) to node[auto,labelsize](T) 
      {$(\overline{\phi}_\cdot)_B\times \Phi_X(A,B)$} 
      (TR);
      \draw[->]  (TR) to node[auto,labelsize] 
      {$(\overline{\phi}_{X,I})_{A,B,B}$} (BR);
      \draw[->]  (TL) to node[auto,swap,labelsize] {$\cong$} 
      (BL);
      \draw[->]  (BL) to node[auto,labelsize] {$\cong$} (BR);
\end{tikzpicture} 
\end{equation*}
\begin{equation*}
\begin{tikzpicture}[baseline=-\the\dimexpr\fontdimen22\textfont2\relax ]
      \node (TL) at (0,1)  {$\Phi_X(A,B)\times 1$};
      \node (TR) at (6,1)  {$\Phi_X(A,B)\times\Phi_I(A,A)$};
      \node (BL) at (0,-1) {$\Phi_X(A,B)$};
      \node (BR) at (6,-1) {$\Phi_{X\otimes I}(A,B)$};
      \draw[->] (TL) to node[auto,labelsize](T) 
      {$\Phi_X(A,B)\times(\overline{\phi}_\cdot)_A$} 
      (TR);
      \draw[->]  (TR) to node[auto,labelsize] 
      {$(\overline{\phi}_{I,X})_{A,A,B}$} (BR);
      \draw[->]  (TL) to node[auto,swap,labelsize] {$\cong$} 
      (BL);
      \draw[->]  (BL) to node[auto,labelsize] {$\cong$} (BR);
\end{tikzpicture}
\end{equation*}
\begin{equation*}
\begin{tikzpicture}[baseline=-\the\dimexpr\fontdimen22\textfont2\relax ]
      \node (TL) at (0,2)  {$\big(\Phi_Z(C,D)\times \Phi_Y(B,C)\big)\times 
            \Phi_X(A,B)$};
      \node (TM) at (8,2)  {$\Phi_{Z\otimes Y}(B,D)\times \Phi_X(A,B)$};
      \node (TR) at (8,0)  {$\Phi_{(Z\otimes Y)\otimes X}(A,D)$};
      \node (BL) at (0,0) {$\Phi_Z(C,D)\times \big(\Phi_Y(B,C)\times 
                  \Phi_X(A,B)\big)$};
      \node (BM) at (0,-2) {$\Phi_Z(C,D)\times\Phi_{Y\otimes X}(A,C)$};
      \node (BR) at (8,-2) {$\Phi_{Z\otimes (Y\otimes X)}(A,D).$};
      \draw[->] (TL) to node[auto,labelsize](T) 
      {$(\overline{\phi}_{Y,Z})_{B,C,D}\times\Phi_X(A,B)$} 
      (TM);
      \draw[->] (TM) to node[auto,labelsize](T) 
      {$(\overline{\phi}_{X,Z\otimes Y})_{A,B,D}$} 
      (TR);
      \draw[->]  (TR) to node[auto,labelsize] {$\cong$} (BR);
      \draw[->]  (TL) to node[auto,swap,labelsize] {$\cong$} (BL);
      \draw[->]  (BL) to node[auto,swap,labelsize] {$\Phi_Z(C,D)\times 
      (\overline{\phi}_{X,Y})_{A,B,C}$} 
      (BM);
      \draw[->] (BM) to node[auto,labelsize](B) {$(\overline{\phi}_{Y\otimes 
      X,Z})_{A,C,D}$} (BR);
\end{tikzpicture} 
\end{equation*}
We say that $(\cat{C},\Phi,\overline{\phi}_\cdot,\overline{\phi})$ is a 
metamodel of $\cat{M}$,
or that $(\Phi,\overline{\phi}_\cdot,\overline{\phi})$ is a 
metamodel of $\cat{M}$ in $\cat{C}$.
\end{definition}

The above definition perfectly makes sense even if we replace 
the category $\SET$ of large sets by the category $\Set$
of small sets. 
Indeed, most of the naturally occurring notions of model 
can be captured by these ``small'' metamodels.
However, for later developments it turns out to be more 
convenient to define metamodels as above. 

Note that
we may replace $((\overline{\phi}_\cdot)_C)_{C\in\cat{C}}$ by 
\[
((j_C)_Z\colon \DayI(Z)\longrightarrow\Phi_{Z}(C,C))_{C\in\cat{C},Z\in\cat{M}}
\] 
and $((\overline{\phi}_{X,Y})_{A,B,C})_{X,Y\in\cat{M},A,B,C\in\cat{C}}$ by 
\[
((M_{A,B,C})_Z\colon (\Phi_{(-)}(B,C)\Dayo \Phi_{(-)}(A,B))(Z)
\longrightarrow \Phi_Z(A,C))_{A,B,C\in\cat{M},Z\in\cat{M}}.
\]
The axioms for metamodel then translate to the ones for 
enrichments (over $\widehat{\cat{M}}$).

On the other hand, we may also replace 
$((\overline{\phi}_\cdot)_C)_{C\in\cat{C}}$ by
\[
((\phi_\cdot)_{A,B}\colon\cat{C}(A,B)\longrightarrow\Phi_I(A,B))_{A,B\in\cat{C}}
\]
and $((\overline{\phi}_{X,Y})_{A,B,C})_{X,Y\in\cat{M},A,B,C\in\cat{C}}$ by
\[
((\phi_{X,Y})_{A,C}\colon (\Phi_Y\ptensorrev\Phi_X)(A,C)\longrightarrow
\Phi_{Y\otimes X}(A,C))_{A,C\in\cat{C},X,Y\in\cat{M}}.
\]
The axioms for metamodel then state that 
\[
(\Phi,\phi_\cdot,\phi)\colon(\cat{M}^\op,I,\otimes)\longrightarrow 
([\cat{C}^\op\times\cat{C},\SET],\cat{C}(-,-),\ptensorrev)
\]
is an oplax monoidal functor.

Hence the attempts to generalise enrichments and oplax actions mentioned above
coincide and both give rise to Definition~\ref{def:metamodel}.

\medskip

The definitions of model and homomorphism we derive from a metamodel 
are the following.
\begin{definition}
\label{def:metamodel_model}
Let $\cat{M}=(\cat{M},I,\otimes)$ be a metatheory, 
$\monoid{T}=(T,e,m)$ be a theory in $\cat{M}$, 
$\cat{C}$ be a large category and 
$\Phi=(\Phi,\overline{\phi}_\cdot,\overline{\phi})$ 
be a metamodel of $\cat{M}$ in $\cat{C}$.
\begin{enumerate}
\item A \defemph{model of $\monoid{T}$ in $\cat{C}$ with respect to $\Phi$}
is a pair $(C,\xi)$ consisting of an object $C$ of $\cat{C}$
and an element $\xi\in \Phi_T(C,C)$ such that 
$(\Phi_e)_{C,C}(\xi)=(\overline{\phi}_\cdot)_{C}(\ast)$ (where $\ast$
is the unique element of $1$)
and $(\Phi_m)_{C,C}(\xi)=(\overline{\phi}_{T,T})_{C,C,C}(\xi,\xi)$:
\[
\begin{tikzpicture}[baseline=-\the\dimexpr\fontdimen22\textfont2\relax ]
      \node(11) at (0,1.5) {$\Phi_T(C,C)$};
      \node(12) at (2,0) {$\Phi_I(C,C)$};
      \node(21) at (4,1.5) {$1$};
      
      \draw [->]  (11) to node [auto,swap,labelsize]{$(\Phi_e)_{C,C}$} (12);
      \draw [->]  (21) to node [auto,labelsize]{$(\overline{\phi}_\cdot)_{C}$} 
      (12);
\end{tikzpicture}
\quad
\begin{tikzpicture}[baseline=-\the\dimexpr\fontdimen22\textfont2\relax ]
      \node(11) at (0,1.5) {$\Phi_T(C,C)$};
      \node(12) at (2,0) {$\Phi_{T\otimes T}(C,C).$};
      \node(21) at (4,1.5) {$\Phi_T(C,C)\times \Phi_T(C,C)$};
      
      \draw [->]  (11) to node [auto,swap,labelsize]{$(\Phi_m)_{C,C}$} (12);
      \draw [->]  (21) to node 
      [auto,labelsize]{$(\overline{\phi}_{T,T})_{C,C,C}$} (12);
\end{tikzpicture}
\] 
\item Let $(C,\xi)$ and $(C',\xi')$ be models of $\monoid{T}$ in $\cat{C}$
with respect to $\Phi$. A \defemph{homomorphism from $(C,\xi)$ to $(C',\xi')$}
is a morphism $f\colon C\longrightarrow C'$ in $\cat{C}$ such that
$\Phi_T(C,f)(\xi)=\Phi_T(f,C')(\xi')$:
\[
\begin{tikzpicture}[baseline=-\the\dimexpr\fontdimen22\textfont2\relax ]
      \node(11) at (0,1.5) {$\Phi_T(C,C)$};
      \node(12) at (2,0) {$\Phi_T(C,C').$};
      \node(21) at (4,1.5) {$\Phi_T(C',C')$};
      
      \draw [->]  (11) to node [auto,swap,labelsize]{$\Phi_T(C,f)$} (12);
      \draw [->]  (21) to node [auto,labelsize]{$\Phi_T(f,C')$} (12);
\end{tikzpicture}
\] 
\end{enumerate}
We denote the (large) category of models of $\monoid{T}$ in $\cat{C}$ with 
respect to 
$\Phi$ by $\Mod{\monoid{T}}{(\cat{C},\Phi)}$.
\end{definition}

\begin{example}
\label{ex:enrichment_as_metamodel}
Let $\cat{M}=(\cat{M},I,\otimes)$ be a metatheory,
$\cat{C}$ be a large category and 
$(\enrich{-}{-},j,M)$ be an enrichment of $\cat{C}$ over $\cat{M}$.
This induces a metamodel $(\Phi,\overline{\phi}_\cdot,\overline{\phi})$ 
of $\cat{M}$ in $\cat{C}$ as follows.
\begin{itemize}
\item The functor 
$\Phi\colon\cat{M}^\op\times\cat{C}^\op\times\cat{C}\longrightarrow\SET$
maps $(X,A,B)\in \cat{M}^\op\times\cat{C}^\op\times\cat{C}$
to 
\[
\Phi_X(A,B)=\cat{M}(X,\enrich{A}{B}).
\]
\item For each $C\in\cat{C}$, $(\overline{\phi}_\cdot)_C\colon 
1\longrightarrow\Phi_I(C,C)$
is the name of $j_C$
(i.e., $(\overline{\phi}_\cdot)_C$ maps the unique element of the singleton 
$1$  to ${j_C}$).
\item For each $A,B,C\in\cat{C}$ and $X,Y\in\cat{M}$, the function
$(\overline{\phi}_{X,Y})_{A,B,C}\colon 
\Phi_Y(B,C)\times\Phi_X(A,B)
\longrightarrow\Phi_{Y\otimes X}(A,C)$
maps $g\colon Y\longrightarrow\enrich{B}{C}$ and $f\colon 
X\longrightarrow\enrich{A}{B}$ to 
\[
\begin{tikzpicture}[baseline=-\the\dimexpr\fontdimen22\textfont2\relax ]
      \node (L) at (0,0)  {$Y\otimes X$};
      \node (M) at (3,0)  {$\enrich{B}{C}\otimes \enrich{A}{B}$};
      \node (R) at (6,0) {$\enrich{A}{C}.$};
      \draw[->] (L) to node[auto,labelsize](T) {$g\otimes f$} 
      (M);
      \draw[->]  (M) to node[auto,labelsize] {$M_{A,B,C}$} (R);
\end{tikzpicture}
\]
\end{itemize}

The definition of model and homomorphism (Definition~\ref{def:enrich_model})
we derive from an enrichment may be seen as 
a special case of the 
corresponding definition (Definition~\ref{def:metamodel_model})
for metamodel.
\end{example}

\begin{example}
\label{ex:oplax_action_as_metamodel}
Let $\cat{M}=(\cat{M},I,\otimes)$ be a metatheory,
$\cat{C}$ be a large category and 
$(\ast,\varepsilon,\delta)$ be an oplax action of $\cat{M}$ on $\cat{C}$.
This induces a metamodel $(\Phi,\overline{\phi}_\cdot,\overline{\phi})$ 
of $\cat{M}$ in $\cat{C}$ as follows.
\begin{itemize}
\item The functor 
$\Phi\colon\cat{M}^\op\times\cat{C}^\op\times\cat{C}\longrightarrow\SET$
maps $(X,A,B)\in \cat{M}^\op\times\cat{C}^\op\times\cat{C}$
to 
\[
\Phi_X(A,B)=\cat{C}(X\ast A,{B}).
\]
\item For each $C\in\cat{C}$, $(\overline{\phi}_\cdot)_C\colon 
1\longrightarrow\Phi_I(C,C)$
is the name of $\varepsilon_C$.
\item For each $A,B,C\in\cat{C}$ and $X,Y\in\cat{M}$, the function
$(\overline{\phi}_{X,Y})_{A,B,C}\colon 
\Phi_Y(B,C)\times\Phi_X(A,B)
\longrightarrow\Phi_{Y\otimes X}(A,C)$
maps $g\colon Y\ast{B}\longrightarrow{C}$ and $f\colon 
X\ast{A}\longrightarrow{B}$ to 
\[
\begin{tikzpicture}[baseline=-\the\dimexpr\fontdimen22\textfont2\relax ]
      \node (L) at (-0.5,0)  {$(Y\otimes X)\ast A$};
      \node (M) at (3,0)  {$Y\ast(X\ast A)$};
      \node (R) at (6,0) {$Y\ast B$};
      \node (RR) at (8,0) {$C.$};
      \draw[->] (L) to node[auto,labelsize](T) {$\delta_{X,Y,A}$} 
      (M);
      \draw[->]  (M) to node[auto,labelsize] {$Y\ast f$} (R);
      \draw[->]  (R) to node[auto,labelsize] {$g$} (RR);
\end{tikzpicture}
\]
\end{itemize}

The definition of model and homomorphism 
(Definition~\ref{def:action_model})
we derive from an oplax action may be seen as a special case of the
corresponding definition (Definition~\ref{def:metamodel_model})
for metamodel.
\end{example}

\subsubsection{The 2-category of metamodels}
Metamodels of a metatheory naturally form a 2-category,
just like enrichments and oplax actions do.

\begin{definition}
Let $\cat{M}=(\cat{M},I,\otimes)$ be a metatheory.
We define the (locally large) 2-category $\MtMod{\cat{M}}$ of 
metamodels of $\cat{M}$ as follows.
\begin{itemize}
\item An object is a metamodel 
$(\cat{C},\Phi,\overline{\phi}_\cdot,\overline{\phi})$ of $\cat{M}$.
\item A 1-cell from $(\cat{C},\Phi,\overline{\phi}_\cdot,\overline{\phi})$ to 
$(\cat{C'},\Phi',\overline{\phi'}_\cdot,\overline{\phi'})$
is a functor $G\colon \cat{C}\longrightarrow \cat{C'}$
together with a natural transformation $(g_{X,A,B}\colon 
\Phi_X(A,B)\longrightarrow \Phi'_X(GA,GB))_{X\in\cat{M},A,B\in\cat{C}}$ 
making the following 
diagrams commute for all 
$X,Y\in\cat{M}$ and $A,B,C\in\cat{C}$:
\begin{equation*}
\begin{tikzpicture}[baseline=-\the\dimexpr\fontdimen22\textfont2\relax ]
      \node (TL) at (0,1)  {$1$};
      \node (TR) at (4,1)  {$\Phi_I(C,C)$};
      \node (BR) at (4,-1) {$\Phi'_I(GC,GC)$};
      \draw[->] (TL) to node[auto,labelsize](T) {$(\overline{\phi}_\cdot)_{C}$} 
      (TR);
      \draw[->]  (TR) to node[auto,labelsize] {$g_{I,C,C}$} (BR);
      \draw[->]  (TL) to node[auto,swap,labelsize] 
      {$(\overline{\phi'}_\cdot)_{GC}$} 
      (BR);
\end{tikzpicture} 
\end{equation*}
\begin{equation*}
\begin{tikzpicture}[baseline=-\the\dimexpr\fontdimen22\textfont2\relax ]
      \node (TL) at (0,1)  {$\Phi_Y(B,C)\times \Phi_X(A,B)$};
      \node (TR) at (8,1)  {$\Phi_{Y\otimes X}(A,C)$};
      \node (BL) at (0,-1) {$\Phi'_Y(GB,GC)\times \Phi'_X(GA,GB)$};
      \node (BR) at (8,-1) {$\Phi'_{Y\otimes X}(GA,GC).$};
      \draw[->] (TL) to node[auto,labelsize](T) 
      {$(\overline{\phi}_{X,Y})_{A,B,C}$} 
      (TR);
      \draw[->]  (TR) to node[auto,labelsize] {$g_{Y\otimes X,A,C}$} (BR);
      \draw[->]  (TL) to node[auto,swap,labelsize] {$g_{Y,B,C}\times 
      g_{X,A,B}$} 
      (BL);
      \draw[->]  (BL) to node[auto,labelsize] 
      {$(\overline{\phi'}_{X,Y})_{GA,GB,GC}$} (BR);
\end{tikzpicture} 
\end{equation*}
\item A 2-cell from $(G,g)$ to $(G',g')$, both from 
$(\cat{C},\Phi,\overline{\phi}_\cdot,\overline{\phi})$ to 
$(\cat{C'},\Phi',\overline{\phi'}_\cdot,\overline{\phi'})$,
is a natural transformation $\theta\colon G\Longrightarrow G'$
making the following diagram commute for all $X\in\cat{M}$
and $A,B\in\cat{C}$:
\begin{equation*}
\begin{tikzpicture}[baseline=-\the\dimexpr\fontdimen22\textfont2\relax ]
      \node (TL) at (0,2)  {$\Phi_X(A,B)$};
      \node (TR) at (6,2)  {$\Phi'_X(GA,GB)$};
      \node (BL) at (0,0) {$\Phi'_X(G'A,G'B)$};
      \node (BR) at (6,0) {$\Phi'_X(GA,G'B).$};
      \draw[->] (TL) to node[auto,labelsize](T) {$g_{X,A,B}$} 
      (TR);
      \draw[->]  (TR) to node[auto,labelsize] {$\Phi'_X(GA,\theta_B)$} (BR);
      \draw[->]  (TL) to node[auto,swap,labelsize] {$g'_{X,A,B}$} 
      (BL);
      \draw[->]  (BL) to node[auto,labelsize] {$\Phi'_X(\theta_A,G'B)$} (BR);
\end{tikzpicture} \qedhere
\end{equation*}
\end{itemize}
\end{definition}

Recall that for a functor (resp.~a 2-functor) 
$F\colon \cat{A}\longrightarrow\cat{B}$,
the \defemph{essential image of $F$} is the full subcategory
(resp.~full sub-2-category) of $\cat{B}$
consisting of all objects $B\in\cat{B}$ such that there exists an object 
$A\in\cat{A}$ and an isomorphism $FA\cong B$.
If $\cat{A}$ is a large category, a contravariant presheaf
$\cat{A}^\op\longrightarrow\SET$ 
(resp.~a covariant presheaf $\cat{A}\longrightarrow\SET$) 
over $\cat{A}$ is 
called \defemph{representable} if and only if it is 
in the essential image of the Yoneda embedding 
$\cat{A}\longrightarrow[\cat{A}^\op,\SET]$
(resp.~$\cat{A}\longrightarrow [\cat{A},\SET]^\op$).

\begin{proposition}
\label{prop:enrichment_as_metamodel}
Let $\cat{M}$ be a metatheory.
The construction given in Example~\ref{ex:enrichment_as_metamodel}
canonically extends to a fully faithful 2-functor
\[
\Enrich{\cat{M}}\longrightarrow\MtMod{\cat{M}}.
\]
A metamodel $(\cat{C},\Phi,\overline{\phi}_\cdot,\overline{\phi})$
of $\cat{M}$ is in the essential image of this 2-functor if and only if 
for each $A,B\in\cat{C}$, the functor 
\[
\Phi_{(-)}(A,B)\colon\cat{M}^\op\longrightarrow\SET
\]
is representable.
\end{proposition}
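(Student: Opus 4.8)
The plan is to recognise the construction of Example~\ref{ex:enrichment_as_metamodel} as a \emph{change of base} along the Yoneda embedding. Recall from the discussion preceding Definition~\ref{def:metamodel} that a metamodel of $\cat{M}$ is precisely an enrichment over the presheaf category $\widehat{\cat{M}}=(\widehat{\cat{M}},\DayI,\Dayo)$ with the convolution monoidal structure; unwinding the 1-cells and 2-cells of $\MtMod{\cat{M}}$ against those of $\Enrich{\widehat{\cat{M}}}$ shows that these two 2-categories are in fact isomorphic, the hom-object attached to $(A,B)$ being the presheaf $\Phi_{(-)}(A,B)$. The Yoneda embedding $y\colon\cat{M}\longrightarrow\widehat{\cat{M}}$ is fully faithful and, with respect to $(\DayI,\Dayo)$, strong monoidal. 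First I would observe that the assignment of Example~\ref{ex:enrichment_as_metamodel} is exactly the object part of the base-change construction of Definition~\ref{def:action_of_lax_on_enrichment} applied to $y$: indeed $y\enrich{A}{B}=\cat{M}(-,\enrich{A}{B})$, whose value at $X$ is $\cat{M}(X,\enrich{A}{B})=\Phi_X(A,B)$, and the induced unit and multiplication transformations match $\overline{\phi}_\cdot$ and $\overline{\phi}$ on the nose. (Here $\widehat{\cat{M}}$ is not large, but the constructions of Definitions~\ref{def:enrichment} and \ref{def:action_of_lax_on_enrichment} are visibly independent of size.) On 1-cells and 2-cells this 2-functor, which I write $\Enrich{y}$, sends $(G,g)$ to $(G,yg)$ and $\theta$ to $\theta$, providing the required extension.

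For full faithfulness I would prove the general fact that $\Enrich{F}$ is a fully faithful 2-functor whenever $F$ is a fully faithful strong monoidal functor, and then specialise to $F=y$. Concretely, the hom-functor $\Enrich{\cat{M}}(\mathcal{E},\mathcal{E}')\longrightarrow\Enrich{\cat{N}}(\Enrich{F}\mathcal{E},\Enrich{F}\mathcal{E}')$ sends $(G,g)\mapsto(G,Fg)$ and acts as the identity on the underlying $\theta$. Fullness of $F$ produces, from any 1-cell $(G,h)$ between the images, components $g_{A,B}$ with $Fg_{A,B}=h_{A,B}$, uniquely by faithfulness; naturality of $g$ and the compatibility of $(G,g)$ with the unit $j$ and multiplication $M$ then follow by applying $F$ to the candidate $\cat{M}$-identities and cancelling, which is legitimate because faithfulness lets us drop $F$ and the structural constraints of $F$ appearing in Definition~\ref{def:action_of_lax_on_enrichment} are invertible. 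The same cancellation shows that the 2-cell condition in $\Enrich{\cat{N}}$ holds if and only if the corresponding one in $\Enrich{\cat{M}}$ does, so the hom-functor is bijective on objects and on morphisms, hence an isomorphism of categories.

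For the essential image, the ``only if'' direction is immediate, since the hom-objects of $\Enrich{y}(\cat{C},\enrich{-}{-},j,M)$ are the representables $y\enrich{A}{B}$. For the ``if'' direction I would start from a metamodel with each $\Phi_{(-)}(A,B)\colon\cat{M}^\op\longrightarrow\SET$ representable, choose representing objects $\enrich{A}{B}\in\cat{M}$ with isomorphisms $y\enrich{A}{B}\cong\Phi_{(-)}(A,B)$, and use full faithfulness of $y$ to lift the curried functor $\Phi_{(-)}\colon\cat{C}^\op\times\cat{C}\longrightarrow\widehat{\cat{M}}$ uniquely to a functor $\enrich{-}{-}\colon\cat{C}^\op\times\cat{C}\longrightarrow\cat{M}$. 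The unit and multiplication of the metamodel, read as $\widehat{\cat{M}}$-morphisms $\DayI\to y\enrich{C}{C}$ and $y\enrich{B}{C}\Dayo y\enrich{A}{B}\to y\enrich{A}{C}$, transport through the strong monoidal constraints $\DayI\cong yI$ and $y\enrich{B}{C}\Dayo y\enrich{A}{B}\cong y(\enrich{B}{C}\otimes\enrich{A}{B})$ and full faithfulness of $y$ to morphisms $j_C\colon I\to\enrich{C}{C}$ and $M_{A,B,C}\colon\enrich{B}{C}\otimes\enrich{A}{B}\to\enrich{A}{C}$ in $\cat{M}$; the enrichment axioms for $(j,M)$ follow from those for the metamodel by the same cancellation, and by construction $\Enrich{y}$ of the resulting enrichment is isomorphic to the original metamodel.

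The main obstacle I anticipate is purely bookkeeping: verifying that the enrichment axioms and the 1-cell and 2-cell compatibility conditions really do transfer back and forth across $y$. All of this rests on the two properties that $y$ is fully faithful, so that morphisms and equations between representables in $\widehat{\cat{M}}$ reflect uniquely to $\cat{M}$, and strong monoidal, so that $\DayI$ and $\Dayo$ of representables are canonically represented and the coherence isomorphisms may be cancelled. There is also a minor size subtlety, since $\widehat{\cat{M}}=[\cat{M}^\op,\SET]$ is not large and so is not literally an object of $\MonCAT$; this is harmless because the definitions of enrichment and of the base-change construction use no size hypothesis.
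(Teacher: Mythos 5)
Your proposal is correct, but it is organised quite differently from the paper's own proof, and the comparison is instructive. The paper argues directly at the level of the metamodel data: for the essential image it chooses representing objects $\enrich{A}{B}$ and isomorphisms $\alpha_{A,B}\colon\cat{M}(-,\enrich{A}{B})\longrightarrow\Phi_{(-)}(A,B)$, defines $M_{A,B,C}$ as the image of the pair $(\id{\enrich{B}{C}},\id{\enrich{A}{B}})$ under a composite built from $\overline{\phi}$ and the $\alpha$'s, and then spends the second half of the proof on an explicit naturality/element-chasing argument showing that the whole family $(\overline{\phi}_{X,Y})_{A,B,C}$ is recovered from $M_{A,B,C}$ via $(g,f)\longmapsto M_{A,B,C}\circ(g\otimes f)$, so that the induced metamodel is isomorphic to the original; full faithfulness is dismissed as a ``standard argument using the Yoneda lemma.'' You instead route everything through the isomorphism $\MtMod{\cat{M}}\cong\Enrich{\widehat{\cat{M}}}$ (which the paper only records afterwards, as Proposition~\ref{prop:metatheory_as_enrichment}, without proof --- there is no circularity, since that identification is an independent unwinding of definitions, but proving it is an extra obligation your route incurs), identify the construction of Example~\ref{ex:enrichment_as_metamodel} as base change along the strong monoidal, fully faithful Yoneda embedding in the sense of Definition~\ref{def:action_of_lax_on_enrichment}, and isolate a reusable lemma: $\Enrich{F}$ is fully faithful whenever $F$ is fully faithful and strong monoidal, proved by the cancellation argument you describe (which genuinely needs the constraints $f_\cdot$, $f_{X,Y}$ to be invertible, so strongness is essential there). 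What your packaging buys is that the paper's closing element chase becomes automatic: since $j$ and $M$ are \emph{defined} by transporting the unit and the Day-convolution-form multiplication across the isomorphisms $\alpha$ and the constraints $\DayI\cong yI$, $y\enrich{B}{C}\Dayo y\enrich{A}{B}\cong y(\enrich{B}{C}\otimes\enrich{A}{B})$, the $\alpha$'s tautologically assemble into an isomorphism of metamodels. What the paper's direct proof buys is self-containedness: it never needs enrichments over the huge monoidal category $\widehat{\cat{M}}$ mid-proof (you correctly flag, as the paper does elsewhere, that the definitions are size-independent, but the paper's argument simply avoids the issue), and it exhibits the concrete formulas that Example~\ref{ex:enrichment_as_metamodel} is built from.
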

\begin{proof}
The construction of the 
2-functor $\Enrich{\cat{M}}\longrightarrow\MtMod{\cat{M}}$
is straightforward. 
The rest can also be proved by a standard argument using the Yoneda lemma.
We sketch the argument below.

Let us focus on the characterisation of the essential image.
Suppose that $(\cat{C},\Phi,\overline{\phi}_\cdot,\overline{\phi})$ is a 
metamodel of $\cat{M}$ such that
for each $A,B\in\cat{C}$, the functor $\Phi_{(-)}(A,B)$
is representable.
From such a metamodel we obtain an enrichment $(\enrich{-}{-},j,M)$ of 
$\cat{C}$ over $\cat{M}$ as follows. 
For each $A,B\in\cat{C}$, choose an object $\enrich{A}{B}\in\cat{M}$ and an 
isomorphism 
$\alpha_{A,B}\colon\cat{M}(-,\enrich{A}{B})\longrightarrow\Phi_{(-)}(A,B)$.
By functoriality of $\Phi$, $\enrich{-}{-}$ uniquely extends to a 
functor of type 
$\cat{C}^\op\times\cat{C}\longrightarrow\cat{M}$ while making 
$(\alpha_{A,B})_{A,B\in\cat{C}}$ natural.
For each $C\in\cat{C}$, $(\overline{\phi}_\cdot)_C\colon 1\longrightarrow
\Phi_I(C,C)\cong \cat{M}(I,\enrich{C}{C})$ gives rise to a 
morphism $j_C\colon I\longrightarrow\enrich{C}{C}$ in $\cat{M}$. 
For each $A,B,C\in\cat{M}$, consider the function
\[
\begin{tikzpicture}[baseline=-\the\dimexpr\fontdimen22\textfont2\relax ]
      \node (TT) at (0,4.5)  
      {$\cat{M}(\enrich{B}{C},\enrich{B}{C})\times\cat{M}(\enrich{A}{B},
      \enrich{A}{B})$};
      \node (T) at (0,3)  
      {$\Phi_{\enrich{B}{C}}(B,C)\times\Phi_\enrich{A}{B}(A,B)$};
      \node (L) at (0,1.5)  {$\Phi_{\enrich{B}{C}\otimes\enrich{A}{B}}(A,C)$};
      \node (LL) at (0,0) {$\cat{M}(\enrich{B}{C}\otimes\enrich{A}{B}, 
      \enrich{A}{C}).$};
      \draw[->] (T) to node[auto,labelsize]
      {$(\overline{\phi}_{\enrich{A}{B},\enrich{B}{C}})_{A,B,C}$} 
      (L);
      \draw[->] (TT) to node[auto,labelsize]{$(\alpha_{B,C})_\enrich{B}{C}
      \times(\alpha_{A,B})_{\enrich{A}{B}}$} (T);
      \draw[->] (L) to 
      node[auto,labelsize]{$(\alpha_{A,C})_{\enrich{B}{C}
      \otimes\enrich{A}{B}}^{-1}$}
       (LL);
\end{tikzpicture}
\]
Let the image of $(\id{\enrich{B}{C}},\id{\enrich{A}{B}})$ 
under this function be $M_{A,B,C}\colon 
\enrich{B}{C}\otimes\enrich{A}{B}\longrightarrow\enrich{A}{C}$.
The axioms of metamodel then shows that $(\enrich{-}{-},j,M)$ is an enrichment.

Moreover, if we consider the metamodel induced from this enrichment 
(see Example~\ref{ex:enrichment_as_metamodel}), then it is 
isomorphic to our original 
$(\cat{C},\Phi,\overline{\phi}_\cdot,\overline{\phi})$.
In particular, for each $X,Y\in\cat{M}$ and $A,B,C\in\cat{C}$, 
the function $(\overline{\phi}_{X,Y})_{A,B,C}$ is completely
determined by $M_{A,B,C}$, as in 
Example~\ref{ex:enrichment_as_metamodel}.
To see this, note that for each $f\in\cat{M}(X,\enrich{A}{B})$
and $g\in\cat{M}(Y,\enrich{B}{C})$, the diagram 
\[
\begin{tikzpicture}[baseline=-\the\dimexpr\fontdimen22\textfont2\relax ]
      \node (TT1) at (7.5,4.5)  
      {$\cat{M}(Y,\enrich{B}{C})\times\cat{M}(X,\enrich{A}{B})$};
      \node (T1) at (7.5,3)  
      {$\Phi_Y(B,C)\times\Phi_X(A,B)$};
      \node (L1) at (7.5,1.5)  
      {$\Phi_{Y\otimes X}(A,C)$};
      \node (LL1) at (7.5,0) {$\cat{M}(\enrich{B}{C}\otimes\enrich{A}{B}, 
      \enrich{A}{C})$};
      \node (TT) at (0,4.5)  
      {$\cat{M}(\enrich{B}{C},\enrich{B}{C})\times\cat{M}(\enrich{A}{B},
      \enrich{A}{B})$};
      \node (T) at (0,3)  
      {$\Phi_{\enrich{B}{C}}(B,C)\times\Phi_\enrich{A}{B}(A,B)$};
      \node (L) at (0,1.5)  {$\Phi_{\enrich{B}{C}\otimes\enrich{A}{B}}(A,C)$};
      \node (LL) at (0,0) {$\cat{M}(\enrich{B}{C}\otimes\enrich{A}{B}, 
      \enrich{A}{C})$};
      \draw[->] (T) to node[auto,swap,labelsize]
      {$(\overline{\phi}_{\enrich{A}{B},\enrich{B}{C}})_{A,B,C}$} 
      (L);
      \draw[->] (TT) to 
      node[auto,swap,labelsize]{$\alpha
      \times\alpha$} (T);
      \draw[->] (L) to 
      node[auto,swap,labelsize]{$\alpha^{-1}$}
       (LL);
      \draw[->] (T1) to node[auto,labelsize]
      {$(\overline{\phi}_{X,Y})_{A,B,C}$} 
      (L1);
      \draw[->] (TT1) to 
      node[auto,labelsize]{$\alpha \times\alpha$} (T1);
      \draw[->] (L1) to 
      node[auto,labelsize]{$\alpha^{-1}$}
       (LL1);
      \draw[->] (TT) to node[auto,labelsize]
      {$\cat{M}(g,\id{})\times\cat{M}(f,\id{})$} 
      (TT1);
      \draw[->] (T) to node[auto,labelsize]
      {$\Phi_g(B,C)\times\Phi_f(A,B)$} 
      (T1);
      \draw[->] (L) to node[auto,labelsize]
      {$\Phi_{g\otimes f}(A,C)$} 
      (L1);
      \draw[->] (LL) to node[auto,labelsize]
      {$\cat{M}(g\otimes f,\id{})$} 
      (LL1);
\end{tikzpicture}
\]
commutes. Hence by chasing the element $(\id{\enrich{B}{C}},\id{\enrich{A}{B}})$
in the top left set, we observe that (modulo the isomorphisms $\alpha$)
$(g,f)$ is mapped by $(\overline{\phi}_{X,Y})_{A,B,C}$ to 
$M_{A,B,C}\circ (g\otimes f)$.
\end{proof}

\begin{proposition}
\label{prop:oplax_action_as_metamodel}
Let $\cat{M}$ be a metatheory.
The construction given in Example~\ref{ex:oplax_action_as_metamodel}
canonically extends to a fully faithful 2-functor
\[
\olAct{\cat{M}}\longrightarrow\MtMod{\cat{M}}.
\]
A metamodel $(\cat{C},\Phi,\overline{\phi}_\cdot,\overline{\phi})$ of $\cat{M}$ 
is in the essential image of this 2-functor if and only if for each
$X\in\cat{M}$ and $A\in\cat{C}$, the functor 
\[
\Phi_X(A,-)\colon \cat{C}\longrightarrow\SET
\]
is representable.
\end{proposition}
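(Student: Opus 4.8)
The plan is to follow the pattern of Proposition~\ref{prop:enrichment_as_metamodel} almost verbatim, the single structural change being that the relevant representability is now \emph{covariant} and takes place in the base category $\cat{C}$ rather than in the metatheory $\cat{M}$. First I would extend the object-assignment of Example~\ref{ex:oplax_action_as_metamodel} to a 2-functor. A 1-cell $(G,g)$ of $\olAct{\cat{M}}$ consists of a functor $G\colon\cat{C}\to\cat{C'}$ together with $(g_{X,C}\colon X\ast' GC\to G(X\ast C))$; I send it to the 1-cell of $\MtMod{\cat{M}}$ with the same underlying $G$ and with $g_{X,A,B}\colon\cat{C}(X\ast A,B)\to\cat{C'}(X\ast' GA,GB)$ given by $h\mapsto Gh\circ g_{X,A}$, and I send a 2-cell $\theta$ to itself. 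Checking that the two 1-cell diagrams of $\olAct{\cat{M}}$ imply the two 1-cell diagrams of $\MtMod{\cat{M}}$, and likewise for 2-cells, is routine and preserves composition.

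Next I would establish full faithfulness, i.e.\ that for each pair of oplax actions the induced functor on hom-categories is an isomorphism. Because $\Phi_X(A,B)=\cat{C}(X\ast A,B)$ is representable covariantly in $B$, the Yoneda lemma identifies a family $g_{X,A,B}\colon\cat{C}(X\ast A,B)\to\cat{C'}(X\ast' GA,GB)$ natural in $B$ with a single morphism $g_{X,A}\colon X\ast' GA\to G(X\ast A)$, obtained as the image of $\id{X\ast A}$; naturality of the family in $X$ and $A$ translates to naturality of $(g_{X,A})$, and the two 1-cell diagrams of $\MtMod{\cat{M}}$ translate precisely into the two 1-cell diagrams of $\olAct{\cat{M}}$. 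The same argument, applied to the 2-cell condition and evaluated at identities, shows the two notions of 2-cell coincide. Hence the functor on hom-categories is bijective on objects and fully faithful, so an isomorphism of categories.

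For the essential image I would argue as in Proposition~\ref{prop:enrichment_as_metamodel}. If each $\Phi_X(A,-)$ is representable, choose for every $X,A$ a representing object, written $X\ast A$, with an isomorphism $\beta_{X,A}\colon\cat{C}(X\ast A,-)\xrightarrow{\cong}\Phi_X(A,-)$. By the parametrised Yoneda lemma the assignment $(X,A)\mapsto X\ast A$ extends uniquely to a functor $\ast\colon\cat{M}\times\cat{C}\to\cat{C}$ making $\beta$ natural in all three variables; note that contravariance of $\Phi$ in $X$ and $A$ becomes \emph{covariance} of $\ast$, since under Yoneda a natural transformation between representables reverses direction. Transporting $\overline{\phi}_\cdot$ and $\overline{\phi}$ across $\beta$ yields $\varepsilon_C\colon I\ast C\to C$ (the morphism named by $(\overline{\phi}_\cdot)_C$) and $\delta_{X,Y,A}\colon(Y\otimes X)\ast A\to Y\ast(X\ast A)$ (the image of $(\id{Y\ast(X\ast A)},\id{X\ast A})$ under $\overline{\phi}_{X,Y}$ read through $\beta$), and the three metamodel axioms become exactly the three oplax-action axioms. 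Finally I would check that the metamodel induced by $(\ast,\varepsilon,\delta)$ is isomorphic, via $\beta$, to the original, using that $\overline{\phi}_{X,Y}$ is determined by $\delta$ just as in Example~\ref{ex:oplax_action_as_metamodel}; the reverse inclusion is immediate, since any induced metamodel satisfies $\Phi_X(A,-)=\cat{C}(X\ast A,-)$.

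The main obstacle will be the bookkeeping in the last paragraph: verifying that the three coherence diagrams of Definition~\ref{def:metamodel} correspond term by term to the unit and associativity diagrams of Definition~\ref{def:oplax_action} after transport along $\beta$, and in particular confirming that the specialisation extracting $\delta$ from $\overline{\phi}$ is compatible with naturality in $X$, $Y$ and $A$. This is conceptually parallel to the enrichment case but dualised, so the care needed lies entirely in tracking variances and in selecting the correct identities at which to evaluate the transported natural transformations.
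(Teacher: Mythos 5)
Your proposal is correct and follows essentially the same route as the paper's proof: the paper likewise extends Example~\ref{ex:oplax_action_as_metamodel} to a 2-functor, handles full faithfulness by the Yoneda lemma, and for the essential image chooses representing objects $X\ast A$ with isomorphisms $\beta_{X,A}$, defines $\delta_{X,Y,A}$ as the image of $(\id{Y\ast(X\ast A)},\id{X\ast A})$ under $\overline{\phi}_{X,Y}$ transported along $\beta$, and verifies that $\overline{\phi}_{X,Y}$ is recovered from $\delta$ by a naturality-and-element-chasing argument. The only cosmetic difference is that the paper leaves the 2-functoriality and full-faithfulness checks implicit (deferring to the enrichment case), whereas you spell them out.
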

\begin{proof}
Similar to the proof of Proposition~\ref{prop:enrichment_as_metamodel}.

In particular, given a metamodel 
$(\cat{C},\Phi,\overline{\phi}_\cdot,\overline{\phi})$ of $\cat{M}$ such that
for each $X\in\cat{M}$ and $A\in\cat{C}$, the functor $\Phi_X(A,-)$ is 
representable, we may construct an oplax action $(\ast,\varepsilon,\delta)$ of 
$\cat{C}$ as follows.
For each $X\in\cat{M}$ and $A\in\cat{C}$, choose an object $X\ast A\in\cat{C}$
and an isomorphism $\beta_{X,A}\colon \cat{C}(X\ast A,-) 
\longrightarrow\Phi_X(A,-)$.
We easily obtain a functor 
$\ast\colon\cat{M}\times\cat{C}\longrightarrow\cat{C}$
and a natural transformation $(\varepsilon_C)_{C\in\cat{C}}$. 
To get $\delta$, for each $X,Y\in\cat{M}$ and $A\in\cat{C}$ consider 
the function
\[
\begin{tikzpicture}[baseline=-\the\dimexpr\fontdimen22\textfont2\relax ]
      \node (TT) at (0,4.5)  
      {$\cat{C}(Y\ast (X\ast A),Y\ast (X\ast A))\times\cat{C}
      (X\ast A,X\ast A)$};
      \node (T) at (0,3)  
      {$\Phi_{Y}(X\ast A,Y\ast (X\ast A))\times\Phi_X(A,X\ast A)$};
      \node (L) at (0,1.5)  {$\Phi_{Y\otimes X}(A,Y\ast (X\ast A))$};
      \node (LL) at (0,0) {$\cat{C}((Y\otimes X)\ast A, 
      Y\ast (X\ast A)).$};
      \draw[->] (T) to node[auto,labelsize]
      {$(\overline{\phi}_{X,Y})_{A,X\ast A,Y\ast (X\ast A)}$}
      (L);
      \draw[->] (TT) to node[auto,labelsize]{$(\beta_{Y,X\ast A})_{Y\ast (X\ast 
      A)}
      \times(\beta_{X,A})_{X\ast A}$} (T);
      \draw[->] (L) to 
      node[auto,labelsize]{$(\beta_{Y\otimes X,A})_{Y\ast(X\ast A)}^{-1}$}
       (LL);
\end{tikzpicture}
\]
We define $\delta_{X,Y,A}\colon (Y\otimes X)\ast A\longrightarrow Y\ast(X\ast 
A)$ to be the image of $(\id{Y\ast (X\ast A)},\id{X\ast A})$ under this 
function.

To verify that the metamodel induced from this oplax action (see 
Example~\ref{ex:oplax_action_as_metamodel}) is isomorphic to 
$(\cat{C},\Phi,\overline{\phi}_\cdot,\overline{\phi})$,
essentially we only need to check that $(\overline{\phi}_{X,Y})_{A,B,C}$
for each $X,Y\in\cat{M}$ and $A,B,C\in\cat{C}$ is determined by
$\delta_{X,Y,A}$ as in Example~\ref{ex:oplax_action_as_metamodel}.
Suppressing the isomorphisms $\beta$ from now on,
for each $f\colon X\ast A\longrightarrow B$ and $g\colon Y\ast B\longrightarrow 
C$ consider the following digram:
\[
\begin{tikzpicture}[baseline=-\the\dimexpr\fontdimen22\textfont2\relax ]
      \node (TT1) at (7.5,4)  
      {$\cat{C}((Y\otimes X)\ast A,Y\ast (X\ast A))$};
      \node (T1) at (7.5,2)  
      {$\cat{C}((Y\otimes X)\ast A,C)$};
      \node (L1) at (7.5,0)  
      {$\cat{C}(Y\ast B,C)\times \cat{C}(X\ast A,B).$};
      \node (TT) at (0,4)  
      {$\cat{C}(Y\ast (X\ast A),Y\ast (X\ast A)) \times 
      \cat{C}(X\ast A,X\ast A)$};
      \node (T) at (0,2)  
      {$\cat{C}(Y\ast (X\ast A),C)\times \cat{C}(X\ast A,X\ast A)$};
      \node (L) at (0,0)  {$\cat{C}(Y\ast B,C)\times \cat{C}(X\ast A,X\ast A)$};
      \draw[<-] (T) to node[auto,swap,labelsize]
      {$\cat{C}(Y\ast f,\id{})\times \cat{C}(\id{},\id{})$} 
      (L);
      \draw[->] (TT) to 
      node[auto,swap,labelsize]{$\cat{C}(\id{},g\circ(Y\ast f))\times 
      \cat{C}(\id{},\id{})$} 
      (T);
      \draw[<-] (T1) to node[auto,labelsize]
      {$(\overline{\phi}_{X,Y})_{A,B,C}$} 
      (L1);
      \draw[->] (TT1) to 
      node[auto,labelsize]{$\cat{C}(\id{},g\circ(Y\ast f))$} (T1);
      \draw[->] (TT) to node[auto,labelsize]
      {$(\overline{\phi}_{X,Y})_{A,X\ast A,Y\ast(X\ast A)}$} 
      (TT1);
      \draw[->] (T) to node[auto,labelsize]
      {$(\overline{\phi}_{X,Y})_{A,X\ast A,C}$} 
      (T1);
      \draw[->] (L) to node[auto,labelsize]
      {$\cat{C}(\id{},\id{})\times \cat{C}(\id{},f)$} 
      (L1);
\end{tikzpicture}
\]
The top square commutes by naturality in $C$ of 
$((\overline{\phi}_{X,Y})_{A,B,C})$ and the bottom square commutes by 
(extra) naturality in $B$ of it.
By chasing the appropriate elements as follows
\[
\begin{tikzpicture}[baseline=-\the\dimexpr\fontdimen22\textfont2\relax ]
      \node (TT1) at (6.5,3)  
      {$\delta_{X,Y,A}$};
      \node (T1) at (6.5,1.5)  
      {$g\circ(Y\ast f)\circ \delta_{X,Y,A}$};
      \node (L1) at (6.5,0)  
      {$(g,f),$};
      \node (TT) at (0,3)  
      {$(\id{Y\ast (X\ast A)},\id{X\ast A})$};
      \node (T) at (0,1.5)  
      {$(g\circ (Y\ast f), \id{X\ast A})$};
      \node (L) at (0,0)  {$(g,\id{X\ast A})$};
      \draw[<-|] (T) to node[auto,swap,labelsize]
      {$\cat{C}(Y\ast f,\id{})\times \cat{C}(\id{},\id{})$} 
      (L);
      \draw[|->] (TT) to 
      node[auto,swap,labelsize]{$\cat{C}(\id{},g\circ(Y\ast f))\times 
      \cat{C}(\id{},\id{})$} 
      (T);
      \draw[<-|] (T1) to node[auto,labelsize]
      {$(\overline{\phi}_{X,Y})_{A,B,C}$} 
      (L1);
      \draw[|->] (TT1) to 
      node[auto,labelsize]{$\cat{C}(\id{},g\circ(Y\ast f))$} (T1);
      \draw[|->] (TT) to node[auto,labelsize]
      {$(\overline{\phi}_{X,Y})_{A,X\ast A,Y\ast(X\ast A)}$} 
      (TT1);
      \draw[|->] (T) to node[auto,labelsize]
      {$(\overline{\phi}_{X,Y})_{A,X\ast A,C}$} 
      (T1);
      \draw[|->] (L) to node[auto,labelsize]
      {$\cat{C}(\id{},\id{})\times \cat{C}(\id{},f)$} 
      (L1);
\end{tikzpicture}
\]
we conclude that $(\overline{\phi}_{X,Y})_{A,B,C}(g,f)=g\circ(Y\ast 
f)\circ\delta_{X,Y,A}$, as desired.
\end{proof}

Recall the 2-categories $\Enrichr{\cat{M}}$ and $\olActl{\cat{M}}$
defined in Definition~\ref{def:enrichr_olactl}.

\begin{corollary}
\label{cor:enrichr_olactl_equiv}
Let $\cat{M}$ be a metatheory.
\begin{enumerate}
\item The 2-functors in Proposition~\ref{prop:enrichment_as_metamodel}
and Proposition~\ref{prop:oplax_action_as_metamodel} restrict to 
fully faithful 2-functors 
\[
\Enrichr{\cat{M}}\longrightarrow\MtMod{\cat{M}}\qquad
\olActl{\cat{M}}\longrightarrow\MtMod{\cat{M}}
\]
with the same essential image characterised as follows:
a metamodel $(\cat{C},\Phi,\overline{\phi}_\cdot,\overline{\phi})$ 
of $\cat{M}$ is in the essential image if and only if
for each $X\in\cat{M}$ and $A,B\in\cat{C}$,
the functors 
\[
\Phi_{(-)}(A,B)\colon\cat{M}^\op\longrightarrow\SET\qquad
\Phi_X(A,-)\colon\cat{C}\longrightarrow\SET
\] 
are representable.
\item The 2-categories $\Enrichr{\cat{M}}$ and $\olActl{\cat{M}}$ are 
equivalent.
\end{enumerate}
\end{corollary}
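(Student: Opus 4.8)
The plan is to deduce both assertions formally from Propositions~\ref{prop:enrichment_as_metamodel} and~\ref{prop:oplax_action_as_metamodel}, together with the standard criterion that a functor is one half of an adjunction precisely when an associated family of presheaves is representable. First I would record that full faithfulness is inherited: since $\Enrichr{\cat{M}}$ and $\olActl{\cat{M}}$ are by definition \emph{full} sub-2-categories of $\Enrich{\cat{M}}$ and $\olAct{\cat{M}}$, and the 2-functors of the two propositions are fully faithful, their restrictions remain fully faithful, the hom-categories between objects of the sub-2-categories being unchanged so that the hom-functors stay isomorphisms of categories.

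The heart of the argument is the identification of the essential images. Using the explicit formula $\Phi_X(A,B)=\cat{M}(X,\enrich{A}{B})$ for the metamodel induced by an enrichment (Example~\ref{ex:enrichment_as_metamodel}), I would observe that for fixed $A\in\cat{C}$ the functor $\enrich{A}{-}\colon\cat{C}\longrightarrow\cat{M}$ admits a left adjoint if and only if for every $X\in\cat{M}$ the presheaf $\cat{M}(X,\enrich{A}{-})=\Phi_X(A,-)\colon\cat{C}\longrightarrow\SET$ is representable. Hence an enrichment lies in $\Enrichr{\cat{M}}$ exactly when each $\Phi_X(A,-)$ is representable. Combining this with Proposition~\ref{prop:enrichment_as_metamodel} (which already forces each $\Phi_{(-)}(A,B)=\cat{M}(-,\enrich{A}{B})$ to be representable), the essential image of $\Enrichr{\cat{M}}\longrightarrow\MtMod{\cat{M}}$ is the full sub-2-category $\tcat{E}\subseteq\MtMod{\cat{M}}$ of metamodels for which both $\Phi_{(-)}(A,B)$ and $\Phi_X(A,-)$ are representable. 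Dually, from $\Phi_X(A,B)=\cat{C}(X\ast A,B)$ (Example~\ref{ex:oplax_action_as_metamodel}), the functor $-\ast A$ has a right adjoint if and only if every $\Phi_{(-)}(A,B)=\cat{C}((-)\ast A,B)\colon\cat{M}^\op\longrightarrow\SET$ is representable; so $\olActl{\cat{M}}$ has the same essential image $\tcat{E}$. This proves the first assertion.

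For the second assertion I would use the elementary fact that a fully faithful 2-functor corestricts to an equivalence onto its essential image: the corestriction $\Enrichr{\cat{M}}\longrightarrow\tcat{E}$ is fully faithful (as $\tcat{E}$ is full in $\MtMod{\cat{M}}$) and essentially surjective by definition of the image, hence an equivalence, and likewise $\olActl{\cat{M}}\longrightarrow\tcat{E}$. Composing the first equivalence with a pseudoinverse of the second yields the desired equivalence $\Enrichr{\cat{M}}\simeq\olActl{\cat{M}}$.

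The main obstacle is the bookkeeping in the essential-image step: one must confirm that the \emph{reconstructed} enrichment (resp.\ oplax action) built in the proofs of Propositions~\ref{prop:enrichment_as_metamodel} and~\ref{prop:oplax_action_as_metamodel} satisfies the relevant adjointness side-condition exactly when the corresponding representability holds, so that the two sub-2-categories match as full sub-2-categories of $\MtMod{\cat{M}}$ rather than merely up to equivalence. This reduces to the adjoint-functor-via-representability lemma applied pointwise, with care taken over which variable is held fixed, and presents no genuine difficulty beyond that tracking.
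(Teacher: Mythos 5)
Your proposal is correct and follows essentially the same route as the paper: the paper likewise identifies membership in $\Enrichr{\cat{M}}$ (resp.\ $\olActl{\cat{M}}$) with pointwise representability of $\Phi_X(A,-)$ (resp.\ $\Phi_{(-)}(A,B)$) via the standard adjunction-by-representability criterion, combines this with the characterisations in Propositions~\ref{prop:enrichment_as_metamodel} and~\ref{prop:oplax_action_as_metamodel}, and then deduces the second clause as a direct consequence of the shared essential image. Your write-up merely makes explicit the bookkeeping the paper leaves implicit, and it is sound.
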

\begin{proof}
The first clause is immediate from the definition of adjunction.
For instance, an enrichment $(\cat{C},\enrich{-}{-},j,M)$ over $\cat{M}$
is in $\Enrichr{\cat{M}}$ if and only if
for each $A\in\cat{C}$, $\enrich{A}{-}$ is a right adjoint,
which in turn is the case if and only if
for each $X\in\cat{M}$ and $A\in\cat{C}$, the functor 
\[
\cat{M}(X,\enrich{A}{-})\colon\cat{C}\longrightarrow\SET
\]
is representable.

The second clause is a direct consequence of the first.
\end{proof}

The reader might have noticed that 
there is another representability condition not covered by 
Propositions~\ref{prop:enrichment_as_metamodel} and 
\ref{prop:oplax_action_as_metamodel}, namely
metamodels $(\cat{C},\Phi,\overline{\phi}_\cdot,\overline{\phi})$
such that for each $X\in\cat{M}$ and $B\in\cat{C}$,
the functor 
\[
\Phi_X(-,B)\colon \cat{C}^\op\longrightarrow\SET
\] 
is representable. They correspond to 
{right lax actions of $\cat{M}^\op$ on $\cat{C}$},
or equivalently, to right oplax actions of $\cat{M}$ on $\cat{C}^\op$.

Extending the definition of enrichment (Definition~\ref{def:enrichment})
and the 2-category of enrichments (Definition~\ref{def:2-cat_of_enrichments})
to huge monoidal categories, we obtain the following.
\begin{proposition}
\label{prop:metatheory_as_enrichment}
Let $\cat{M}$ be a metatheory
and $\widehat{\cat{M}}=([\cat{M}^\op,\SET],\widehat{I},\widehat{\otimes})$
(see Definition~\ref{def:Day_conv}).
The 2-categories $\MtMod{\cat{M}}$ and $\Enrich{\widehat{\cat{M}}}$
are canonically isomorphic.
\end{proposition}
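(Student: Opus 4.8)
The plan is to show that a metamodel of $\cat{M}$ is literally the same data as an enrichment of a large category over $\widehat{\cat{M}}$, and that this identification is compatible with $1$-cells and $2$-cells, so that it assembles into a strict isomorphism of $2$-categories. The object-level dictionary is already sketched in the two paragraphs following Definition~\ref{def:metamodel}; I would first make it precise and then extend it upward. A point worth stressing throughout is that every bijection used is canonical (no representing objects need be chosen), which is why the resulting $2$-functor is an isomorphism on the nose rather than merely an equivalence.

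\textbf{Objects.} First I would invoke the currying isomorphism of functor categories
\[
[\cat{C}^\op\times\cat{C},\widehat{\cat{M}}]\cong[\cat{M}^\op\times\cat{C}^\op\times\cat{C},\SET]
\]
to match the hom-functor $\enrich{-}{-}\colon\cat{C}^\op\times\cat{C}\to\widehat{\cat{M}}$ of an enrichment with the functor $\Phi$ of Definition~\ref{def:metamodel} via $\enrich{A}{B}(X)=\Phi_X(A,B)$. Under this identification the unit $j_C\colon\DayI\to\enrich{C}{C}$ corresponds, by the Yoneda lemma (recall $\DayI=\cat{M}(-,I)$), to an element of $\Phi_I(C,C)$, that is, to $(\overline{\phi}_\cdot)_C$; and the multiplication $M_{A,B,C}\colon\enrich{B}{C}\Dayo\enrich{A}{B}\to\enrich{A}{C}$ corresponds, by the universal property of the Day convolution coend (Definition~\ref{def:Day_conv}) together with Yoneda, to a family $\Phi_Y(B,C)\times\Phi_X(A,B)\to\Phi_{Y\otimes X}(A,C)$ natural in all variables, that is, to $\overline{\phi}_{X,Y}$. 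It then remains to check that the unit and associativity axioms of an enrichment over $\widehat{\cat{M}}$ (Definition~\ref{def:enrichment}) translate exactly into the three axioms of Definition~\ref{def:metamodel}, which gives a bijection on objects.

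\textbf{$1$-cells and $2$-cells.} A $1$-cell of $\Enrich{\widehat{\cat{M}}}$ is a functor $G$ together with a natural transformation $g_{A,B}\colon\enrich{A}{B}\to\enrich{GA}{GB}'$ valued in $\widehat{\cat{M}}$; evaluating at $X\in\cat{M}$ this is precisely a family $g_{X,A,B}\colon\Phi_X(A,B)\to\Phi'_X(GA,GB)$ natural in $X,A,B$, matching the $1$-cell datum of $\MtMod{\cat{M}}$. The two coherence squares of Definition~\ref{def:2-cat_of_enrichments} (one for $j$, one for $M$) translate, again by Yoneda and coend universality, into the two squares required of a $1$-cell in $\MtMod{\cat{M}}$. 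For $2$-cells both definitions ask for a natural transformation $\theta\colon G\Rightarrow G'$ subject to a single compatibility square, and these squares are literally the same once $\enrich{-}{-}'$ is read as $\Phi'$. Since all three assignments are bijective and manifestly respect identities and the various composites, they constitute a $2$-functor that is bijective on objects, $1$-cells and $2$-cells, hence an isomorphism.

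\textbf{Main obstacle.} The genuinely delicate part is verifying that the enrichment axioms correspond to the metamodel axioms under this dictionary. The associativity axiom for an enrichment over $\widehat{\cat{M}}$ is phrased using the associator of the Day convolution, which is itself a coherence isomorphism between iterated coends built from $\otimes$ on $\cat{M}$; unwinding it through the universal property and comparing it with the associativity axiom of Definition~\ref{def:metamodel} is where the bookkeeping concentrates. The unit axioms demand the analogous, but lighter, unwinding of the Day unit isomorphisms. I would also note at the outset that, since $\widehat{\cat{M}}=[\cat{M}^\op,\SET]$ is huge, the definitions of enrichment and of $\Enrich{-}$ are to be read in their evident extension to huge monoidal categories, under which every step above goes through verbatim.
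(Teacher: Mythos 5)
Your proposal is correct and follows essentially the same route as the paper: the paper (in the remarks immediately following Definition~\ref{def:metamodel} and preceding the proposition) establishes exactly this dictionary, replacing $(\overline{\phi}_\cdot)_C$ by $j_C\colon\DayI\to\Phi_{(-)}(C,C)$ via the Yoneda lemma and $(\overline{\phi}_{X,Y})_{A,B,C}$ by $M_{A,B,C}\colon\Phi_{(-)}(B,C)\Dayo\Phi_{(-)}(A,B)\to\Phi_{(-)}(A,C)$ via the universal property of the Day convolution coend, with the metamodel axioms translating into the enrichment axioms over $\widehat{\cat{M}}$. Your explicit treatment of $1$-cells and $2$-cells, and your remark on reading $\Enrich{-}$ for the huge monoidal category $\widehat{\cat{M}}$, fill in details the paper leaves implicit but do not change the argument.
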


\subsubsection{$\Mod{-}{-}$ as a 2-functor}
Let $\cat{M}$ be a metatheory.
Similarly to the cases of enrichments and oplax actions,
we can view the $\Mod{-}{-}$ construction as a 2-functor
using the 2-category $\MtMod{\cat{M}}$.
In fact, via the inclusion
\begin{equation}
\label{eqn:theory_as_metamodel}
\Th{\cat{M}}=\Mon{\cat{M}}\longrightarrow
\Enrich{\cat{M}}\longrightarrow\MtMod{\cat{M}},
\end{equation}
the 2-functor $\Mod{-}{-}$ is simply given by the following composite:
\[
\begin{tikzpicture}[baseline=-\the\dimexpr\fontdimen22\textfont2\relax ]
      \node (1) at (0,0)  {$\Th{\cat{M}}^\op\times \MtMod{\cat{M}}$};
      \node (2) at (0,-1.5)  {$\MtMod{\cat{M}}^\op\times \MtMod{\cat{M}}$};
      \node (3) at (0,-3) {$\tCAT$,};
      \draw[->] (1) to node[auto,labelsize]{inclusion} (2);
      \draw[->] (2) to node[auto,labelsize] {$\MtMod{\cat{M}}(-,-)$} (3);
\end{tikzpicture}
\]
where $\MtMod{\cat{M}}(-,-)$ is the hom-2-functor for 
the locally large $\MtMod{\cat{M}}$.
The inclusion (\ref{eqn:theory_as_metamodel}) identifies
a theory $\monoid{T}=(T,m,e)$ in $\cat{M}$ with the 
metamodel $(\Phi^{(\monoid{T})}, 
\overline{\phi^{(\monoid{T})}}_\cdot,\overline{\phi^{(\monoid{T})}})$
of $\cat{M}$ in
the terminal category $1$ (whose unique object we denote by $\ast$),
defined as follows:
\begin{itemize}
\item the functor $\Phi^{(\monoid{T})}\colon\cat{M}^\op\times 1^\op\times 1
\longrightarrow\SET$ maps $(X,\ast,\ast)$ to $\cat{M}(X,T)$;
\item the function $(\overline{\phi^{(\monoid{T})}}_\cdot)_\ast\colon 
1\longrightarrow \cat{M}(I,T)$ maps the unique element of $1$ to $e$;
\item for each $X,Y\in\cat{M}$, the function 
$(\overline{\phi^{(\monoid{T})}}_{X,Y})_{\ast,\ast,\ast}\colon
\cat{M}(Y,T)\times \cat{M}(X,T)\longrightarrow\cat{M}(Y\otimes X,T)$
maps $(g,f)$ to $m\circ (g\otimes f)$.
\end{itemize}

\subsection{Morphisms of metatheories}
\label{subsec:morphism_metatheory}
In this section, we introduce a notion of morphism between metatheories.
The main purpose of morphisms of metatheories is to provide a 
uniform method to compare different notions of algebraic theory.
A paradigmatic case of such a comparison
is given in Section~\ref{subsec:enrichment},
where we compare clones, symmetric operads and non-symmetric operads. 
Recall that the crucial observation used there was the fact that 
the $\Enrich{-}$ construction extends to a 2-functor
\begin{equation}
\label{eqn:Enrich_as_2-functor}
\Enrich{-}\colon \MonCAT\longrightarrow\twoCAT.
\end{equation}
Therefore, we want to define morphisms of metatheories with 
respect to which $\MtMod{-}$ behaves (2-)functorially.

On the other hand, recall from Section~\ref{subsec:oplax_action}
that $\olAct{-}$ is a 2-functor of type
\begin{equation}
\label{eqn:olAct_as_2-functor}
\olAct{-}\colon(\MonCATol)^\coop\longrightarrow\twoCAT.
\end{equation}
Since metamodels unify both enrichments and oplax actions, 
we would like to explain both (\ref{eqn:Enrich_as_2-functor})
and (\ref{eqn:olAct_as_2-functor}) by introducing a
sufficiently general notion of morphism of metatheories.

The requirement to unify both $\MonCAT$ and $(\MonCATol^\coop)$
suggests the possibility of using a suitable variant of 
profunctors (Definition~\ref{def:profunctor}),
leading to the following definition.

\begin{definition}
\label{def:morphism_of_metatheories}
Let $\cat{M}=(\cat{M},I_\cat{M},\otimes_\cat{M})$ and 
$\cat{N}=(\cat{N},I_\cat{N},\otimes_\cat{N})$ be metatheories. 
A \defemph{morphism of metatheories from $\cat{M}$ to $\cat{N}$} 
is a lax monoidal functor 
\[
H=(H,h_\cdot,h)\colon\cat{N}^\op\times\cat{M}\longrightarrow\SET.
\]
More precisely, such a morphism consists of:
\begin{itemize}
\item a functor $H\colon\cat{N}^\op\times\cat{M}\longrightarrow\SET$;
\item a function $h_\cdot\colon1\longrightarrow H(I_\cat{N},I_\cat{M})$;
\item a natural transformation $(h_{N,N',M,M'}\colon H(N',M')\times 
H(N,M)\longrightarrow H(N'\otimes_\cat{N}
N,M'\otimes_\cat{M} M))_{N,N'\in\cat{N},M,M'\in\cat{M}}$
\end{itemize}
making the following diagrams commute for each $N,N',N''\in\cat{N}$
and $M,M',M''\in\cat{M}$ (we omit subscripts on $\otimes$):
\begin{equation*}
\begin{tikzpicture}[baseline=-\the\dimexpr\fontdimen22\textfont2\relax ]
      \node (TL) at (0,1)  {$1\times H(N,M)$};
      \node (TR) at (6,1)  {$H(I_\cat{N},I_\cat{M})\times H(N,M)$};
      \node (BL) at (0,-1) {$H(N,M)$};
      \node (BR) at (6,-1) 
      {$H(I_\cat{N}\otimes N,I_\cat{M}\otimes M)$};
      \draw[->] (TL) to node[auto,labelsize](T) 
      {$h_\cdot\times H(N,M)$} 
      (TR);
      \draw[->]  (TR) to node[auto,labelsize] 
      {$h_{N,I_\cat{N},M,I_\cat{M}}$} (BR);
      \draw[->]  (TL) to node[auto,swap,labelsize] {$\cong$} 
      (BL);
      \draw[->]  (BL) to node[auto,labelsize] {$\cong$} (BR);
\end{tikzpicture} 
\end{equation*}
\begin{equation*}
\begin{tikzpicture}[baseline=-\the\dimexpr\fontdimen22\textfont2\relax ]
      \node (TL) at (0,1)  {$H(N,M)\times 1$};
      \node (TR) at (6,1)  {$H(N,M)\times H(I_\cat{N},I_\cat{M})$};
      \node (BL) at (0,-1) {$H(N,M)$};
      \node (BR) at (6,-1) {$H(N\otimes I_\cat{N},M\otimes, 
      I_\cat{M})$};
      \draw[->] (TL) to node[auto,labelsize](T) 
      {$H(N,M)\times h_\cdot$} 
      (TR);
      \draw[->]  (TR) to node[auto,labelsize] 
      {$h_{I_\cat{N},N,I_\cat{M},M}$} (BR);
      \draw[->]  (TL) to node[auto,swap,labelsize] {$\cong$} 
      (BL);
      \draw[->]  (BL) to node[auto,labelsize] {$\cong$} (BR);
\end{tikzpicture}
\end{equation*}
\begin{equation*}
\begin{tikzpicture}[baseline=-\the\dimexpr\fontdimen22\textfont2\relax ]
      \node (TL) at (0,2)  {$\big(H(N'',M'')\times H(N',M')\big)\times 
            H(N,M)$};
      \node (TM) at (8,2)  {$H(N''\otimes N',M''\otimes 
      M')\times H(N,M)$};
      \node (TR) at (8,0)  {$H((N''\otimes N')\otimes  N, 
      (M''\otimes  M')\otimes  M)$};
      \node (BL) at (0,0) {$H(N'',M'')\times \big(H(N',M')\times 
              H(N,M)\big)$};
      \node (BM) at (0,-2) {$H(N'',M'')\times 
      H(N'\otimes N,M'\otimes M)$};
      \node (BR) at (8,-2) {$H(N''\otimes(N'\otimes N),M''\otimes (M'\otimes 
      M)).$};
      \draw[->] (TL) to node[auto,labelsize](T) 
      {$h_{N',N'',M',M''}\times H(N,M)$} 
      (TM);
      \draw[->] (TM) to node[auto,labelsize](T) 
      {$h_{N,N''\otimes N',M,M''\otimes M'}$} 
      (TR);
      \draw[->]  (TR) to node[auto,labelsize] {$\cong$} (BR);
      \draw[->]  (TL) to node[auto,swap,labelsize] {$\cong$} (BL);
      \draw[->]  (BL) to node[auto,swap,labelsize] {$H(N'',M'')\times 
      h_{N,N',M,M'}$} 
      (BM);
      \draw[->] (BM) to node[auto,labelsize](B) {$h_{N'\otimes N,N'',M'\otimes 
      M,M''}$} (BR);
\end{tikzpicture} 
\end{equation*}

We write $H\colon\cat{M}\pto\cat{N}$ if $H$ is a morphism of
metatheories from $\cat{M}$ to $\cat{N}$.
\end{definition}

Morphisms of metatheories are a monoidal version of profunctors,
and indeed they are called \emph{monoidal profunctors} in \cite{Im_Kelly}.
We may identify a morphism $H\colon\cat{M}\pto\cat{N}$
with a lax monoidal functor 
\[
(M\longmapsto H(-,M))\colon\cat{M}\longrightarrow[\cat{N}^\op,\SET],
\]
or equivalently with an oplax monoidal functor 
\[
(N\longmapsto H(N,-))\colon\cat{N}\longrightarrow[\cat{M},\SET]^\op,
\]
where in both cases the codomain is equipped with the convolution monoidal 
structure.


\begin{definition}
We define the bicategory $\MtTH$ of metatheories as follows.
\begin{itemize}
\item An object is a metatheory.
\item A 1-cell from $\cat{M}$ to $\cat{N}$
is a morphism of metatheories $\cat{M}\pto\cat{N}$.
The identity 1-cell on a metatheory $\cat{M}$ is the hom-functor $\cat{M}(-,-)$,
equipped with the evident structure for a morphism of metatheories.
Given morphisms of metatheories 
$(H,h_\cdot,h)\colon \cat{M}\pto\cat{N}$ and 
$(K,k_\cdot,k)\colon \cat{N}\pto\cat{L}$,
their composite is $(K\ptensor H,k_\cdot\ptensor h_\cdot, k \ptensor h)
\colon \cat{M}\pto\cat{L}$ where $K\ptensor H$ is the composition of the 
profunctors $H$ and $K$ (Definition~\ref{def:profunctor}), and 
$k_\cdot \ptensor h_\cdot$ and $k\ptensor h$ are the evident
natural transformations.
\item A 2-cell from $H$ to $H'$, both from $\cat{M}$ to $\cat{N}$,
is a monoidal natural transformation
$\alpha\colon H\Longrightarrow 
H'\colon\cat{N}^\op\times\cat{M}\longrightarrow\SET$. \qedhere
\end{itemize}
\end{definition}

Similarly to the case of profunctors,
we have identity-on-objects fully faithful pseudofunctors 
\[
(-)_\ast\colon\MonCAT\longrightarrow\MtTH
\]
and 
\[
(-)^\ast\colon (\MonCATol)^\coop\longrightarrow\MtTH.
\]
In detail, a lax monoidal functor 
\[
F=(F,f_\cdot,f)\colon \cat{M}\longrightarrow\cat{N}
\]
gives rise to a morphism of metatheories 
\[
F_\ast=(F_\ast,(f_\ast)_\cdot,f_\ast)\colon\cat{M}\pto\cat{N}
\]
with $F_\ast(N,M)=\cat{N}(N,FM)$, $(f_\ast)_\cdot\colon 1\longrightarrow 
\cat{N}(I_\cat{N},FI_\cat{M})$ mapping the unique element of $1$ to 
$f_\cdot\colon I_\cat{N}\longrightarrow FI_\cat{M}$,
and $(f_\ast)_{N,N',M,M'}\colon 
\cat{N}(N',FM')\times\cat{N}(N,FM)\longrightarrow\cat{N}(N'\otimes_\cat{N} N,
F(M'\otimes_\cat{M} M))$
mapping $g'\colon N'\longrightarrow FM'$ and $g\colon N\longrightarrow FM$
to $f_{M,M'}\circ (g'\otimes g)\colon N'\otimes_\cat{N} N\longrightarrow
F(M'\otimes_\cat{M} M)$.
Given an oplax monoidal functor 
\[
F=(F,f_\cdot,f)\colon \cat{M}\longrightarrow\cat{N},
\]
we obtain a morphism of metatheories
\[
F^\ast=(F^\ast, (f^\ast)_\cdot,f^\ast)\colon\cat{N}\pto\cat{M}
\]
analogously.

In particular, a \emph{strong} monoidal functor
\[
F\colon\cat{M}\longrightarrow\cat{N}
\]
gives rise to both $F_\ast\colon \cat{M}\pto\cat{N}$
and $F^\ast\colon\cat{N}\pto\cat{M}$, and 
it is straightforward to see that these form an adjunction $F_\ast\dashv 
F^\ast$ in $\MtTH$.
\medskip

A morphism of metatheories $H\colon\cat{M}\pto\cat{N}$
induces a 2-functor 
\[
\MtMod{H}\colon\MtMod{\cat{M}}\longrightarrow\MtMod{\cat{N}}.
\]
Its action on objects is as follows.
\begin{definition}
\label{def:action_of_morphism_on_metamodel}
Let $\cat{M}=(\cat{M},I_\cat{M},\otimes_\cat{M})$ and 
$\cat{N}=(\cat{N},I_\cat{N},\otimes_\cat{N})$ be metatheories,
$H=(H,h_\cdot,h)\colon\cat{M}\pto\cat{N}$ a morphism of metatheories, 
$\cat{C}$ a large category and 
$\Phi=(\Phi,\overline{\phi}_\cdot,\overline{\phi})$ a metamodel of  $\cat{M}$ 
in $\cat{C}$.
We define the metamodel 
$H(\Phi)=(\Phi',\overline{\phi'}_\cdot,\overline{\phi'})$ 
of $\cat{N}$ on $\cat{C}$ as follows:
\begin{itemize}
\item The functor 
$\Phi'\colon\cat{N}^\op\times\cat{C}^\op\times\cat{C}\longrightarrow\SET$
maps $(N,A,B)\in\cat{N}^\op\times\cat{C}^\op\times\cat{C}$ to the set 
\begin{equation}
\label{eqn:action_of_H_on_metamodels}
\Phi'_N(A,B)=
\int^{M\in\cat{M}}H(N,M)\times \Phi_M(A,B).
\end{equation}
\item The natural transformation $((\overline{\phi'}_\cdot)_C\colon 
1\longrightarrow \Phi'_{I_\cat{N}}(C,C))_{C\in\cat{C}}$
is defined by
mapping the unique element $\ast$ of $1$ to
\begin{multline*}
[I_\cat{M}\in\cat{M}, h_\cdot(\ast)\in H(I_\cat{N},I_\cat{M}), 
(\overline{\phi}_\cdot)_C(\ast)\in
\Phi_{I_\cat{M}}(C,C)]\\
\in\int^{M\in\cat{M}}H(I_\cat{N},M)\times \Phi_M(C,C)
\end{multline*}
for each $C\in\cat{C}$. 
\item The natural transformation 
\[((\overline{\phi'}_{N,N'})_{A,B,C}\colon
\Phi'_{N'}(B,C)\times\Phi'_N(A,B)\longrightarrow
\Phi'_{N'\otimes_\cat{N} N}(A,C))_{N,N'\in\cat{N},A,B,C\in\cat{C}}\]
is defined by mapping a pair consisting of
$[M',x',y']\in\Phi'_{N'}(B,C)$ and $[M,x,y]\in\Phi'_N(A,B)$
to 
\begin{equation*}
[M'\otimes_\cat{M}M, h_{N,N',M,M'}(x',x), 
(\overline{\phi}_{M,M'})_{A,B,C}(y',y)]
\end{equation*}
for each $N,N'\in\cat{N}$ and $A,B,C\in\cat{C}$.\qedhere
\end{itemize}
\end{definition}

The above construction extends routinely, giving rise to a pseudofunctor
\[
\MtMod{-}\colon \MtTH\longrightarrow\twoCAT.
\]

\section{Comparing different notions of algebraic theory}
\label{sec:comparing}
In this section, we shall demonstrate how we can compare 
different notions of algebraic theory via morphisms of metatheories.

We start with a few remarks on simplification of the action
(Definition~\ref{def:action_of_morphism_on_metamodel}) of a morphism of 
metatheories
on metamodels, in certain special cases.
Let $\cat{M}$ and $\cat{N}$ be metatheories, 
\[
H\colon\cat{M}\pto\cat{N}
\]
be a morphism of metatheories,
$\cat{C}$ be a large category and 
$\Phi=(\Phi,\overline{\phi}_\cdot,\overline{\phi})$ be a metamodel of $\cat{M}$
in $\cat{C}$. 

First consider the case where for each $A,B\in\cat{C}$,
the functor $\Phi_{(-)}(A,B)\colon \cat{M}^\op\longrightarrow{\SET}$
is representable. This means that $\Phi$ is in fact (up to an isomorphism) an 
enrichment $\enrich{-}{-}$; see Proposition~\ref{prop:enrichment_as_metamodel}.
In this case, $\Phi_M(A,B)$ may be written as $\cat{M}(M,\enrich{A}{B})$
and hence the formula (\ref{eqn:action_of_H_on_metamodels})
simplifies:
\[
\Phi'_N(A,B)=
\int^{M\in\cat{M}}H(N,M)\times \cat{M}(M,\enrich{A}{B})
\cong H(N,\enrich{A}{B}).
\]
In particular, if moreover $H$ is of the form
\[
F_\ast\colon \cat{M}\pto\cat{N}
\]
for some lax monoidal functor $F\colon\cat{M}\longrightarrow\cat{N}$,
then we have 
\[
\Phi'_N(A,B)\cong \cat{N}(N,F\enrich{A}{B}),
\]
implying that $H(\Phi)=F_\ast(\Phi)$ is again isomorphic to
an enrichment; indeed, this case reduces to $F_\ast(\enrich{-}{-})$
defined in Definition~\ref{def:action_of_lax_on_enrichment}.
Note that, as a special case, for any theory $\monoid{T}$ in $\cat{M}$
(recall that such a theory is identified with a metamodel of $\cat{M}$
in the terminal category $1$),
$F_\ast(\monoid{T})$ is again isomorphic to a theory in $\cat{N}$.
The 2-functor 
\[
\MtMod{F_\ast}\colon\MtMod{\cat{M}}\longrightarrow\MtMod{\cat{N}}
\]
extends the functor 
\[
\Th{F}\colon\Th{\cat{M}}\longrightarrow\Th{\cat{N}}
\]
between the categories of theories induced by $F$, using the well-known fact 
that a lax monoidal functor preserves theories (= monoid objects).

Next consider the case where $H$ is of the form
\[
G^\ast\colon \cat{M}\pto\cat{N}
\]
for some oplax monoidal functor $G\colon\cat{N}\longrightarrow\cat{M}$.
In this case $H(N,M)=\cat{M}(GN,M)$ and the formula 
(\ref{eqn:action_of_H_on_metamodels}) simplifies as follows:
\[
\Phi'_N(A,B)=
\int^{M\in\cat{M}}\cat{M}(GN,M)\times \Phi_M(A,B)
\cong \Phi_{GN}(A,B).
\]
Of course this construction reduces to $G^\ast(\ast)$ defined in 
Definition~\ref{def:action_of_oplax_on_oplax_action} for 
a metamodel induced from an oplax action.

Combining the above observations, suppose now that we have a strong monoidal 
functor
\[
F\colon\cat{M}\longrightarrow\cat{N}
\]
between metatheories $\cat{M}$ and $\cat{N}$.
On the one hand, $F$ induces a functor
\[
\Th{F}\colon\Th{\cat{M}}\longrightarrow\Th{\cat{N}}
\]
between the categories of theories,
which is a restriction of the 2-functor $\MtMod{F_\ast}$.
On the other hand, $F$ induces a 2-functor
\[
\MtMod{F^\ast}\colon \MtMod{\cat{N}}\longrightarrow\MtMod{\cat{M}}
\]
between the 2-categories of metamodels.
The 2-adjointness $\MtMod{F_\ast}\dashv \MtMod{F^\ast}$
yields, for each theory $\monoid{T}$ in $\cat{M}$
and each metamodel $(\cat{C},\Phi)$ of $\cat{N}$,
an isomorphism of categories
\[
\Mod{F_\ast(\monoid{T})}{(\cat{C},\Phi)}\cong
\Mod{\monoid{T}}{(\cat{C},F^\ast(\Phi))}.
\]
Observe that $F_\ast(\monoid{T})=\Th{F}(\monoid{T})$ is the standard action of 
a strong monoidal 
functor on a theory, and $F^\ast(\Phi)$ is, in essence,
simply precomposition by $F$.

\medskip
Now we apply the above argument to some concrete cases.
\begin{example}
Recall from Section~\ref{subsec:enrichment}, where we have compared clones, 
symmetric operads and non-symmetric operads, that there is a chain of lax 
monoidal functors
\[
\begin{tikzpicture}[baseline=-\the\dimexpr\fontdimen22\textfont2\relax ]
      \node (L) at (0,0)  {$[\Ncat,\Set]$};
      \node (M) at (3.5,0)  {$[\Pcat,\Set]$};
      \node (R) at (7,0)  {$[\F,\Set]$.};
      \draw[->]  (L) to node[auto,labelsize] 
      {$\Lan_J$} (M);
      \draw[->]  (M) to node[auto,labelsize] 
      {$\Lan_{J'}$} (R);
\end{tikzpicture} 
\] 
These functors, being left adjoints in $\MonCAT$, are in fact strong monoidal 
\cite{Kelly:doctrinal}.
Theories are mapped as follows, as noted in Section~\ref{subsec:enrichment}:
\[
\begin{tikzpicture}[baseline=-\the\dimexpr\fontdimen22\textfont2\relax ]
      \node (L) at (0,0)  {$\Th{[\Ncat,\Set]}$};
      \node (M) at (4.5,0)  {$\Th{[\Pcat,\Set]}$};
      \node (R) at (9,0)  {$\Th{[\F,\Set]}.$};
      \draw[->]  (L) to node[auto,labelsize] {$\Th{\Lan_J}$} (M);
      \draw[->]  (M) to node[auto,labelsize] {$\Th{\Lan_{J'}}$} (R);
\end{tikzpicture} 
\]
In this case, the suitable 2-functors between 2-categories of metamodels
can be given either as $\MtMod{(\Lan_J)^\ast}$ or $\MtMod{([J,\Set])_\ast}$ 
(and similarly for $J'$),
because $(\Lan_J)^\ast\cong ([J,\Set])_\ast$ in $\MtTH$.
\end{example}
\begin{example}
Let us consider the relationship between clones and monads on $\Set$.
The inclusion functor
\[
J''\colon \F\longrightarrow\Set
\]
induces a functor 
\[
\Lan_{J''}\colon[\F,\Set]\longrightarrow[\Set,\Set],
\]
which naturally acquires the structure of a strong monoidal functor.
The essential image of this functor is precisely the \emph{finitary}
endofunctors on $\Set$, i.e., those endofunctors preserving filtered colimits.
The functor $\Th{\Lan_{J''}}$ maps a clone to a finitary monad on $\Set$,
in accordance with 
the well-known correspondence between clones (= Lawvere theories) and finitary 
monads on $\Set$ \cite{Linton_equational}.
Between the 2-categories of metamodels, we have a 2-functor
\[
\MtMod{(\Lan_{J''})^\ast}\colon\MtMod{[\Set,\Set]}\longrightarrow
\MtMod{[\F,\Set]}.
\]
The standard metamodel of $[\Set,\Set]$ in $\Set$ (corresponding to the 
definition of Eilenberg--Moore algebras) is given by the strict action
described in Example~\ref{ex:monad_standard_action};
in particular, its functor part $\Phi\colon 
[\Set,\Set]^\op\times\Set^\op\times\Set\longrightarrow\SET$ maps $(F,A,B)$ to 
$\Set(FA,B)$. 
The metamodel $(\Lan_{J''})^\ast(\Phi)$ of $[\F,\Set]$ in $\Set$ has the 
functor part $(\Lan_{J''})^\ast(\Phi)\colon[\F,\Set]^\op\times\Set^\op\times
\Set\longrightarrow\SET$ mapping $(X,A,B)$ to 
\begin{align*}
\Set((\Lan_{J''}X)A,B)
&= \Set\left(\int^{[n]\in\F} A^n\times X_n,B\right)\\
&\cong \int_{[n]\in\F} \Set (X_n, \Set(A^n,B))\\
&\cong [\F,\Set] (X, \enrich{A}{B}),
\end{align*}
where $\enrich{A}{B}\in[\F,\Set]$ in the final line is the one in 
Example~\ref{ex:clone_enrichment}.
Hence $\MtMod{(\Lan_{J''})^\ast}$ preserves the standard metamodels 
and this way we restore the well-known observation 
that the classical correspondence 
of clones and finitary monads on $\Set$ preserves semantics.

Note that by combining the previous example we obtain the chain
\[
\begin{tikzpicture}[baseline=-\the\dimexpr\fontdimen22\textfont2\relax ]
      \node (L) at (0,0)  {$[\Ncat,\Set]$};
      \node (M) at (2.5,0)  {$[\Pcat,\Set]$};
      \node (R) at (5,0)  {$[\F,\Set]$};
      \node (RR) at (7.5,0) {$[\Set,\Set]$};
      \draw[->]  (L) to node[auto,labelsize] 
      {$\Lan_J$} (M);
      \draw[->]  (M) to node[auto,labelsize] 
      {$\Lan_{J'}$} (R);
      \draw[->]  (R) to node[auto,labelsize] 
      {$\Lan_{J''}$} (RR);
\end{tikzpicture} 
\] 
of strong monoidal functors, connecting non-symmetric and symmetric operads 
with monads on $\Set$.
\end{example}

\begin{example}
Let $\cat{M}$ be a metatheory, $\cat{C}$ a large category,
and $\ast$ a \emph{pseudo} action of $\cat{M}$ on $\cat{C}$.
We obtain a strong monoidal functor 
\[
F\colon\cat{M}\longrightarrow[\cat{C},\cat{C}]
\]
(where $[\cat{C},\cat{C}]$ is equipped with the composition monoidal structure)
as the transpose of $\ast\colon\cat{M}\times\cat{C}\longrightarrow\cat{C}$.
The functor $\Th{F}$ maps any theory $\monoid{T}=(T,e,m)$ in $\cat{M}$
to the monad $F(\monoid{T})=(T\ast(-), e\ast(-),m\ast(-))$ on $\cat{C}$.
The 2-functor 
$\MtMod{F^\ast}\colon\MtMod{[\cat{C},\cat{C}]}\longrightarrow\MtMod{\cat{M}}$
maps the standard metamodel $\Phi$ of $[\cat{C},\cat{C}]$ in $\cat{C}$
(Example~\ref{ex:monad_standard_action}) to the metamodel 
$F^\ast(\Phi)\colon 
\cat{M}^\op\times\cat{C}^\op\times\cat{C}\longrightarrow\SET$
mapping $(X,A,B)$ to
\[
\cat{C}((FX)A,B)=\cat{C}(X\ast A,B).
\]
Therefore it maps the standard metamodel $\Phi$ to the metamodel induced from 
$\ast$.

As a special case, for a large category $\cat{C}$ with finite limits and a 
cartesian monad $\monoid{S}$ on $\cat{C}$,
the standard metamodel for $\monoid{S}$-operads
(Example~\ref{ex:metamodel_S-operad}) may be related to 
the standard metamodel of monads on $\cat{C}$,
and models of an $\monoid{S}$-operad $\monoid{T}$ may alternatively be defined 
as Eilenberg--Moore algebras of the monad on $\cat{C}$ induced from $\monoid{T}$
(as noted in \cite{Leinster_book}).
\end{example}

We have introduced a notion of morphism between metatheories,
which is more general than both lax monoidal functors and oplax monoidal 
functors (in the opposite direction).
As we pointed out, an adjunction of morphisms between metatheories are rich 
enough to generate isomorphisms of categories of models.
Moreover, such adjunctions abound, as every strong monoidal functor generates 
one.

\section{Related work}
There are a few recent papers~\cite{Curien_operad,Hyland_elts,Avery_thesis}
which develop unified account of various notions of algebraic theory.

The papers by Curien \cite{Curien_operad} and Hyland \cite{Hyland_elts} 
concentrate on clones, symmetric operads and non-symmetric operads, and concern 
primarily the conceptual understanding of the substitution monoidal structures. 
Via the theory of pseudo-distributive laws \cite{Tanaka_Power_psd_dist},
they reduce substitution monoidal structures to certain 2-monads on $\tCat$,
for example the free cartesian category 2-monad in the case of clones.
Their work illuminates the relationship between the notions of algebraic 
theory they treat and their standard metamodels, because the standard 
metamodels arise as Eilenberg--Moore algebras of the 2-monad from which 
the corresponding substitution monoidal structure is induced. 
On the other hand, monads and generalised operads do not seem to be 
captured by their framework.

The framework by Avery~\cite{Avery_thesis} is relative to a well-behaved 
2-category (which he calls a \emph{setting}).
In the basic setting of $\tCAT$
he identifies 
algebraic theories with identity-on-objects functor from a certain category 
$\cat{A}$ of arities, calling them \emph{proto-theories}.
In this case, the relationship to our work may be established by the fact that 
(putting size issues aside)
identity-on-objects functors from $\cat{A}$ correspond to monoid objects in 
$[\cat{A}^\op\times\cat{A},\SET]$ (with the profunctor composition as the 
monoidal structure).
This way we may understand Avery's framework (with respect to the setting 
$\tCAT$)
within ours, although for general 
setting probably we cannot do so.
However we remark that for specific examples of settings treated in 
\cite{Avery_thesis}, it seems that
proto-theories therein can be identified with monoid objects in 
the category of a suitable variant of profunctors.

Avery's framework has an attractive feature that 
it can treat Lawvere theories, PROPs, PROs, symmetric and non-symmetric operads
 by choosing a suitable setting,
without requiring any complicated calculation
(cf.~the definition of substitution monoidal product and 
the relevant enrichments in Section~\ref{sec:framework_prelude}).
Generalised operads do not seem to be captured in Avery's 
framework.

Avery does not consider the questions of 
functoriality that arise at various levels. 
Note that, in contrast, we have defined morphisms of metamodels, of 
metatheories, and so on, which suitably act on the relevant constructions.

\chapter{Structure-semantics adjunctions}
\label{chap:str_sem}

\emph{Structure-semantics adjunctions} are a classical topic in categorical 
algebra.
They are a family of 
adjunctions parametrised by a metatheory $\cat{M}$
and its metamodel $(\cat{C},\Phi,\overline{\phi}_\cdot,\overline{\phi})$;
if we fix these parameters, the structure-semantics adjunction is 
\emph{ideally} of type
\begin{equation}
\label{eqn:str_sem_idealised}
\begin{tikzpicture}[baseline=-\the\dimexpr\fontdimen22\textfont2\relax ]
      \node(11) at (0,0) 
      {$\Th{\cat{M}}^\op$};
      \node(22) at (4,0) {$\CAT/\cat{C}$,};
  
      \draw [->,transform canvas={yshift=5pt}]  (22) to node 
      [auto,swap,labelsize]{$\Str$} (11);
      \draw [->,transform canvas={yshift=-5pt}]  (11) to node 
      [auto,swap,labelsize]{$\Sem$} (22);
      \path (11) to node[midway](m){} (22); 

      \node at (m) [labelsize,rotate=90] {$\vdash$};
\end{tikzpicture}
\end{equation}
and the functor $\Sem$ is essentially $\Mod{-}{(\cat{C},\Phi)}$.
Various authors have constructed such adjunctions for a variety of 
notions of algebraic theory, most notably for 
clones~\cite{Lawvere_thesis,Linton_equational,
Isbell_general_functorial_sem} and monads~\cite{Dubuc_Kan,Street_FTM}.
There were also some attempts to unify these 
results~\cite{Linton_outline,Avery_thesis}.
See Section~\ref{sec:what_are_str_sem} for the ideas behind such adjunctions.

If we try to work this idea out, however, there turn out to be size-issues or 
other problems, and usually we cannot obtain an adjunction of type 
(\ref{eqn:str_sem_idealised});
we cannot find a suitable functor $\Str$ of that type.
To get an adjunction, various conditions on objects in $\CAT/\cat{C}$
were introduced in the literature in order to single out well-behaved
(usually called \emph{tractable}) objects, 
yielding a restricted version of (\ref{eqn:str_sem_idealised}):
\begin{equation}
\label{eqn:str_sem_tractable}
\begin{tikzpicture}[baseline=-\the\dimexpr\fontdimen22\textfont2\relax ]
      \node(11) at (0,0) 
      {$\Th{\cat{M}}^\op$};
      \node(22) at (4,0) {$(\CAT/\cat{C})_{\mathrm{tr}}$.};
  
      \draw [->,transform canvas={yshift=5pt}]  (22) to node 
      [auto,swap,labelsize]{$\Str$} (11);
      \draw [->,transform canvas={yshift=-5pt}]  (11) to node 
      [auto,swap,labelsize]{$\Sem$} (22);
      \path (11) to node[midway](m){} (22); 

      \node at (m) [labelsize,rotate=90] {$\vdash$};
\end{tikzpicture}
\end{equation}
Here, $(\CAT/\cat{C})_\mathrm{tr}$ is the full-subcategory of 
$\CAT/\cat{C}$ consisting of all {tractable} objects.

In this chapter, we construct a structure-semantics adjunction for 
an arbitrary metatheory and an arbitrary metamodel of it.
Of course, we cannot obtain an adjunction of type (\ref{eqn:str_sem_idealised}),
for the same reasons that have prevented  
other authors from doing so.
However, we shall obtain a modified adjunction by a strategy different from 
theirs (and similar to \cite{Linton_outline,Avery_thesis}):
instead of restricting $\CAT/\cat{C}$,
we {extend} $\Th{\cat{M}}$ to $\Th{\widehat{\cat{M}}}$\footnote{The monoidal 
category $\widehat{\cat{M}}$ is not a metatheory because it is not large.
Extending Definition~\ref{def:theory}, by $\Th{\widehat{\cat{M}}}$ we mean
the category of monoids in $\widehat{\cat{M}}$.}
(where $\widehat{\cat{M}}=[\cat{M}^\op,\SET]$ is equipped with
the convolution monoidal structure),
and obtain an extended version of (\ref{eqn:str_sem_idealised}):
\begin{equation}
\label{eqn:str_sem_extend}
\begin{tikzpicture}[baseline=-\the\dimexpr\fontdimen22\textfont2\relax ]
      \node(11) at (0,0) 
      {$\Th{\widehat{\cat{M}}}^\op$};
      \node(22) at (4,0) {$\CAT/\cat{C}$.};
  
      \draw [->,transform canvas={yshift=5pt}]  (22) to node 
      [auto,swap,labelsize]{$\Str$} (11);
      \draw [->,transform canvas={yshift=-5pt}]  (11) to node 
      [auto,swap,labelsize]{$\Sem$} (22);
      \path (11) to node[midway](m){} (22); 

      \node at (m) [labelsize,rotate=90] {$\vdash$};
\end{tikzpicture}
\end{equation}
We may then obtain known adjunctions of the form (\ref{eqn:str_sem_tractable}),
at least for clones and monads,
by suitably restricting (\ref{eqn:str_sem_extend}).


\section{The idea of structure-semantics adjunctions}
\label{sec:what_are_str_sem}
This section is an introduction to the idea of structure-semantics 
adjunctions.
We start with an informal explanation of a duality between 
\emph{sentences} and \emph{structures}~\cite{Lawvere_adjointness},
which may be seen as
a degenerate version of structure-semantics 
adjunctions.
Given any sentence $\phi$ of a suitable type and any structure $A$
of a suitable type, suppose we know whether  
$\phi$ holds in $A$ (written as $A\vDash \phi$) or not.  
Then, from a set of sentences $\Phi$
we may define a set $\mathrm{Mod}(\Phi)$ of structures,
whose elements are called \emph{models of $\Phi$}:
\[
\mathrm{Mod}(\Phi)=\{\,A\mid A\vDash \phi \text{ for all }\phi\in\Phi\,\}.
\]
Conversely, from a set of structures $\mathcal{A}$ we get 
a set $\mathrm{Thm}(\mathcal{A})$ of sentences, whose elements we call
\emph{theorems of $\mathcal{A}$}:
\[
\mathrm{Thm}(\mathcal{A})
=\{\,\phi\mid A\vDash \phi \text{ for all }A\in\mathcal{A}\,\}.
\]
It is straightforward to see that
$\mathrm{Mod}$ and $\mathrm{Thm}$ form a Galois connection:
for any set $\Phi$ of sentences and any set $\mathcal{A}$ of structures,
\[
\Phi\subseteq \mathrm{Thm}(\mathcal{A}) \iff
\mathcal{A}\subseteq \mathrm{Mod}(\Phi)
\]
holds.
The setting of universal algebra (see Section~\ref{sec:univ_alg})
provides a concrete example.
For a fixed graded set (signature) $\Sigma$,
the notions of $\Sigma$-equation (Definition~\ref{def:Sigma_eq}) 
and $\Sigma$-algebra (Definition~\ref{def:Sigma_alg})
play the roles of sentence and structure respectively,
with the relation $\vDash$ defined as in 
Definition~\ref{def:semantical_consequence_rel}.

In various fields in mathematics, it has been observed that 
behind classical Galois connections
there often hide more profound adjunctions~\cite{Lawvere_adjointness};
the structure-semantics adjunctions are what we may find 
behind the above duality between sentences and structures.
For example, the structure-semantics adjunctions for clones 
refine and unify the dualities for universal algebra 
for arbitrary graded sets $\Sigma$.
Given a \emph{small} category $\cat{C}$ with finite powers, 
the structure-semantics adjunction for clones with respect to $\cat{C}$
may be formulated as an adjunction
\begin{equation}
\label{eqn:str_sem_adj_clone}
\begin{tikzpicture}[baseline=-\the\dimexpr\fontdimen22\textfont2\relax ]
      \node(11) at (0,0) 
      {$\mathbf{Clo}^\op$};
      \node(22) at (4,0) {$\Cat/\cat{C}$,};
  
      \draw [->,transform canvas={yshift=5pt}]  (22) to node 
      [auto,swap,labelsize]{$\Str$} (11);
      \draw [->,transform canvas={yshift=-5pt}]  (11) to node 
      [auto,swap,labelsize]{$\Sem$} (22);
      \path (11) to node[midway](m){} (22); 

      \node at (m) [labelsize,rotate=90] {$\vdash$};
\end{tikzpicture}
\end{equation}
where $\mathbf{Clo}=\Th{[\F,\Set]}$ is the category of clones
and $\Cat/\cat{C}$ is a slice category.
We already know what the functor $\Sem$ does:
it maps a clone $\monoid{T}$ to the 
category $\Mod{\monoid{T}}{\cat{C}}$ of models of $\monoid{T}$ in $\cat{C}$
(with respect to the standard metamodel as in Example~\ref{ex:clone_enrichment})
equipped with the forgetful functor
$U\colon \Mod{\monoid{T}}{\cat{C}}\longrightarrow\cat{C}$.
The functor $\Str$, in this case, maps any functor 
$V\colon\cat{A}\longrightarrow\cat{C}$ with small domain $\cat{A}$
to the clone whose underlying graded set is given by 
$([\cat{A},\cat{C}]((-)^n\circ V,V))_{n\in\N}$,
where $(-)^n\colon \cat{C}\longrightarrow\cat{C}$ is the functor taking 
$n$-th powers, and whose clone operations canonically induced from powers in 
$\cat{C}$.

An object of $\Cat/\cat{C}$, say $V\colon\cat{A}\longrightarrow\cat{C}$,
may be seen as specifying an additional structure (of a very general type) 
on objects in $\cat{C}$, by viewing $\cat{A}$ as the category of 
$\cat{C}$-objects with the additional structure and $V$ as the associated 
forgetful functor.
The functor $\Str$ extracts a clone from $V$,  
giving the best approximation of this additional structure by 
structures expressible by clones.

We remark that if we take a \emph{locally small} category $\cat{C}$,
as is often the case of interest (e.g., $\cat{C}=\Set$),
then in general we cannot have an adjunction
\begin{equation*}
\begin{tikzpicture}[baseline=-\the\dimexpr\fontdimen22\textfont2\relax ]
      \node(11) at (0,0) 
      {$\mathbf{Clo}^\op$};
      \node(22) at (4,0) {$\CAT/\cat{C}$.};
  
      \draw [->,transform canvas={yshift=5pt}]  (22) to node 
      [auto,swap,labelsize]{$\Str$} (11);
      \draw [->,transform canvas={yshift=-5pt}]  (11) to node 
      [auto,swap,labelsize]{$\Sem$} (22);
      \path (11) to node[midway](m){} (22); 

      \node at (m) [labelsize,rotate=90] {$\vdash$};
\end{tikzpicture}
\end{equation*}
The above construction fails because
for an object $V\colon\cat{A}\longrightarrow\cat{C}$ in $\CAT/\cat{C}$
and a natural number $n$,
the set $[\cat{A},\cat{C}] ((-)^n\circ V,V)$ may not be small.
Indeed, a functor $V\colon \cat{A}\longrightarrow\cat{C}$ is called 
\emph{tractable} in 
\cite{Linton_equational} precisely when the sets of the form
$[\cat{A},\cat{C}] ((-)^n\circ V,V)$ are small.
We obtain an adjunction if we restrict $\CAT/\cat{C}$ to its full subcategory 
consisting of all 
tractable functors.

\section{The structure and semantics functors}
Let $\cat{M}=(\cat{M},I,\otimes)$ be a metatheory, $\cat{C}$ be a large 
category, and $\Phi=(\Phi,\overline{\phi}_\cdot,\overline{\phi})$ be a 
metamodel of $\cat{M}$ in $\cat{C}$.
The metamodel $\Phi$ enables us to define, for each theory $\monoid{T}
\in\Th{\cat{M}}$, the category of models $\Mod{\monoid{T}}{(\cat{C},\Phi)}$
together with the forgetful functor $U\colon \Mod{\monoid{T}}{(\cat{C},\Phi)}
\longrightarrow\cat{C}$.
This construction is functorial, and gives rise to a functor
\[
\Th{\cat{M}}^\op\longrightarrow\CAT/\cat{C}.
\]
However, as we have remarked in Proposition~\ref{prop:metatheory_as_enrichment},
a metamodel of $\cat{M}$ in $\cat{C}$ corresponds to 
an enrichment of $\cat{C}$ over $\widehat{\cat{M}}$;
hence using $\Phi$ we can actually give 
the definition of models for any theory (i.e., monoid object) in 
$\widehat{\cat{M}}$.
Therefore the previous functor can be extended to 
\begin{equation}
\label{eqn:semantics_functor}
\Sem\colon \Th{\widehat{\cat{M}}}^\op\longrightarrow\CAT/\cat{C}.
\end{equation}
The category $\Th{\widehat{\cat{M}}}$ is 
isomorphic to the category of lax monoidal functors of type 
$\cat{M}^\op\longrightarrow\SET$ and monoidal natural transformations between 
them.
Indeed, an object $(P,e,m)$ of $\Th{\widehat{\cat{M}}}$
consists of:
\begin{itemize}
\item a functor $P\colon\cat{M}^\op\longrightarrow\SET$;
\item a natural transformation $(e_X\colon \DayI (X)\longrightarrow 
P(X))_{X\in\cat{M}}$;
\item a natural transformation $(m_X\colon (P\Dayo P)(X)\longrightarrow 
P(X))_{X\in\cat{M}}$
\end{itemize}
satisfying the monoid axioms, and such a data is equivalent to 
\begin{itemize}
\item a functor $P\colon\cat{M}^\op\longrightarrow\SET$;
\item a function $\overline{e}\colon 1\longrightarrow P(I)$;
\item a natural transformation $(\overline{m}_{X,Y}\colon P(Y)\times P(X)
\longrightarrow P(Y\otimes X))_{X,Y\in\cat{M}}$
\end{itemize}
satisfying the axioms for $(P,\overline{e},\overline{m})$ to be a
lax monoidal functor $\cat{M}^\op\longrightarrow\SET$.
We shall use these two descriptions of objects of the category 
$\Th{\widehat{\cat{M}}}$ 
interchangeably.

Let us describe the action of the functor $\Sem$ concretely. 
For any $\monoid{P}=(P,e,m)\in\Th{\widehat{\cat{M}}}$, we define 
the category $\Mod{\monoid{P}}{(\cat{C},\Phi)}$ as follows:
\begin{itemize}
\item An object is a pair consisting of an object $C\in\cat{C}$ and 
a natural transformation 
\[
(\xi_X\colon P(X)\longrightarrow \Phi_X(C,C))_{C\in\cat{M}}
\]
making the following diagrams commute for each $X,Y\in\cat{M}$:
\begin{equation}
\label{eqn:model_of_P}
\begin{tikzpicture}[baseline=-\the\dimexpr\fontdimen22\textfont2\relax ]
      \node (TL) at (0,1)  {$1$};
      \node (TR) at (2,1)  {$P(I)$};
      \node (BR) at (2,-1) {$\Phi_I(C,C)$};
      \draw[->] (TL) to node[auto,labelsize](T) {$\overline{e}$} (TR);
      \draw[->]  (TR) to node[auto,labelsize] {$\xi_I$} (BR);
      \draw[->]  (TL) to node[auto,swap,labelsize] 
      {$(\overline{\phi}_\cdot)_{C}$} 
      (BR);
\end{tikzpicture} 
\quad
\begin{tikzpicture}[baseline=-\the\dimexpr\fontdimen22\textfont2\relax ]
      \node (TL) at (0,1)  {$P(Y)\times P(X)$};
      \node (TR) at (4.5,1)  {$P({Y\otimes X})$};
      \node (BL) at (0,-1) {$\Phi_Y(C,C)\times \Phi_X(C,C)$};
      \node (BR) at (4.5,-1) {$\Phi_{Y\otimes X}(C,C).$};
      \draw[->] (TL) to node[auto,labelsize](T) 
      {$\overline{m}_{X,Y}$} 
      (TR);
      \draw[->]  (TR) to node[auto,labelsize] {$\xi_{Y\otimes X}$} (BR);
      \draw[->]  (TL) to node[auto,swap,labelsize] {$\xi_Y\times \xi_X$} 
      (BL);
      \draw[->]  (BL) to node[auto,labelsize] 
      {$(\overline{\phi}_{X,Y})_{C,C,C}$} (BR);
\end{tikzpicture} 
\end{equation}
\item A morphism from $(C,\xi)$ to $(C',\xi')$ is a morphism $f\colon 
C\longrightarrow C'$ in $\cat{C}$ making the following diagram commute for 
each $X\in\cat{M}$:
\begin{equation}
\label{eqn:P_model_homomorphism}
\begin{tikzpicture}[baseline=-\the\dimexpr\fontdimen22\textfont2\relax ]
      \node (TL) at (0,1)  {$P(X)$};
      \node (TR) at (4,1)  {$\Phi_X(C,C)$};
      \node (BL) at (0,-1) {$\Phi_X(C',C')$};
      \node (BR) at (4,-1) {$\Phi_{X}(C,C').$};
      \draw[->] (TL) to node[auto,labelsize](T) {$\xi_X$} (TR);
      \draw[->]  (TR) to node[auto,labelsize] {$\Phi_X(C,f)$} (BR);
      \draw[->]  (TL) to node[auto,swap,labelsize] {$\xi'_X$} (BL);
      \draw[->]  (BL) to node[auto,labelsize] {$\Phi_X(f,C')$} (BR);
\end{tikzpicture} 
\end{equation}
\end{itemize}
There exists an evident forgetful functor 
$U\colon\Mod{\monoid{P}}{(\cat{C},\Phi)}\longrightarrow\cat{C}$ mapping 
$(C,\xi)$ to $C$ and $f$ to $f$;
the functor $\Sem$ maps $\monoid{P}$ to $U$.

We have a canonical fully faithful functor
\begin{equation*}
J\colon\Th{\cat{M}}\longrightarrow\Th{\widehat{\cat{M}}}
\end{equation*}
mapping $(T,e,m)\in\Th{\cat{M}}$ to the functor $\cat{M}(-,T)$
with the evident monoid structure induced from $e$ and $m$.
An object $(P,e,m)\in\Th{\widehat{\cat{M}}}$ is in the essential 
image of $J$
if and only if $P\colon\cat{M}^\op\longrightarrow\SET$ is representable.

Let us describe the left adjoint $\Str$ to (\ref{eqn:semantics_functor}).
Given an object $V\colon\cat{A}\longrightarrow\cat{C}$ of $\CAT/\cat{C}$, 
we define $\Str(V)=(P^{(V)},e^{(V)},m^{(V)})\in\Th{\widehat{\cat{M}}}$ as 
follows:
\begin{itemize}
\item The functor $P^{(V)}\colon \cat{M}^\op\longrightarrow\SET$
maps $X\in\cat{M}$ to 
\begin{equation}
\label{eqn:structure_of_V}
P^{(V)}(X)=\int_{A\in\cat{A}}\Phi_X(VA,VA).
\end{equation}
\item The function $\overline{e^{(V)}}\colon 1\longrightarrow P^{(V)}(I)$
maps the unique element of $1$ to 
$((\overline{\phi}_\cdot)_{VA} (\ast))_{A\in\cat{A}}$ $\in P^{(V)}(I)$.
\item The $(X,Y)$-th component of the natural transformation 
\[(\overline{m^{(V)}}_{X,Y}\colon P^{(V)}(Y)\times P^{(V)}(X)\longrightarrow 
P^{(V)}(Y\otimes X))_{X,Y\in\cat{M}}\]
maps $((y_A)_{A\in\cat{A}},(x_A)_{A\in\cat{A}})$
to $((\overline{\phi}_{X,Y})_{VA,VA,VA}(y_A,x_A))_{A\in\cat{A}}$.
\end{itemize}
The monoid axioms for $(P^{(V)},{e^{(V)}},{m^{(V)}})$ 
follow easily from the axioms for metamodels,
and $\Str$ routinely extends to a functor of type 
$\CAT/\cat{C}\longrightarrow\Th{\widehat{\cat{M}}}^\op$. 

\begin{thm}
\label{thm:str_sem_small}
Let $\cat{M}$ be a metatheory, $\cat{C}$ be a large category and 
$\Phi=(\Phi,\overline{\phi}_\cdot,\overline{\phi})$ 
be a metamodel of $\cat{M}$ in $\cat{C}$.
The functors $\Sem$ and $\Str$ defined above
form an adjunction:
\[
\begin{tikzpicture}[baseline=-\the\dimexpr\fontdimen22\textfont2\relax ]
      \node(11) at (0,0) 
      {$\Th{\widehat{\cat{M}}}^\op$};
      \node(22) at (4,0) {$\CAT/\cat{C}$.};
  
      \draw [->,transform canvas={yshift=5pt}]  (22) to node 
      [auto,swap,labelsize]{$\Str$} (11);
      \draw [->,transform canvas={yshift=-5pt}]  (11) to node 
      [auto,swap,labelsize]{$\Sem$} (22);
      \path (11) to node[midway](m){} (22); 

      \node at (m) [labelsize,rotate=90] {$\vdash$};
\end{tikzpicture}
\] 
\end{thm}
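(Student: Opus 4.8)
The plan is to establish the adjunction $\Str\dashv\Sem$ directly, by producing a bijection
\[
\CAT/\cat{C}(V,\Sem(\monoid{P}))\;\cong\;\Th{\widehat{\cat{M}}}(\monoid{P},\Str(V))
\]
natural in $V\in\CAT/\cat{C}$ and $\monoid{P}\in\Th{\widehat{\cat{M}}}$, and then observing that the right-hand side is by definition $\Th{\widehat{\cat{M}}}^\op(\Str(V),\monoid{P})$, so that the bijection exhibits $\Str$ as left adjoint to $\Sem$.

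First I would unwind the left-hand side. Fix $V\colon\cat{A}\longrightarrow\cat{C}$ and $\monoid{P}=(P,\overline{e},\overline{m})$. A morphism $V\longrightarrow\Sem(\monoid{P})$ over $\cat{C}$ is a functor $W\colon\cat{A}\longrightarrow\Mod{\monoid{P}}{(\cat{C},\Phi)}$ with $U\circ W=V$; since $U$ is the forgetful functor, such a $W$ is determined by a choice, for every $A\in\cat{A}$, of a model structure $\xi^A=(\xi^A_X\colon P(X)\longrightarrow\Phi_X(VA,VA))_{X\in\cat{M}}$ on the object $VA$ (subject to the axioms (\ref{eqn:model_of_P})), together with the requirement that $Va$ be a homomorphism for every $a\colon A\longrightarrow A'$ in $\cat{A}$, i.e.\ that (\ref{eqn:P_model_homomorphism}) holds with $C=VA$, $C'=VA'$ and $f=Va$.

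The key step is to repackage this data through the end $P^{(V)}(X)=\int_{A\in\cat{A}}\Phi_X(VA,VA)$ of (\ref{eqn:structure_of_V}). For each fixed $X$, the homomorphism condition (\ref{eqn:P_model_homomorphism}) is precisely the wedge condition for the family $(\xi^A_X)_{A\in\cat{A}}$, so by the universal property of the end it corresponds to a single function $\overline{\xi}_X\colon P(X)\longrightarrow P^{(V)}(X)$; naturality of each $\xi^A$ in $X$, together with the fact that ends are computed pointwise, makes $\overline{\xi}=(\overline{\xi}_X)_{X\in\cat{M}}$ a natural transformation $P\Longrightarrow P^{(V)}$. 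Finally, because the unit and multiplication of $\Str(V)$ are defined componentwise via the (jointly monic) end projections, the two model axioms (\ref{eqn:model_of_P}) holding for all $A$ simultaneously translate exactly into the statement that $\overline{\xi}$ commutes with units and with multiplications, i.e.\ is a monoid homomorphism $\monoid{P}\longrightarrow\Str(V)$ in $\Th{\widehat{\cat{M}}}$. This yields the bijection, and naturality in $V$ and $\monoid{P}$ is then a routine check, all constructions being functorial and the universal property of the end being natural.

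I expect the main obstacle to be the bookkeeping in this last step: matching the monoid-homomorphism axioms against the model axioms through the universal property of the end, while keeping the naturality in $X\in\cat{M}$ cleanly separated from the extranaturality in $A\in\cat{A}$. I would also remark on why no tractability hypothesis is needed: the end (\ref{eqn:structure_of_V}) is a limit indexed by the (large) category $\cat{A}$ and valued in $\SET$, and since $\SET$ admits all such large limits, $\Str(V)$ always exists as a genuine object of $\widehat{\cat{M}}=[\cat{M}^\op,\SET]$. It is exactly this passage to $\SET$ (in place of $\Set$), already exploited via Proposition~\ref{prop:metatheory_as_enrichment} to extend the semantics functor (\ref{eqn:semantics_functor}) to all of $\Th{\widehat{\cat{M}}}$, that removes the size restriction responsible for the \emph{tractable} objects in the classical formulations.
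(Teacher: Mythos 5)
Your proposal is correct and is essentially the paper's own proof: both establish the bijection $\Th{\widehat{\cat{M}}}(\monoid{P},\Str(V))\cong(\CAT/\cat{C})(V,\Sem(\monoid{P}))$ via the universal property of the end $P^{(V)}(X)=\int_{A\in\cat{A}}\Phi_X(VA,VA)$, identifying the wedge (extranaturality in $A$) condition with the requirement that each $Vf$ be a homomorphism of models, and matching the componentwise-defined monoid structure of $\Str(V)$ against the model axioms (\ref{eqn:model_of_P}). The paper merely packages this as a three-way correspondence --- monoid morphisms $\monoid{P}\longrightarrow\Str(V)$, families $\xi_{A,X}$ natural in both variables, and functors over $\cat{C}$ --- which is your argument read in the opposite direction.
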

\begin{proof}
We show that there are bijections
\[\Th{\widehat{\cat{M}}}(\monoid{P},\Str(V))\cong
(\CAT/\cat{C})(V,\Sem(\monoid{P}))\]
natural in $\monoid{P}=(P,e,m)\in\Th{\widehat{\cat{M}}}$ and $(V\colon 
\cat{A}\longrightarrow\cat{C})\in\CAT/\cat{C}$.

In fact, we show that the following three types of data naturally 
correspond to each other.
\begin{enumerate}
\item A morphism $\alpha\colon \monoid{P}\longrightarrow\Str(V)$ in 
$\Th{\widehat{\cat{M}}}$; that is, a natural transformation 
\[
(\alpha_X\colon P(X)\longrightarrow P^{(V)}(X))_{X\in\cat{M}}
\]
making the suitable diagrams commute.
\item A natural transformation \[
(\xi_{A,X}\colon P(X)\longrightarrow \Phi_X(VA,VA))_{X\in\cat{M},
A\in\cat{A}}
\]
making the following diagrams commute for each $A\in\cat{A}$ and 
$X,Y\in\cat{M}$:
\begin{equation*}
\begin{tikzpicture}[baseline=-\the\dimexpr\fontdimen22\textfont2\relax ]
      \node (TL) at (0,1)  {$1$};
      \node (TR) at (2,1)  {$P(I)$};
      \node (BR) at (2,-1) {$\Phi_I(VA,VA)$};
      \draw[->] (TL) to node[auto,labelsize](T) {$\overline{e}$} (TR);
      \draw[->]  (TR) to node[auto,labelsize] {$\xi_{A,I}$} (BR);
      \draw[->]  (TL) to node[auto,swap,labelsize] 
      {$(\overline{\phi}_\cdot)_{VA}$} 
      (BR);
\end{tikzpicture} 
\quad
\begin{tikzpicture}[baseline=-\the\dimexpr\fontdimen22\textfont2\relax ]
      \node (TL) at (0,1)  {$P(Y)\times P(X)$};
      \node (TR) at (5.5,1)  {$P({Y\otimes X})$};
      \node (BL) at (0,-1) {$\Phi_Y(VA,VA)\times \Phi_X(VA,VA)$};
      \node (BR) at (5.5,-1) {$\Phi_{Y\otimes X}(VA,VA).$};
      \draw[->] (TL) to node[auto,labelsize](T) 
      {$\overline{m}_{X,Y}$} 
      (TR);
      \draw[->]  (TR) to node[auto,labelsize] {$\xi_{A,Y\otimes X}$} (BR);
      \draw[->]  (TL) to node[auto,swap,labelsize] {$\xi_{A,Y}\times 
      \xi_{A,X}$} 
      (BL);
      \draw[->]  (BL) to node[auto,labelsize] 
      {$(\overline{\phi}_{X,Y})_{VA,VA,VA}$} (BR);
\end{tikzpicture} 
\end{equation*}
\item A morphism $F\colon V\longrightarrow\Sem(\monoid{P})$ in $\CAT/\cat{C}$;
that is, a functor 
$F\colon\cat{A}\longrightarrow\Mod{\monoid{P}}{(\cat{C},\Phi)}$
such that $U\circ F=V$ 
($U\colon\Mod{\monoid{P}}{(\cat{C},\Phi)}\longrightarrow\cat{C}$ is the 
forgetful functor).
\end{enumerate}

The correspondence between 1 and 2 is by the universality of ends
(see (\ref{eqn:structure_of_V})). 
To give $\xi$ as in 2 \emph{without} requiring naturality
in $A\in\cat{A}$, is equivalent to give a 
function $\ob{F}\colon 
\ob{\cat{A}}\longrightarrow\ob{\Mod{\monoid{P}}{(\cat{C},\Phi)}}$
such that $\ob{U}\circ \ob{F}=\ob{V}$ (see (\ref{eqn:model_of_P})). 
To say that $\xi$ is natural also in $A\in\cat{A}$
is equivalent to saying that $\ob{F}$ extends to a functor 
$F\colon \cat{A}\longrightarrow\Mod{\monoid{P}}{(\cat{C},\Phi)}$
by mapping each morphism $f$ in $\cat{A}$ to $Vf$.
\end{proof}

\section{The classical cases}
We conclude this chapter by showing that we can restore the known
structure-semantics adjunctions for clones and monads, by restricting
our version of structure-semantics adjunctions 
(Theorem~\ref{thm:str_sem_small}).

In both cases of clones and monads, we shall consider the diagram
\[
\begin{tikzpicture}[baseline=-\the\dimexpr\fontdimen22\textfont2\relax ]
      \node(11T) at (0,2) 
      {$\Th{\cat{\widehat{M}}}^\op$};
      \node(22T) at (4,2) {$\CAT/\cat{C}$};
      \draw [->,transform canvas={yshift=5pt}]  (22T) to node 
      [auto,swap,labelsize]{$\Str$} (11T);
      \draw [->,transform canvas={yshift=-5pt}]  (11T) to node 
      [auto,swap,labelsize]{$\Sem$} (22T);
      \path (11T) to node[midway](mT){} (22T); 
      \node at (mT) [labelsize,rotate=90] {$\vdash$};
      \node(11) at (0,0) 
      {$\Th{\cat{M}}^\op$};
      \node(22) at (4,0) {$(\CAT/\cat{C})_{\mathrm{tr}}$};
      \draw [->,transform canvas={yshift=5pt}]  (22) to node 
      [auto,swap,labelsize]{$\Str'$} (11);
      \draw [->,transform canvas={yshift=-5pt}]  (11) to node 
      [auto,swap,labelsize]{$\Sem'$} (22);
      \path (11) to node[midway](m){} (22); 
      \node at (m) [labelsize,rotate=90] {$\vdash$};
      \draw[->] (11) to node[auto,labelsize] {$J$} (11T);
      \draw[->] (22) to node[auto,swap,labelsize] {$K$} (22T);
\end{tikzpicture}
\]
in which the top adjunction is the one we have constructed in the previous 
section,
the bottom adjunction is a classical structure-semantics adjunction,
and $J$ and $K$ are the canonical fully faithful functors
(the precise definition of $(\CAT/\cat{C})_\mathrm{tr}$ will be given below).
We shall prove that the two squares, one involving $\Str$ and $\Str'$,
the other involving $\Sem$ and $\Sem'$, commute,
showing that $\Str'$ (resp.~$\Sem'$) arises as a restriction of 
$\Str$ (resp.~$\Sem$).

First, that $K\circ \Sem'\cong \Sem\circ J$ holds is straightforward,
and this is true as soon as $\Sem'$ maps any $\monoid{T}\in\Th{\cat{M}}$ 
to the forgetful functor 
$U\colon\Mod{\monoid{T}}{(\cat{C},\Phi)}\longrightarrow\cat{C}$.
Indeed, for any theory $\monoid{T}=(T,e,m)$ in $\cat{M}$,
$J(\monoid{T})\in\Th{\widehat{\cat{M}}}$ has the underlying object 
$\cat{M}(-,T)\in\widehat{\cat{M}}$, and the description of 
$\Mod{J(\monoid{T})}{(\cat{C},\Phi)}$ in the previous section coincides with
$\Mod{\monoid{T}}{(\cat{C},\Phi)}$ by the Yoneda lemma.

Let us check that $J\circ \Str'\cong \Str\circ K$ holds.\footnote{This does not 
seem to follow formally from $K\circ \Sem'\cong \Sem\circ J$,
even if we take into consideration the fact that $J$ and $K$ are fully 
faithful.}
For this, we have to review the classical structure functors
and the tractability conditions.

We begin with the case of clones as treated in \cite{Linton_equational},
which we have already sketched in Section~\ref{sec:what_are_str_sem}.
Let $\cat{C}$ be a locally small category with finite powers and 
consider the standard metamodel $\Phi$ of $[\F,\Set]$ in $\cat{C}$
(derived from the enrichment $\enrich{-}{-}$ in 
Example~\ref{ex:clone_enrichment}). 
An object $V\colon\cat{A}\longrightarrow\cat{C}\in\CAT/\cat{C}$ is called 
\defemph{tractable} if and only if for any natural number $n$,
the set $[\cat{A},\cat{C}]((-)^n\circ V,V)$ is small.
Given a tractable $V$, $\Str'(V)\in\Th{[\F,\Set]}$ has the underlying 
functor $|\Str'(V)|$ mapping $[n]\in\F$ to $[\cat{A},\cat{C}]((-)^n\circ V,V)$.
On the other hand, our formula (\ref{eqn:structure_of_V})
reduces as follows:
\begin{align*}
P^{(V)}(X)&=\int_{A\in\cat{A}}\Phi_X(VA,VA)\\
&= \int_{A\in\cat{A}} [\F,\Set](X, \enrich{VA}{VA})\\
&\cong \int_{A\in\cat{A},[n]\in\F} \Set(X_n, \cat{C}((VA)^n,VA))\\
&\cong \int_{[n]\in\F} \Set\bigg(X_n, 
\int_{A\in\cat{A}}\cat{C}((VA)^n,VA)\bigg)\\
&\cong \int_{[n]\in\F} \Set(X_n, [\cat{A},\cat{C}]((-)^n\circ V,V))\\
&\cong [\F,\Set](X, |\Str'(V)|). 
\end{align*}
It is routine from this to see that $J\circ \Str'\cong \Str\circ K$ holds.

Finally, for monads, we take as a classical structure-semantics adjunction 
the one in \cite[Section~II.~1]{Dubuc_Kan}.
Let $\cat{C}$ be a large category and consider the standard metamodel $\Phi$
of $[\cat{C},\cat{C}]$ in $\cat{C}$ (derived from the standard strict action 
$\ast$ in Example~\ref{ex:monad_standard_action}).
An object $V\colon \cat{A}\longrightarrow\cat{C}\in\CAT/\cat{C}$
is called \defemph{tractable} if and only if the right Kan extension  
$\Ran_V V$ of $V$ along itself exists.\footnote{In fact, in 
\cite[p.~68]{Dubuc_Kan} 
Dubuc defines tractability as a slightly stronger condition. 
However, the condition we have introduced above is the one which is used for 
the construction of structure-semantics adjunctions in \cite{Dubuc_Kan}.}
It is known that a functor of the form
$\Ran_V V$ acquires a canonical monad structure,
and the resulting monad is called the {codensity monad of $V$}. 
For a tractable $V$, $\Str'(V)$ is defined to be the codensity monad of $V$.
Now let us return to our formula (\ref{eqn:structure_of_V}):
\begin{align*}
P^{(V)}(X)&=\int_{A\in\cat{A}}\Phi_X(VA,VA)\\
&= \int_{A\in\cat{A}} \cat{C}(XVA,{VA})\\
&\cong [\cat{A},\cat{C}](X\circ V,V)\\
&\cong [\cat{C},\cat{C}](X,\Ran_V V). 
\end{align*}
Again we see that $J\circ \Str'\cong \Str\circ K$ holds.

\chapter{Categories of models as double limits}
\label{chap:double_lim}
Let $\cat{M}$ be a metatheory, $\monoid{T}$ be a theory in $\cat{M}$,
$\cat{C}$ be a large category and $\Phi$ be a metamodel of $\cat{M}$ in 
$\cat{C}$.
Given these data, in Definition~\ref{def:metamodel_model} we have defined---in
a concrete manner---the category $\Mod{\monoid{T}}{(\cat{C},\Phi)}$ of models 
of $\monoid{T}$ in $\cat{C}$ with respect to $\Phi$,
equipped with the evident forgetful functor 
$U\colon\Mod{\monoid{T}}{(\cat{C},\Phi)}\longrightarrow\cat{C}$.

In this chapter, we give an abstract characterisation of the categories of 
models.
A similar result is known for the {Eilenberg--Moore category}
of a monad; Street \cite{Street_FTM}
has proved that it can be abstractly characterised as the lax limit in 
$\tCAT$ of a certain diagram canonically constructed from the original monad.
We prove that the categories of models in our framework can also be 
characterised by a certain universal property.
A suitable language to express this universal property is
that of \emph{pseudo double categories}~\cite{GP1},
reviewed in Section~\ref{sec:pseudo_double_cat}.
We show that the category $\Mod{\monoid{T}}{(\cat{C},\Phi)}$, together with 
the forgetful functor $U$ and some other natural data, form a double limit 
in the pseudo double category $\PPROF$ of large categories,
profunctors, functors and natural transformations.

\section{The universality of Eilenberg--Moore categories}
\label{sec:EM_cat_as_2-lim}
In this section we review the 2-categorical characterisation in 
\cite{Street_FTM} of the Eilenberg--Moore category of a monad on a large 
category, in elementary terms.\footnote{The main point of the paper  
\cite{Street_FTM} is the introduction of the notion of Eilenberg--Moore object
in an \emph{arbitrary 2-category} $\tcat{B}$ via a universal property and show 
that, if exists, it satisfies certain formal properties of Eilenberg--Moore 
categories.
However, for our purpose, it suffices to consider the simple case 
$\tcat{B}=\tCAT$ only. It is left as future work to investigate whether 
we can develop a similar ``formal theory'' from the double-categorical 
universal property of categories of models.}
Let $\cat{C}$ be a large category and $\monoid{T}=(T,\eta,\mu)$ be a monad on 
$\cat{C}$.
The Eilenberg--Moore category $\cat{C}^\monoid{T}$ of $\monoid{T}$ 
is equipped with a canonical forgetful functor $U\colon 
\cat{C}^\monoid{T}\longrightarrow\cat{C}$ mapping an Eilenberg--Moore algebra 
$(C,\gamma)$ of $\monoid{T}$ to its underlying object $C$.
Moreover, there exists a canonical natural transformation $u\colon T\circ 
U\Longrightarrow U$, i.e., of type 
\[
\begin{tikzpicture}[baseline=-\the\dimexpr\fontdimen22\textfont2\relax ]
      \node (TL) at (0,0.75)  {$\cat{C}^\monoid{T}$};
      \node (TR) at (2,0.75)  {$\cat{C}^\monoid{T}$};
      \node (BL) at (0,-0.75) {$\cat{C}$};
      \node (BR) at (2,-0.75) {$\cat{C}.$};
      \draw[->]  (TL) to node[auto,swap,labelsize] {$U$} (BL);
      \draw[->]  (TL) to node[auto,labelsize] {$\id{\cat{C}^\monoid{T}}$} (TR);
      \draw[->]  (BL) to node[auto,swap,labelsize] {$T$} (BR);
      \draw[->]  (TR) to node[auto,labelsize](B) {$U$} (BR);
      \tzsquareup{1}{0}{$u$}
\end{tikzpicture} 
\] 
We are depicting $u$ in a square rather than in a triangle for later comparison 
with similar diagrams in a pseudo double category.
For each $(C,\gamma)\in\cat{C}^\monoid{T}$, 
the $(C,\gamma)$-th component of $u$ is simply $\gamma\colon TC\longrightarrow 
C$.
We claim that the data $(\cat{C}^\monoid{T}, U,u)$ is characterised by a
certain universal property.

To state this universal property, let us define 
a \defemph{left $\monoid{T}$-module} to be 
a triple $(\cat{A},V,v)$ consisting of a large category $\cat{A}$,
a functor $V\colon \cat{A}\longrightarrow\cat{C}$ and a natural transformation 
$v\colon T\circ V\Longrightarrow V$,
such that the following equations hold:
\[
\begin{tikzpicture}[baseline=-\the\dimexpr\fontdimen22\textfont2\relax ]
      \node (TL) at (0,0.75)  {$\cat{A}$};
      \node (TR) at (2,0.75)  {$\cat{A}$};
      \node (BL) at (0,-0.75) {$\cat{C}$};
      \node (BR) at (2,-0.75) {$\cat{C}$};
      \draw[->] (TL) to node[auto,labelsize](T) {$\id{\cat{A}}$} 
      (TR);
      \draw[->]  (TR) to node[auto,labelsize] {$V$} (BR);
      \draw[->]  (TL) to node[auto,swap,labelsize] {$V$} 
      (BL);
      \draw[->, bend left=30] (BL) to node[auto,labelsize](B) 
      {$T$} 
      (BR);
      \draw[->, bend right=30] (BL) to node[auto,swap,labelsize](B) 
      {$\id{\cat{C}}$} 
      (BR);
      \tzsquareup{1}{0.25}{$v$}
      \tzsquareup{1}{-0.75}{$\eta$}
\end{tikzpicture}
\quad=\quad
\begin{tikzpicture}[baseline=-\the\dimexpr\fontdimen22\textfont2\relax ]
      \node (TL) at (0,0.75)  {$\cat{A}$};
      \node (TR) at (2,0.75)  {$\cat{A}$};
      \node (BL) at (0,-0.75) {$\cat{C}$};
      \node (BR) at (2,-0.75) {$\cat{C}$};
      \draw[->] (TL) to node[auto,labelsize](T) {$\id{\cat{A}}$} 
      (TR);
      \draw[->]  (TR) to node[auto,labelsize] {$V$} (BR);
      \draw[->]  (TL) to node[auto,swap,labelsize] {$V$} 
      (BL);
      \draw[->] (BL) to node[auto,swap,labelsize](B) {$\id{\cat{C}}$} 
      (BR);
      \tzsquareup{1}{0}{$\id{V}$}
\end{tikzpicture}
\]
\[
\begin{tikzpicture}[baseline=-\the\dimexpr\fontdimen22\textfont2\relax ]
      \node (TL) at (0,0.75)  {$\cat{A}$};
      \node (TR) at (2,0.75)  {$\cat{A}$};
      \node (BL) at (0,-0.75) {$\cat{C}$};
      \node (BR) at (2,-0.75) {$\cat{C}$};
      \draw[->] (TL) to node[auto,labelsize](T) {$\id{\cat{A}}$} 
      (TR);
      \draw[->]  (TR) to node[auto,labelsize] {$V$} (BR);
      \draw[->]  (TL) to node[auto,swap,labelsize] {$V$} 
      (BL);
      \draw[->, bend left=30] (BL) to node[auto,labelsize](B) 
      {$T$} 
      (BR);
      \draw[->, bend right=30] (BL) to node[auto,swap,labelsize](B) 
      {$T\circ T$} 
      (BR);
      \tzsquareup{1}{0.25}{$v$}
      \tzsquareup{1}{-0.75}{$\mu$}
\end{tikzpicture}
\quad=\quad
\begin{tikzpicture}[baseline=-\the\dimexpr\fontdimen22\textfont2\relax ]
      \node (TL) at (0,0.75)  {$\cat{A}$};
      \node (TR) at (2,0.75)  {$\cat{A}$};
      \node (BL) at (0,-0.75) {$\cat{C}$};
      \node (BR) at (2,-0.75) {$\cat{C}$};
      \node (TRR) at (4,0.75)  {$\cat{A}$};
      \node (BRR) at (4,-0.75) {$\cat{C}.$};
      \draw[->] (TL) to node[auto,labelsize](T) {$\id{\cat{A}}$} 
      (TR);
      \draw[->] (TR) to node[auto,labelsize](T2) {$\id{\cat{A}}$} 
      (TRR);
      \draw[->]  (TR) to node[auto,labelsize] {$V$} (BR);
      \draw[->]  (TRR) to node[auto,labelsize] {$V$} (BRR);
      \draw[->]  (TL) to node[auto,swap,labelsize] {$V$} 
      (BL);
      \draw[->] (BL) to node[auto,swap,labelsize](B) {$T$} 
      (BR);
      \draw[->] (BR) to node[auto,swap,labelsize](B) {$T$} 
      (BRR);
      \tzsquareup{1}{0}{$v$}
      \tzsquareup{3}{0}{$v$}
\end{tikzpicture}
\]

The triple $(\cat{C}^\monoid{T}, U,u)$ is then a \defemph{universal left 
$\monoid{T}$-module}, meaning that it satisfies the following:
\begin{enumerate}
\item it is a left $\monoid{T}$-module;
\item for any left $\monoid{T}$-module $(\cat{A},V,v)$,
there exists a unique functor $K\colon \cat{A}\longrightarrow\cat{C}^\monoid{T}$
such that 
\[
\begin{tikzpicture}[baseline=-\the\dimexpr\fontdimen22\textfont2\relax ]
      \node (TL) at (0,0.75)  {$\cat{A}$};
      \node (TR) at (2,0.75)  {$\cat{A}$};
      \node (BL) at (0,-0.75) {$\cat{C}$};
      \node (BR) at (2,-0.75) {$\cat{C}$};
      \draw[->] (TL) to node[auto,labelsize](T) {$\id{\cat{A}}$} 
      (TR);
      \draw[->]  (TR) to node[auto,labelsize] {$V$} (BR);
      \draw[->]  (TL) to node[auto,swap,labelsize] {$V$} 
      (BL);
      \draw[->] (BL) to node[auto,swap,labelsize](B) {$T$} (BR);
      \tzsquareup{1}{0}{$v$}
\end{tikzpicture}  
\quad=\quad
\begin{tikzpicture}[baseline=-\the\dimexpr\fontdimen22\textfont2\relax ]
      \node (TL) at (0,1.5)  {$\cat{A}$};
      \node (TR) at (2,1.5)  {$\cat{A}$};
      \node (BL) at (0,0) {$\cat{C}^\monoid{T}$};
      \node (BR) at (2,0) {$\cat{C}^\monoid{T}$};
      \node (BBL) at (0,-1.5) {$\cat{C}$};
      \node (BBR) at (2,-1.5) {$\cat{C}$};
      \draw[->] (TL) to node[auto,labelsize](T) {$\id{\cat{A}}$} 
      (TR);
      \draw[->]  (TR) to node[auto,labelsize] {$K$} (BR);
      \draw[->]  (TL) to node[auto,swap,labelsize] {$K$} 
      (BL);
      \draw[->] (BL) to node[auto,labelsize](B) {$\id{\cat{C}^\monoid{T}}$} 
      (BR);
      \draw[->]  (BR) to node[auto,labelsize] {$U$} (BBR);
      \draw[->]  (BL) to node[auto,swap,labelsize] {$U$} (BBL);
      \draw[->] (BBL) to node[auto,swap,labelsize](B) {$T$} (BBR);
      \tzsquareup{1}{0.75}{$\id{K}$}
      \tzsquareup{1}{-0.75}{$u$} 
\end{tikzpicture} 
\]
holds;
\item for any pair of left $\monoid{T}$-modules $(\cat{A},V,v)$ and 
$(\cat{A},V',v')$ on a common large category $\cat{A}$
and any natural transformation $\theta\colon V\Longrightarrow V'$ such that 
\[
\begin{tikzpicture}[baseline=-\the\dimexpr\fontdimen22\textfont2\relax ]
      \node (TL) at (0,0.75)  {$\cat{A}$};
      \node (TR) at (2,0.75)  {$\cat{A}$};
      \node (BL) at (0,-0.75) {$\cat{C}$};
      \node (BR) at (2,-0.75) {$\cat{C}$};
      \draw[->] (TL) to node[auto,labelsize](T) {$\id{\cat{A}}$} 
      (TR);
      \draw[->,bend right=30]  (TR) to node[auto,swap,labelsize] {$V$} (BR);
      \draw[->,bend left=30]  (TR) to node[auto,labelsize] {$V'$} (BR);
      \draw[->]  (TL) to node[auto,swap,labelsize] {$V$} 
      (BL);
      \draw[->] (BL) to node[auto,swap,labelsize](B) {$T$} 
      (BR);
      \draw[2cell] (1.75,0) to node[auto,labelsize] {$\theta$} (2.25,0);
      \tzsquareupswap{1}{0}{$v$}
\end{tikzpicture}
\quad=\quad
\begin{tikzpicture}[baseline=-\the\dimexpr\fontdimen22\textfont2\relax ]
      \node (TL) at (0,0.75)  {$\cat{A}$};
      \node (TR) at (2,0.75)  {$\cat{A}$};
      \node (BL) at (0,-0.75) {$\cat{C}$};
      \node (BR) at (2,-0.75) {$\cat{C}$};
      \draw[->] (TL) to node[auto,labelsize](T) {$\id{\cat{A}}$} 
      (TR);
      \draw[->]  (TR) to node[auto,labelsize] {$V'$} (BR);
      \draw[->,bend right=30]  (TL) to node[auto,swap,labelsize] {$V$} (BL);
      \draw[->,bend left=30]  (TL) to node[auto,labelsize] {$V'$} 
      (BL);
      \draw[->] (BL) to node[auto,swap,labelsize](B) {$T$} 
      (BR);
      \draw[2cell] (-0.25,0) to node[auto,labelsize] {$\theta$} (0.25,0);
      \tzsquareup{1}{0}{$v'$}
\end{tikzpicture}
\]
holds,
there exists a unique natural transformation $\sigma\colon K\Longrightarrow K'$
such that $\theta=U\circ \sigma$, where $K\colon 
\cat{A}\longrightarrow\cat{C}^\monoid{T}$
and $K'\colon\cat{A}\longrightarrow\cat{C}^\monoid{T}$ are the functors 
corresponding to $(\cat{A},V,v)$ and 
$(\cat{A},V',v')$ respectively.
\end{enumerate}
In more conceptual terms, this means that we have a family of isomorphisms of 
categories 
\[
\tCAT(\cat{A},\cat{C}^\monoid{T})\cong
\tCAT(\cat{A},\cat{C})^{\tCAT(\cat{A},\monoid{T})}
\]
natural in $\cat{A}\in\tCAT$,
where the right hand side denotes the Eilenberg--Moore category of the 
monad $\tCAT(\cat{A},\monoid{T})$; note that 
$\tCAT(\cat{A},-)$ is a 2-functor and therefore preserves monads.

It is straightforward to verify the above three statements on 
$(\cat{C}^\monoid{T},U,u)$.
That $(\cat{C}^\monoid{T},U,u)$ is a left $\monoid{T}$-module follows  
from the definition of Eilenberg--Moore algebras.
Given a left $\monoid{T}$-module $(\cat{A},V,v)$, for any object $A\in\cat{A}$
the pair $(VA,v_A)$ is an Eilenberg--Moore algebra of $\monoid{T}$.
Hence the required functor $K\colon \cat{A}\longrightarrow\cat{C}^\monoid{T}$ 
can be defined by mapping an object $A\in\cat{A}$
to $(VA,v_A)$ and a morphism $f$ in $\cat{A}$ to $Vf$.
The final clause can be proved similarly. In fact, 
this automatically follows from the second clause since $\tCAT$ 
admits tensor products (= cartesian products) with the arrow category; see
\cite{Kelly:elementary}.

As with any universal characterisation, the above property characterises 
the triple $(\cat{C}^\monoid{T},U,u)$ uniquely up to
unique isomorphisms.
One can also express this universal property in terms of the standard 
2-categorical limit notions, such as lax limit or weighted 2-limit 
\cite{Street_limits}.

\section{Pseudo double categories}
\label{sec:pseudo_double_cat}
We shall see that our category of models admit a similar characterisation,
in a different setting:
instead of the 2-category $\tCAT$, we will work within
the \emph{pseudo double category} $\PPROF$.
The notion of pseudo double category is due to Grandis and Par{\'e}~\cite{GP1},
and it generalises the classical notion of double category \cite{Ehresmann}
in a way similar to the generalisation of 2-categories to bicategories,
or to the generalisation of strict monoidal categories to monoidal categories.
In this section we briefly review pseudo double categories, and introduce the 
pseudo double category $\PPROF$.

Let us begin with an informal explanation of double categories.
A double category consists of \defemph{objects} $A$, \defemph{vertical 
morphisms}
$f\colon A\longrightarrow A^\prime$,
\defemph{horizontal morphisms} $X\colon A\pto B$
and \defemph{squares}
\[
\begin{tikzpicture}[baseline=-\the\dimexpr\fontdimen22\textfont2\relax ]
      \node (TL) at (0,1.5)  {$A$};
      \node (TR) at (2,1.5)  {$B$};
      \node (BL) at (0,0) {$A^\prime$};
      \node (BR) at (2,0) {$B^\prime$,};
      \draw[pto] (TL) to node[auto,labelsize](T) {$X$} (TR);
      \draw[->]  (TR) to node[auto,labelsize] {$g$} (BR);
      \draw[->]  (TL) to node[auto,swap,labelsize] {$f$} (BL);
      \draw[pto] (BL) to node[auto,swap,labelsize](B) {$X^\prime$} (BR);
      \tzsquare{1}{0.75}{$\alpha$}
\end{tikzpicture}
\]
together with several identity and composition operations,
namely: 
\begin{itemize}
\item for each object $A$ we have the \defemph{vertical identity morphism} 
$\id{A}\colon A\longrightarrow A$;
\item for each composable pair of vertical morphisms $f\colon A\longrightarrow 
A'$ and $f'\colon A'\longrightarrow A''$ we have the \defemph{vertical 
composition}
$f'\circ f\colon A\longrightarrow A''$;
\item for each horizontal morphism $X\colon A\pto B$ we have the 
\defemph{vertical identity square}
\[
\begin{tikzpicture}[baseline=-\the\dimexpr\fontdimen22\textfont2\relax ]
      \node (TL) at (0,1.5)  {$A$};
      \node (TR) at (2,1.5)  {$B$};
      \node (BL) at (0,0) {$A$};
      \node (BR) at (2,0) {$B$;};
      \draw[pto] (TL) to node[auto,labelsize](T) {$X$} (TR);
      \draw[->]  (TR) to node[auto,labelsize] {$\id{B}$} (BR);
      \draw[->]  (TL) to node[auto,swap,labelsize] {$\id{A}$} (BL);
      \draw[pto] (BL) to node[auto,swap,labelsize](B) {$X$} (BR);
      \tzsquare{1}{0.75}{$\id{X}$}
\end{tikzpicture}
\]
\item for each vertically composable pair of squares 
\[
\begin{tikzpicture}[baseline=-\the\dimexpr\fontdimen22\textfont2\relax ]
      \node (TL) at (0,0.75)  {$A$};
      \node (TR) at (2,0.75)  {$B$};
      \node (BL) at (0,-0.75) {$A^\prime$};
      \node (BR) at (2,-0.75) {$B^\prime$};
      \draw[pto] (TL) to node[auto,labelsize](T) {$X$} (TR);
      \draw[->]  (TR) to node[auto,labelsize] {$g$} (BR);
      \draw[->]  (TL) to node[auto,swap,labelsize] {$f$} (BL);
      \draw[pto] (BL) to node[auto,swap,labelsize](B) {$X^\prime$} (BR);
      \tzsquare{1}{0}{$\alpha$}
\end{tikzpicture}
\text{ and }
\begin{tikzpicture}[baseline=-\the\dimexpr\fontdimen22\textfont2\relax ]
      \node (TL) at (0,0.75)  {$A'$};
      \node (TR) at (2,0.75)  {$B'$};
      \node (BL) at (0,-0.75) {$A''$};
      \node (BR) at (2,-0.75) {$B''$};
      \draw[pto] (TL) to node[auto,labelsize](T) {$X'$} (TR);
      \draw[->]  (TR) to node[auto,labelsize] {$g'$} (BR);
      \draw[->]  (TL) to node[auto,swap,labelsize] {$f'$} (BL);
      \draw[pto] (BL) to node[auto,swap,labelsize](B) {$X''$} (BR);
      \tzsquare{1}{0}{$\alpha'$}
\end{tikzpicture}
\]
we have the \defemph{vertical composition}
\[
\begin{tikzpicture}[baseline=-\the\dimexpr\fontdimen22\textfont2\relax ]
      \node (TL) at (0,1.5)  {$A$};
      \node (TR) at (2,1.5)  {$B$};
      \node (BL) at (0,0) {$A''$};
      \node (BR) at (2,0) {$B''$;};
      \draw[pto] (TL) to node[auto,labelsize](T) {$X$} (TR);
      \draw[->]  (TR) to node[auto,labelsize] {$g'\circ g$} (BR);
      \draw[->]  (TL) to node[auto,swap,labelsize] {$f'\circ f$} (BL);
      \draw[pto] (BL) to node[auto,swap,labelsize](B) {$X''$} (BR);
      \tzsquare{1}{0.75}{$\alpha'\circ \alpha$}
\end{tikzpicture}
\]
\end{itemize}
and symmetrically:
\begin{itemize}
\item for each object $A$ we have the \defemph{horizontal identity morphism} 
$I_{A}\colon A\pto A$;
\item for each composable pair of horizontal morphisms $X\colon A\pto B$ and 
$Y\colon B\pto C$ we have the \defemph{horizontal composition}
$Y\otimes X\colon A\pto  C$;
\item for each vertical morphism $f\colon A\longrightarrow A'$ we have the 
\defemph{horizontal identity square}
\[
\begin{tikzpicture}[baseline=-\the\dimexpr\fontdimen22\textfont2\relax ]
      \node (TL) at (0,1.5)  {$A$};
      \node (TR) at (2,1.5)  {$A$};
      \node (BL) at (0,0) {$A'$};
      \node (BR) at (2,0) {$A'$;};
      \draw[pto] (TL) to node[auto,labelsize](T) {$I_A$} (TR);
      \draw[->]  (TR) to node[auto,labelsize] {$f$} (BR);
      \draw[->]  (TL) to node[auto,swap,labelsize] {$f$} (BL);
      \draw[pto] (BL) to node[auto,swap,labelsize](B) {$I_{A'}$} (BR);
      \tzsquare{1}{0.75}{$I_f$}
\end{tikzpicture}
\]
\item for each horizontally composable pair of squares 
\[
\begin{tikzpicture}[baseline=-\the\dimexpr\fontdimen22\textfont2\relax ]
      \node (TL) at (0,0.75)  {$A$};
      \node (TR) at (2,0.75)  {$B$};
      \node (BL) at (0,-0.75) {$A^\prime$};
      \node (BR) at (2,-0.75) {$B^\prime$};
      \draw[pto] (TL) to node[auto,labelsize](T) {$X$} (TR);
      \draw[->]  (TR) to node[auto,labelsize] {$g$} (BR);
      \draw[->]  (TL) to node[auto,swap,labelsize] {$f$} (BL);
      \draw[pto] (BL) to node[auto,swap,labelsize](B) {$X^\prime$} (BR);
      \tzsquare{1}{0}{$\alpha$}
\end{tikzpicture}
\text{ and }
\begin{tikzpicture}[baseline=-\the\dimexpr\fontdimen22\textfont2\relax ]
      \node (TL) at (0,0.75)  {$B$};
      \node (TR) at (2,0.75)  {$C$};
      \node (BL) at (0,-0.75) {$B'$};
      \node (BR) at (2,-0.75) {$C'$};
      \draw[pto] (TL) to node[auto,labelsize](T) {$Y$} (TR);
      \draw[->]  (TR) to node[auto,labelsize] {$h$} (BR);
      \draw[->]  (TL) to node[auto,swap,labelsize] {$g$} (BL);
      \draw[pto] (BL) to node[auto,swap,labelsize](B) {$Y'$} (BR);
      \tzsquare{1}{0}{$\beta$}
\end{tikzpicture}
\]
we have the \defemph{horizontal composition}
\[
\begin{tikzpicture}[baseline=-\the\dimexpr\fontdimen22\textfont2\relax ]
      \node (TL) at (0,1.5)  {$A$};
      \node (TR) at (2,1.5)  {$C$};
      \node (BL) at (0,0) {$A'$};
      \node (BR) at (2,0) {$C'$.};
      \draw[pto] (TL) to node[auto,labelsize](T) {$Y\otimes X$} (TR);
      \draw[->]  (TR) to node[auto,labelsize] {$h$} (BR);
      \draw[->]  (TL) to node[auto,swap,labelsize] {$f$} (BL);
      \draw[pto] (BL) to node[auto,swap,labelsize](B) {$Y'\otimes X'$} (BR);
      \tzsquare{1}{0.75}{$\beta\otimes \alpha$}
\end{tikzpicture}
\]
\end{itemize}
These identity and composition operations are required to satisfy several 
axioms, such as the unit and associativity axioms for vertical 
(resp.~horizontal) 
identity and composition, as well as the axiom 
$\id{I_A}=I_\id{A}$ for each object $A$ and 
the \emph{interchange law}, saying that whenever we have a configuration 
of squares as in 
\[
\begin{tikzpicture}[baseline=-\the\dimexpr\fontdimen22\textfont2\relax ]
      \node (TL) at (0,3)  {$\bullet$};
      \node (TM) at (2,3)  {$\bullet$};
      \node (TR) at (4,3)  {$\bullet$};
      \node (ML) at (0,1.5)  {$\bullet$};
      \node (MM) at (2,1.5)  {$\bullet$};
      \node (MR) at (4,1.5)  {$\bullet$};
      \node (BL) at (0,0)  {$\bullet$};
      \node (BM) at (2,0)  {$\bullet$};
      \node (BR) at (4,0)  {$\bullet$,};
      \draw[pto] (TL) to (TM);
      \draw[pto] (TM) to (TR);
      \draw[pto] (ML) to (MM);
      \draw[pto] (MM) to (MR);
      \draw[pto] (BL) to (BM);
      \draw[pto] (BM) to (BR);
      \draw[->]  (TR) to (MR);
      \draw[->]  (MR) to (BR);
      \draw[->]  (TM) to (MM);
      \draw[->]  (MM) to (BM);
      \draw[->]  (TL) to (ML);
      \draw[->]  (ML) to (BL);
      \tzsquare{1}{2.25}{$\alpha$}
      \tzsquare{1}{0.75}{$\alpha'$}
      \tzsquare{3}{2.25}{$\beta$}
      \tzsquare{3}{0.75}{$\beta'$}
\end{tikzpicture}
\]
$(\beta'\otimes \alpha')\circ(\beta\otimes \alpha)=(\beta'\circ 
\beta)\otimes(\alpha'\circ\alpha)$ holds.

\medskip

Some naturally arising double-category-like structure, 
including $\PPROF$, are 
such that whose vertical morphisms are homomorphism-like (e.g., functors)
and whose horizontal morphisms are bimodule-like (e.g., profunctors);
see \cite[Section~1]{Shulman_framed} for a discussion
on these two kinds of morphisms.
However, a problem crops up from the bimodule-like horizontal
morphisms: 
in general, their composition is not unital nor associative on the nose.
Therefore such structures fail to form  
(strict) double categories, but instead form 
\emph{pseudo} (or \emph{weak}) \emph{double 
categories}~\cite{GP1,Leinster_book,Garner_thesis,Shulman_framed},
in which horizontal composition is allowed to 
be unital and associative up to suitable isomorphism.\footnote{In the 
literature, definitions of pseudo double category differ as to 
whether to weaken horizontal compositions or vertical compositions.
We follow  \cite{Garner_thesis,Shulman_framed} and weaken horizontal 
compositions, but note that 
the original paper~\cite{GP1} weakens vertical compositions.}

\begin{definition}[\cite{GP1}]
\label{def:psd_dbl_cat}
A \defemph{pseudo double category} $\dcat{D}$ consists of the following data.
\begin{description}
\item[(DD1)] A category $\dcat{D}_0$, whose objects are called
\defemph{objects} of $\dcat{D}$ and whose 
morphisms \defemph{vertical morphisms} of $\dcat{D}$.
\item[(DD2)] A category $\dcat{D}_1$, whose objects are called
\defemph{horizontal morphisms} of $\dcat{D}$ and whose 
morphisms \defemph{squares} of $\dcat{D}$.
\item[(DD3)] Functors 
\begin{align*}
s,t&\colon \dcat{D}_1\longrightarrow\dcat{D}_0,\\
I&\colon \dcat{D}_0\longrightarrow \dcat{D}_1,\\
\otimes&\colon\dcat{D}_2\longrightarrow\dcat{D}_1,
\end{align*}
where $\dcat{D}_2$ is the pullback
\[
\begin{tikzpicture}[baseline=-\the\dimexpr\fontdimen22\textfont2\relax ]
      \node (TL) at (0,1.5)  {$\dcat{D}_2$};
      \node (TR) at (1.5,1.5)  {$\dcat{D}_1$};
      \node (BL) at (0,0) {$\dcat{D}_1$};
      \node (BR) at (1.5,0) {$\dcat{D}_0$};
      \draw[->] (TL) to node[auto,labelsize](T) {$\pi_2$} (TR);
      \draw[->] (TR) to node[auto,labelsize] {$t$} (BR);
      \draw[->] (TL) to node[auto,swap,labelsize] {$\pi_1$} (BL);
      \draw[->] (BL) to node[auto,swap,labelsize](B) {$s$} (BR);
      \draw (TL)++(0.2,-0.6)-- ++ (0.4,0) -- ++(0,0.4);
\end{tikzpicture}
\]
of categories.
\item[(DD4)] Natural isomorphisms with components
\begin{align*}
\isoa_{X,Y,Z}&\colon (Z\otimes Y)\otimes X \longrightarrow Z\otimes(Y\otimes 
X),\\
\isol_X&\colon I_B\otimes X\longrightarrow X, \\
\isor_X&\colon X\otimes I_A\longrightarrow X
\end{align*}
in $\dcat{D}_1$, where $(Z,Y,X)\in \dcat{D}_3$
which is the pullback
\[
\begin{tikzpicture}[baseline=-\the\dimexpr\fontdimen22\textfont2\relax ]
      \node (TL) at (0,1.5)  {$\dcat{D}_3$};
      \node (TR) at (1.5,1.5)  {$\dcat{D}_1$};
      \node (BL) at (0,0) {$\dcat{D}_2$};
      \node (BR) at (1.5,0) {$\dcat{D}_0$};
      \draw[->] (TL) to node[auto,labelsize](T) {} (TR);
      \draw[->] (TR) to node[auto,labelsize] {$t$} (BR);
      \draw[->] (TL) to node[auto,swap,labelsize] {} (BL);
      \draw[->] (BL) to node[auto,swap,labelsize](B) {$s\circ \pi_2$} (BR);
      \draw (TL)++(0.2,-0.6)-- ++ (0.4,0) -- ++(0,0.4);
\end{tikzpicture}
\]
of categories and $X\in \dcat{D}_1$ with $s(X)=A$ and $t(X)=B$.
\end{description}
These data are subject to the following axioms.
\begin{description}
\item[(DA1)] The diagrams
\[
\begin{tikzpicture}[baseline=-\the\dimexpr\fontdimen22\textfont2\relax ]
      \node (TL) at (0,1.5)  {$\dcat{D}_0$};
      \node (BL) at (0,0) {$\dcat{D}_1$};
      \node (BR) at (1.5,0) {$\dcat{D}_0$};
      \node (BLL) at (-1.5,0) {$\dcat{D}_0$};
      \draw[->] (TL) to node[auto,labelsize](T) {$\id{}$} (BR);
      \draw[->] (TL) to node[auto,labelsize] {$I$} (BL);
      \draw[->] (BL) to node[auto,swap,labelsize](B) {$t$} (BR);
      \draw[->] (TL) to node[auto,swap,labelsize](T) {$\id{}$} (BLL);
      \draw[->] (BL) to node[auto,labelsize](B) {$s$} (BLL);
\end{tikzpicture}
\ \qquad
\begin{tikzpicture}[baseline=-\the\dimexpr\fontdimen22\textfont2\relax ]
      \node (TL) at (0,1.5)  {$\dcat{D}_2$};
      \node (BL) at (0,0) {$\dcat{D}_1$};
      \node (BR) at (1.5,0) {$\dcat{D}_0$};
      \node (BLL) at (-1.5,0) {$\dcat{D}_0$};
      \draw[->] (TL) to node[auto,labelsize](T) {$t\circ \pi_1$} (BR);
      \draw[->] (TL) to node[auto,labelsize] {$\otimes$} (BL);
      \draw[->] (BL) to node[auto,swap,labelsize](B) {$t$} (BR);
      \draw[->] (TL) to node[auto,swap,labelsize](T) {$s\circ \pi_2$} (BLL);
      \draw[->] (BL) to node[auto,labelsize](B) {$s$} (BLL);
\end{tikzpicture}
\]
commute (on the nose).
\item[(DA2)] The morphisms $s(\isoa_{X,Y,Z}),s(\isol_X)$ and 
$s(\isor_X)$
are equal to $\id{A}$ for all $(Z,Y,X)$ $\in\dcat{D}_3$ 
and $X\in\dcat{D}_1$, where $A=s(X)$.
Similarly for $t$.
\item[(DA3)] The coherence axioms (triangle and pentagon) for $\isoa,\isol$ and 
$\isor$.\qedhere
\end{description}
\end{definition}
See \cite[Section~2.1]{Garner_thesis}
for the full details of the definition.
Although Definition~\ref{def:psd_dbl_cat} might look quite different from the  
aforementioned informal description of double categories at the first sight,
in fact it is not, and 
the only difference is the existence of isomorphisms $\isoa,\isol$ and $\isor$
instead of equalities. 
Perhaps it is worth remarking that the functors $s$ and $t$ are meant to assign 
the (horizontal) \emph{sources} and \emph{targets},
so given the diagram
\[
\begin{tikzpicture}[baseline=-\the\dimexpr\fontdimen22\textfont2\relax ]
      \node (TL) at (0,1.5)  {$A$};
      \node (TR) at (2,1.5)  {$B$};
      \node (BL) at (0,0) {$A^\prime$};
      \node (BR) at (2,0) {$B^\prime$};
      \draw[pto] (TL) to node[auto,labelsize](T) {$X$} (TR);
      \draw[->]  (TR) to node[auto,labelsize] {$g$} (BR);
      \draw[->]  (TL) to node[auto,swap,labelsize] {$f$} (BL);
      \draw[pto] (BL) to node[auto,swap,labelsize](B) {$X^\prime$} (BR);
      \tzsquare{1}{0.75}{$\alpha$}
\end{tikzpicture}
\]
in $\dcat{D}$, we read as:
$A$ is the domain of $f$ in $\dcat{D}_0$, $X'$ is the codomain 
of $\alpha$ in $\dcat{D}_1$, $A=s(X)$, $g=t(\alpha)$, and so on. 

We write the isomorphisms $\isoa,\isol$ and $\isor$ as 
\[
\begin{tikzpicture}[baseline=-\the\dimexpr\fontdimen22\textfont2\relax ]
      \node (L) at (0,0)  {$A$};
      \node (T) at (1.5,0.5)  {$B$};
      \node (B) at (1.5,-0.5) {$C$};
      \node (R) at (3,0) {$D$};
      \draw[pto, bend left=10] (L) to node[auto,labelsize]{$X$} (T);
      \draw[pto, bend left=10] (T) to node[auto,labelsize]{$Z\otimes Y$} (R);
      \draw[pto, bend left=-10] (L) to node[auto,swap,labelsize]{$Y\otimes X$} 
      (B);
      \draw[pto, bend left=-10] (B) to node[auto,swap,labelsize]{$Z$} (R);
      \tzsquare{1.5}{0}{$\cong$}
\end{tikzpicture}
\quad
\begin{tikzpicture}[baseline=-\the\dimexpr\fontdimen22\textfont2\relax ]
      \node (L) at (0,0)  {$A$};
      \node (T) at (1.5,0.5)  {$B$};
      \node (R) at (3,0) {$B$};
      \draw[pto, bend left=10] (L) to node[auto,labelsize]{$X$} (T);
      \draw[pto, bend left=10] (T) to node[auto,labelsize]{$I_B$} (R);
      \draw[pto, bend left=-30] (L) to node[auto,swap,labelsize]{$X$} 
      (R);
      \tzsquare{1.5}{0}{$\cong$}
\end{tikzpicture}
\quad
\begin{tikzpicture}[baseline=-\the\dimexpr\fontdimen22\textfont2\relax ]
      \node (L) at (0,0)  {$A$};
      \node (T) at (1.5,0.5)  {$A$};
      \node (R) at (3,0) {$B$};
      \node at (3.05,0) {$\phantom{B}.$};
      \draw[pto, bend left=10] (L) to node[auto,labelsize]{$I_A$} (T);
      \draw[pto, bend left=10] (T) to node[auto,labelsize]{$X$} (R);
      \draw[pto, bend left=-30] (L) to node[auto,swap,labelsize]{$X$} 
      (R);
      \tzsquare{1.5}{0}{$\cong$}
\end{tikzpicture}
\]
The suppression of the vertical morphisms in the above diagrams is 
justified by (DA2).
Similarly we also denote inverses and composites of 
$\isoa,\isol$ and $\isor$ by unnamed double arrows labelled with $\cong$.

\begin{example}[\cite{GP1}]
\label{ex:H_construction}
Let $\tcat{B}$ be a bicategory.
This induces a pseudo double category $\dcat{H}\tcat{B}$, given as follows:
\begin{itemize}
\item an object of $\dcat{H}\tcat{B}$ is an object of $\tcat{B}$;
\item all vertical morphisms of $\dcat{H}\tcat{B}$ are vertical identity 
morphisms;
\item a horizontal morphism of $\dcat{H}\tcat{B}$ is a 1-cell of $\tcat{B}$;
\item a square of $\dcat{H}\tcat{B}$ is a 2-cell of $\tcat{B}$.
\end{itemize} 
The isomorphisms $\isoa,\isol$ and $\isor$ of $\dcat{H}\tcat{B}$ is given by
the corresponding iso-2-cells of $\tcat{B}$.

Conversely, for any pseudo double category $\dcat{D}$,
we obtain a bicategory $\tcat{H}\dcat{D}$ given as follows:
\begin{itemize}
\item an object of $\tcat{H}\dcat{D}$ is an object of $\dcat{D}$;
\item a 1-cell of $\tcat{H}\dcat{D}$ is a horizontal morphism of $\dcat{D}$;
\item a 2-cell of $\tcat{H}\dcat{D}$ is a square in $\dcat{D}$ whose 
horizontal source and target are both vertical identity morphisms.\qedhere
\end{itemize}
\end{example}

Let us introduce the pseudo double category $\PPROF$.
\begin{definition}[{\cite[Section~3.1]{GP1}}]
We define the pseudo double category $\PPROF$ as follows.
\begin{itemize}
\item An object is a large category.
\item A vertical morphism from $\cat{A}$ to $\cat{A'}$ is a functor 
$F\colon \cat{A}\longrightarrow\cat{A'}$.
\item A horizontal morphism from $\cat{A}$ to $\cat{B}$ is 
a profunctor $H\colon\cat{A}\pto \cat{B}$, i.e.,
a functor $H\colon \cat{B}^\op\times\cat{A}\longrightarrow\SET$. 
Horizontal identities and horizontal compositions are the same as 
in Definition~\ref{def:profunctor}.
\item A square as in 
\[
\begin{tikzpicture}[baseline=-\the\dimexpr\fontdimen22\textfont2\relax ]
      \node (TL) at (0,1.5)  {$\cat{A}$};
      \node (TR) at (2,1.5)  {$\cat{B}$};
      \node (BL) at (0,0) {$\cat{A^\prime}$};
      \node (BR) at (2,0) {$\cat{B^\prime}$};
      \draw[pto] (TL) to node[auto,labelsize](T) {$H$} (TR);
      \draw[->]  (TR) to node[auto,labelsize] {$G$} (BR);
      \draw[->]  (TL) to node[auto,swap,labelsize] {$F$} (BL);
      \draw[pto] (BL) to node[auto,swap,labelsize](B) {$H^\prime$} (BR);
      \tzsquare{1}{0.75}{$\alpha$}
\end{tikzpicture}
\] 
is a natural transformation 
\[
\alpha=(\alpha_{B,A}\colon H(B,A)\longrightarrow 
H'(GB,FA))_{B\in\cat{B},A\in\cat{A}},
\]
that is, of type
\[
\begin{tikzpicture}[baseline=-\the\dimexpr\fontdimen22\textfont2\relax ]
      \node (L) at (0,0)  {$\cat{B}^\op\times\cat{A}$};
      \node (T) at (2.5,-0.7)  {$\cat{B'}^\op\times\cat{A'}$};
      \node (R) at (5,0) {$\SET$.};
      \draw[->, bend left=-5] (L) to node[auto,swap,labelsize]{$G^\op\times F$} 
      (T);
      \draw[->, bend left=-5] (T) to node[auto,swap,labelsize]{$H'$} (R);
      \draw[->, bend left=20] (L) to node[auto,labelsize]{$H$} 
      (R);
      \tzsquare{2.5}{0}{$\alpha$}
\end{tikzpicture}
\]
\end{itemize}
It is straightforward to define various compositions of these morphisms and 
squares. 
The isomorphisms $\isoa,\isol$ and $\isor$ are the same as those in
the bicategory $\PROF$.
(Indeed, using the construction introduced in Example~\ref{ex:H_construction}, 
$\PROF=\tcat{H}\PPROF$.)
\end{definition}

Given a pseudo double category $\dcat{D}$,
denote by $\dcat{D}^\op$, $\dcat{D}^\co$, and $\dcat{D}^\coop$
the pseudo double categories obtained from $\dcat{D}$ by reversing
the horizontal direction (swapping $s$ and $t$), reversing the 
vertical direction (taking the opposites of $\dcat{D}_0$ and $\dcat{D}_1$)
and reversing both the horizontal and vertical directions,
respectively.\footnote{For a double category $\dcat{D}$ we also have the 
\emph{transpose} $\dcat{D}^\mathrm{tr}$, obtained from $\dcat{D}$ by swapping 
the horizontal and vertical directions.
However, in pseudo double categories the horizontal direction and the vertical 
direction are not symmetric and we no longer have this duality for them.}
In the following we shall mainly work within $\PPROF^\op$,
though most of the diagrams are symmetric in the horizontal direction and 
this makes little difference.
(In fact, the pseudo double category defined in \cite[Section~3.1]{GP1} 
amounts to our $\PPROF^\op$, because our convention on the direction of 
profunctors differs from theirs.)

\section{The universality of categories of models}

Let $\cat{M}$ be a metatheory, $\monoid{T}=(T,e,m)$ be a theory in $\cat{M}$,
$\cat{C}$ be a large category, and $\Phi=(\Phi, 
\overline{\phi}_\cdot,\overline{\phi})$ be a metamodel of $\cat{M}$ in 
$\cat{C}$.
Recall from Section~\ref{subsec:metamodel} that in the data of 
the metamodel 
$(\Phi,\overline{\phi}_\cdot,\overline{\phi})$,
the natural transformations 
\[
((\overline{\phi}_\cdot)_C\colon 1\longrightarrow \Phi_I(C,C))_{C\in\cat{C}}
\]
and 
\[
((\overline{\phi}_{X,Y})_{A,B,C}\colon \Phi_Y(B,C)\times 
\Phi_X(A,B)\longrightarrow \Phi_{Y\otimes 
X}(A,C))_{X,Y\in\cat{M},A,B,C\in\cat{C}},
\]
may be replaced by
the natural transformations 
\[
((\phi_\cdot)_{A,B}\colon\cat{C}(A,B)\longrightarrow 
\Phi_I(A,B))_{A,B\in\cat{C}}
\]
and
\[
((\phi_{X,Y})_{A,B}\colon (\Phi_Y \ptensorrev 
\Phi_X)(A,B)\longrightarrow\Phi_{Y\otimes 
X}(A,B))_{X,Y\in\cat{M},A,B\in\cat{C}},
\]
respectively.
In this chapter we shall mainly use the expression of 
metamodel via the data $(\Phi,\phi_\cdot,\phi)$.
The category of models $\Mod{\monoid{T}}{(\cat{C},\Phi)}$,
henceforth abbreviated as $\Mod{\monoid{T}}{\cat{C}}$,
defined in Definition~\ref{def:metamodel_model} admits a canonical 
forgetful functor $U\colon 
\Mod{\monoid{T}}{\cat{C}}\longrightarrow\cat{C}$
and a natural transformation (a square in $\PPROF^\op$) $u$ as in
\[
\begin{tikzpicture}[baseline=-\the\dimexpr\fontdimen22\textfont2\relax ]
      \node (TL) at (0,0.75)  {$\Mod{\monoid{T}}{\cat{C}}$};
      \node (TR) at (5,0.75)  {$\Mod{\monoid{T}}{\cat{C}}$};
      \node (BL) at (0,-0.75) {$\cat{C}$};
      \node (BR) at (5,-0.75) {$\cat{C}.$};
      \draw[->]  (TL) to node[auto,swap,labelsize] {$U$} (BL);
      \draw[pto]  (TL) to node[auto,labelsize] 
      {$\Mod{\monoid{T}}{\cat{C}}(-,-)$} (TR);
      \draw[pto]  (BL) to node[auto,swap,labelsize] {$\Phi_T$} (BR);
      \draw[->]  (TR) to node[auto,labelsize](B) {$U$} (BR);
      \tzsquare{2.5}{0}{$u$}
\end{tikzpicture} 
\] 
Concretely, $u$ is a natural transformation 
\[
(u_{(C,\xi),(C',\xi')}\colon 
\Mod{\monoid{T}}{\cat{C}}((C,\xi),(C',\xi'))\longrightarrow
\Phi_T(C,C'))_{(C,\xi),(C',\xi')\in\Mod{\monoid{T}}{\cat{C}}}
\]
whose $((C,\xi),(C',\xi'))$-th component 
maps each morphism $f\colon (C,\xi)\longrightarrow (C',\xi')$ in 
$\Mod{\monoid{T}}{\cat{C}}$ to the element 
$\Phi_T(C,f)(\xi)=\Phi_T(f,C')(\xi')\in \Phi_T(C,C')$.
Alternatively, by the Yoneda lemma, $u$ may be equivalently given as 
a natural transformation 
\[
(\overline{u}_{(C,\xi)}\colon 
1\longrightarrow
\Phi_T(C,C))_{(C,\xi)\in\Mod{\monoid{T}}{\cat{C}}}
\]
whose $(C,\xi)$-th component maps the unique element of $1$ to 
$\xi\in\Phi_T(C,C)$.

We claim that the triple $(\Mod{\monoid{T}}{\cat{C}},U,u)$ 
has a certain universal property.
\begin{definition}
\label{def:double_cone}
Define a \defemph{vertical double cone over $\Phi(\monoid{T})$}
to be a triple $(\cat{A},V,v)$ consisting of a large category $\cat{A}$,
a functor $V\colon\cat{A}\longrightarrow\cat{C}$, and a square $v$ in 
$\PPROF^\op$ of type 
\[
\begin{tikzpicture}[baseline=-\the\dimexpr\fontdimen22\textfont2\relax ]
      \node (TL) at (0,0.75)  {$\cat{A}$};
      \node (TR) at (2,0.75)  {$\cat{A}$};
      \node (BL) at (0,-0.75) {$\cat{C}$};
      \node (BR) at (2,-0.75) {$\cat{C},$};
      \draw[->]  (TL) to node[auto,swap,labelsize] {$V$} (BL);
      \draw[pto]  (TL) to node[auto,labelsize] 
      {$\cat{A}(-,-)$} (TR);
      \draw[pto]  (BL) to node[auto,swap,labelsize] {$\Phi_T$} (BR);
      \draw[->]  (TR) to node[auto,labelsize](B) {$V$} (BR);
      \tzsquare{1}{0}{$v$}
\end{tikzpicture} 
\]
satisfying the following equations:
\begin{equation}
\label{eqn:cone_e}
\begin{tikzpicture}[baseline=-\the\dimexpr\fontdimen22\textfont2\relax ]
      \node (TL) at (0,0.75)  {$\cat{A}$};
      \node (TR) at (2,0.75)  {$\cat{A}$};
      \node (BL) at (0,-0.75) {$\cat{C}$};
      \node (BR) at (2,-0.75) {$\cat{C}$};
      \draw[pto] (TL) to node[auto,labelsize](T) {$\cat{A}(-,-)$} 
      (TR);
      \draw[->]  (TR) to node[auto,labelsize] {$V$} (BR);
      \draw[->]  (TL) to node[auto,swap,labelsize] {$V$} 
      (BL);
      \draw[pto, bend left=30] (BL) to node[auto,labelsize](B) 
      {$\Phi_T$} 
      (BR);
      \draw[pto, bend right=30] (BL) to node[auto,swap,labelsize](B) 
      {$\Phi_I$} 
      (BR);
      \tzsquare{1}{0.25}{$v$}
      \tzsquare{1}{-0.75}{$\Phi_e$}
\end{tikzpicture}
\quad=\quad
\begin{tikzpicture}[baseline=-\the\dimexpr\fontdimen22\textfont2\relax ]
      \node (TL) at (0,0.75)  {$\cat{A}$};
      \node (TR) at (2,0.75)  {$\cat{A}$};
      \node (BL) at (0,-0.75) {$\cat{C}$};
      \node (BR) at (2,-0.75) {$\cat{C}$};
      \draw[pto] (TL) to node[auto,labelsize](T) {$\cat{A}(-,-)$} 
      (TR);
      \draw[->]  (TR) to node[auto,labelsize] {$V$} (BR);
      \draw[->]  (TL) to node[auto,swap,labelsize] {$V$} 
      (BL);
      \draw[pto, bend left=30] (BL) to node[auto,labelsize](B) {$\cat{C}(-,-)$} 
      (BR);
      \draw[pto, bend right=30] (BL) to node[auto,swap,labelsize](B) 
      {$\Phi_I$} 
      (BR);
      \tzsquare{1}{0.25}{$I_{V}$}
      \tzsquare{1}{-0.75}{$\phi_\cdot$}
\end{tikzpicture}
\end{equation}
\begin{equation}
\label{eqn:cone_m}
\begin{tikzpicture}[baseline=-\the\dimexpr\fontdimen22\textfont2\relax ]
      \node (TL) at (0,0.75)  {$\cat{A}$};
      \node (TR) at (2,0.75)  {$\cat{A}$};
      \node (BL) at (0,-0.75) {$\cat{C}$};
      \node (BR) at (2,-0.75) {$\cat{C}$};
      \draw[pto] (TL) to node[auto,labelsize](T) {$\cat{A}(-,-)$} 
      (TR);
      \draw[->]  (TR) to node[auto,labelsize] {$V$} (BR);
      \draw[->]  (TL) to node[auto,swap,labelsize] {$V$} 
      (BL);
      \draw[pto, bend left=30] (BL) to node[auto,labelsize](B) 
      {$\Phi_T$} 
      (BR);
      \draw[pto, bend right=30] (BL) to node[auto,swap,labelsize](B) 
      {$\Phi_{T\otimes T}$} 
      (BR);
      \tzsquare{1}{0.25}{$v$}
      \tzsquare{1}{-0.75}{$\Phi_m$}
\end{tikzpicture}
\quad=\quad
\begin{tikzpicture}[baseline=-\the\dimexpr\fontdimen22\textfont2\relax ]
      \node (TL) at (0,0.75)  {$\cat{A}$};
      \node (TR) at (2,0.75)  {$\cat{A}$};
      \node (BL) at (0,-0.75) {$\cat{C}$};
      \node (BR) at (2,-0.75) {$\cat{C}$};
      \node (TRR) at (4,0.75)  {$\cat{A}$};
      \node (BRR) at (4,-0.75) {$\cat{C}$};
      \node at (4.05,-0.75) {$\phantom{\cat{C}}$.};
      \draw[pto] (TL) to node[auto,labelsize](T) {$\cat{A}(-,-)$} 
      (TR);
      \draw[pto] (TR) to node[auto,labelsize](T2) {$\cat{A}(-,-)$} 
      (TRR);
      \draw[->]  (TR) to node[auto,labelsize] {$V$} (BR);
      \draw[->]  (TRR) to node[auto,labelsize] {$V$} (BRR);
      \draw[->]  (TL) to node[auto,swap,labelsize] {$V$} 
      (BL);
      \draw[pto] (BL) to node[auto,labelsize](B) {$\Phi_T$} 
      (BR);
      \draw[pto] (BR) to node[auto,labelsize](B) {$\Phi_T$} 
      (BRR);
      \draw[pto, bend right=30] (BL) to node[auto,swap,labelsize](B) 
      {$\Phi_{T\otimes T}$} 
      (BRR);
      \draw[pto, bend left=30] (TL) to node[auto,labelsize](B) 
      {$\cat{A}(-,-)$} 
      (TRR);
      \tzsquare{1}{0}{$v$}
      \tzsquare{3}{0}{$v$}
      \tzsquare{2}{-1.1}{$\phi_{T,T}$}
      \tzsquare{2}{1.1}{$\cong$}
\end{tikzpicture}
\end{equation}
\end{definition}
Using this notion, we can state the universal property of 
the triple $(\Mod{\monoid{T}}{\cat{C}},$ $U,u)$, just as in the case of 
Eilenberg--Moore categories.

\begin{thm}
\label{thm:double_categorical_univ_property}
Let $\cat{M}=(\cat{M},I,\otimes)$ be a metatheory, $\monoid{T}=(T,e,m)$ be a 
theory in $\cat{M}$, $\cat{C}$ be a large category, and 
$\Phi=(\Phi,\phi_\cdot,\phi)$ be a metamodel of $\cat{M}$ in $\cat{C}$.
The triple $(\Mod{\monoid{T}}{\cat{C}},U,u)$ defined above is 
a universal vertical double cone over $\Phi(\monoid{T})$,
namely:\begin{enumerate}
\item it is a vertical double cone over $\Phi(\monoid{T})$;
\item for any vertical double cone $(\cat{A},V,v)$ over $\Phi(\monoid{T})$,
there exists a unique functor $K\colon\cat{A}\longrightarrow
\Mod{\monoid{T}}{\cat{C}}$ such that 
\[
\begin{tikzpicture}[baseline=-\the\dimexpr\fontdimen22\textfont2\relax ]
      \node (TL) at (0,0.75)  {$\cat{A}$};
      \node (TR) at (2,0.75)  {$\cat{A}$};
      \node (BL) at (0,-0.75) {$\cat{C}$};
      \node (BR) at (2,-0.75) {$\cat{C}$};
      \draw[pto] (TL) to node[auto,labelsize](T) {$\cat{A}(-,-)$} 
      (TR);
      \draw[->]  (TR) to node[auto,labelsize] {$V$} (BR);
      \draw[->]  (TL) to node[auto,swap,labelsize] {$V$} 
      (BL);
      \draw[pto] (BL) to node[auto,swap,labelsize](B) {$\Phi_T$} (BR);
      \tzsquare{1}{0}{$v$}
\end{tikzpicture}  
\quad=\quad
\begin{tikzpicture}[baseline=-\the\dimexpr\fontdimen22\textfont2\relax ]
      \node (TL) at (0,1.5)  {$\cat{A}$};
      \node (TR) at (5,1.5)  {$\cat{A}$};
      \node (BL) at (0,0) {$\Mod{\monoid{T}}{\cat{C}}$};
      \node (BR) at (5,0) {$\Mod{\monoid{T}}{\cat{C}}$};
      \node (BBL) at (0,-1.5) {$\cat{C}$};
      \node (BBR) at (5,-1.5) {$\cat{C}$};
      \draw[pto] (TL) to node[auto,labelsize](T) {${\cat{A}}(-,-)$} 
      (TR);
      \draw[->]  (TR) to node[auto,labelsize] {$K$} (BR);
      \draw[->]  (TL) to node[auto,swap,labelsize] {$K$} 
      (BL);
      \draw[pto] (BL) to node[auto,labelsize](B) 
      {$\Mod{\monoid{T}}{\cat{C}}(-,-)$} 
      (BR);
      \draw[->]  (BR) to node[auto,labelsize] {$U$} (BBR);
      \draw[->]  (BL) to node[auto,swap,labelsize] {$U$} (BBL);
      \draw[pto] (BBL) to node[auto,swap,labelsize](B) {$\Phi_T$} (BBR);
      \tzsquare{2.5}{0.75}{$I_{K}$}
      \tzsquare{2.5}{-0.75}{$u$} 
\end{tikzpicture} 
\]
holds;
\item for any pair of vertical cones $(\cat{A},V,v)$ and $(\cat{A'},V',v')$
over $\Phi(\monoid{T})$, any horizontal morphism $H\colon \cat{A}\pto \cat{A'}$
in $\PPROF^\op$ and any square 
\[
\begin{tikzpicture}[baseline=-\the\dimexpr\fontdimen22\textfont2\relax ]
      \node (TL) at (0,0.75)  {$\cat{A}$};
      \node (TR) at (2,0.75)  {$\cat{A'}$};
      \node (BL) at (0,-0.75) {$\cat{C}$};
      \node (BR) at (2,-0.75) {$\cat{C}$};
      \draw[pto] (TL) to node[auto,labelsize](T) {$H$} (TR);
      \draw[->]  (TR) to node[auto,labelsize] {$V'$} (BR);
      \draw[->]  (TL) to node[auto,swap,labelsize] {$V$} (BL);
      \draw[pto] (BL) to node[auto,swap,labelsize](B) {$\cat{C}(-,-)$} (BR);
      \tzsquare{1}{0}{$\theta$}
\end{tikzpicture}  
\]
in $\PPROF^\op$ such that 
\begin{equation}
\label{eqn:H_theta}
\begin{tikzpicture}[baseline=-\the\dimexpr\fontdimen22\textfont2\relax ]
      \node (TL) at (0,0.75)  {$\cat{A}$};
      \node (TM) at (2,0.75)  {$\cat{A}$};
      \node (TR) at (4,0.75)  {$\cat{A'}$};
      \node (BL) at (0,-0.75) {$\cat{C}$};
      \node (BM) at (2,-0.75) {$\cat{C}$};
      \node (BR) at (4,-0.75) {$\cat{C}$};
      \draw[pto, bend left=30] (TL) to node[auto,labelsize](T) {$H$} (TR);
      \draw[pto] (TL) to node[auto,labelsize](T) {$\cat{A}(-,-)$} (TM);
      \draw[pto] (TM) to node[auto,labelsize](T) {$H$} (TR);
      \draw[->]  (TL) to node[auto,swap,labelsize] {$V$} (BL);
      \draw[->]  (TM) to node[auto,labelsize] {$V$} (BM);
      \draw[->]  (TR) to node[auto,labelsize] {$V'$} (BR);
      \draw[pto] (BL) to node[auto,swap,labelsize](B) {$\Phi_T$} (BM);
      \draw[pto] (BM) to node[auto,swap,labelsize](B) {$\cat{C}(-,-)$} (BR);
      \draw[pto, bend left=-30] (BL) to node[auto,swap,labelsize](T) {$\Phi_T$} 
      (BR);
      \tzsquare{2}{1.1}{$\cong$}
      \tzsquare{1}{0}{$v$}
      \tzsquare{3}{0}{$\theta$}
      \tzsquare{2}{-1.1}{$\cong$}
\end{tikzpicture}
\ =\ 
\begin{tikzpicture}[baseline=-\the\dimexpr\fontdimen22\textfont2\relax ]
      \node (TL) at (0,0.75)  {$\cat{A}$};
      \node (TM) at (2,0.75)  {$\cat{A'}$};
      \node (TR) at (4,0.75)  {$\cat{A'}$};
      \node (BL) at (0,-0.75) {$\cat{C}$};
      \node (BM) at (2,-0.75) {$\cat{C}$};
      \node (BR) at (4,-0.75) {$\cat{C}$};
      \draw[pto, bend left=30] (TL) to node[auto,labelsize](T) {$H$} (TR);
      \draw[pto] (TL) to node[auto,labelsize](T) {$H$} (TM);
      \draw[pto] (TM) to node[auto,labelsize](T) {$\cat{A'}(-,-)$} (TR);
      \draw[->]  (TL) to node[auto,swap,labelsize] {$V$} (BL);
      \draw[->]  (TM) to node[auto,labelsize] {$V'$} (BM);
      \draw[->]  (TR) to node[auto,labelsize] {$V'$} (BR);
      \draw[pto] (BL) to node[auto,swap,labelsize](B) {$\cat{C}(-,-)$} (BM);
      \draw[pto] (BM) to node[auto,swap,labelsize](B) {$\Phi_T$} (BR);
      \draw[pto, bend left=-30] (BL) to node[auto,swap,labelsize](T) {$\Phi_T$} 
      (BR);
      \tzsquare{2}{1.1}{$\cong$}
      \tzsquare{1}{0}{$\theta$}
      \tzsquare{3}{0}{$v'$}
      \tzsquare{2}{-1.1}{$\cong$}
\end{tikzpicture}  
\end{equation}
holds, there exists a unique square 
\[
\begin{tikzpicture}[baseline=-\the\dimexpr\fontdimen22\textfont2\relax ]
      \node (TL) at (0,0.75)  {$\cat{A}$};
      \node (TR) at (5,0.75)  {$\cat{A'}$};
      \node (BL) at (0,-0.75) {$\Mod{\monoid{T}}{\cat{C}}$};
      \node (BR) at (5,-0.75) {$\Mod{\monoid{T}}{\cat{C}}$};
      \draw[pto] (TL) to node[auto,labelsize](T) {$H$} (TR);
      \draw[->]  (TR) to node[auto,labelsize] {$K'$} (BR);
      \draw[->]  (TL) to node[auto,swap,labelsize] {$K$} (BL);
      \draw[pto] (BL) to node[auto,swap,labelsize](B) 
      {$\Mod{\monoid{T}}{\cat{C}}(-,-)$} (BR);
      \tzsquare{2.5}{0}{$\sigma$}
\end{tikzpicture}  
\]
in $\PPROF^\op$ such that 
\[
\begin{tikzpicture}[baseline=-\the\dimexpr\fontdimen22\textfont2\relax ]
      \node (TL) at (0,0.75)  {$\cat{A}$};
      \node (TR) at (2,0.75)  {$\cat{A'}$};
      \node (BL) at (0,-0.75) {$\cat{C}$};
      \node (BR) at (2,-0.75) {$\cat{C}$};
      \draw[pto] (TL) to node[auto,labelsize](T) {$H$} (TR);
      \draw[->]  (TR) to node[auto,labelsize] {$V'$} (BR);
      \draw[->]  (TL) to node[auto,swap,labelsize] {$V$} (BL);
      \draw[pto] (BL) to node[auto,swap,labelsize](B) {$\cat{C}(-,-)$} (BR);
      \tzsquare{1}{0}{$\theta$}
\end{tikzpicture}  
\quad =\quad
\begin{tikzpicture}[baseline=-\the\dimexpr\fontdimen22\textfont2\relax ]
      \node (TL) at (0,1.5)  {$\cat{A}$};
      \node (TR) at (5,1.5)  {$\cat{A'}$};
      \node (ML) at (0,0) {$\Mod{\monoid{T}}{\cat{C}}$};
      \node (MR) at (5,0) {$\Mod{\monoid{T}}{\cat{C}}$};
      \node (BL) at (0,-1.5) {${\cat{C}}$};
      \node (BR) at (5,-1.5) {${\cat{C}}$};
      \draw[pto] (TL) to node[auto,labelsize](T) {$H$} (TR);
      \draw[->]  (TR) to node[auto,labelsize] {$K'$} (MR);
      \draw[->]  (MR) to node[auto,labelsize] {$U$} (BR);
      \draw[->]  (ML) to node[auto,swap,labelsize] {$U$} (BL);
      \draw[->]  (TL) to node[auto,swap,labelsize] {$K$} (ML);
      \draw[pto] (ML) to node[auto,swap,labelsize](B) 
      {$\Mod{\monoid{T}}{\cat{C}}(-,-)$} (MR);
      \draw[pto] (BL) to node[auto,swap,labelsize]{${\cat{C}}(-,-)$} (BR);
      \tzsquare{2.5}{0.75}{$\sigma$}
      \tzsquare{2.5}{-0.75}{$I_{U}$}
\end{tikzpicture}  
\]
holds, where $K$ and $K'$ are the functors corresponding to $(\cat{A},V,v)$ and 
$(\cat{A'},V',v')$ respectively.
\end{enumerate}
\end{thm}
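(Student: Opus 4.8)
The plan is to verify the three clauses directly, exploiting the fact that $\PPROF^\op$-squares into and out of hom-profunctors and $\Phi_T$ unwind (via the Yoneda lemma) into concrete families of functions between sets, exactly as in the elementary treatment of the Eilenberg--Moore case in Section~\ref{sec:EM_cat_as_2-lim}. The key translation step is that a square $v$ as in Definition~\ref{def:double_cone}, whose horizontal source is the hom-profunctor $\cat{A}(-,-)$, corresponds by Yoneda to a family $(\overline{v}_A\colon 1\longrightarrow\Phi_T(VA,VA))_{A\in\cat{A}}$, i.e.\ a choice of element $\xi_A:=\overline{v}_A(\ast)\in\Phi_T(VA,VA)$ for each object $A$, together with the naturality of $v$ in the two variables, which expresses precisely that each $f\colon A\longrightarrow A'$ in $\cat{A}$ satisfies $\Phi_T(VA,Vf)(\xi_A)=\Phi_T(Vf,VA')(\xi_{A'})$. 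So I would first set up this dictionary carefully and record how the composite squares appearing in the cone equations unwind.

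With this dictionary in place, clause~1 is a routine check: the triple $(\Mod{\monoid{T}}{\cat{C}},U,u)$ is a vertical double cone because the defining element $\xi\in\Phi_T(C,C)$ of each model $(C,\xi)$ satisfies exactly the unit and multiplication conditions of Definition~\ref{def:metamodel_model}, and I would show that equation~(\ref{eqn:cone_e}) unwinds to $(\Phi_e)_{C,C}(\xi)=(\overline{\phi}_\cdot)_C(\ast)$ while equation~(\ref{eqn:cone_m}) unwinds to $(\Phi_m)_{C,C}(\xi)=(\overline{\phi}_{T,T})_{C,C,C}(\xi,\xi)$, reading off $(\overline{\phi}_\cdot)$ and $(\overline{\phi}_{T,T})$ from $\phi_\cdot$ and $\phi_{T,T}$ through the bend-and-compose squares. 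For clause~2, given an arbitrary cone $(\cat{A},V,v)$ I would use the dictionary to produce, for each $A\in\cat{A}$, the pair $(VA,\xi_A)$; the cone equations~(\ref{eqn:cone_e}) and~(\ref{eqn:cone_m}) say precisely that $(VA,\xi_A)$ is a model of $\monoid{T}$, and the naturality of $v$ says each $Vf$ is a homomorphism, so $K(A)=(VA,\xi_A)$, $K(f)=Vf$ defines the required functor. Uniqueness follows because $U\circ K=V$ forces the object- and morphism-assignments on underlying data, while the pasting equation with $u$ forces $\xi_A=\overline{v}_A(\ast)$ on the structure part.

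Clause~3 is the part I expect to demand the most care, since it involves a square $\theta$ whose horizontal source is a genuine profunctor $H\colon\cat{A}\pto\cat{A'}$ rather than a hom-profunctor, and the compatibility equation~(\ref{eqn:H_theta}) pasts $\theta$ against the cone squares $v,v'$ through associativity/unitality isomorphisms $\isoa,\isol,\isor$ of $\PPROF^\op$. The plan is again to unwind $\theta$ by Yoneda-type reasoning into a family $(\theta_{A',A}\colon H(A',A)\longrightarrow\cat{C}(VA,V'A'))_{A\in\cat{A},A'\in\cat{A'}}$ and to read equation~(\ref{eqn:H_theta}) as asserting, for each element $h\in H(A',A)$, that the morphism $\theta_{A',A}(h)\colon VA\longrightarrow V'A'$ is a homomorphism of models from $(VA,\xi_A)=K(A)$ to $(V'A',\xi'_{A'})=K'(A')$. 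Granting this, I would define $\sigma_{A',A}:=\theta_{A',A}$ as a map $H(A',A)\longrightarrow\Mod{\monoid{T}}{\cat{C}}(K(A),K'(A'))$, which is forced and hence unique, and check it is natural and satisfies the stated pasting identity with $I_U$ using $U\circ\sigma=\theta$. The main obstacle will be bookkeeping: correctly tracking how the coherence isomorphisms $\isoa,\isol,\isor$ act in the composite squares of~(\ref{eqn:H_theta}) — in $\PPROF^\op$ these encode the canonical isomorphisms $\Phi_T\ptensor\cat{C}(-,-)\cong\Phi_T\cong\cat{C}(-,-)\ptensor\Phi_T$ — and confirming that the two pasted composites, once unwound through the coend defining profunctor composition, reduce to the single homomorphism condition $\Phi_T(VA,\theta_{A',A}(h))(\xi_A)=\Phi_T(\theta_{A',A}(h),V'A')(\xi'_{A'})$. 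Once that identification is made, the rest is a direct application of the one-variable Yoneda lemma together with the universal property of coends.
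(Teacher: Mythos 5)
Your proposal is correct and follows essentially the same route as the paper's own proof: clause~1 read off directly from Definition~\ref{def:metamodel_model}, clause~2 by setting $K(A)=(VA,\xi_A)$ and $K(f)=Vf$, and clause~3 by recognising equation~(\ref{eqn:H_theta}) as the statement that each $\theta_{A,A'}(x)$ is a $\monoid{T}$-model homomorphism and then taking $\sigma=\theta$. The only cosmetic difference is that you phrase the key translation via the Yoneda transpose $\overline{v}_A(\ast)$, whereas the paper works directly with $v_{A,A}(\id{A})$ --- the same element, as the paper itself notes in its two descriptions of $u$.
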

The above statements are taken from the definition of double limit
\cite[Section~4.2]{GP1}.
\begin{proof}[Proof of Theorem~\ref{thm:double_categorical_univ_property}]
First, that $(\Mod{\monoid{T}}{\cat{C}},U,u)$ is a vertical double cone over 
$\Phi(\monoid{T})$ follows directly from the definition of model of $\monoid{T}$
in $\cat{C}$ with respect to $\Phi$ (Definition~\ref{def:metamodel_model}).

Given a vertical double cone $(\cat{A},V,v)$ over $\Phi(\monoid{T})$,
for each object $A\in\cat{A}$, the pair $(VA,v_{A,A}(\id{A}))$ is a
$\monoid{T}$-model in $\cat{C}$ with respect to $\Phi$,
and for each morphism $f\colon A\longrightarrow A'$ in $\cat{A}$,
the morphism $Vf$ is a $\monoid{T}$-model homomorphism from 
$(VA,v_{A,A}(\id{A}))$ to $(VA',v_{A',A'}(\id{A'}))$.
The functor $K\colon \cat{A}\longrightarrow \Mod{\monoid{T}}{\cat{C}}$
can therefore be given as $KA=(VA,v_{A,A}(\id{A}))$ and $Kf=Vf$.
The uniqueness is clear. 

Finally, given $H$ and $\theta$ as in the third clause, the equation 
(\ref{eqn:H_theta}) says that for each $A\in\cat{A}$, $A'\in\cat{A'}$ 
and $x\in H(A,A')$, the 
morphism $\theta_{A,A'}(x)\colon VA\longrightarrow V'A'$
in $\cat{C}$ satisfies $\Phi_T(VA,\theta_{A,A'}(x)) (v_{A,A}(\id{A}))= 
\Phi_T(\theta_{A,A'}(x),VA')(v'_{A',A'}(\id{A'}))$;
in other words, that $\theta_{A,A'}(x)$ is a $\monoid{T}$-model homomorphism
from $KA$ to $K'A'$.
The square $\sigma$ can then be given as the natural transformation 
with $\sigma_{A,A'}(x)=\theta_{A,A'}(x)$.
\end{proof}

\section{Relation to double limits}
In this final section of this chapter, we sketch how the double categorical 
universal property (Theorem~\ref{thm:double_categorical_univ_property}) of 
categories of models in our framework can be expressed 
via the notion of double limit \cite{GP1}, connecting our characterisation 
to a well-established notion.
A short outline of this reduction is as follows.
\begin{enumerate}
\item A theory $\monoid{T}$ in a metatheory $\cat{M}$ may be equivalently given 
as a strong monoidal functor $\monoid{T}\colon\Simp\longrightarrow\cat{M}$,
where $\Simp$ is the augmented simplex category with monoidal structure given 
by ordinal sum; see Definition~\ref{def:simp}.
\item A metamodel $\Phi$ of a metatheory $\cat{M}$
may be identified with a lax double functor 
$\Phi\colon\dcat{H}\Sigma(\cat{M}^\op)\longrightarrow\PPROF^\op$,
where $\Sigma$ turns a monoidal category to the corresponding one-object 
bicategory and $\dcat{H}$ turns a bicategory to the corresponding vertically 
discrete pseudo double category (see Example~\ref{ex:H_construction}).
\item Therefore given a theory $\monoid{T}$ and a metamodel $\Phi$ (in 
$\cat{C}$) of a metatheory $\cat{M}$, we obtain a lax double functor 
$\Phi(\monoid{T})\colon \dcat{H}\Sigma(\Simp^\op)\longrightarrow\PPROF^\op$
as the following composition:
\[
\begin{tikzpicture}[baseline=-\the\dimexpr\fontdimen22\textfont2\relax ]
      \node (L) at (0,0) {$\dcat{H}\Sigma(\Simp^\op)$};
      \node (M) at (3,0) {$\dcat{H}\Sigma(\cat{M}^\op)$};
      \node (R) at (6,0) {$\PPROF^\op$.};
      \draw[->] (L) to node[auto,labelsize]{$\dcat{H}\Sigma(\monoid{T}^\op)$} 
      (M);
      \draw[->] (M) to node[auto,labelsize] {$\Phi$} (R);
\end{tikzpicture}  
\]
Theorem~\ref{thm:double_categorical_univ_property} may then be interpreted as 
establishing that $\Mod{\monoid{T}}{\cat{C}}$ is (the apex of) the double limit 
of $\Phi(\monoid{T})$ in the sense of \cite{GP1}.
\end{enumerate}
We remark that the 2-categorical universal property of Eilenberg--Moore 
categories (Section~\ref{sec:EM_cat_as_2-lim}) can also be interpreted as 
establishing $\cat{C}^\monoid{T}$ as (the apex of) the
lax limit of the 2-functor of type $\Sigma\Simp\longrightarrow\tCAT$ 
corresponding to a monad $\monoid{T}$ on a large category $\cat{C}$; 
see~\cite{Street_limits}.
The following reduction is essentially routine and rather peripheral, so those 
readers contented with the above outline may safely skip the rest of this 
section.

\medskip

We start from the first step, namely a well-known observation (see e.g., 
\cite[Section~VII.~5]{MacLane_CWM})
that monoid objects (= theories) may be identified with
strong monoidal functors out of $\Simp$.
\begin{definition}
\label{def:simp}
We define the \defemph{augmented simplex category} (also known as the
\defemph{algebraists' simplex category})
$\Simp$ as follows.
\begin{itemize}
\item Objects are all finite ordinals $\ord{n}=\{\,{0}<{1}<\dots 
<{n-1} \,\}$,
including the empty ordinal $\ord{0}=\{\ \}$.
\item Morphisms are all monotone functions.
\end{itemize}
Note that a morphism in $\Simp$ is mono (resp.~epi)
iff it is an injective (resp.~surjective) monotone function.

This category has a natural monoidal structure, given as follows.
\begin{itemize}
\item The unit object is $\ord{0}$.
\item The monoidal product $+\colon 
\Simp\times\Simp\longrightarrow\Simp$ maps a pair of objects 
$\ord{n}$ and $\ord{m}$ in $\Simp$ to $\ord{n+m}$, and maps a pair of morphisms 
$u\colon \ord{n}\longrightarrow \ord{n'}$ and 
$v\colon \ord{m}\longrightarrow\ord{m'}$ in $\Simp$ to 
$u+v\colon \ord{n+m}\longrightarrow\ord{n'+m'}$ defined as
\[
(u+v) ({i}) =\begin{cases}
u(i) & \text{if }i\leq n-1\\
n'+v(i) & \text{if }i\geq n.
\end{cases}
\]
\end{itemize}
In the following, whenever we talk about a monoidal structure on $\Simp$,
we always mean this (strict) monoidal structure $(\ord{0},+)$.
\end{definition}

The morphisms in the category $\Simp$ are generated by 
certain simple morphisms.
For each $\ord{n}\in\Simp$ and $i\in\{0,\dots, n\}$, 
let $\delta^{(n)}_i\colon \ord{n}\longrightarrow\ord{n+1}$ be the 
unique injective monotone function 
whose image does not contain $i\in\ord{n+1}$, and for each
$\ord{n}\in\Simp$ and $i\in \{0,\dots,n-1\}$, let 
$\sigma^{(n)}_i\colon\ord{n}\longrightarrow\ord{n-1}$ be the unique 
surjective monotone function such that 
$\sigma^{(n)}_i(i)=\sigma^{(n)}_i(i+1)=i$.
Morphisms of the form $\delta^{(n)}_i$ are called \defemph{face maps}
and those of the form $\sigma^{(n)}_i$ \defemph{degeneracy maps}.
It is easy to see that every monomorphism in $\Simp$
can be expressed as a composition of face maps\footnote{An identity morphism in 
$\Simp$ is interpreted as the result of 0-ary composition of morphisms.}, and 
every epimorphism in $\Simp$ as a composition of degeneracy maps.
Furthermore, an arbitrary morphism in $\Simp$ can be 
written uniquely as the composition of an epimorphism followed by a 
monomorphism (the {image factorisation}). 
Hence all morphisms in $\Simp$ can be written as a composition of 
face and degeneracy maps.
This means that an arbitrary \emph{functor} from $\Simp$ to a category 
is completely determined by its images of all objects in $\Simp$
and face and degeneracy maps.
Conversely, such an assignment of the images of objects and face and degeneracy 
maps extends to a functor if and only if it satisfies the well-known
\emph{simplicial identities}; see  \cite[Section~VII.~5]{MacLane_CWM}.

Moreover, if we take into account the monoidal structure of $\Simp$,
we can further cut down the generating data.
Clearly, every object in $\Simp$ is written as the monoidal product of finitely 
many copies of $\ord{1}$.
Consider the unique morphism ${!_\ord{0}}\colon \ord{0}\longrightarrow 
\ord{1}$ in $\Simp$. 
Every face map $\delta^{(n)}_i\colon \ord{n}\longrightarrow\ord{n+1}$
can be written as $\id{\ord{i}}+{!_\ord{0}}+\id{\ord{n-i}}$ using this morphism
and the monoidal product.
Similarly, using the unique morphism ${!_\ord{2}}\colon 
\ord{2}\longrightarrow\ord{1}$, every degeneracy map $\sigma^{(n)}_i\colon 
\ord{n}\longrightarrow\ord{n-1}$ can be written as $\id{\ord{i}}+{!_{\ord{2}}}+
\id{\ord{n-i-2}}$.
Hence every \emph{strict monoidal functor} of type 
$F\colon\Simp\longrightarrow\cat{M}$ to a strict monoidal category 
$(\cat{M},I,\otimes)$
is completely determined by the object $T=F(\ord{1})\in\cat{M}$
and the morphisms $e=F({!_\ord{0}})\colon I\longrightarrow T$ and 
$m=F({!_\ord{2}})\colon T\otimes T\longrightarrow T$ in $\cat{M}$.
It turns out that, conversely, such a data $(T,e,m)$ defines a strict monoidal 
functor if and only if $(T,e,m)$ is a monoid object in $\cat{M}$.

The following proposition is a mild variant of this.

\begin{proposition}
Let $\cat{M}=(\cat{M},I,\otimes)$ be a metatheory.
There is an equivalence of categories 
between the category $\MonCATst(\Simp,\cat{M})$ of all strong monoidal functors
$\Simp\longrightarrow\cat{M}$ and monoidal natural transformations,
and the category $\Th{\cat{M}}$.
\end{proposition}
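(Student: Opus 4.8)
The plan is to exhibit a functor $\Psi\colon\MonCATst(\Simp,\cat{M})\longrightarrow\Th{\cat{M}}$ and prove it fully faithful and essentially surjective. On objects, given a strong monoidal functor $(F,f_\cdot,f)\colon\Simp\longrightarrow\cat{M}$, I would set $\Psi(F)=(T,e,m)$ with $T=F\ord{1}$, defining $e\colon I\longrightarrow T$ and $m\colon T\otimes T\longrightarrow T$ by composing the images $F({!_{\ord{0}}})\colon F\ord{0}\longrightarrow F\ord{1}$ and $F({!_{\ord{2}}})\colon F\ord{2}\longrightarrow F\ord{1}$ of the generating maps ${!_{\ord{0}}}\colon\ord{0}\longrightarrow\ord{1}$ and ${!_{\ord{2}}}\colon\ord{2}\longrightarrow\ord{1}$ with the invertible coherence constraints $f_\cdot\colon I\longrightarrow F\ord{0}$ and $f_{\ord{1},\ord{1}}\colon T\otimes T\longrightarrow F\ord{2}$. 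That $(T,e,m)$ is a monoid would follow by translating the simplicial identities $ {!_{\ord{2}}}\circ(\id{\ord{1}}+{!_{\ord{0}}})=\id{\ord{1}}={!_{\ord{2}}}\circ({!_{\ord{0}}}+\id{\ord{1}})$ and ${!_{\ord{2}}}\circ({!_{\ord{2}}}+\id{\ord{1}})={!_{\ord{2}}}\circ(\id{\ord{1}}+{!_{\ord{2}}})$ through the monoidal-functor axioms, invoking Mac Lane coherence to absorb the associators and unitors of $\cat{M}$. On morphisms, a monoidal natural transformation $\alpha\colon F\Longrightarrow G$ is sent to its single component $\alpha_{\ord{1}}\colon F\ord{1}\longrightarrow G\ord{1}$, which the monoidal-naturality squares at ${!_{\ord{0}}}$ and ${!_{\ord{2}}}$ force to be a monoid homomorphism; functoriality of $\Psi$ (preservation of identities and vertical composites) is then immediate.

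Full faithfulness is the most routine part. Because every object of $\Simp$ is an iterated monoidal product $\ord{n}=\ord{1}+\dots+\ord{1}$ of copies of $\ord{1}$, the monoidal-naturality conditions determine each component $\alpha_{\ord{n}}$ from $\alpha_{\ord{1}}$, up to the structure isomorphisms $f$ and $g$ of $F$ and $G$; hence $\alpha\longmapsto\alpha_{\ord{1}}$ is injective. Conversely, any monoid homomorphism $h\colon\Psi(F)\longrightarrow\Psi(G)$ extends to a monoidal natural transformation whose $\ord{n}$-component is the evident composite built from $h^{\otimes n}$ and the comparison constraints, so the map is also surjective. This yields a bijection $\MonCATst(\Simp,\cat{M})(F,G)\cong\Th{\cat{M}}(\Psi F,\Psi G)$, natural in $F$ and $G$.

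For essential surjectivity, which I expect to be the main obstacle, I would start from a monoid $\monoid{T}=(T,e,m)$ and construct a strong monoidal functor $F_\monoid{T}\colon\Simp\longrightarrow\cat{M}$ with $\Psi(F_\monoid{T})\cong\monoid{T}$, taking $F_\monoid{T}\ord{0}=I$ and $F_\monoid{T}\ord{n}$ to be a chosen (say left-associated) $n$-fold power $T^{\otimes n}$. The action on face and degeneracy generators is dictated by $e$ and $m$ respectively, and on an arbitrary morphism by its unique epi--mono (image) factorisation. The real work is to check that this assignment respects the simplicial identities, so that $F_\monoid{T}$ is genuinely a functor, and that the canonical comparison maps $T^{\otimes m}\otimes T^{\otimes n}\longrightarrow T^{\otimes(m+n)}$ and $I\longrightarrow T^{\otimes 0}$ are coherent isomorphisms making $F_\monoid{T}$ strong monoidal. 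Both reduce to the unit and associativity laws of $\monoid{T}$ wrapped inside the associators and unitors of $\cat{M}$, and I would discharge the bookkeeping by Mac Lane's coherence theorem—equivalently, by first settling the strict case (where $\Simp$ is literally the free strict monoidal category on a monoid, as sketched in the preceding discussion) and then transporting along the monoidal equivalence between $\cat{M}$ and its strictification. Combining this with full faithfulness gives the claimed equivalence of categories.
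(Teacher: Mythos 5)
Your proposal follows essentially the same route as the paper: it defines the identical comparison functor (sending $(F,f_\cdot,f)$ to $(F(\ord{1}),\,F({!_\ord{0}})\circ f_\cdot,\,F({!_{\ord{2}}})\circ f_{\ord{1},\ord{1}})$ and $\phi$ to $\phi_{\ord{1}}$), which the paper simply asserts to be a well-defined equivalence, relying on its preceding discussion of $\Simp$ as the free strict monoidal category on a monoid. Your fuller verification of full faithfulness and essential surjectivity (via generators, epi--mono factorisation, and coherence/strictification) is correct and just spells out what the paper leaves implicit.
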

\begin{proof}
Recall that a strong monoidal functor $(F,f_\cdot,f)\colon 
\Simp\longrightarrow\cat{M}$ consists of a functor $F\colon 
\Simp\longrightarrow\cat{M}$, an isomorphism $f_\cdot\colon I\longrightarrow 
F(\ord{0})$ and a natural isomorphism 
\[
f=(f_{\ord{m},\ord{n}}\colon F(\ord{n})\otimes F(\ord{m})\longrightarrow 
F(\ord{n+m}))_{\ord{m},\ord{n}\in\Simp}
\]
satisfying the suitable axioms. The functor
\[
\MonCATst(\Simp,\cat{M})\longrightarrow \Th{\cat{M}}
\]
mapping an object $(F,f_\cdot,f)\in\MonCATst(\Simp,\cat{M})$ to $(F(\ord{1}),
F({!_\ord{0}})\circ f_\cdot, F({!_{\ord{2}}})\circ 
f_{\ord{1},\ord{1}})$
and a morphism $\phi\colon (F,f_\cdot,f)\longrightarrow (G,g_\cdot,g)$
in $\MonCATst(\Simp,\cat{M})$ to $\phi_\ord{1}$ 
is well-defined and is an equivalence of categories.
\end{proof}

\medskip

The second step, that a metamodel of a metatheory $\cat{M}$
corresponds to a lax double functor of type 
$\dcat{H}\Sigma(\cat{M}^\op)\longrightarrow\PPROF^\op$, is straightforward.
Rather than introducing a general definition of lax double functor (for this, 
see \cite[Section~7.2]{GP1}), we shall use the following fact: for any 
bicategory $\tcat{B}$ and any pseudo double category $\dcat{D}$, 
lax double functors of type $\dcat{H}\tcat{B}\longrightarrow\dcat{D}$ 
bijectively correspond to lax functors of type 
$\tcat{B}\longrightarrow\tcat{H}\dcat{D}$ in a canonical way.
Hence it suffices to see that a metamodel of $\cat{M}$ corresponds to 
a lax functor of type 
$\Sigma(\cat{M}^\op)\longrightarrow\tcat{H}(\PPROF^\op)=\PROF^\op$,
which we have already remarked in Section~\ref{subsec:metamodel}. 

\medskip

As a sketch for the final step, we show that a vertical double cone over the 
lax double functor $\Phi(\monoid{T})$
in the sense of \cite[Section~4.1, 7.3]{GP1} is indeed equivalent to
a triple $(\cat{A},V,v)$ defined in Definition~\ref{def:double_cone}.
Specialising the original definition,
a vertical double cone over $\Phi(\monoid{T})\colon 
\dcat{H}\Sigma(\cat{M}^\op)\longrightarrow \PPROF^\op$ 
consists of the following data:
\begin{description}
\item[(DCD1)] A category $\cat{A}$.
\item[(DCD2)] A functor $V\colon \cat{A}\longrightarrow \cat{C}$.
\item[(DCD3')] For each $\ord{n}\in\Simp$, a square in $\PPROF^\op$
\[
\begin{tikzpicture}[baseline=-\the\dimexpr\fontdimen22\textfont2\relax ]
      \node (TL) at (0,1.5)  {$\cat{A}$};
      \node (TR) at (3,1.5)  {$\cat{A}$};
      \node (BL) at (0,0) {$\cat{C}$};
      \node (BR) at (3,0) {$\cat{C}$.};
      \draw[pto] (TL) to node[auto,labelsize](T) {$\cat{A}(-,-)$} 
      (TR);
      \draw[->]  (TR) to node[auto,labelsize] {$V$} (BR);
      \draw[->]  (TL) to node[auto,swap,labelsize] {$V$} 
      (BL);
      \draw[pto] (BL) to node[auto,swap,labelsize](B) 
      {$\Phi_{T^{\otimes n}}(=\Phi(\monoid{T})_\ord{n})$} (BR);
      \tzsquare{1.5}{0.75}{$v_\ord{n}$}
\end{tikzpicture} 
\]
\end{description}
satisfying the following axioms:
\begin{description}
\item[(DCA1')] 
\[
\begin{tikzpicture}[baseline=-\the\dimexpr\fontdimen22\textfont2\relax ]
      \node (TL) at (0,0.75)  {$\cat{A}$};
      \node (TR) at (2,0.75)  {$\cat{A}$};
      \node (BL) at (0,-0.75) {$\cat{C}$};
      \node (BR) at (2,-0.75) {$\cat{C}$};
      \draw[pto] (TL) to node[auto,labelsize](T) {$\cat{A}(-,-)$} 
      (TR);
      \draw[->]  (TR) to node[auto,labelsize] {$V$} (BR);
      \draw[->]  (TL) to node[auto,swap,labelsize] {$V$} 
      (BL);
      \draw[pto] (BL) to node[auto,swap,labelsize](B) 
      {$\Phi_I$} (BR);
      \tzsquare{1}{0}{$v_\ord{0}$}
\end{tikzpicture} 
\quad =\quad
\begin{tikzpicture}[baseline=-\the\dimexpr\fontdimen22\textfont2\relax ]
      \node (TL) at (0,0.75)  {$\cat{A}$};
      \node (TR) at (2,0.75)  {$\cat{A}$};
      \node (BL) at (0,-0.75) {$\cat{C}$};
      \node (BR) at (2,-0.75) {$\cat{C}$};
      \draw[pto] (TL) to node[auto,labelsize](T) {$\cat{A}(-,-)$} 
      (TR);
      \draw[->]  (TR) to node[auto,labelsize] {$V$} (BR);
      \draw[->]  (TL) to node[auto,swap,labelsize] {$V$} 
      (BL);
      \draw[pto, bend left=30] (BL) to node[auto,labelsize](B) {$\cat{C}(-,-)$} 
      (BR);
      \draw[pto, bend right=30] (BL) to node[auto,swap,labelsize](B) 
      {$\Phi_I$} 
      (BR);
      \tzsquare{1}{0.25}{$\id{V}$}
      \tzsquare{1}{-0.75}{$\phi_\cdot$}
\end{tikzpicture}
\]
\item[(DCA2')] For each pair of objects $\ord{n},\ord{m}\in\Simp$,
\[
\begin{tikzpicture}[baseline=-\the\dimexpr\fontdimen22\textfont2\relax ]
      \node (TL) at (0,0.75)  {$\cat{A}$};
      \node (TR) at (2,0.75)  {$\cat{A}$};
      \node (BL) at (0,-0.75) {$\cat{C}$};
      \node (BR) at (2,-0.75) {$\cat{C}$};
      \draw[pto] (TL) to node[auto,labelsize](T) {$\cat{A}(-,-)$} 
      (TR);
      \draw[->]  (TR) to node[auto,labelsize] {$V$} (BR);
      \draw[->]  (TL) to node[auto,swap,labelsize] {$V$} 
      (BL);
      \draw[pto] (BL) to node[auto,swap,labelsize](B) 
      {$\Phi_{T^{\otimes (m+n)}}$} (BR);
      \tzsquare{1}{0}{$v_\ord{m+n}$}
\end{tikzpicture} 
\quad =\quad
\begin{tikzpicture}[baseline=-\the\dimexpr\fontdimen22\textfont2\relax ]
      \node (TL) at (0,0.75)  {$\cat{A}$};
      \node (TR) at (2,0.75)  {$\cat{A}$};
      \node (BL) at (0,-0.75) {$\cat{C}$};
      \node (BR) at (2,-0.75) {$\cat{C}$};
      \node (TRR) at (4,0.75)  {$\cat{A}$};
      \node (BRR) at (4,-0.75) {$\cat{C}$};
      \draw[pto] (TL) to node[auto,labelsize](T) {$\cat{A}(-,-)$} 
      (TR);
      \draw[pto] (TR) to node[auto,labelsize](T2) {$\cat{A}(-,-)$} 
      (TRR);
      \draw[->]  (TR) to node[auto,labelsize] {$V$} (BR);
      \draw[->]  (TRR) to node[auto,labelsize] {$V$} (BRR);
      \draw[->]  (TL) to node[auto,swap,labelsize] {$V$} 
      (BL);
      \draw[pto] (BL) to node[auto,labelsize](B) {$\Phi_{T^{\otimes m}}$} 
      (BR);
      \draw[pto] (BR) to node[auto,labelsize](B) {$\Phi_{T^{\otimes n}}$} 
      (BRR);
      \draw[pto, bend right=30] (BL) to node[auto,swap,labelsize](B) 
      {$\Phi_{T^{\otimes (m+n)}}$} 
      (BRR);
      \draw[pto, bend left=30] (TL) to node[auto,labelsize](B) 
      {$\cat{A}(-,-)$} 
      (TRR);
      \tzsquare{1}{0}{$v_\ord{m}$}
      \tzsquare{3}{0}{$v_\ord{n}$}
      \tzsquare{2}{-1.1}{$\phi_{T^{\otimes n},T^{\otimes m}}$}
      \tzsquare{2}{1.1}{$\cong$}
\end{tikzpicture}
\]
\item[(DCA3')] For each morphism $u\colon \ord{n}\longrightarrow\ord{n'}$
in $\Simp$, 
\[
\begin{tikzpicture}[baseline=-\the\dimexpr\fontdimen22\textfont2\relax ]
      \node (TL) at (0,0.75)  {$\cat{A}$};
      \node (TR) at (2,0.75)  {$\cat{A}$};
      \node (BL) at (0,-0.75) {$\cat{C}$};
      \node (BR) at (2,-0.75) {$\cat{C}$};
      \draw[pto] (TL) to node[auto,labelsize](T) {$\cat{A}(-,-)$} 
      (TR);
      \draw[->]  (TR) to node[auto,labelsize] {$V$} (BR);
      \draw[->]  (TL) to node[auto,swap,labelsize] {$V$} 
      (BL);
      \draw[pto] (BL) to node[auto,swap,labelsize](B) {$\Phi_{T^{\otimes n}}$} 
      (BR);
      \tzsquare{1}{0}{$v_\ord{n}$}
\end{tikzpicture} 
\quad =\quad
\begin{tikzpicture}[baseline=-\the\dimexpr\fontdimen22\textfont2\relax ]
      \node (TL) at (0,0.75)  {$\cat{A}$};
      \node (TR) at (2,0.75)  {$\cat{A}$};
      \node (BL) at (0,-0.75) {$\cat{C}$};
      \node (BR) at (2,-0.75) {$\cat{C}.$};
      \draw[pto] (TL) to node[auto,labelsize](T) {$\cat{A}(-,-)$} 
      (TR);
      \draw[->]  (TR) to node[auto,labelsize] {$V$} (BR);
      \draw[->]  (TL) to node[auto,swap,labelsize] {$V$} 
      (BL);
      \draw[pto, bend left=30] (BL) to node[auto,labelsize](B) 
      {$\Phi_{T^{\otimes n'}}$} 
      (BR);
      \draw[pto, bend right=30] (BL) to node[auto,swap,labelsize](B) 
      {$\Phi_{T^{\otimes n}}$} 
      (BR);
      \tzsquare{1}{0.25}{$v_\ord{n'}$}
      \tzsquare{0.7}{-0.75}{$\Phi(\monoid{T})_{u}$}
\end{tikzpicture}
\]
\end{description}
By (DCA1') and (DCA2'), $v_\ord{1}$ determines all $v_\ord{n}$.
Also, it suffices to check the condition (DCA3') with respect to all 
face and degeneracy maps.
In fact, it suffices to check (DCA3') only with respect to
two maps,
namely ${!_\ord{0}}\colon \ord{0}\longrightarrow\ord{1}$ and 
${!_\ord{2}}\colon \ord{2}\longrightarrow \ord{1}$.
This is because, as noted above, any face map $\delta^{(n)}_i\colon 
\ord{n}\longrightarrow\ord{n+1}$
can be written as $\id{\ord{i}}+{!_\ord{0}}+\id{\ord{n-i}}$
and any degeneracy map $\sigma^{(n)}_i\colon\ord{n}\longrightarrow\ord{n-1}$
as $\id{\ord{i}}+{!_\ord{2}}+\id{\ord{n-i-2}}$.
Therefore if 
\[
\begin{tikzpicture}[baseline=-\the\dimexpr\fontdimen22\textfont2\relax ]
      \node (TL) at (0,0.75)  {$\cat{A}$};
      \node (TR) at (2,0.75)  {$\cat{A}$};
      \node (BL) at (0,-0.75) {$\cat{C}$};
      \node (BR) at (2,-0.75) {$\cat{C}$};
      \draw[pto] (TL) to node[auto,labelsize](T) {$\cat{A}(-,-)$} 
      (TR);
      \draw[->]  (TR) to node[auto,labelsize] {$V$} (BR);
      \draw[->]  (TL) to node[auto,swap,labelsize] {$V$} 
      (BL);
      \draw[pto] (BL) to node[auto,swap,labelsize](B) {$\Phi_I$} (BR);
      \tzsquare{1}{0}{$v_\ord{0}$}
\end{tikzpicture} 
\quad =\quad
\begin{tikzpicture}[baseline=-\the\dimexpr\fontdimen22\textfont2\relax ]
      \node (TL) at (0,0.75)  {$\cat{A}$};
      \node (TR) at (2,0.75)  {$\cat{A}$};
      \node (BL) at (0,-0.75) {$\cat{C}$};
      \node (BR) at (2,-0.75) {$\cat{C}$};
      \draw[pto] (TL) to node[auto,labelsize](T) {$\cat{A}(-,-)$} 
      (TR);
      \draw[->]  (TR) to node[auto,labelsize] {$V$} (BR);
      \draw[->]  (TL) to node[auto,swap,labelsize] {$V$} 
      (BL);
      \draw[pto, bend left=30] (BL) to node[auto,labelsize](B) 
      {$\Phi_T$} 
      (BR);
      \draw[pto, bend right=30] (BL) to node[auto,swap,labelsize](B) 
      {$\Phi_I$} 
      (BR);
      \tzsquare{1}{0.25}{$v_\ord{1}$}
      \tzsquare{1}{-0.75}{$\Phi_e$}
\end{tikzpicture}
\]
holds (we have used $\Phi(\monoid{T})_{!_{\ord{0}}}=\Phi_e$, where 
$e\colon I\longrightarrow T$ is given by the theory $\monoid{T}=(T,e,m)$), 
then for 
$\delta^{(n)}_i=\id{\ord{i}}+{!_\ord{0}}+\id{\ord{n-i}}\colon 
\ord{n}\longrightarrow\ord{n+1}$,
\begingroup
\allowdisplaybreaks
\begin{align*}
\begin{tikzpicture}[baseline=-\the\dimexpr\fontdimen22\textfont2\relax ]
      \node (TL) at (0,0.75)  {$\cat{A}$};
      \node (TR) at (2,0.75)  {$\cat{A}$};
      \node (BL) at (0,-0.75) {$\cat{C}$};
      \node (BR) at (2,-0.75) {$\cat{C}$};
      \draw[pto] (TL) to node[auto,labelsize](T) {$\cat{A}(-,-)$} 
      (TR);
      \draw[->]  (TR) to node[auto,labelsize] {$V$} (BR);
      \draw[->]  (TL) to node[auto,swap,labelsize] {$V$} 
      (BL);
      \draw[pto] (BL) to node[auto,swap,labelsize](B) {$\Phi_{T^{\otimes n}}$} 
      (BR);
      \tzsquare{1}{0}{$v_\ord{n}$}
\end{tikzpicture} 
\quad &=\quad
\begin{tikzpicture}[baseline=-\the\dimexpr\fontdimen22\textfont2\relax ]
      \node (TL) at (0,0.75)  {$\cat{A}$};
      \node (TR) at (2,0.75)  {$\cat{A}$};
      \node (BL) at (0,-0.75) {$\cat{C}$};
      \node (BR) at (2,-0.75) {$\cat{C}$};
      \node (TRR) at (4,0.75)  {$\cat{A}$};
      \node (BRR) at (4,-0.75) {$\cat{C}$};
      \node (TRRR) at (6,0.75)  {$\cat{A}$};
      \node (BRRR) at (6,-0.75) {$\cat{C}$};
      \draw[pto] (TL) to node[auto,labelsize](T) {$\cat{A}(-,-)$} 
      (TR);
      \draw[pto] (TR) to node[auto,labelsize](T2) {$\cat{A}(-,-)$} 
      (TRR);
      \draw[pto] (TRR) to node[auto,labelsize](T2) {$\cat{A}(-,-)$} 
      (TRRR);
      \draw[->]  (TR) to node[auto,labelsize] {$V$} (BR);
      \draw[->]  (TRR) to node[auto,labelsize] {$V$} (BRR);
      \draw[->]  (TRRR) to node[auto,labelsize] {$V$} (BRRR);
      \draw[->]  (TL) to node[auto,swap,labelsize] {$V$} 
      (BL);
      \draw[pto, bend left=30] (TL) to node[auto,labelsize](B) 
      {$\cat{A}(-,-)$} 
      (TRRR);
      \draw[pto] (BL) to node[auto,labelsize](B) {$\Phi_{T^{\otimes (n-i)}}$} 
      (BR);
      \draw[pto] (BR) to node[auto,labelsize](B) {$\Phi_I$} 
      (BRR);
      \draw[pto] (BRR) to node[auto,labelsize](B) {$\Phi_{T^{\otimes i}}$} 
      (BRRR);
      \draw[pto, bend right=30] (BL) to node[auto,swap,labelsize](B) 
      {$\Phi_{T^{\otimes n}}$} 
      (BRRR);
      \tzsquare{1}{0}{$v_\ord{n-i}$}
      \tzsquare{3}{0}{$v_\ord{0}$}
      \tzsquare{5}{0}{$v_\ord{i}$}
      \tzsquare{3}{-1.3}{$\phi_{T^{\otimes i},I,T^{\otimes (n-i)}}$}
      \tzsquare{3}{1.3}{$\cong$}
\end{tikzpicture}\\
 &=\quad
\begin{tikzpicture}[baseline=-\the\dimexpr\fontdimen22\textfont2\relax ]
      \node (TL) at (0,0.75)  {$\cat{A}$};
      \node (TR) at (2,0.75)  {$\cat{A}$};
      \node (BL) at (0,-0.75) {$\cat{C}$};
      \node (BR) at (2,-0.75) {$\cat{C}$};
      \node (TRR) at (4,0.75)  {$\cat{A}$};
      \node (BRR) at (4,-0.75) {$\cat{C}$};
      \node (TRRR) at (6,0.75)  {$\cat{A}$};
      \node (BRRR) at (6,-0.75) {$\cat{C}$};
      \draw[pto] (TL) to node[auto,labelsize](T) {$\cat{A}(-,-)$} 
      (TR);
      \draw[pto] (TR) to node[auto,labelsize](T2) {$\cat{A}(-,-)$} 
      (TRR);
      \draw[pto] (TRR) to node[auto,labelsize](T2) {$\cat{A}(-,-)$} 
      (TRRR);
      \draw[->]  (TR) to node[auto,labelsize] {$V$} (BR);
      \draw[->]  (TRR) to node[auto,labelsize] {$V$} (BRR);
      \draw[->]  (TRRR) to node[auto,labelsize] {$V$} (BRRR);
      \draw[->]  (TL) to node[auto,swap,labelsize] {$V$} 
      (BL);
      \draw[pto, bend left=30] (TL) to node[auto,labelsize](B) 
      {$\cat{A}(-,-)$} 
      (TRRR);
      \draw[pto] (BL) to node[auto,labelsize](B) {$\Phi_{T^{\otimes (n-i)}}$} 
      (BR);
      \draw[pto,bend left=30] (BR) to node[auto,labelsize](B) {$\Phi_T$} 
      (BRR);
      \draw[pto,bend right=30] (BR) to node[auto,pos=0.4,swap,labelsize](B) 
      {$\Phi_I$} 
      (BRR);
      \draw[pto] (BRR) to node[auto,labelsize](B) {$\Phi_{T^{\otimes i}}$} 
      (BRRR);
      \draw[pto, bend right=30] (BL) to node[auto,swap,labelsize](B) 
      {$\Phi_{T^{\otimes n}}$} 
      (BRRR);
      \tzsquare{1}{0}{$v_\ord{n-i}$} 
      \tzsquare{3}{0.25}{$v_\ord{1}$}
      \tzsquare{5}{0}{$v_\ord{i}$}
      \tzsquare{3}{-1.375}{$\phi_{T^{\otimes i},I,T^{\otimes (n-i)}}$}
      \tzsquare{3}{-0.75}{$\Phi_e$}
      \tzsquare{3}{1.3}{$\cong$}
\end{tikzpicture}\\
&=\quad
\begin{tikzpicture}[baseline=-\the\dimexpr\fontdimen22\textfont2\relax ]
      \node (TL) at (0,0.75)  {$\cat{A}$};
      \node (TR) at (2,0.75)  {$\cat{A}$};
      \node (BL) at (0,-0.75) {$\cat{C}$};
      \node (BR) at (2,-0.75) {$\cat{C}$};
      \node (TRR) at (4,0.75)  {$\cat{A}$};
      \node (BRR) at (4,-0.75) {$\cat{C}$};
      \node (TRRR) at (6,0.75)  {$\cat{A}$};
      \node (BRRR) at (6,-0.75) {$\cat{C}$};
      \draw[pto] (TL) to node[auto,labelsize](T) {$\cat{A}(-,-)$} 
      (TR);
      \draw[pto] (TR) to node[auto,labelsize](T2) {$\cat{A}(-,-)$} 
      (TRR);
      \draw[pto] (TRR) to node[auto,labelsize](T2) {$\cat{A}(-,-)$} 
      (TRRR);
      \draw[->]  (TR) to node[auto,labelsize] {$V$} (BR);
      \draw[->]  (TRR) to node[auto,labelsize] {$V$} (BRR);
      \draw[->]  (TRRR) to node[auto,labelsize] {$V$} (BRRR);
      \draw[->]  (TL) to node[auto,swap,labelsize] {$V$} 
      (BL);
      \draw[pto, bend left=30] (TL) to node[auto,labelsize](B) 
      {$\cat{A}(-,-)$} 
      (TRRR);
      \draw[pto] (BL) to node[auto,labelsize](B) {$\Phi_{T^{\otimes (n-i)}}$} 
      (BR);
      \draw[pto] (BR) to node[auto,labelsize](B) {$\Phi_T$} 
      (BRR);
      \draw[pto] (BRR) to node[auto,labelsize](B) {$\Phi_{T^{\otimes i}}$} 
      (BRRR);
      \draw[pto, bend right=25] (BL) to node[auto,pos=0.45,swap,labelsize](B) 
      {$\Phi_{T^{\otimes (n+1)}}$} 
      (BRRR);
      \draw[pto, bend right=50] (BL) to node[auto,swap,labelsize](B) 
      {$\Phi_{T^{\otimes n}}$} 
      (BRRR);
      \tzsquare{1}{0}{$v_\ord{n-i}$}
      \tzsquare{3}{0}{$v_\ord{1}$}
      \tzsquare{5}{0}{$v_\ord{i}$}
      \tzsquare{3}{-1.15}{$\phi_{T^{\otimes i},T,T^{\otimes (n-i)}}$}
      \tzsquare{3}{-1.9}{$\Phi_{\delta^{(n)}_i}$}
      \tzsquare{3}{1.3}{$\cong$}
\end{tikzpicture}\\
 &=\quad
\begin{tikzpicture}[baseline=-\the\dimexpr\fontdimen22\textfont2\relax ]
      \node (TL) at (0,0.75)  {$\cat{A}$};
      \node (TR) at (2,0.75)  {$\cat{A}$};
      \node (BL) at (0,-0.75) {$\cat{C}$};
      \node (BR) at (2,-0.75) {$\cat{C}$};
      \draw[pto] (TL) to node[auto,labelsize](T) {$\cat{A}(-,-)$} 
      (TR);
      \draw[->]  (TR) to node[auto,labelsize] {$V$} (BR);
      \draw[->]  (TL) to node[auto,swap,labelsize] {$V$} 
      (BL);
      \draw[pto, bend left=30] (BL) to node[auto,labelsize](B) 
      {$\Phi_{T^{\otimes (n+1)}}$} 
      (BR);
      \draw[pto, bend right=30] (BL) to node[auto,swap,labelsize](B) 
      {$\Psi_{T^{\otimes n}}$} 
      (BR);
      \tzsquare{1}{0.25}{$v_\ord{n+1}$}
      \tzsquare{1}{-0.75}{$\Phi_{\delta^{(n)}_i}$}
\end{tikzpicture}
\end{align*}
\endgroup
(where $\phi$ with three subscripts denote suitable composites of $\phi_{X,Y}$),
and similarly for the degeneracy maps.

Therefore, a vertical double cone for $\Phi(\monoid{T})$ is given equivalently 
as the data
(DCD1), (DCD2) together with: 
\begin{description}
\item[(DCD3)] a square in $\PPROF^\op$
\[
\begin{tikzpicture}[baseline=-\the\dimexpr\fontdimen22\textfont2\relax ]
      \node (TL) at (0,1.5)  {$\cat{A}$};
      \node (TR) at (2,1.5)  {$\cat{A}$};
      \node (BL) at (0,0) {$\cat{C}$};
      \node (BR) at (2,0) {$\cat{C},$};
      \draw[pto] (TL) to node[auto,labelsize](T) {$\cat{A}(-,-)$} 
      (TR);
      \draw[->]  (TR) to node[auto,labelsize] {$V$} (BR);
      \draw[->]  (TL) to node[auto,swap,labelsize] {$V$} 
      (BL);
      \draw[pto] (BL) to node[auto,swap,labelsize](B) {$\Phi_T$} (BR);
      \tzsquare{1}{0.75}{$v$}
\end{tikzpicture} 
\]
\end{description}
satisfying the equations (\ref{eqn:cone_e}) and (\ref{eqn:cone_m}).
This coincides with Definition~\ref{def:double_cone}.

Arguing similarly, we obtain the following corollary of 
Theorem~\ref{thm:double_categorical_univ_property}.

\begin{corollary}
\label{cor:Mod_as_dbl_lim}
Let $\cat{M}$ be a metatheory, $\monoid{T}$ be a theory in $\cat{M}$,
$\cat{C}$ be a large category and $\Phi$ be a metamodel of $\cat{M}$ in 
$\cat{C}$.
The category $\Mod{\monoid{T}}{(\cat{C},\Phi)}$ of models of $\monoid{T}$
in $\cat{C}$ with respect to $\Phi$is the apex of 
the double limit of the lax double functor $\Phi(\monoid{T})$.
\end{corollary}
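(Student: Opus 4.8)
The plan is to assemble the statement directly from Theorem~\ref{thm:double_categorical_univ_property}, whose three clauses are, by design, nothing but the defining clauses of a double limit in \cite[Section~4.2]{GP1} transcribed in terms of the ad hoc notion of vertical double cone of Definition~\ref{def:double_cone}. The present section has already supplied two of the three ingredients: first, the construction of the lax double functor
\[
\Phi(\monoid{T})\colon\dcat{H}\Sigma(\Simp^\op)
\xrightarrow{\dcat{H}\Sigma(\monoid{T}^\op)}\dcat{H}\Sigma(\cat{M}^\op)
\xrightarrow{\Phi}\PPROF^\op,
\]
using the identification of $\monoid{T}$ with a strong monoidal functor $\monoid{T}\colon\Simp\longrightarrow\cat{M}$ and of $\Phi$ with a lax functor $\Sigma(\cat{M}^\op)\longrightarrow\tcat{H}(\PPROF^\op)=\PROF^\op$; and second, the verification that a vertical double cone over $\Phi(\monoid{T})$ in the sense of \cite[Section~4.1, 7.3]{GP1} amounts precisely to a triple $(\cat{A},V,v)$ as in Definition~\ref{def:double_cone}. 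The remaining work is to upgrade this coincidence of \emph{cones} to a coincidence of the \emph{full} universal properties, so that ``apex of the double limit'' and ``universal vertical double cone'' become the same assertion.

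Concretely, I would proceed as follows. First, I recall the collapse argument already given: it reduces the indexed family $(v_\ord{n})_{\ord{n}\in\Simp}$ defining a cone in the sense of \cite{GP1} to the single square $v=v_\ord{1}$, since (DCA1') and (DCA2') reconstruct every $v_\ord{n}$ from $v_\ord{1}$ by horizontal composition, and since every face map factors as $\id{\ord{i}}+{!_{\ord{0}}}+\id{\ord{n-i}}$ and every degeneracy map as $\id{\ord{i}}+{!_{\ord{2}}}+\id{\ord{n-i-2}}$, so that the naturality axiom (DCA3') over all of $\Simp$ follows from its instances at the two generators ${!_{\ord{0}}}$ and ${!_{\ord{2}}}$, which under $\Phi(\monoid{T})_{!_{\ord{0}}}=\Phi_e$ and $\Phi(\monoid{T})_{!_{\ord{2}}}=\Phi_m$ become exactly (\ref{eqn:cone_e}) and (\ref{eqn:cone_m}). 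Second, I would run the identical reduction for the morphism-level and square-level data of a double limit: a vertical comparison morphism between cones, and more importantly the universal filling square $\sigma$ appearing in \cite[Section~4.2]{GP1}, are each determined by their $\ord{1}$-components, and the compatibility of these components with the generators ${!_{\ord{0}}},{!_{\ord{2}}}$ reproduces precisely the data $(H,\theta,\sigma)$ and the equation~(\ref{eqn:H_theta}) of the third clause of Theorem~\ref{thm:double_categorical_univ_property}.

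The main obstacle will be this second step, the bookkeeping for the \emph{two-dimensional} part of the double limit: where the cone correspondence needed only to match a single square $v$, the double limit additionally demands matching morphisms between cones together with the universal filling square, and one must check that the generator-based reduction interacts correctly with the interchange of vertical and horizontal composition in $\PPROF^\op$ and with the structural isomorphisms $\isoa,\isol,\isor$. This is routine but lengthy diagram-chasing of exactly the same flavour as the displayed calculation for the face and degeneracy maps, which is why ``arguing similarly'' suffices. Once the three clauses of the double limit of $\Phi(\monoid{T})$ are seen to coincide with the three clauses of Theorem~\ref{thm:double_categorical_univ_property}, the theorem's assertion that $(\Mod{\monoid{T}}{(\cat{C},\Phi)},U,u)$ is a universal vertical double cone says exactly that $\Mod{\monoid{T}}{(\cat{C},\Phi)}$ is the apex of the double limit of $\Phi(\monoid{T})$, completing the proof.
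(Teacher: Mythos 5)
Your proposal is correct and follows essentially the same route as the paper: the paper constructs the lax double functor $\Phi(\monoid{T})$, verifies in detail that vertical double cones in the sense of Grandis--Par\'e collapse (via the generators $!_{\ord{0}}$ and $!_{\ord{2}}$ of $\Simp$) to the triples $(\cat{A},V,v)$ of Definition~\ref{def:double_cone}, and then invokes Theorem~\ref{thm:double_categorical_univ_property}, leaving the morphism- and square-level matching as ``arguing similarly''---exactly the step you identify and outline. Your explicit acknowledgement that the two-dimensional bookkeeping for $(H,\theta,\sigma)$ and equation~(\ref{eqn:H_theta}) is the remaining routine work is precisely what the paper's phrase elides.
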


\part{Weak $n$-dimensional $\cat{V}$-categories}
\chapter{Extensive categories}
\label{chap:extensive}
From this chapter on we shall turn to the study of weak $n$-categories.
In this chapter, we introduce \emph{extensive categories},
a central notion in our study of weak $n$-categories, and prove useful lemmas 
for them.

The results in this section have been published in 
\cite{CFP1,CFP2}.

\section{The definition and examples}
Extensive categories were first introduced by 
Lawvere~\cite{Lawvere_thoughts,Lawvere_space_quantity}
and their basic properties established by Carboni, Lack and 
Walters~\cite{Carboni_Lack_Walters} and by Cockett~\cite{Cockett}.
Roughly speaking, an extensive category is a category with well-behaved 
coproducts.\footnote{The original notion of extensive category requires 
well-behaved \emph{finite} coproducts, but what we shall use below is an
infinitary variant of this, requiring well-behaved \emph{small} coproducts;
such a notion is previously used in e.g., \cite[Section~4]{Centazzo_Vitale}.
In this thesis, the term ``extensive category'' always refer to this infinitary 
variant as defined in Definition~\ref{def:extensive_cat}.}

Let $\cat{V}$ be a large category with all small coproducts, $I$ be a small
set and $(X_i)_{i\in I}$ be an $I$-indexed family 
of objects of $\cat{V}$. We have the functor
\begin{equation}
\label{eqn:ext_coprod}
\coprod\colon \prod_{i\in I}(\cat{V}/X_i)\longrightarrow 
\cat{V}/\coprod_{i\in I}X_i
\end{equation}
which maps $(f_i\colon A_i\longrightarrow X_i)_{i\in I}$
to $(\coprod_{i\in I}f_i\colon\coprod_{i\in I}A_i\longrightarrow
\coprod_{i\in I}X_i)$.
\begin{definition}[\cite{Centazzo_Vitale}, 
cf.~\cite{Carboni_Lack_Walters,Cockett}]
\label{def:extensive_cat}
A large category $\cat{V}$ is \defemph{extensive}
if and only if it admits all small coproducts and for any small set $I$ and 
$I$-indexed family $(X_i)_{i\in I}$ of objects of $\cat{V}$,
the functor $\coprod$ in (\ref{eqn:ext_coprod}) is an equivalence of 
categories.
\end{definition}

Our leading examples of extensive categories are $\Set$ and the
category $\omega$-$\Cpo$ of (small) posets with sups of $\omega$-chains
and monotone functions preserving sups of $\omega$-chains,
together with, for any extensive category $\cat{V}$ with finite
limits, the 
categories 
$\enGph{\cat{V}}^{(n)}$ and
$\enCat{\cat{V}}^{(n)}$, which are defined recursively. In order to
define the former, we first need to define the category of
$\cat{V}$-graphs.

\begin{definition}[\cite{Wolff}]\label{Vgraph}
Let $\cat{V}$ be a large category.
\begin{enumerate}
\item 
A \defemph{small $\cat{V}$-graph} $G$
consists of a small set $\ob{G}$ together with,
for each $x,y\in\ob{G}$,
an object $G(x,y)\in \cat{V}$.
\item 
A \defemph{morphism of $\cat{V}$-graphs} from $G$ to $G'$
is a function $f\colon \ob{G}\longrightarrow\ob{G'}$
together with, for each $x,y\in\ob{G}$,
a morphism $f_{x,y}\colon G(x,y)\longrightarrow G'(fx,fy)$ in $\cat{V}$. 
\qedhere
\end{enumerate}
\end{definition}
Clearly, a $\Set$-graph is nothing but a directed multigraph.

We denote the category of all small $\cat{V}$-graphs and morphisms by
$\enGph{\cat{V}}$. The construction $\enGph{(-)}$ routinely extends
to an endo-2-functor on the 2-category $\tCAT$ of large
categories.

\begin{definition}\label{nVgraph}
For any natural number $n$ and any large category $\cat{V}$, the category
$\enGph{\cat{V}}^{(n)}$ is defined as follows:
\[
\enGph{\cat{V}}^{(0)}=\cat{V};\qquad \enGph{\cat{V}}^{(n+1)}=
\enGph{(\enGph{\cat{V}}^{(n)})}.
\]
An object of $\enGph{\cat{V}}^{(n)}$ is called an \defemph{$n$-dimensional 
$\cat{V}$-graph}.
\end{definition}

\begin{definition}\label{nVcat}
For each natural number $n$ and any large category $\cat{V}$ with finite
products, the category $\enCat{\cat{V}}^{(n)}$ 
is defined as follows (using the cartesian structure for 
enrichment):
\[
\enCat{\cat{V}}^{(0)}=\cat{V};\qquad \enCat{\cat{V}}^{(n+1)}=
\enCat{(\enCat{\cat{V}}^{(n)})}.
\]
An object of $\enCat{\cat{V}}^{(n)}$ is called a \defemph{strict 
$n$-dimensional $\cat{V}$-category} (to avoid confusion with \emph{weak} 
$n$-dimensional $\cat{V}$-category which we are trying to define).
\end{definition}

From now on, whenever we mention enriched categories,
we always mean enrichment with respect to the cartesian structure.
When $\cat{V} = \Set$, we abbreviate $\enGph{\cat{V}}^{(n)}$ by $\enGph{n}$
(whose object we call an \defemph{$n$-graph}),
and we abbreviate $\enCat{\cat{V}}^{(n)}$  by $\enCat{n}$
(whose object we call a \defemph{strict $n$-category}).

\medskip

We now show that
if $\cat{V}$ is an extensive category with finite limits, then so are
$\enGph{\cat{V}}$ and $\enCat{\cat{V}}$.
Actually, to ensure that $\enGph{\cat{V}}$ and $\enCat{\cat{V}}$
are extensive, 
the much weaker requirement of $\cat{V}$ having a strict initial object
suffices.
Recall that an initial object $0$ in a category
is called \defemph{strict} if every morphism going into $0$ is
an isomorphism.
Every extensive category has a strict initial object; consider the case 
$I=\emptyset$ in (\ref{eqn:ext_coprod}).

\begin{proposition}\label{prop:VGph_ext}
If $\cat{V}$ is a large category with a strict initial object $0$, then
$\enGph{\cat{V}}$ is extensive.
\end{proposition}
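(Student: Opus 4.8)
The plan is to unwind Definition~\ref{def:extensive_cat} directly: first verify that $\enGph{\cat{V}}$ has all small coproducts, and then exhibit an explicit pseudo-inverse to the comparison functor $\coprod$ of (\ref{eqn:ext_coprod}) (applied with $\cat{V}$ replaced by $\enGph{\cat{V}}$). First I would describe coproducts concretely. Given a small family $(G_i)_{i\in I}$ of small $\cat{V}$-graphs, their coproduct $\coprod_{i\in I}G_i$ should have object set the disjoint union $\coprod_{i\in I}\ob{G_i}$ and hom-objects
\[
\Big(\coprod_{i\in I}G_i\Big)(x,y)=
\begin{cases}
G_i(x,y) & \text{if } x,y\in\ob{G_i}\text{ for some }i,\\
0 & \text{otherwise.}
\end{cases}
\]
The universal property then follows because a morphism out of this graph into any $H$ is forced, on a cross-component pair $(x,y)$, to be the unique map $0\longrightarrow H(fx,fy)$; hence such morphisms correspond exactly to families of morphisms $G_i\longrightarrow H$. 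I would emphasise that only the initial object of $\cat{V}$, and no coproducts in $\cat{V}$, are needed here, which matches the hypothesis precisely.

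Next, fixing a family $(X_i)_{i\in I}$ of $\cat{V}$-graphs with coproduct $X=\coprod_{i\in I}X_i$, I would define a decomposition functor $\Delta\colon \enGph{\cat{V}}/X\longrightarrow \prod_{i\in I}(\enGph{\cat{V}}/X_i)$ running opposite to $\coprod$. Given $g\colon A\longrightarrow X$, its object function $\ob{g}\colon\ob{A}\longrightarrow\ob{X}=\coprod_{i}\ob{X_i}$ partitions $\ob{A}$ into fibres $\ob{A_i}=(\ob{g})^{-1}(\ob{X_i})$; I let $A_i$ be the full sub-$\cat{V}$-graph of $A$ on $\ob{A_i}$, with $g_i\colon A_i\longrightarrow X_i$ the evident restriction of $g$, and set $\Delta(g)=(g_i)_{i\in I}$. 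This assignment is routinely functorial.

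Then I would check that $\coprod$ and $\Delta$ are pseudo-inverse. The composite $\Delta\circ\coprod$ is the identity up to canonical identification: decomposing $\coprod_i f_i$ recovers each $A_i$ as the fibre over $\ob{X_i}$, since $\ob{\coprod_i A_i}=\coprod_i\ob{A_i}$ and, on a same-component pair, the coproduct hom-objects agree with those of $A_i$ by construction. For the other composite $\coprod\circ\Delta$, given $g\colon A\longrightarrow X$ I would build an isomorphism $\coprod_i A_i\cong A$ over $X$: the two graphs share the same object set, and on a same-component pair $(x,y)$ the hom-objects coincide by definition of the full subgraph. The hard part, and the only place the hypothesis enters, is the cross-component case: for $x\in\ob{A_i}$ and $y\in\ob{A_j}$ with $i\neq j$, the morphism $g$ induces $A(x,y)\longrightarrow X(\ob{g}(x),\ob{g}(y))=0$, and since $0$ is \emph{strict} initial this morphism is an isomorphism, whence $A(x,y)\cong 0=(\coprod_i A_i)(x,y)$. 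Assembling these component isomorphisms yields an isomorphism of $\cat{V}$-graphs over $X$, natural in $A$, so $\coprod$ is an equivalence and $\enGph{\cat{V}}$ is extensive.
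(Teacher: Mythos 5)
Your proof is correct and takes essentially the same route as the paper's: the same concrete description of coproducts in $\enGph{\cat{V}}$, the same decomposition of a morphism $g\colon A\longrightarrow \coprod_{i\in I}X_i$ into full sub-$\cat{V}$-graphs over the fibres of the object function (the paper realises these as pullbacks along the coprojections), and the same use of strictness of $0$ to identify the cross-component hom-objects. The only difference is presentational: you exhibit an explicit pseudo-inverse and check both composites, whereas the paper verifies that $\coprod$ is fully faithful and essentially surjective.
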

\begin{proof}
The coproduct of a family $(G_i)_{i\in I}$ 
of $\cat{V}$-graphs is given by
$\ob{\coprod_{i\in I}G_i}=\coprod_{i\in I}\ob{G_i}$
and 
\[
(\coprod_{i\in I}G_i) ((i,x), (i',x'))=
\begin{cases}
G_i(x,x')\quad &\text{if }i=i',\\
0&\text{otherwise.}
\end{cases}
\]
It is easy to see that
the functor $\coprod\colon \prod_{i\in I}(\enGph{\cat{V}}/G_i)\longrightarrow
\enGph{\cat{V}}/(\coprod_{i\in I}G_i)$
(as in~(\ref{eqn:ext_coprod})) is full and faithful.
For any object $(f\colon H\longrightarrow \coprod_{i\in I}G_i)$
in $\enGph{\cat{V}}/{(\coprod_{i\in I} G_i)}$,
define an object $(f_i\colon H_i\longrightarrow G_i)_{i\in I}\in \prod_{i\in 
I}(\cat{V}/G_i)$
by the pullbacks
of $f$ along the coprojections $\sigma_i\colon G_i\longrightarrow \coprod_{i\in 
I}G_i$;
note that these pullbacks always exist, 
and $H_i$ are just the suitable ``full sub'' $\cat{V}$-graphs
of $H$.
Since $0$ is strict, 
$(\coprod_{i\in I}f_i \colon \coprod_{i\in I}H_i\longrightarrow 
\coprod_{i\in 
I}G_i)$
is isomorphic to $f$. Hence $\coprod$ is also essentially surjective.
\end{proof}

\begin{proposition}\label{prop:VCat_ext}
If $\cat{V}$ is a large category with a strict initial object and finite 
products, then $\enCat{\cat{V}}$ is extensive.
\end{proposition}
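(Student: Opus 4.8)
The plan is to mirror the proof of Proposition~\ref{prop:VGph_ext}, replacing $\cat{V}$-graphs by $\cat{V}$-categories throughout and supplying the extra bookkeeping needed for the enriched composition and identity data. The one genuinely new ingredient is the observation that, in a category with finite products and a strict initial object $0$, one has $X\times 0\cong 0$ for every object $X$: the projection $X\times 0\longrightarrow 0$ is a morphism into the strict initial object and is therefore an isomorphism. This single fact is what makes all the off-diagonal enriched structure collapse.

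First I would describe the coproduct in $\enCat{\cat{V}}$. Given a small family $(\cat{A}_i)_{i\in I}$ of $\cat{V}$-categories, set $\ob{\coprod_{i\in I}\cat{A}_i}=\coprod_{i\in I}\ob{\cat{A}_i}$ and
\[
(\coprod_{i\in I}\cat{A}_i)((i,x),(i',x'))=
\begin{cases}
\cat{A}_i(x,x') & \text{if }i=i',\\
0 & \text{otherwise,}
\end{cases}
\]
with identities $j_{(i,x)}=j_x$ and composition given by that of $\cat{A}_i$ when all three indices agree. When the indices do not all agree, at least one factor of the domain of the relevant composition morphism is $0$, so by $X\times 0\cong 0$ the whole domain is isomorphic to $0$ and the composition morphism is forced to be the unique map out of the initial object. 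The unit and associativity axioms of Definition~\ref{def:enrichment} then hold: on the diagonal they are those of $\cat{A}_i$, and off the diagonal both sides are the unique morphism out of an object isomorphic to $0$. This exhibits a genuine $\cat{V}$-category, and one checks it is the coproduct.

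Next I would verify that the comparison functor $\coprod\colon\prod_{i\in I}(\enCat{\cat{V}}/\cat{A}_i)\longrightarrow\enCat{\cat{V}}/(\coprod_{i\in I}\cat{A}_i)$ of~(\ref{eqn:ext_coprod}) is full and faithful; this is routine, since any $\cat{V}$-functor $H$ lying over $\coprod_i\cat{A}_i$ must send objects over $\cat{A}_i$ to objects over $\cat{A}_i$ and hence decomposes uniquely as $\coprod_i H_i$, exactly as in the graph case. The substantive point is essential surjectivity. Given an object $F\colon\cat{B}\longrightarrow\coprod_{i\in I}\cat{A}_i$, partition $\ob{\cat{B}}$ according to the index of $\ob{F}$, and let $\cat{B}_i$ be the full $\cat{V}$-subcategory of $\cat{B}$ on those objects sent into $\cat{A}_i$. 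For $x\in\cat{B}_i$ and $y\in\cat{B}_{i'}$ with $i\neq i'$, the component $F_{x,y}\colon\cat{B}(x,y)\longrightarrow(\coprod_{i\in I}\cat{A}_i)(Fx,Fy)=0$ is a morphism into the strict initial object, hence an isomorphism, so $\cat{B}(x,y)\cong 0$. Thus the off-diagonal hom-objects of $\cat{B}$ vanish, and $\cat{B}$ is canonically isomorphic to $\coprod_{i\in I}\cat{B}_i$; restricting $F$ gives $\cat{V}$-functors $F_i\colon\cat{B}_i\longrightarrow\cat{A}_i$ with $\coprod_{i\in I}F_i\cong F$ over $\coprod_{i\in I}\cat{A}_i$. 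Hence $\coprod$ is an equivalence and $\enCat{\cat{V}}$ is extensive.

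I expect the main obstacle to be organisational rather than conceptual: checking that the decomposition $\cat{B}\cong\coprod_{i\in I}\cat{B}_i$ and the restrictions $F_i$ respect the full enriched structure (composition and units), not merely the underlying $\cat{V}$-graphs. All of these verifications, however, reduce to the collapse $X\times 0\cong 0$ together with the strictness of $0$, exactly as in the unit and associativity checks for the coproduct above; so no new idea beyond Proposition~\ref{prop:VGph_ext} is required.
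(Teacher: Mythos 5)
Your proof is correct and follows essentially the same route as the paper's: the same coproduct construction with $0$ as the off-diagonal hom-objects, the same use of $B\times 0\cong 0$ (forced by strictness of $0$) to define and verify the composition laws, and the same essential-surjectivity argument — the paper phrases it as pullbacks along coprojections, but it notes these are exactly the full sub-$\cat{V}$-categories on the object fibres, which is precisely your decomposition $\cat{B}\cong\coprod_{i\in I}\cat{B}_i$ via strictness of $0$.
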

\begin{proof}
Coproducts in $\enCat{\cat{V}}$ are formed just as in $\enGph{\cat{V}}$;
namely, given a family $(\cat{C}_i)_{i\in I}$ of $\cat{V}$-categories, 
we have $\ob{\coprod_{i\in I} \cat{C}_i}=\coprod_{i\in I} \ob{\cat{C}_i}$
and 
\[
(\coprod_{i\in I}\cat{C}_i) ((i,x), (i',x'))=
\begin{cases}
\cat{C}_i(x,x')\quad &\text{if }i=i',\\
0&\text{otherwise.}
\end{cases}
\]
Note that to define a composition law for $\coprod_{i\in I}\cat{C}_i$, we use 
the fact that for a category $\cat{V}$ with a strict initial object $0$,
$0\times B\cong 0$ for every object $B\in\cat{V}$.\footnote{In fact, for a 
category $\cat{V}$ with an initial object $0$ and finite products, $0$ is 
strict if and only if $0\times B\cong 0$ for 
every $B\in \cat{V}$.}
The rest of the proof is identical to that of Proposition~\ref{prop:VGph_ext}.
\end{proof}

When $\cat{V}$ has finite limits, then so do $\enGph{\cat{V}}$ and 
$\enCat{\cat{V}}$.
Given a finite category $\cat{I}$ and a functor $F\colon \cat{I}\longrightarrow
\enGph{\cat{V}}$, the limit $\lim F$ of $F$ can be constructed 
as follows.\footnote{This construction is valid for limits indexed 
by an arbitrary small category $\cat{I}$, provided that $\cat{V}$ has all 
$\cat{I}$-indexed limits.}
The set of objects 
is $\ob{\lim F}=\lim (\mathrm{ob}\circ {F})$, where 
$\mathrm{ob}\colon\enGph{\cat{V}}\longrightarrow\Set$ is the functor 
mapping a $\cat{V}$-graph to its set of objects.
Explicitly, an object of  $\lim F$ is an $\ob{\cat{I}}$-indexed family 
$(a_i)_{i\in\cat{I}}$ where $a_i$ is an object of the $\cat{V}$-graph ${Fi}$ and
such that for any morphism  $u\colon i\longrightarrow j$ in $\cat{I}$,
$(Fu)(a_i)=a_j$ holds.
Given any pair of objects $a=(a_i)_{i\in{\cat{I}}},b=(b_i)_{i\in{\cat{I}}}
\in \ob{\lim F}$, we obtain a functor 
$F_{a,b}\colon\cat{I}\longrightarrow\cat{V}$
by mapping an object $i\in\cat{I}$ to $(Fi)(a_i,b_i)$ and a morphism $u\colon 
i\longrightarrow j$ in $\cat{I}$ to $(Fu)_{a_i,b_i}$; observe that 
$(Fu)(a_i)=a_j$ and $(Fu)(b_i)=b_j$ hold and we indeed have 
a morphism $(Fu)_{a_i,b_i}\colon (Fi)(a_i,b_i)\longrightarrow (Fj)(a_j,b_j)$
in $\cat{V}$. 
The object $(\lim F)(a,b)$ is given by $\lim F_{a,b}$.

Finite limits in $\enCat{\cat{V}}$ may be constructed similarly, noting that 
limits commute with products; we remind the reader that $\enCat{\cat{V}}$
is defined using the cartesian structure of $\cat{V}$.

Below we record the case of pullbacks,
as they will play an important role later.
\begin{proposition}\label{prop:pb_in_V_gph_and_V_cat}
Let $\cat{V}$ have finite limits.
A commutative square
\begin{equation*}
\begin{tikzpicture}[baseline=-\the\dimexpr\fontdimen22\textfont2\relax ]
      \node(0) at (0,1) {$P$};
      \node(1) at (2,1) {$B$};
      \node(2) at (0,-1) {$A$};
      \node(3) at (2,-1) {$X$};
      
      \draw [->] 
            (0) to node (t)[auto,labelsize] {$k$} 
            (1);
      \draw [->] 
            (1) to node (r)[auto,labelsize] {$g$} 
            (3);
      \draw [->] 
            (0) to node (l)[auto,swap,labelsize] {$h$} 
            (2);
      \draw [->] 
            (2) to node (b)[auto,swap,labelsize] {$f$} 
            (3);  
\end{tikzpicture}
\end{equation*}
in $\enGph{\cat{V}}$ or in $\enCat{\cat{V}}$ 
is a pullback if and only if the square
\[
\begin{tikzpicture}[baseline=-\the\dimexpr\fontdimen22\textfont2\relax ]
      \node(0) at (0,1) {$\ob{P}$};
      \node(1) at (3,1) {$\ob{B}$};
      \node(2) at (0,-1) {$\ob{A}$};
      \node(3) at (3,-1) {$\ob{X}$};
      
      \draw [->] 
            (0) to node (t)[auto,labelsize] {${k}$} 
            (1);
      \draw [->] 
            (1) to node (r)[auto,labelsize] {${g}$} 
            (3);
      \draw [->] 
            (0) to node (l)[auto,swap,labelsize] {${h}$} 
            (2);
      \draw [->] 
            (2) to node (b)[auto,swap,labelsize] {${f}$} 
            (3);  
            
\end{tikzpicture}
\]
is a pullback in $\Set$, 
and for any pair 
$p_1,p_2\in\ob{P}$, writing $a_i=h(p_i)$, $b_i=k(p_i)$ and 
$x_i=f(a_i)=g(b_i)$ for $i=1,2$, the square
\begin{equation*}
\begin{tikzpicture}[baseline=-\the\dimexpr\fontdimen22\textfont2\relax ]
      \node(0) at (0,1) {${P}(p_1,p_2)$};
      \node(1) at (3,1) {${B}(b_1,b_2)$};
      \node(2) at (0,-1) {${A}(a_1,a_2)$};
      \node(3) at (3,-1) {${X}(x_1,x_2)$};
      
      \draw [->] 
            (0) to node (t)[auto,labelsize] {$k_{p_1,p_2}$} 
            (1);
      \draw [->] 
            (1) to node (r)[auto,labelsize] {$g_{b_1,b_2}$} 
            (3);
      \draw [->] 
            (0) to node (l)[auto,swap,labelsize] {$h_{p_1,p_2}$} 
            (2);
      \draw [->] 
            (2) to node (b)[auto,swap,labelsize] {$f_{a_1,a_2}$} 
            (3);  

\end{tikzpicture}
\end{equation*}
is a  pullback in $\cat{V}$.
\end{proposition}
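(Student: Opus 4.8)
The plan is to reduce the statement to the explicit construction of finite limits in $\enGph{\cat{V}}$ and $\enCat{\cat{V}}$ recalled just before the proposition, together with the elementary fact that a morphism of $\cat{V}$-graphs (respectively a $\cat{V}$-functor) is an isomorphism if and only if its underlying function on objects is a bijection and each of its hom-components is an isomorphism in $\cat{V}$. First I would record this iso-characterisation as a short lemma, since it is precisely what converts the conjunction ``bijection of object sets plus isomorphisms of hom-objects'' into ``isomorphism in $\enGph{\cat{V}}$''; for $\cat{V}$-graphs it is immediate, and for $\cat{V}$-categories the inverse automatically respects composition and identities.

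Next I would build the canonical pullback $P'$ of $f$ and $g$ by specialising the limit recipe given above: take $\ob{P'}$ to be the pullback of $\ob{f}$ and $\ob{g}$ in $\Set$, so that an object of $P'$ is a pair $(a,b)$ with $\ob{f}(a)=\ob{g}(b)=:x$, and for objects $(a_1,b_1),(a_2,b_2)$ set $P'((a_1,b_1),(a_2,b_2))$ to be the pullback in $\cat{V}$ of $f_{a_1,a_2}\colon A(a_1,a_2)\to X(x_1,x_2)$ and $g_{b_1,b_2}\colon B(b_1,b_2)\to X(x_1,x_2)$; the composition structure in the $\enCat{\cat{V}}$ case is the one forced by the construction. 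By the cited construction, $P'$ equipped with its two projections $h'\colon P'\to A$ and $k'\colon P'\to B$ is a pullback of $f$ and $g$. Any commutative square with apex $P$ then induces, by this universal property, a unique comparison morphism $\theta\colon P\to P'$ with $h'\circ\theta=h$ and $k'\circ\theta=k$, and the given square is a pullback if and only if $\theta$ is an isomorphism.

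It remains to identify the two components of $\theta$. The object-part $\ob{\theta}\colon\ob{P}\to\ob{P'}$ is exactly the comparison map from $\ob{P}$ into the $\Set$-pullback $\ob{P'}$ determined by $\ob{h}$ and $\ob{k}$, so $\ob{\theta}$ is a bijection precisely when the object square is a pullback in $\Set$. For a pair $p_1,p_2\in\ob{P}$, writing $a_i=h(p_i)$, $b_i=k(p_i)$ and $x_i=f(a_i)=g(b_i)$, we have $\ob{\theta}(p_i)=(a_i,b_i)$, whence $P'(\ob{\theta}p_1,\ob{\theta}p_2)$ is exactly the $\cat{V}$-pullback of $f_{a_1,a_2}$ and $g_{b_1,b_2}$; unwinding the definitions shows $\theta_{p_1,p_2}$ is the induced comparison from $P(p_1,p_2)$ into this pullback. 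By the iso-characterisation, $\theta$ is an isomorphism if and only if $\ob{\theta}$ is a bijection and every $\theta_{p_1,p_2}$ is an isomorphism in $\cat{V}$, i.e. if and only if the object square is a pullback in $\Set$ and every hom-square is a pullback in $\cat{V}$, which is the claimed equivalence.

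The argument is uniform across the two cases: for $\enCat{\cat{V}}$ one invokes that the same limit construction also produces the composition and identity data (limits commuting with the finite products used for enrichment), and that the iso-characterisation holds verbatim for $\cat{V}$-functors. I expect the only delicate point to be the bookkeeping in the third paragraph — verifying that the hom-component of the abstract comparison $\theta$ coincides on the nose with the comparison map into the hom-object pullback, rather than merely being compatible with it — but this follows directly from the fact that the projections out of $P'$ are defined hom-objectwise as the pullback projections in $\cat{V}$.
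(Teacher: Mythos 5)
Your proof is correct and takes essentially the same route as the paper: the paper states this proposition without a separate proof, recording it as an immediate consequence of the explicit construction of finite limits in $\enGph{\cat{V}}$ and $\enCat{\cat{V}}$ (object set as the limit of object sets, hom-objects as limits of hom-objects) given in the paragraphs just before the statement. Your comparison-morphism argument, together with the iso-characterisation lemma for morphisms of $\cat{V}$-graphs and $\cat{V}$-functors, is precisely the bookkeeping that the paper leaves implicit.
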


Combining the above observation with Propositions~\ref{prop:VGph_ext} and 
\ref{prop:VCat_ext}, we immediately obtain the following result.
\begin{corollary}
If $\cat{V}$ is an extensive category with finite limits,
then so are $\enGph{\cat{V}}^{(n)}$ and $\enCat{\cat{V}}^{(n)}$,
for each natural number $n$.
\end{corollary}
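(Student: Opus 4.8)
The plan is to proceed by induction on $n$, handling $\enGph{\cat{V}}^{(n)}$ and $\enCat{\cat{V}}^{(n)}$ in parallel. The invariant I would carry through the induction is exactly the conjunction ``extensive and has finite limits''. The base case $n=0$ is immediate, since $\enGph{\cat{V}}^{(0)}=\enCat{\cat{V}}^{(0)}=\cat{V}$, which is extensive with finite limits by hypothesis.

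For the inductive step on the graph side, I would first note that the property ``extensive with finite limits'' is closed under $\enGph{(-)}$. Assuming $\enGph{\cat{V}}^{(n)}$ is extensive with finite limits, I would invoke the remark following Definition~\ref{def:extensive_cat}---that every extensive category has a strict initial object (the case $I=\emptyset$ of (\ref{eqn:ext_coprod}))---to conclude that $\enGph{\cat{V}}^{(n)}$ has a strict initial object. Proposition~\ref{prop:VGph_ext} then yields that $\enGph{\cat{V}}^{(n+1)}=\enGph{(\enGph{\cat{V}}^{(n)})}$ is extensive, while the explicit limit construction for $\enGph{(-)}$ given before Proposition~\ref{prop:pb_in_V_gph_and_V_cat} shows it has finite limits. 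This closes the step for the graph hierarchy.

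The step on the category side proceeds identically, modulo one bookkeeping point I would check carefully. From $\enCat{\cat{V}}^{(n)}$ being extensive with finite limits, I would extract both a strict initial object (via extensivity, as above) and finite products (a fortiori from finite limits); the finite products are what make the recursion $\enCat{\cat{V}}^{(n+1)}=\enCat{(\enCat{\cat{V}}^{(n)})}$ meaningful, since $\enCat{(-)}$ is defined using the cartesian structure of its input. With a strict initial object and finite products in hand, Proposition~\ref{prop:VCat_ext} gives extensivity of $\enCat{\cat{V}}^{(n+1)}$, and the limit construction for $\enCat{(-)}$ gives finite limits.

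I do not expect any genuine obstacle, as every ingredient has already been proved and the result is a formal induction. The only care required is in threading the hypotheses---observing that extensivity supplies the strict initial object demanded by Propositions~\ref{prop:VGph_ext} and~\ref{prop:VCat_ext}, and that having finite limits supplies the finite products needed both to form $\enCat{\cat{V}}^{(n+1)}$ and to apply Proposition~\ref{prop:VCat_ext}. Recognising ``extensive with finite limits'' as the correct inductive invariant is therefore essentially the whole content of the argument.
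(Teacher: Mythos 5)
Your proposal is correct and is essentially the paper's own argument: the paper obtains the corollary by combining Propositions~\ref{prop:VGph_ext} and~\ref{prop:VCat_ext} (extensivity supplies the strict initial object, finite limits the products) with the explicit finite-limit construction preceding Proposition~\ref{prop:pb_in_V_gph_and_V_cat}, the induction on $n$ being left implicit. You have merely made that induction and the threading of hypotheses explicit, which matches the intended proof exactly.
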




\section{Properties of coproducts in an extensive category}
We need several results about behaviour of coproducts in extensive categories
later, so in this section we collect such results.

The first proposition gives a characterisation of extensive categories.
\begin{proposition}[{\cite[Section~4.2, Exercise~1]{Centazzo_Vitale}, 
cf.~\cite[Proposition~2.2]{Carboni_Lack_Walters}}]\label{prop:ext_criterion}
A category $\cat{V}$ with small coproducts is extensive if and only if
it has all pullbacks along coprojections associated with small coproducts,
and for any small set $I$, $I$-indexed family $(X_i)_{i\in I}$ of objects of 
$\cat{V}$,
morphism $f\colon A\longrightarrow \coprod_{i\in I}X_i$
in $\cat{V}$, and $I$-indexed family of commutative squares 
\begin{equation}\label{eqn:square}
\begin{tikzpicture}[baseline=-\the\dimexpr\fontdimen22\textfont2\relax ]
      \node(0) at (0,1) {$A_i$};
      \node(1) at (2,1) {$A$};
      \node(2) at (0,-1) {$X_i$};
      \node(3) at (2,-1) {$\coprod_{i\in I}X_i$};
      
      \draw [->] 
            (0) to node (t)[auto,labelsize] {$\tau_i$} 
            (1);
      \draw [->] 
            (1) to node (r)[auto,labelsize] {$f$} 
            (3);
      \draw [->] 
            (0) to node (l)[auto,swap,labelsize] {$f_i$} 
            (2);
      \draw [->] 
            (2) to node (b)[auto,swap,labelsize] {$\sigma_i$} 
            (3);  
\end{tikzpicture}
\end{equation}
in $\cat{V}$ (in which $\sigma_i$ is the $i$-th coprojection),
each square (\ref{eqn:square}) is a pullback square if and only if
$(\tau_i)_{i\in I}$ defines a coproduct (that is, $A=\coprod_{i\in I} A_i$
with $\tau_i$ the $i$-th coprojection).
\end{proposition}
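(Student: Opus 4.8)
The plan is to prove both implications by relating the equivalence functor $\coprod$ of (\ref{eqn:ext_coprod}) to the operation of pulling back along coprojections. Throughout, write $X=\coprod_{i\in I}X_i$ with coprojections $\sigma_i\colon X_i\to X$. The candidate quasi-inverse to $\coprod$ is the functor $F\colon \cat{V}/X\to \prod_{i\in I}(\cat{V}/X_i)$ sending $f\colon A\to X$ to the family of pullbacks $(A\times_X X_i\to X_i)_{i\in I}$; this functor is available only once pullbacks along coprojections are known to exist, which is exactly the first clause of the criterion. The whole proposition then amounts to showing that ``$\coprod$ is an equivalence'' coincides with ``$F$ is defined and is a quasi-inverse of $\coprod$'', with the biconditional on the squares (\ref{eqn:square}) serving as the bookkeeping that turns the two unit/counit comparisons into isomorphisms.

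For the implication criterion $\Rightarrow$ extensive, I would assume that pullbacks along coprojections exist and that the biconditional holds, and verify that $F$ is a quasi-inverse of $\coprod$. Given $f\colon A\to X$, the defining squares of $F(f)$ are pullbacks by construction, so the forward half of the biconditional shows that the projections $A\times_X X_i\to A$ exhibit $A$ as a coproduct; hence the canonical comparison $\coprod_i(A\times_X X_i)\to A$ is an isomorphism over $X$, giving $\coprod F\cong\mathrm{id}$. Conversely, given a family $(f_i\colon A_i\to X_i)_i$, the coprojections $\tau_i\colon A_i\to\coprod_i A_i$ form a coproduct, so the backward half of the biconditional shows that the summand squares for $\coprod_i f_i$ are pullbacks; this identifies $A_j$ with $(\coprod_i A_i)\times_X X_j$ and yields $F\coprod\cong\mathrm{id}$. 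Both composites being naturally isomorphic to the identity, $\coprod$ is an equivalence, which is the definition of extensivity.

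For the implication extensive $\Rightarrow$ criterion, I would exploit full faithfulness and essential surjectivity of $\coprod$ together with the strict initial object $0$ obtained from the case $I=\emptyset$ (recall that every extensive category has a strict initial object). The key observation is that each coprojection $\sigma_j$, viewed as an object of $\cat{V}/X$, corresponds under $\coprod$ to the family $E^{(j)}$ whose $j$-th entry is $\mathrm{id}_{X_j}$ and whose other entries are the unique maps $0\to X_i$. Feeding well-chosen objects of $\cat{V}/X$ through the equivalence then forces the required pullback behaviour: essential surjectivity writes any $f$ as $\coprod_i f_i$, while full faithfulness, combined with strictness of $0$ (a morphism into $0$ is invertible, so it kills the off-diagonal components or leaves the relevant hom-set empty), computes the hom-sets into $E^{(j)}$ and thereby shows that the summand squares are pullbacks and that the coprojections are monic. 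Packaging the resulting \emph{universality} (a family of squares of the form (\ref{eqn:square}) consists of pullbacks exactly when the apices form a coproduct) and \emph{disjointness} gives precisely the biconditional of the criterion, and the pullback construction obtained along the way supplies its first clause.

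The main obstacle is this second direction, and specifically the passage from the abstract equivalence of slice categories to the concrete pullback universal property. Unlike the first direction, nothing here is a formal manipulation of a given functor $F$; instead I must manufacture the pullbacks and their universal property out of full faithfulness alone. The decisive step is to recognise the coprojection objects $E^{(j)}$ and to test morphisms into them, which is exactly where the argument uses that $0$ is \emph{strict} and not merely initial. Once disjointness and universality are established in this way, re-expressing them as the single biconditional on the squares (\ref{eqn:square}) is routine.
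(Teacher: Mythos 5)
Your proposal is correct, and its first half is the paper's argument in different clothing: your functor $F=\langle\sigma_i^\ast\rangle_{i\in I}$ is exactly the right adjoint that the paper exhibits for $\coprod$, and your two canonical comparisons $\coprod F\Rightarrow\mathrm{id}$ and $\mathrm{id}\Rightarrow F\coprod$ are its counit and unit, so your "biconditional $\iff$ both comparisons invertible" is the same computation the paper performs via "a functor is an equivalence iff it has a right adjoint with invertible unit and counit." Where you genuinely diverge is the direction extensive $\Rightarrow$ criterion. The paper disposes of it abstractly: any equivalence has a right adjoint, and it asserts that $\coprod$ has a right adjoint if and only if all pullbacks along the coprojections exist; invertibility of unit and counit then yields the biconditional. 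You instead manufacture the pullbacks by hand, testing against the families $E^{(j)}$ (identity in slot $j$, initial object elsewhere) and using full faithfulness, essential surjectivity, and strictness of $0$. This extra work is not wasted: the "only if" half of the paper's assertion --- that a right adjoint, if it exists, is necessarily computed by pullbacks along coprojections --- is left unproved there, and its verification is essentially your hom-set computation (testing a putative right adjoint $G$ against $E^{(j)}$-type families gives $\hom_{\cat{V}/\coprod_i X_i}(\sigma_j\circ g,f)\cong\hom_{\cat{V}/X_j}(g,G_j f)$, i.e.\ $G_j f$ represents the pullback). So the paper's route is shorter and uniform at the price of leaving that representability argument implicit, while yours is more elementary and self-contained; you also correctly note that strictness of the initial object is available (it is the $I=\emptyset$ case of extensivity, as the paper remarks), and the monicity of coprojections you flag is genuinely used when translating morphisms over $\coprod_i X_i$ back into the two triangle identities of the pullback universal property.
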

\begin{proof}
If $\cat{V}$ has small coproducts, then the functor (\ref{eqn:ext_coprod})
has a right adjoint if and only if all pullbacks along $\sigma_i$ exists in
$\cat{V}$, and in that case the right adjoint 
\[
\langle \sigma_i^\ast\rangle_{i\in I}\colon\cat{V}/\coprod_{i\in 
I}X_i\longrightarrow\prod_{i\in I}(\cat{V}/X_i) 
\]
has the $i$-th component $\sigma_i^\ast\colon \cat{V}/\coprod_{i\in 
I}X_i\longrightarrow\cat{V}/X_i$
mapping $(f\colon A\longrightarrow \coprod_{i\in I}X_i)\in 
\cat{V}/\coprod_{i\in I} X_i$ to $(\sigma_i^\ast f\colon \sigma_i^\ast 
A\longrightarrow X_i)\in\cat{V}/X_i$, defined by the pullback
\[
\begin{tikzpicture}[baseline=-\the\dimexpr\fontdimen22\textfont2\relax ]
      \node(0) at (0,1) {$\sigma_i^\ast A$};
      \node(1) at (2,1) {$A$};
      \node(2) at (0,-1) {$X_i$};
      \node(3) at (2,-1) {$\coprod_{i\in I}X_i$};
      
      \draw [->] 
            (0) to node (t)[auto,labelsize] {} 
            (1);
      \draw [->] 
            (1) to node (r)[auto,labelsize] {$f$} 
            (3);
      \draw [->] 
            (0) to node (l)[auto,swap,labelsize] {$\sigma_i^\ast f$} 
            (2);
      \draw [->] 
            (2) to node (b)[auto,swap,labelsize] {$\sigma_i$} 
            (3);  

     \draw (0.2,0.4) -- (0.6,0.4) -- (0.6,0.8);
\end{tikzpicture}
\]
in $\cat{V}$.

In general, a functor is an equivalence of categories if and only if it has 
a right adjoint and the associated unit and counit are natural isomorphisms.
Applying this fact to the functors of the form (\ref{eqn:ext_coprod}),
we obtain the desired result.
\end{proof}

\begin{proposition}\label{prop:ext_coprod_of_pbs}
Let $\cat{V}$ be an extensive category. 
For any small set $I$ and $I$-indexed family of pullback squares in $\cat{V}$
as on the left of the following diagram, 
the square as on the right is a pullback.
\begin{equation*}
\begin{tikzpicture}[baseline=-\the\dimexpr\fontdimen22\textfont2\relax ]
      \node(0) at (0,1) {$P_i$};
      \node(1) at (2,1) {$B_i$};
      \node(2) at (0,-1) {$A_i$};
      \node(3) at (2,-1) {$X_i$};
      
      \draw [->] 
            (0) to node (t)[auto,labelsize] {$q_i$} 
            (1);
      \draw [->] 
            (1) to node (r)[auto,labelsize] {$g_i$} 
            (3);
      \draw [->] 
            (0) to node (l)[auto,swap,labelsize] {$p_i$} 
            (2);
      \draw [->] 
            (2) to node (b)[auto,swap,labelsize] {$f_i$} 
            (3);  

     \draw (0.2,0.4) -- (0.6,0.4) -- (0.6,0.8);
\end{tikzpicture}
\qquad\qquad
\begin{tikzpicture}[baseline=-\the\dimexpr\fontdimen22\textfont2\relax ]
      \node(0) at (0,1) {$\coprod_{i\in I}P_i$};
      \node(1) at (3,1) {$\coprod_{i\in I}B_i$};
      \node(2) at (0,-1) {$\coprod_{i\in I}A_i$};
      \node(3) at (3,-1) {$\coprod_{i\in I}X_i$};
      
      \draw [->] 
            (0) to node (t)[auto,labelsize] {$\coprod_{i\in I}q_i$} 
            (1);
      \draw [->] 
            (1) to node (r)[auto,labelsize] {$\coprod_{i\in I}g_i$} 
            (3);
      \draw [->] 
            (0) to node (l)[auto,swap,labelsize] {$\coprod_{i\in I}p_i$} 
            (2);
      \draw [->] 
            (2) to node (b)[auto,swap,labelsize] {$\coprod_{i\in I}f_i$} 
            (3);  

     \draw (0.2,0.4) -- (0.6,0.4) -- (0.6,0.8);
\end{tikzpicture}
\end{equation*}
\end{proposition}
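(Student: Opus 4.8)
The plan is to exploit the defining equivalence of extensivity directly, rather than to verify the universal property of the pullback by hand. By Definition~\ref{def:extensive_cat}, for the family $(X_i)_{i\in I}$ the functor
\[
\coprod\colon \prod_{i\in I}(\cat{V}/X_i)\longrightarrow \cat{V}/\coprod_{i\in I}X_i
\]
of (\ref{eqn:ext_coprod}) is an equivalence of categories. The key observation is that a pullback in $\cat{V}$ of two morphisms with common codomain $Y$ is precisely their \emph{product} in the slice category $\cat{V}/Y$: a cone over $f\colon A\to Y$ and $g\colon B\to Y$ from $(C,c)$ is exactly a pair of morphisms $C\to A$, $C\to B$ compatible with $c$, i.e.\ a map into the pullback. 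I would therefore reinterpret the target square as a product in $\cat{V}/\coprod_{i\in I}X_i$ and compute it through the equivalence.

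First I would regard $\coprod_{i\in I}f_i$ and $\coprod_{i\in I}g_i$ as objects of $\cat{V}/\coprod_{i\in I}X_i$; under the equivalence $\coprod$ they correspond to the families $(f_i)_{i\in I}$ and $(g_i)_{i\in I}$ in $\prod_{i\in I}(\cat{V}/X_i)$. Since an equivalence of categories preserves, reflects, and creates all limits, the product of $\coprod_{i\in I}f_i$ and $\coprod_{i\in I}g_i$ exists in $\cat{V}/\coprod_{i\in I}X_i$ as soon as the product of $(f_i)_{i\in I}$ and $(g_i)_{i\in I}$ exists in $\prod_{i\in I}(\cat{V}/X_i)$, and the latter is computed componentwise. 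In each slice $\cat{V}/X_i$ the product of $f_i$ and $g_i$ is the pullback $P_i=A_i\times_{X_i}B_i$, which exists by hypothesis, with product projections $p_i$ and $q_i$. Applying $\coprod$ to this componentwise product then shows that $\coprod_{i\in I}P_i$, equipped with $\coprod_{i\in I}p_i$ and $\coprod_{i\in I}q_i$, is the product of $\coprod_{i\in I}A_i$ and $\coprod_{i\in I}B_i$ in $\cat{V}/\coprod_{i\in I}X_i$ --- that is, exactly the pullback of $\coprod_{i\in I}f_i$ and $\coprod_{i\in I}g_i$ in $\cat{V}$ realised by the square in the statement.

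The only points requiring care are bookkeeping rather than substance: one must check that the image under $\coprod$ of the componentwise product projections is genuinely $\coprod_{i\in I}p_i$ and $\coprod_{i\in I}q_i$ (so that the pullback square produced is the claimed one, and not merely isomorphic to it through an unnamed map), and that ``product in $\cat{V}/Y$'' is identified with ``pullback in $\cat{V}$'' on the nose at the level of apex objects and legs. A more pedestrian alternative, which I would fall back on if the slice-theoretic identifications needed more justification, is to pull the square in the statement back along each coprojection $\sigma_j\colon X_j\to\coprod_{i\in I}X_i$: using Proposition~\ref{prop:ext_criterion} the restrictions recover the original squares on $P_i,A_i,B_i,X_i$, and then a pasting argument together with Proposition~\ref{prop:ext_criterion} reassembles these into the coproduct square. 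I expect no genuine obstacle here; the main work is simply making the slice-category identifications precise.
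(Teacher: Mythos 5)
Your proposal is correct and is essentially the paper's own proof: the paper argues in one line that, by extensivity, the functor $\coprod\colon \prod_{i\in I}(\cat{V}/X_i)\longrightarrow \cat{V}/\coprod_{i\in I}X_i$ is an equivalence and hence preserves binary products, which (since products in slices are pullbacks in $\cat{V}$, computed componentwise in the product category) is exactly your argument spelled out in detail. The bookkeeping concerns you raise are harmless, since a pullback square need only be exhibited up to the canonical isomorphism that the equivalence provides.
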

\begin{proof}
By the definition of extensivity, the functor 
$
\coprod\colon \prod_{i\in I} (\cat{V}/X_i) \longrightarrow 
\cat{V}/(\coprod_{i\in I} X_i)
$
is an equivalence of categories and, in particular, it preserves binary 
products. 
\end{proof}

\begin{proposition}\label{prop:extensive_prod}
Let $\cat{V}$ be an extensive category with finite products. 
For any $B\in\cat{V}$, the functor $(-)\times B\colon 
\cat{V}\longrightarrow\cat{V}$ preserves small coproducts.
\end{proposition}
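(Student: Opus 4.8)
The plan is to deduce the statement directly from the characterisation of extensivity in Proposition~\ref{prop:ext_criterion}. Concretely, I would apply that proposition with the morphism $f$ taken to be the first projection $\pi_1\colon(\coprod_{i\in I}X_i)\times B\longrightarrow\coprod_{i\in I}X_i$, and for each $i\in I$ set $A_i=X_i\times B$, take $f_i$ to be the first projection $\pi_1\colon X_i\times B\longrightarrow X_i$, and put $\tau_i=\sigma_i\times\id{B}\colon X_i\times B\longrightarrow(\coprod_{i\in I}X_i)\times B$, where $\sigma_i$ denotes the $i$-th coprojection. These data make the square (\ref{eqn:square}) commute, since $\pi_1\circ(\sigma_i\times\id{B})=\sigma_i\circ\pi_1$.

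The key step is to observe that each such square is a pullback. This is an instance of the general fact, valid in any category with finite products, that for a morphism $g\colon X\longrightarrow Y$ and an object $B$ the square whose top edge is $g\times\id{B}$, whose bottom edge is $g$, and whose two vertical edges are the first projections, is a pullback: a cone over the cospan $g\colon X\longrightarrow Y$, $\pi_1\colon Y\times B\longrightarrow Y$ consists of a map into $X$ together with a map into $B$ subject to a compatibility condition that is automatically satisfied, and such a pair is precisely a map into $X\times B$. I would verify this by the universal property of the product $Y\times B$.

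With each square of the form (\ref{eqn:square}) now known to be a pullback, Proposition~\ref{prop:ext_criterion} applies (extensivity of $\cat{V}$ guarantees that the requisite pullbacks along coprojections exist), and its conclusion is exactly that the family $(\tau_i)_{i\in I}=(\sigma_i\times\id{B})_{i\in I}$ exhibits $(\coprod_{i\in I}X_i)\times B$ as the coproduct $\coprod_{i\in I}(X_i\times B)$. Writing $\kappa_i$ for the $i$-th coprojection into an abstract coproduct $\coprod_{i\in I}(X_i\times B)$, the canonical comparison morphism $c$ is characterised by $c\circ\kappa_i=\sigma_i\times\id{B}=\tau_i$; so the conclusion of Proposition~\ref{prop:ext_criterion} says precisely that $c$ is an isomorphism, which is what it means for $(-)\times B$ to preserve the coproduct $\coprod_{i\in I}X_i$.

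The argument is entirely formal once the right square is fed into the criterion, so there is no serious obstacle; the only point demanding care is choosing both $f$ and the $f_i$ to be the first projections, so that the squares in question are the product projection squares, and then checking that these are genuine pullbacks. An alternative route through Proposition~\ref{prop:ext_coprod_of_pbs} is possible but less direct: the coproduct of the defining pullback squares for the products $X_i\times B$ computes a pullback over $\coprod_{i\in I}1$ rather than over the terminal object $1$, so it would require an additional identification to recover $(\coprod_{i\in I}X_i)\times B$.
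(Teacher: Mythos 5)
Your proof is correct and is essentially identical to the paper's own argument: the paper likewise observes that the square with $\sigma_i\times B$ on top, $\sigma_i$ on the bottom, and first projections as vertical edges is a pullback in any category with finite products, and then concludes via Proposition~\ref{prop:ext_criterion} that $(\coprod_{i\in I}X_i)\times B\cong\coprod_{i\in I}(X_i\times B)$. Your closing remark about the less direct route through Proposition~\ref{prop:ext_coprod_of_pbs} is a reasonable observation but plays no role in the argument.
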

\begin{proof}
In any category, a square as on the left of the following diagram
is always a pullback.
Hence for any object $B\in\cat{V}$, 
small set $I$, and $I$-indexed family $(X_i)_{i\in I}$
of objects of $\cat{V}$, for each $i\in I$ the square as on the right
is a pullback.
\[
\begin{tikzpicture}[baseline=-\the\dimexpr\fontdimen22\textfont2\relax ]
      \node(0) at (0,1) {$A\times B$};
      \node(1) at (2,1) {$C\times B$};
      \node(2) at (0,-1) {$A$};
      \node(3) at (2,-1) {$C$};
      
      \draw [->] 
            (0) to node (t)[auto,labelsize] {$h\times B$} 
            (1);
      \draw [->] 
            (1) to node (r)[auto,labelsize] {$\pi_1$} 
            (3);
      \draw [->] 
            (0) to node (l)[auto,swap,labelsize] {$\pi_1$} 
            (2);
      \draw [->] 
            (2) to node (b)[auto,swap,labelsize] {$h$} 
            (3);  
\end{tikzpicture}
\qquad\qquad
\begin{tikzpicture}[baseline=-\the\dimexpr\fontdimen22\textfont2\relax ]
      \node(0) at (0,1) {$X_i\times B$};
      \node(1) at (3,1) {$(\coprod_{i\in I}X_i)\times B$};
      \node(2) at (0,-1) {$X_i$};
      \node(3) at (3,-1) {$\coprod_{i\in I}X_i$};
      
      \draw [->] 
            (0) to node (t)[auto,labelsize] {$\sigma_i\times B$} 
            (1);
      \draw [->] 
            (1) to node (r)[auto,labelsize] {$\pi_1$} 
            (3);
      \draw [->] 
            (0) to node (l)[auto,swap,labelsize] {$\pi_1$} 
            (2);
      \draw [->] 
            (2) to node (b)[auto,swap,labelsize] {$\sigma_i$} 
            (3);  
\end{tikzpicture}
\]
Therefore by Proposition~\ref{prop:ext_criterion},
$(\coprod_{i\in I}X_i)\times B \cong \coprod_{i\in I}(X_i\times B)$.
\end{proof}

\begin{proposition}\label{prop:extensive_slice}
Let $\cat{V}$ be an extensive category. For any object $Y\in\cat{V}$,
the slice category $\cat{V}/Y$ is again extensive.
\end{proposition}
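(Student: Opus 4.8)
The plan is to reduce the extensivity of $\cat{V}/Y$ to that of $\cat{V}$ by means of the canonical isomorphism of iterated slice categories. First I would check that $\cat{V}/Y$ admits all small coproducts, and that these are created by the forgetful functor $\cat{V}/Y\longrightarrow\cat{V}$. Concretely, given a small family $((X_i,x_i))_{i\in I}$ of objects of $\cat{V}/Y$ (where $x_i\colon X_i\longrightarrow Y$), its coproduct is $([x_i]_{i\in I}\colon\coprod_{i\in I}X_i\longrightarrow Y)$, the copairing of the structure morphisms. The universal property is immediate: a cocone consists of morphisms $f_i\colon X_i\longrightarrow Z$ over $Y$ (that is, with $z\circ f_i=x_i$), and the induced $f\colon\coprod_{i\in I}X_i\longrightarrow Z$ automatically satisfies $z\circ f=[x_i]_{i\in I}$ by the uniqueness part of the copairing property, so $f$ is a morphism in $\cat{V}/Y$.

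The key tool is the standard isomorphism of categories
\begin{equation*}
(\cat{V}/Y)/(A,g)\cong\cat{V}/A,
\end{equation*}
valid for any object $(g\colon A\longrightarrow Y)$ of $\cat{V}/Y$: an object of the left-hand side is a morphism of $\cat{V}/Y$ into $(A,g)$, which is completely determined by its underlying morphism into $A$ in $\cat{V}$, the commuting triangle over $Y$ being forced. Applying this with $A=X_i$ gives $(\cat{V}/Y)/(X_i,x_i)\cong\cat{V}/X_i$, and applying it to the coproduct object computed above gives $(\cat{V}/Y)/{\coprod_{i\in I}(X_i,x_i)}\cong\cat{V}/{\coprod_{i\in I}X_i}$.

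Next I would check that, under these isomorphisms, the coproduct comparison functor for $\cat{V}/Y$,
\begin{equation*}
\coprod\colon\prod_{i\in I}\bigl((\cat{V}/Y)/(X_i,x_i)\bigr)\longrightarrow(\cat{V}/Y)/{\coprod_{i\in I}(X_i,x_i)},
\end{equation*}
corresponds precisely to the comparison functor
\begin{equation*}
\coprod\colon\prod_{i\in I}(\cat{V}/X_i)\longrightarrow\cat{V}/{\coprod_{i\in I}X_i}
\end{equation*}
for $\cat{V}$. This holds because coproducts in $\cat{V}/Y$ are computed on underlying objects as in $\cat{V}$, so forming the slice-copairing of a family of maps $h_i\colon B_i\longrightarrow X_i$ over $Y$ agrees on underlying data with the coproduct $\coprod_{i\in I}h_i$ in $\cat{V}$. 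Since $\cat{V}$ is extensive, the latter functor is an equivalence of categories, and hence so is the former; by the definition of extensivity (Definition~\ref{def:extensive_cat}) this shows $\cat{V}/Y$ is extensive.

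The routine but somewhat delicate part — the main place where care is needed — is the verification of this last compatibility: one must confirm that the two iterated-slice isomorphisms are natural enough to commute with the respective comparison functors, i.e.\ that translating a tuple through $\prod_{i\in I}(\cat{V}/X_i)\cong\prod_{i\in I}\bigl((\cat{V}/Y)/(X_i,x_i)\bigr)$, forming its slice-coproduct, and translating back yields the ordinary coproduct in $\cat{V}/{\coprod_{i\in I}X_i}$. This amounts to tracking underlying morphisms together with the forced triangles over $Y$ through both directions, which is purely bookkeeping once the copairing description of coproducts in $\cat{V}/Y$ from the first step is in hand. (Alternatively, one could bypass the iterated-slice route and verify the criterion of Proposition~\ref{prop:ext_criterion} directly for $\cat{V}/Y$, using that pullbacks and coproducts there are created by the forgetful functor into $\cat{V}$; the argument is essentially the same.)
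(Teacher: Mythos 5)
Your proposal is correct and follows essentially the same route as the paper's own proof: coproducts in $\cat{V}/Y$ computed by copairing, the canonical isomorphism $(\cat{V}/Y)/(A,g)\cong\cat{V}/A$, and the observation that under these isomorphisms the coproduct comparison functor for $\cat{V}/Y$ is identified with the one for $\cat{V}$, which is an equivalence by extensivity. The compatibility square you flag as the delicate step is exactly the commuting diagram the paper exhibits, so no gap remains.
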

\begin{proof}
Clearly $\cat{V}/Y$ has small coproducts given by  $\coprod_{i\in I}(f_i\colon 
X_i\longrightarrow Y)
= ([f_i]_{i\in I}\colon $ $\coprod_{i\in I} X_i\longrightarrow Y)$.
Also note that for any object $(f\colon X\longrightarrow Y)$ of $\cat{V}/Y$,
the canonical functor $(\cat{V}/Y)/f\longrightarrow \cat{V}/X$ 
which maps $(h\colon (g\colon A\longrightarrow Y)\longrightarrow f)\in 
(\cat{V}/Y)/f$ to
$(h\colon A\longrightarrow X)\in \cat{V}/X$ is an isomorphism of categories.
For any small set $I$ and $I$-indexed family $(f_i\colon X_i\longrightarrow 
Y)_{i\in 
I}$
of objects of $\cat{V}/Y$, the diagram 
\[
\begin{tikzpicture}[baseline=-\the\dimexpr\fontdimen22\textfont2\relax ]
      \node(0) at (0,1.5) {$\prod_{i\in I}((\cat{V}/Y)/f_i)$};
      \node(1) at (5,1.5) {$(\cat{V}/Y)/[f_i]_{i\in I}$};
      \node(2) at (0,0) {$\prod_{i\in I}(\cat{V}/X_i)$};
      \node(3) at (5,0) {$\cat{V}/(\coprod_{i\in I}X_i)$};
      
      \draw [->] 
            (0) to node (t)[auto,labelsize] {$\coprod$} 
            (1);
      \draw [->] 
            (1) to node (r)[auto,labelsize] {$\cong$} 
            (3);
      \draw [->] 
            (0) to node (l)[auto,swap,labelsize] {$\cong$} 
            (2);
      \draw [->] 
            (2) to node (b)[auto,labelsize] {$\coprod$} 
            (3);  
\end{tikzpicture}
\]
(in which the vertical arrows are the canonical isomorphisms mentioned above)
commutes. 
Since the lower $\coprod$ is an equivalence by the assumption, so is the upper 
one.
\end{proof}

\begin{corollary}\label{cor:ext_pb_coprod}
Let $\cat{V}$ be an extensive category with pullbacks. 
\begin{enumerate}
\item For any morphism $g\colon B\longrightarrow X$ in $\cat{V}$, 
small set $I$, and $I$-indexed family of pullback squares in $\cat{V}$
as on the left of the following diagram, 
the square as on the right is a pullback.
\begin{equation*}
\begin{tikzpicture}[baseline=-\the\dimexpr\fontdimen22\textfont2\relax ]
      \node(0) at (0,1) {$P_i$};
      \node(1) at (2,1) {$B$};
      \node(2) at (0,-1) {$A_i$};
      \node(3) at (2,-1) {$X$};
      
      \draw [->] 
            (0) to node (t)[auto,labelsize] {$q_i$} 
            (1);
      \draw [->] 
            (1) to node (r)[auto,labelsize] {$g$} 
            (3);
      \draw [->] 
            (0) to node (l)[auto,swap,labelsize] {$p_i$} 
            (2);
      \draw [->] 
            (2) to node (b)[auto,swap,labelsize] {$f_i$} 
            (3);  

     \draw (0.2,0.4) -- (0.6,0.4) -- (0.6,0.8);
\end{tikzpicture}
\qquad\qquad
\begin{tikzpicture}[baseline=-\the\dimexpr\fontdimen22\textfont2\relax ]
      \node(0) at (0,1) {$\coprod_{i\in I}P_i$};
      \node(1) at (3,1) {$B$};
      \node(2) at (0,-1) {$\coprod_{i\in I}A_i$};
      \node(3) at (3,-1) {$X$};
      
      \draw [->] 
            (0) to node (t)[auto,labelsize] {$[q_i]_{i\in I}$} 
            (1);
      \draw [->] 
            (1) to node (r)[auto,labelsize] {$g$} 
            (3);
      \draw [->] 
            (0) to node (l)[auto,swap,labelsize] {$\coprod_{i\in I}p_i$} 
            (2);
      \draw [->] 
            (2) to node (b)[auto,swap,labelsize] {$[f_i]_{i\in I}$} 
            (3);  

     \draw (0.2,0.4) -- (0.6,0.4) -- (0.6,0.8);
\end{tikzpicture}
\end{equation*}
\item For any object $X\in\cat{V}$, small set $I$, 
$I$-indexed family of morphisms $(f_i\colon A_i\longrightarrow X)_{i\in I}$ 
in $\cat{V}$, small set $J$, $J$-indexed family of morphisms
$(g_j\colon B_j\longrightarrow X)_{j\in J}$ in $\cat{V}$,
and $(I\times J)$-indexed family of pullback squares in $\cat{V}$
as on the left of the following diagram, 
the square as on the right is a pullback.
\begin{equation*}
\begin{tikzpicture}[baseline=-\the\dimexpr\fontdimen22\textfont2\relax ]
      \node(0) at (0,1) {$P_{i,j}$};
      \node(1) at (2,1) {$B_j$};
      \node(2) at (0,-1) {$A_i$};
      \node(3) at (2,-1) {$X$};
      
      \draw [->] 
            (0) to node (t)[auto,labelsize] {$q_{i,j}$} 
            (1);
      \draw [->] 
            (1) to node (r)[auto,labelsize] {$g_j$} 
            (3);
      \draw [->] 
            (0) to node (l)[auto,swap,labelsize] {$p_{i,j}$} 
            (2);
      \draw [->] 
            (2) to node (b)[auto,swap,labelsize] {$f_i$} 
            (3);  

     \draw (0.2,0.4) -- (0.6,0.4) -- (0.6,0.8);
\end{tikzpicture}
\qquad\qquad
\begin{tikzpicture}[baseline=-\the\dimexpr\fontdimen22\textfont2\relax ]
      \node(0) at (0,1) {$\coprod_{i\in I,j\in J}P_{i,j}$};
      \node(1) at (4,1) {$\coprod_{j\in J}B_j$};
      \node(2) at (0,-1) {$\coprod_{i\in I}A_i$};
      \node(3) at (4,-1) {$X$};
      
      \draw [->] 
            (0) to node (t)[auto,labelsize] {$\coprod_{j\in J}([q_{i,j}]_{i\in 
            I})$} 
            (1);
      \draw [->] 
            (1) to node (r)[auto,labelsize] {$[g_j]_{j\in J}$} 
            (3);
      \draw [->] 
            (0) to node (l)[auto,swap,labelsize] {$\coprod_{i\in 
            I}([p_{i,j}]_{j\in J})$} 
            (2);
      \draw [->] 
            (2) to node (b)[auto,swap,labelsize] {$[f_i]_{i\in I}$} 
            (3);  

     \draw (0.2,0.4) -- (0.6,0.4) -- (0.6,0.8);
\end{tikzpicture}
\end{equation*}
\end{enumerate}
\end{corollary}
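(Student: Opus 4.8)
The plan is to derive both parts from the earlier results on extensive categories, chiefly Proposition~\ref{prop:extensive_slice} (slices of an extensive category are extensive) and Proposition~\ref{prop:extensive_prod} (binary products preserve small coproducts), by passing to slice categories.

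For part~1, the idea is to recognise the data as living in $\cat{V}/X$. First I would recall, as in the proof of Proposition~\ref{prop:extensive_slice}, that coproducts in $\cat{V}/X$ are computed on underlying objects, and that the binary product of two objects $f_i\colon A_i\longrightarrow X$ and $g\colon B\longrightarrow X$ of $\cat{V}/X$ is exactly the pullback of $f_i$ and $g$ in $\cat{V}$. Thus each given pullback square exhibits $(P_i,p_i,q_i)$ as the product $(f_i)\times(g)$ in $\cat{V}/X$, and $[f_i]_{i\in I}\colon\coprod_{i\in I}A_i\longrightarrow X$ is the coproduct $\coprod_{i\in I}(f_i)$. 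Since $\cat{V}$ has pullbacks, $\cat{V}/X$ has finite products, and by Proposition~\ref{prop:extensive_slice} it is extensive; hence Proposition~\ref{prop:extensive_prod}, applied in $\cat{V}/X$ to the object $(g)$, gives a natural isomorphism
\[
\Big(\coprod_{i\in I}(f_i)\Big)\times(g)\;\cong\;\coprod_{i\in I}\big((f_i)\times(g)\big).
\]
On underlying objects the left-hand side is the pullback of $[f_i]_{i\in I}$ and $g$ in $\cat{V}$, while the right-hand side is $\coprod_{i\in I}P_i$. Tracing the two product projections through this isomorphism should identify the comparison maps with $[q_i]_{i\in I}$ and $\coprod_{i\in I}p_i$, which is precisely the claim that the right-hand square of part~1 is a pullback.

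For part~2, I would apply part~1 twice, using that pullback squares are symmetric in their two legs. Fixing $j\in J$ and applying part~1 to the family $(P_{i,j})_{i\in I}$ over the single morphism $g_j\colon B_j\longrightarrow X$ yields, for each $j$, a pullback square with apex $\coprod_{i\in I}P_{i,j}$, lower edge $[f_i]_{i\in I}\colon\coprod_{i\in I}A_i\longrightarrow X$, and right edge $g_j$. These now form a $J$-indexed family of pullback squares sharing the single morphism $[f_i]_{i\in I}$, so applying part~1 once more along this morphism (with the two legs interchanged) gives a pullback square with apex $\coprod_{j\in J}\coprod_{i\in I}P_{i,j}$, right edge $[g_j]_{j\in J}$, and lower edge $[f_i]_{i\in I}$. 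Finally the canonical isomorphism $\coprod_{j\in J}\coprod_{i\in I}P_{i,j}\cong\coprod_{i,j}P_{i,j}$ rewrites this as the square in the statement, once the two remaining edges are checked to reduce to the displayed copairings $\coprod_{j\in J}([q_{i,j}]_{i\in I})$ and $\coprod_{i\in I}([p_{i,j}]_{j\in J})$.

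The routine-but-fiddly part, and the only genuine obstacle, is the bookkeeping in part~1: verifying that the abstract isomorphism supplied by Proposition~\ref{prop:extensive_prod} carries the two product projections in $\cat{V}/X$ to exactly the maps $[q_i]_{i\in I}$ and $\coprod_{i\in I}p_i$ named in the corollary, and not merely to some compatible pair. This requires unwinding the construction of the preserved-coproduct isomorphism together with the description of coproducts and products in the slice; everything else is formal.
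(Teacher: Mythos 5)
Your proposal is correct and is essentially the paper's own argument: part~1 is exactly the observation that $\cat{V}/X$ is extensive with finite products given by pullbacks (Propositions~\ref{prop:extensive_slice} and~\ref{prop:extensive_prod}, so that product with $(g)$ preserves small coproducts), and part~2 follows by iterating part~1, just as the paper does. The bookkeeping you flag about identifying the projections is genuinely mild, since the preserved-coproduct isomorphism of Proposition~\ref{prop:extensive_prod} comes from Proposition~\ref{prop:ext_criterion}, which directly exhibits the coprojection squares as pullbacks with the stated legs.
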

\begin{proof}
\begin{enumerate}
\item By the assumption, the slice category $\cat{V}/X$ has 
finite products $\times_X$ (given by pullbacks in $\cat{V}$),
and is extensive (Proposition~\ref{prop:extensive_slice}).
Hence by Proposition~\ref{prop:extensive_prod}, 
binary product by $(g\colon B\longrightarrow X)\in \cat{V}/X$ 
preserves small coproducts, that is, $(\coprod_{i\in I} f_i)\times_X g \cong 
\coprod_{i\in I}(f_i\times_X g)$.
\item Using the first clause iteratively, we obtain
$(\coprod_{i\in I}f_i)\times_X(\coprod_{j\in J}g_j)\cong \coprod_{i\in I,j\in 
J}$ $(f_i\times_X g_j)$. \qedhere
\end{enumerate}
\end{proof}

\chapter{The free strict $n$-dimensional $\cat{V}$-category monad on 
$\enGph{\cat{V}}^{(n)}$}
\label{chap:free_strict_n_cat_monad}
The construction of the free category $FG$ generated by a ($\Set$-)graph $G$ 
is well-known:
the set of objects of $FG$ is the same as that of $G$, and a morphism in 
$FG$ is a (directed) \emph{path} in $G$ (see 
Section~\ref{sec:free_Vcat_monad}).
This construction is the left adjoint to the  
forgetful functor $U\colon\Cat\longrightarrow\mathbf{Gph}=\enGph{\Set}$,
and gives rise to a monad $\monoid{T}$ on $\mathbf{Gph}$,
the \emph{free category monad}.
This monad and its higher dimensional analogues, the free strict $n$-category 
monad $\monoid{T}^{(n)}$ on $\enGph{n}$ for each natural number $n$, play a 
crucial role 
in the Batanin--Leinster approach to weak $n$-categories, because 
they turn out to be \emph{cartesian} monads and therefore we may consider 
\emph{$\monoid{T}^{(n)}$-operads}.
The structure of weak $n$-category is expressed via a certain 
$\monoid{T}^{(n)}$-operad.

In this chapter, we show a generalisation of these facts; 
rather than starting from the category $\Set$, we start from an arbitrary 
extensive category $\cat{V}$ with finite limits, and show that we have the 
\emph{free strict $n$-dimensional $\cat{V}$-category monad} $\monoid{T}^{(n)}$ 
on 
$\enGph{\cat{V}}^{(n)}$, and that it is cartesian. 

The results in this section have been published in 
\cite{CFP2}.

\section{The free $\cat{V}$-category monad}
\label{sec:free_Vcat_monad}

In this section we deal with the one-dimensional case; that is,
we define the free $\cat{V}$-category monad on $\enGph{\cat{V}}$
and show it is cartesian. 

Let us start with reviewing the construction of free categories over graphs.
Suppose that $G=(\ob{G},(G(x,y))_{x,y\in\ob{G}})$ is an object of 
$\mathbf{Gph}$, i.e., 
a directed multigraph.
For $x,y\in\ob{G}$, a \defemph{path} in $G$ from $x$ to $y$
is a sequence 
\[
(w_0, f_1,w_1,f_2,\dots, f_n,w_n)
\]
where $n$ is a natural number called the \defemph{length} of the path, 
$w_i\in\ob{G}$ and $f_i\in G(w_{i-1},w_i)$ such that $w_0=x$ and $w_n=y$:
\begin{equation*}
\begin{tikzpicture}[baseline=-\the\dimexpr\fontdimen22\textfont2\relax ]
      \node            (0) {$x=w_0$};
      \node[right=of 0](1) {$w_1$};
      \node[right=of 1](d) {$\cdots$};
      \node[right=of d](n) {$w_n=y$.};
      
      \draw [->] (0) to node [auto,labelsize] {$f_1$} (1);
      \draw [->] (1) to node [auto,labelsize] {$f_2$} (d);
      \draw [->] (d) to node [auto,labelsize] {$f_n$} (n);
\end{tikzpicture}
\end{equation*}
The set of all paths in $G$ from $x$ to $y$ is therefore given by
\begin{equation}
\label{eqn:paths_in_graph}
\coprod_{n\in\N} \coprod_{\substack{w_0,\dots,w_n\in 
\ob{G}\\w_0=x,w_n=y}}G(w_{n-1},w_{n})\times\cdots\times G(w_0,w_1)
\end{equation}
(we have written $G(w_{n-1},w_n)\times \dots\times G(w_0,w_1)$ instead of 
$G(w_0,w_1)\times\dots\times G(w_{n-1},w_n)$ because the former agrees 
with our convention to write compositions in a category in the 
anti-diagrammatic order).

The free category $FG$ over $G$ has the same objects as $G$, and its 
hom-set $(FG)(x,y)$ is given by (\ref{eqn:paths_in_graph}).
Note that the set of all paths in $G$ from $x$ to $y$ of length $n$ 
is given as
\begin{equation}
\label{eqn:paths_of_length_n}
(FG)(x,y)_n=\coprod_{\substack{w_0,\dots,w_n\in 
\ob{G}\\w_0=x,w_n=y}}G(w_{n-1},w_n)\times\dots\times G(w_0,w_1),
\end{equation}
and using this, we may rewrite (\ref{eqn:paths_in_graph}) as 
\[
(FG)(x,y)=\coprod_{n\in\NN}(FG)(x,y)_n.
\]
The identities in $FG$ are given by the paths of length 0
(note that $(FG)(x,y)_0$ is a singleton if $x=y$ and is
empty otherwise),
and compositions in $FG$ are given by the evident compositions of paths.
 
The following construction is a straightforward generalisation of the 
above ``path'' construction for free categories over graphs.
\begin{proposition}\label{prop:free_V_cat}
If $\cat{V}$ has finite products and small coproducts, and if 
for any $B\in\cat{V}$ the functor $(-)\times B\colon
\cat{V}\longrightarrow \cat{V}$ preserves small coproducts, then the
forgetful functor
$U\colon\enCat{\cat{V}}\longrightarrow\enGph{\cat{V}}$ admits a left
adjoint $F$.
\end{proposition}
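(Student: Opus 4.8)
The plan is to generalise, essentially verbatim, the ``path'' construction reviewed above for free categories over $\Set$-graphs. Given a small $\cat{V}$-graph $G$, I would define a strict $\cat{V}$-category $FG$ with the same objects, $\ob{FG}=\ob{G}$, and with hom-objects
\[
(FG)(x,y)=\coprod_{n\in\NN}(FG)(x,y)_n,
\]
where the ``object of length-$n$ paths'' $(FG)(x,y)_n$ is defined by the formula (\ref{eqn:paths_of_length_n}), using the finite products and small coproducts of $\cat{V}$. The identity element $j_x\colon 1\longrightarrow (FG)(x,x)$ is then the coprojection of the $n=0$ summand, which is the terminal object $1$ precisely when the two endpoints coincide.

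The composition law $M_{x,y,z}\colon (FG)(y,z)\times (FG)(x,y)\longrightarrow (FG)(x,z)$ should be concatenation of paths, and this is the step where the hypothesis is used essentially. To define a morphism out of the product of two coproducts I first rewrite this product as a coproduct of products: since $(-)\times B$ preserves small coproducts for every $B$ (and hence, by symmetry of $\times$ up to isomorphism, so does $B\times(-)$), there is a canonical isomorphism
\[
(FG)(y,z)\times (FG)(x,y)\cong\coprod_{m,n\in\NN}\,(FG)(y,z)_m\times (FG)(x,y)_n.
\]
Distributing once more over the vertex-indexed coproducts defining each factor, concatenation sends the summand indexed by a path $y=v_0,\dots,v_m=z$ together with a path $x=w_0,\dots,w_n=y$ to the summand of $(FG)(x,z)_{m+n}$ indexed by the concatenated sequence $x=w_0,\dots,w_n=v_0,\dots,v_m=z$, via the evident rearrangement of product factors. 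The universal property of the coproduct assembles these into $M$.

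Next I would verify the category axioms, define the unit, and establish the universal property. Associativity and the unit laws reduce, after unwinding the distributivity isomorphisms, to the strict associativity and unitality of concatenation of paths together with the coherence of products in $\cat{V}$. The unit $\eta_G\colon G\longrightarrow U(FG)$ is the identity on objects and, on hom-objects, the coprojection $G(x,y)=(FG)(x,y)_1\longrightarrow (FG)(x,y)$ of the length-$1$ summand. For the universal property, given a strict $\cat{V}$-category $\cat{A}$ and a $\cat{V}$-graph morphism $k\colon G\longrightarrow U\cat{A}$, I define the transpose $\overline{k}\colon FG\longrightarrow\cat{A}$ on the summand $(FG)(x,y)_n$ as the composite of the $k_{w_{i-1},w_i}$ on each factor followed by the iterated composition law of $\cat{A}$ (and as the identity element of $\cat{A}$ when $n=0$); the coprojections then yield a single morphism on $(FG)(x,y)$. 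That $\overline{k}$ is a $\cat{V}$-functor is exactly the statement that these iterated composites respect concatenation, which holds by associativity in $\cat{A}$, and uniqueness is forced since any structure-preserving extension of $k$ must act this way summand-by-summand.

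The hard part will be the coherent bookkeeping of the distributivity isomorphisms in the definition and verification of composition: one must check that the several ways of distributing $\times$ over the nested coproducts agree, and that concatenation becomes strictly associative once transported along them. This is routine but notationally heavy, and it is the only place where the hypothesis on $(-)\times B$ does any genuine work; by Proposition~\ref{prop:extensive_prod} this hypothesis is automatic whenever $\cat{V}$ is extensive, so the proposition applies in particular to all the categories $\enGph{\cat{V}}^{(n)}$ of interest.
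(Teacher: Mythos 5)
Your proposal is correct and follows essentially the same route as the paper's proof: the same path construction with hom-objects $\coprod_{n\in\NN}(FG)(x,y)_n$, the identity given by the coprojection of the length-$0$ summand, and composition obtained by distributing $\times$ over the coproducts (the only place the hypothesis is used) and concatenating into the length-$(m+n)$ summand. You additionally spell out the unit of the adjunction and the universal property, which the paper leaves implicit; your description of the unit as the length-$1$ coprojection agrees with what the paper uses later in Proposition~\ref{prop:free_V_cat_unit}.
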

\begin{proof}
Given a $\cat{V}$-graph $G=(\ob{G},(G(x,y))_{x,y\in\ob{G}})$, the free 
$\cat{V}$-category $FG$ on $G$ 
has the same objects as $G$ and the hom-object given by
\[
(FG)(x,y)=\coprod_{n\in\NN}\coprod_{\substack{w_0,\dots,w_n\in 
\ob{G}\\w_0=x,w_n=y}}G(w_{n-1},w_{n})\times\cdots\times G(w_0,w_1)
\]
for all $x,y\in \ob{FG}=\ob{G}$.
To spell out the identity elements and composition laws in $FG$, let us write 
\[
(FG)(x,y)_n=\coprod_{\substack{w_0,\dots,w_n\in 
\ob{G}\\w_0=x,w_n=y}}G(w_{n-1},w_{n})\times\cdots\times G(w_0,w_1) 
\]
for all $x,y\in\ob{FG}$ and $n\in\NN$.

Note that $(FG)(x,y)_0$ is the terminal object $1$ of $\cat{V}$ if $x=y$
(otherwise, it is the initial object $0$ of $\cat{V}$).
Hence the identity element on $x\in\ob{FG}$ can be given as 
\[
\begin{tikzpicture}[baseline=-\the\dimexpr\fontdimen22\textfont2\relax ]
      \node            (0) {$1$};
      \node[right=of 0](1) {$(FG)(x,x)_0$};
      \node[right=of 1](d) 
      {$\coprod_{n\in\NN}(FG)(x,x)_n=(FG)(x,y)$,};
      
      \draw [->] (0) to node [auto,labelsize] {$\cong$} (1);
      \draw [->] (1) to node [auto,labelsize] {$\sigma_0$} (d);
\end{tikzpicture}
\]
where $\sigma_0$ denotes the $0$-th coprojection.
Given any triple $x,y,z\in\ob{FG}$ of objects, by the assumption we have 
\[
(FG)(y,z)\times (FG)(x,y)\cong \coprod_{k,l\in\NN}(FG)(y,z)_l\times (FG)(x,y)_k.
\]
Using the assumption once again, we see that $(FG)(y,z)_l\times (FG)(x,y)_k$
is isomorphic to 
\[
\coprod_{\substack{u_0,\dots,u_k,v_0,\dots,v_l\in \ob{G}\\
u_0=x,u_k=v_0=y,v_l=z}}G(v_{l-1},v_l)\times\dots\times G(v_0,v_1)\times
G(u_{k-1},u_k)\times\dots\times G(u_0,u_1),
\]
and therefore naturally embeds into $(FG)(x,z)_{k+l}$.
The universality of coproducts induce the composition laws for $FG$ from these 
embeddings.
\end{proof}

Examples of categories $\cat{V}$ satisfying the assumptions of 
Proposition~\ref{prop:free_V_cat} include cartesian closed categories with 
small coproducts (in this case, Proposition~\ref{prop:free_V_cat} appears in 
\cite[Proposition~2.2]{Wolff}) and extensive categories with 
finite products (by Proposition~\ref{prop:extensive_prod}).

For any extensive category $\cat{V}$ with finite limits, 
the \defemph{free $\cat{V}$-category 
monad} $\monoid{T}=(T,\eta,\mu)$ is the monad on $\enGph{\cat{V}}$ generated by 
the adjunction $F\dashv U$ in Proposition~\ref{prop:free_V_cat}.
The rest of this section is devoted to a proof of the fact that $\monoid{T}$ is 
cartesian. 
We show this by inspecting the adjunction $F\dashv U$
rather than the monad $\monoid{T}$ itself, because we will use  
certain properties of $F\dashv U$ in an inductive argument 
in the next section. 

\medskip

As a preliminary for the proof of the next proposition, 
let us examine the action of the functor 
$F\colon\enGph{\cat{V}}\longrightarrow\enCat{\cat{V}}$ on morphisms.
Suppose that $f\colon G\longrightarrow H$ is a morphism in $\enGph{\cat{V}}$.
The $\cat{V}$-functor $Ff\colon FG\longrightarrow FH$ is given as follows.
Its action on objects is the same as $f$.
Given $x,y\in\ob{G}$, the morphism $(Ff)_{x,y}\colon(FG)(x,y)\longrightarrow 
(FH)(fx,fy)$ is induced by the universality of coproducts, as the unique 
morphism making the following diagram commute
for all $n\in\NN$ and $w_0,\dots,w_n\in\ob{G}$ such that $w_0=x$ and $w_n=y$:
\begin{equation*}
\begin{tikzpicture}[baseline=-\the\dimexpr\fontdimen22\textfont2\relax ]
      \node(0) at (0,2.5) {$G(w_{n-1},w_n)\times\dots\times G(w_0,w_1)$};
      \node(1) at (6,1) 
      {$\displaystyle\coprod_{n\in\N}\coprod_{\substack{w_0,\dots,w_n\in\ob{G}\\
      w_0=x,w_n=y}}G(w_{n-1},w_n)\times\dots\times G(w_0,w_1)$};
      \node(2) at (0,-1) {$H(fw_{n-1},fw_n)\times\dots\times H(fw_0,fw_1)$};
      \node(3) at (6,-2.5) 
      {$\displaystyle\coprod_{n\in\N}\coprod_{\substack{v_0,\dots,v_n\in\ob{H}\\
      v_0=fx,v_n=fy}}H(v_{n-1},v_n)\times\dots\times H(v_0,v_1),$};
      
      \draw [->,rounded corners] (0)--(6,2.5)--(1);
      \node [labelsize] at (6,2.7) {$\sigma_{(n,w_0,\dots,w_n)}$};
      \draw [->] (1) to node (r)[auto,labelsize] {$(Ff)_{x,y}$} (3);
      \draw [->] (0) to node (l)[auto,swap,labelsize] 
            {$f_{w_{n-1},w_n}\times\dots\times f_{w_0,w_1}$} (2);
      \draw [->,rounded corners] (2)--(0,-2.5)--(3); 
      \node [labelsize] at (0,-2.7) {$\sigma_{(n,fw_0,\dots,fw_n)}$};
\end{tikzpicture}
\end{equation*}
where $\sigma$ denotes the appropriate coprojections.
Note that the morphism $(Ff)_{x,y}$ may be written as 
\begin{equation}
\label{eqn:Ffxy_via_Ffxyn}
(Ff)_{x,y}=\coprod_{n\in\NN}(Ff)_{x,y,n},
\end{equation}
where $(Ff)_{x,y,n}\colon (FG)(x,y)_n\longrightarrow(FH)(fx,fy)_n$ is 
characterised by the condition that the diagram 
\begin{equation*}
\begin{tikzpicture}[baseline=-\the\dimexpr\fontdimen22\textfont2\relax ]
      \node(0) at (0,2.5) {$G(w_{n-1},w_n)\times\dots\times G(w_0,w_1)$};
      \node(1) at (6,1) 
      {$\displaystyle\coprod_{\substack{w_0,\dots,w_n\in\ob{G}\\
      w_0=x,w_n=y}}G(w_{n-1},w_n)\times\dots\times G(w_0,w_1)$};
      \node(2) at (0,-1) {$H(fw_{n-1},fw_n)\times\dots\times H(fw_0,fw_1)$};
      \node(3) at (6,-2.5) 
      {$\displaystyle\coprod_{\substack{v_0,\dots,v_n\in\ob{H}\\
      v_0=fx,v_n=fy}}H(v_{n-1},v_n)\times\dots\times H(v_0,v_1)$};
      
      \draw [->,rounded corners] (0)--(6,2.5)--(1);
      \node [labelsize] at (6,2.7) {$\sigma_{(w_0,\dots,w_n)}$};
      \draw [->] (1) to node (r)[auto,labelsize] {$(Ff)_{x,y,n}$} (3);
      \draw [->] (0) to node (l)[auto,swap,labelsize] 
            {$f_{w_{n-1},w_n}\times\dots\times f_{w_0,w_1}$} (2);
      \draw [->,rounded corners] (2)--(0,-2.5)--(3); 
      \node [labelsize] at (0,-2.7) {$\sigma_{(fw_0,\dots,fw_n)}$};
\end{tikzpicture}
\end{equation*}
commutes, and this morphism $(Ff)_{x,y,n}$ may in turn be rewritten, using 
\begin{multline*}
\coprod_{\substack{w_0,\dots,w_n\in\ob{G}\\ 
w_0=x,w_n=y}}G(w_{n-1},w_n)\times\dots\times G(w_0,w_1) \\
\cong\coprod_{\substack{v_0,\dots,v_n\in\ob{H}\\ v_0=fx,v_n=fy}}
\coprod_{\substack{w_0,\dots,w_n\in\ob{G}\\ w_0=x,w_n=y\\ 
f(w_i)=v_i}}G(w_{n-1},w_n)\times\dots\times G(w_0,w_1),
\end{multline*}
as 
\begin{equation}
\label{eqn:Ffxyn_via_Ffxynvv}
(Ff)_{x,y,n}=\coprod_{\substack{v_0,\dots,v_n\in \ob{H}\\ 
v_0=fx,v_n=fy}}(Ff)_{x,y,n,v_0,\dots,v_n},
\end{equation}
where 
\begin{equation}
\label{eqn:Ffxyvnn_as_f}
(Ff)_{x,y,n,v_0,\dots,v_n}=[f_{w_{n-1},w_n}\times\dots\times 
f_{w_0,w_1}]_{\substack{w_0,\dots,w_n\in\ob{G}\\
 w_0=x,w_n=y\\ f(w_i)=v_i}}.
\end{equation}

\begin{proposition}\label{prop:free_V_cat_pres_pb}
If $\cat{V}$ is an extensive category with finite limits, then the functor
$F\colon\enGph{\cat{V}}\longrightarrow\enCat{\cat{V}}$ given in
Proposition~\ref{prop:free_V_cat} preserves pullbacks.
\end{proposition}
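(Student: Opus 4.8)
The plan is to verify pullback-preservation fibrewise, reducing everything to the two facts about extensive categories established above: that a coproduct of pullback squares is again a pullback (Proposition~\ref{prop:ext_coprod_of_pbs}), and that pulling back distributes over coproducts (Corollary~\ref{cor:ext_pb_coprod}). I would start from a pullback square of $\cat{V}$-graphs with vertices $P,A,B,X$ and maps $h,k,f,g$ as in Proposition~\ref{prop:pb_in_V_gph_and_V_cat}. Since $F$ is the identity on objects and $Ff$ agrees with $f$ on objects, the object-set square obtained after applying $F$ coincides with the original one, hence is a pullback in $\Set$. By Proposition~\ref{prop:pb_in_V_gph_and_V_cat} it therefore suffices to show, for each pair $p_1,p_2\in\ob{P}$ (with the induced $a_i=h(p_i)$, $b_i=k(p_i)$ and $x_i=f(a_i)=g(b_i)$), that the square with vertices $(FP)(p_1,p_2)$, $(FB)(b_1,b_2)$, $(FA)(a_1,a_2)$, $(FX)(x_1,x_2)$ and maps induced by $h,k,f,g$ is a pullback in $\cat{V}$.

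First I would split this hom-square along path length. Because $F$ preserves the length of a path, by (\ref{eqn:Ffxy_via_Ffxyn}) each of the four maps is the coproduct over $n\in\NN$ of its length-$n$ component $(F-)_{-,-,n}$; thus the hom-square is the $\NN$-indexed coproduct of the squares built from the objects $(FP)(p_1,p_2)_n$, $(FB)(b_1,b_2)_n$, etc. By Proposition~\ref{prop:ext_coprod_of_pbs} it is then enough to prove that each length-$n$ square is a pullback.

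Next, I would fix $n$ and split the length-$n$ square according to the image object-path in $X$. Using the decomposition (\ref{eqn:Ffxyn_via_Ffxynvv}), group the coproduct defining $(FA)(a_1,a_2)_n$ (and likewise those defining $(FB)(b_1,b_2)_n$ and $(FP)(p_1,p_2)_n$) by the object-path $\vec\xi=(\xi_0,\dots,\xi_n)$ in $X$ onto which it maps, setting $A^{\vec\xi}=\coprod_{\vec\alpha\mapsto\vec\xi} A(\alpha_{n-1},\alpha_n)\times\cdots\times A(\alpha_0,\alpha_1)$ and analogously $B^{\vec\xi}$ and $P^{\vec\xi}$; the bottom-right corner $(FX)(x_1,x_2)_n$ is exactly $\coprod_{\vec\xi} X_{\vec\xi}$, where $X_{\vec\xi}=X(\xi_{n-1},\xi_n)\times\cdots\times X(\xi_0,\xi_1)$. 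The length-$n$ square is then the coproduct over $\vec\xi$ of the squares with corners $P^{\vec\xi},A^{\vec\xi},B^{\vec\xi},X_{\vec\xi}$, so again by Proposition~\ref{prop:ext_coprod_of_pbs} I am reduced to showing that each of these $\vec\xi$-squares is a pullback, now over the \emph{single} base $X_{\vec\xi}$.

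Finally, this last step combines the two hypotheses with Corollary~\ref{cor:ext_pb_coprod}(2). Since the object-square is a pullback in $\Set$, the object-paths in $P$ lying over $\vec\xi$ correspond bijectively to pairs $(\vec\alpha,\vec\beta)$ of object-paths in $A$ and $B$ both lying over $\vec\xi$ (each pair $(\alpha_i,\beta_i)$ with $f(\alpha_i)=\xi_i=g(\beta_i)$ determining a unique $w_i\in\ob{P}$); and since each original hom-square $P(w_{i-1},w_i)\cong A(\alpha_{i-1},\alpha_i)\times_{X(\xi_{i-1},\xi_i)}B(\beta_{i-1},\beta_i)$ is a pullback and products commute with pullbacks, the corresponding summand $P_{\vec w}$ is isomorphic to $A_{\vec\alpha}\times_{X_{\vec\xi}}B_{\vec\beta}$ (this is where (\ref{eqn:Ffxyvnn_as_f}) identifies the relevant maps). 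Corollary~\ref{cor:ext_pb_coprod}(2), applied with base $X_{\vec\xi}$ and the two families $(A_{\vec\alpha})_{\vec\alpha\mapsto\vec\xi}$ and $(B_{\vec\beta})_{\vec\beta\mapsto\vec\xi}$, then yields $A^{\vec\xi}\times_{X_{\vec\xi}}B^{\vec\xi}\cong\coprod_{\vec\alpha,\vec\beta}A_{\vec\alpha}\times_{X_{\vec\xi}}B_{\vec\beta}\cong P^{\vec\xi}$ compatibly with the projections, which exhibits the $\vec\xi$-square as a pullback. I expect the main obstacle to be purely organisational: keeping the three nested coproduct decompositions (over $n$, over $\vec\xi$, and over $(\vec\alpha,\vec\beta)$) aligned so that the maps $(F-)_{\dots}$ really are the coproducts of the component maps claimed at each stage, which is exactly what the formulas (\ref{eqn:Ffxy_via_Ffxyn})--(\ref{eqn:Ffxyvnn_as_f}) are there to guarantee.
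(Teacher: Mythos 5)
Your proposal is correct and follows essentially the same route as the paper's proof: reduction to hom-squares via Proposition~\ref{prop:pb_in_V_gph_and_V_cat}, the two-stage coproduct decomposition (first by path length via (\ref{eqn:Ffxy_via_Ffxyn}), then by the image object-path in $X$ via (\ref{eqn:Ffxyn_via_Ffxynvv})) with Proposition~\ref{prop:ext_coprod_of_pbs} applied at each stage, and the final step combining the bijection of $P$-paths with pairs of $A$- and $B$-paths, the commutation of pullbacks with products, and Corollary~\ref{cor:ext_pb_coprod}(2). The only difference is presentational: you assemble the final isomorphism $A^{\vec\xi}\times_{X_{\vec\xi}}B^{\vec\xi}\cong P^{\vec\xi}$ directly, whereas the paper phrases the same step as a reduction to single-path squares.
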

\begin{proof}
Suppose we have a pullback
\begin{equation*}
\begin{tikzpicture}[baseline=-\the\dimexpr\fontdimen22\textfont2\relax ]
      \node(0) at (0,1) {$P$};
      \node(1) at (2,1) {$B$};
      \node(2) at (0,-1) {$A$};
      \node(3) at (2,-1) {$X$};
      
      \draw [->] 
            (0) to node (t)[auto,labelsize] {$k$} 
            (1);
      \draw [->] 
            (1) to node (r)[auto,labelsize] {$g$} 
            (3);
      \draw [->] 
            (0) to node (l)[auto,swap,labelsize] {$h$} 
            (2);
      \draw [->] 
            (2) to node (b)[auto,swap,labelsize] {$f$} 
            (3);  
     \draw (0.2,0.4) -- (0.6,0.4) -- (0.6,0.8);
\end{tikzpicture}
\end{equation*}
in $\enGph{\cat{V}}$.
Since $F$ does nothing on the set of objects, by 
Proposition~\ref{prop:pb_in_V_gph_and_V_cat} it suffices to show 
that for any pair $p=(a,b),p'=(a',b')\in\ob{P}$ with $f(a)=g(b)=x$
and $f(a')=g(b')=x'$, the square 
\begin{equation*}
\begin{tikzpicture}[baseline=-\the\dimexpr\fontdimen22\textfont2\relax ]
      \node(0) at (0,1) {$(FP)(p,p')$};
      \node(1) at (4,1) {$(FB)(b,b')$};
      \node(2) at (0,-1) {$(FA)(a,a')$};
      \node(3) at (4,-1) {$(FX)(x,x')$};
      
      \draw [->] 
            (0) to node (t)[auto,labelsize] {$(Fk)_{p,p'}$} 
            (1);
      \draw [->] 
            (1) to node (r)[auto,labelsize] {$(Fg)_{b,b'}$} 
            (3);
      \draw [->] 
            (0) to node (l)[auto,swap,labelsize] {$(Fh)_{p,p'}$} 
            (2);
      \draw [->] 
            (2) to node (b)[auto,swap,labelsize] {$(Ff)_{a,a'}$} 
            (3);  

\end{tikzpicture}
\end{equation*}
is a pullback in $\cat{V}$.
Recall that 
\[
(FP)(p,p')= \coprod_{n\in\NN}(FP)(x,y)_n= 
\coprod_{n\in\N}\coprod_{\substack{p_0,\dots,p_n\in\ob{P}\\
p_0=p,p_n=p'}}P(p_{n-1},p_n)\times\dots\times P(p_0,p_1),
\]
and similarly for other objects in the above diagram.
Decomposing the morphisms by (\ref{eqn:Ffxy_via_Ffxyn}), we may apply
Proposition~\ref{prop:ext_coprod_of_pbs} and now it suffices 
to show that for each $n\in\N$, the square 
\begin{equation*}
\begin{tikzpicture}[baseline=-\the\dimexpr\fontdimen22\textfont2\relax ]
      \node(0) at (0,1) {$(FP)(p,p')_n$};
      \node(1) at (4,1) {$(FB)(b,b')_n$};
      \node(2) at (0,-1){$(FA)(a,a')_n$};
      \node(3) at (4,-1){$(FX)(x,x')_n$};
      
      \draw [->] 
            (0) to node (t)[auto,labelsize] {$(Fk)_{p,p',n}$} 
            (1);
      \draw [->] 
            (1) to node (r)[auto,labelsize] {$(Fg)_{b,b',n}$} 
            (3);
      \draw [->] 
            (0) to node (l)[auto,swap,labelsize] {$(Fh)_{p,p',n}$} 
            (2);
      \draw [->] 
            (2) to node (b)[auto,swap,labelsize] {$(Ff)_{a,a',n}$} 
            (3);  

\end{tikzpicture}
\end{equation*}
is a pullback.
Decomposing the morphisms by (\ref{eqn:Ffxyn_via_Ffxynvv}) and applying 
Proposition~\ref{prop:ext_coprod_of_pbs} once again, we see that it suffices to 
show that for each $n\in\N$ and $x_0,\dots,x_n\in\ob{X}$ with $x_0=x$ and 
$x_n=x'$, the square
\[
\begin{tikzpicture}[baseline=-\the\dimexpr\fontdimen22\textfont2\relax ]
      \node(0) at (0,2.5) {$\displaystyle 
      \coprod_{\substack{p_0,\dots,p_n\in\ob{P}\\
            p_0=p,p_n=p'\\f\circ h(p_i)=x_i}}P(p_{n-1},p_n)\times\dots\times 
            P(p_0,p_1)$};
      \node(1) at (6,1) 
      {$\displaystyle \coprod_{\substack{b_0,\dots,b_n\in\ob{B}\\
                  b_0=b,b_n=b'\\g(b_i)=x_i}}B(b_{n-1},b_n)\times\dots\times 
                  B(b_0,b_1)$};
      \node(2) at (0,-1) {$\displaystyle 
      \coprod_{\substack{a_0,\dots,a_n\in\ob{A}\\
                  a_0=a,a_n=a'\\f(a_i)=x_i}}A(a_{n-1},a_n)\times\dots\times 
                  A(a_0,a_1)$};
      \node(3) at (6,-2.5) 
      {$X(x_{n-1},x_n)\times\dots\times X(x_0,x_1)$};
      
      \draw [->,rounded corners] (0)--(6,2.5)--(1);
      \node [labelsize] at (6,3.2) 
      {$\displaystyle\coprod_{\substack{b_0,\dots,b_n\in\ob{B}\\
      b_0=b,b_n=b'\\g(b_i)=x_i}} (Fk)_{p,p',n,b_0,\dots,b_n}$};
      \draw [->] (1) to node (r)[auto,labelsize] 
      {$(Fg)_{b,b',n,x_0,\dots,x_n}$} (3);
      \draw [->] (0) to node (l)[auto,swap,labelsize] 
      {$\displaystyle\coprod_{\substack{a_0,\dots,a_n\in\ob{A}\\
      a_0=a,a_n=a'\\f(a_i)=x_i}} (Fh)_{p,p',n,a_0,\dots,a_n}$} (2);
      \draw [->,rounded corners] (2)--(0,-2.5)--(3); 
      \node [labelsize] at (0,-2.7) {$(Ff)_{a,a',n,x_0,\dots,x_n}$};
\end{tikzpicture}
\]
is a pullback.
Writing the indexing sets of the coproducts appearing in the above diagram as 
\begin{align*}
I&=\{\,(a_0,\dots,a_n)\mid a_i\in\ob{A}, a_0=a,a_n=a',f(a_i)=x_i\,\},\\
J&=\{\,(b_0,\dots,b_n)\mid b_i\in\ob{B}, b_0=b,b_n=b',g(b_i)=x_i\,\},\\
K&=\{\,(p_0,\dots,p_n)\mid p_i\in\ob{P}, p_0=p,p_n=p',f\circ h(p_i)=x_i\,\},
\end{align*}
we have $I\times J\cong K$ by the description of $\ob{P}$ as a pullback.
Using (\ref{eqn:Ffxyvnn_as_f}) and the second clause of
Corollary~\ref{cor:ext_pb_coprod}, 
it suffices to show that for any $n,x_i,a_i,b_i,p_i$ with $p_i=(a_i,b_i)$,
$f(a_i)=g(b_i)=x_i$, the square
\begin{equation*}
\begin{tikzpicture}[baseline=-\the\dimexpr\fontdimen22\textfont2\relax ]
      \node(0) at (0,1) {$P(p_{n-1},p_n)\times\dots\times 
      P(p_0,p_1)$};
      \node(1) at (8,1) {$B(b_{n-1},b_n)\times\dots\times 
            B(b_0,b_1)$};
      \node(2) at (0,-1) {$A(a_{n-1},a_n)\times\dots\times 
            A(a_0,a_1)$};
      \node(3) at (8,-1) {$ X(x_{n-1},x_n)\times\dots\times X(x_0,x_1)$};
      
      \draw [->] 
            (0) to node (t)[auto,labelsize] 
            {$k_{p_{n-1},p_n}\times\dots\times k_{p_0,p_1}$} 
            (1);
      \draw [->] 
            (1) to node (r)[midway,fill=white,labelsize] 
            {$g_{b_{n-1},b_n}\times\dots\times g_{b_0,b_1}$} 
            (3);
      \draw [->] 
            (0) to node (l)[midway,fill=white,labelsize] 
            {$h_{p_{n-1},p_n}\times\dots\times h_{p_0,p_1}$} 
            (2);
      \draw [->] 
            (2) to node (b)[auto,swap,labelsize] 
            {$f_{a_{n-1},a_n}\times\dots\times f_{a_0,a_1}$} 
            (3);  
\end{tikzpicture}
\end{equation*}
is a pullback.
This follows from the fact that each $P(p_i,p_{i+1})$
is the pullback of $A(a_i,a_{i+1})$ and $B(b_i,b_{i+1})$
over $X(x_i,x_{i+1})$, as pullbacks commute with products.
\end{proof}

\begin{proposition}\label{prop:forgetful_V_cat_coprod}
If $\cat{V}$ has a strict initial object $0$ and finite products,
then the categories $\enGph{\cat{V}}$ and $\enCat{\cat{V}}$
admit small coproducts and the forgetful functor
$U\colon \enCat{\cat{V}}\longrightarrow\enGph{\cat{V}}$
preserves small coproducts.
\end{proposition}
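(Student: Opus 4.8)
The plan is to reduce the statement to the explicit descriptions of coproducts already extracted in the proofs of Propositions~\ref{prop:VGph_ext} and \ref{prop:VCat_ext}, so that almost nothing new needs to be constructed. First, the existence of small coproducts is immediate: since $\cat{V}$ has a strict initial object, Proposition~\ref{prop:VGph_ext} gives that $\enGph{\cat{V}}$ is extensive, and since $\cat{V}$ moreover has finite products, Proposition~\ref{prop:VCat_ext} gives that $\enCat{\cat{V}}$ is extensive; as every extensive category has all small coproducts by definition, both categories admit them under the present hypotheses. Thus the real content is the preservation claim, and I would spend the rest of the argument on that.

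For preservation, I would write out the coproduct of a family $(\cat{C}_i)_{i\in I}$ of $\cat{V}$-categories using the formula from the proof of Proposition~\ref{prop:VCat_ext}, namely $\ob{\coprod_{i\in I}\cat{C}_i}=\coprod_{i\in I}\ob{\cat{C}_i}$ together with the hom-object $\cat{C}_i(x,x')$ on the diagonal blocks $i=i'$ and the strict initial object $0$ on the off-diagonal blocks. The key observation is that, after forgetting the composition laws and identity elements, this underlying $\cat{V}$-graph coincides \emph{on the nose} with the coproduct $\coprod_{i\in I}U\cat{C}_i$ formed in $\enGph{\cat{V}}$ as in the proof of Proposition~\ref{prop:VGph_ext}. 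I would then check that the coprojections agree: the $i$-th coprojection $\sigma_i\colon\cat{C}_i\longrightarrow\coprod_{j\in I}\cat{C}_j$ is the $\cat{V}$-functor acting as $x\longmapsto(i,x)$ on objects and as the identity on each hom-object, so its image $U\sigma_i$ is precisely the $i$-th coprojection of $\coprod_{j\in I}U\cat{C}_j$ in $\enGph{\cat{V}}$. Hence $U$ sends the coproduct cocone $(\sigma_i)_{i\in I}$ to the cocone $(U\sigma_i)_{i\in I}$, which is a genuine coproduct cocone in $\enGph{\cat{V}}$; this witnesses that $U$ preserves the coproduct.

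I expect no deep obstacle here, precisely because the coproduct constructions in the two categories were set up in the earlier propositions so as to agree on underlying graphs. The only point demanding care is the well-definedness of the composition law on $\coprod_{i\in I}\cat{C}_i$ in the off-diagonal blocks, which relies on the isomorphism $0\times B\cong 0$ valid for a strict initial object $0$; but this is already handled in the proof of Proposition~\ref{prop:VCat_ext}, so I would simply cite it rather than redo it. The proof therefore amounts to recording that $U$ ``commutes with the recipe'' for coproducts and verifying the coprojection cocone condition.
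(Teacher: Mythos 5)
Your proposal is correct and follows essentially the same route as the paper: the paper's proof simply observes that in both $\enGph{\cat{V}}$ and $\enCat{\cat{V}}$ small coproducts are given by the identical recipe (disjoint union of objects, with hom-objects $0$ between distinct components), citing the proofs of Propositions~\ref{prop:VGph_ext} and \ref{prop:VCat_ext}, so that $U$ evidently preserves them. Your write-up just makes explicit the details the paper leaves implicit (the matching of coprojections and the role of $0\times B\cong 0$ for the composition law).
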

\begin{proof}
In both $\enGph{\cat{V}}$ and $\enCat{\cat{V}}$,
small coproducts are given by taking disjoint union of objects and
setting the hom-objects between objects from different components to be $0$
(see the proofs of Propositions~\ref{prop:VGph_ext} and \ref{prop:VCat_ext}). 
\end{proof}

\begin{proposition}\label{prop:free_V_cat_unit}
If $\cat{V}$ is an extensive category with finite limits, then
the unit $\eta\colon \id{\enGph{\cat{V}}}\Longrightarrow UF$
of the adjunction $F\dashv U$ in Proposition~\ref{prop:free_V_cat}
is cartesian.
\end{proposition}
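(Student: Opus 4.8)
The plan is to verify directly that every naturality square of $\eta$ is a pullback in $\enGph{\cat{V}}$. Fix a morphism $f\colon G\longrightarrow H$ of $\cat{V}$-graphs; the naturality square at $f$ has $\eta_G$ and $\eta_H$ as its horizontal edges and $f$ and $UFf$ as its vertical edges. By Proposition~\ref{prop:pb_in_V_gph_and_V_cat}, this square is a pullback in $\enGph{\cat{V}}$ precisely when the induced square of object-sets is a pullback in $\Set$ and, for each pair $x,y\in\ob{G}$, the induced square of hom-objects is a pullback in $\cat{V}$. The object-set square is trivial: since $F$ is the identity on objects and $\eta$ is the identity on objects, both horizontal edges are identities while both vertical edges are the object-part of $f$, so the square is automatically a pullback.

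It then remains to treat the hom-objects. Writing $x'=fx$ and $y'=fy$, the relevant square has horizontal edges
\[
(\eta_G)_{x,y}\colon G(x,y)\longrightarrow (FG)(x,y), \quad (\eta_H)_{x',y'}\colon H(x',y')\longrightarrow (FH)(x',y'),
\]
and vertical edges $f_{x,y}$ and $(Ff)_{x,y}$. Here the unit component $(\eta_G)_{x,y}$ is the coprojection $\sigma_1$ of the length-one summand $(FG)(x,y)_1$ into $(FG)(x,y)=\coprod_{n\in\NN}(FG)(x,y)_n$, and likewise for $H$; note that $(FG)(x,y)_1\cong G(x,y)$. First I would invoke the decomposition $(Ff)_{x,y}=\coprod_{n\in\NN}(Ff)_{x,y,n}$ from (\ref{eqn:Ffxy_via_Ffxyn}), exhibiting $(Ff)_{x,y}$ as the image under the coproduct functor (\ref{eqn:ext_coprod}) of the family $((Ff)_{x,y,n})_{n\in\NN}$. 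By extensivity, in the form of Proposition~\ref{prop:ext_criterion}, the pullback of such a coproduct map along the $n$-th coprojection of its codomain is exactly the component $(Ff)_{x,y,n}$; specialising to $n=1$ shows the hom-object square is a pullback, provided I identify $(Ff)_{x,y,1}$ with $f_{x,y}$ under the isomorphisms $(FG)(x,y)_1\cong G(x,y)$ and $(FH)(x',y')_1\cong H(x',y')$. This last identification follows by unwinding (\ref{eqn:Ffxyn_via_Ffxynvv}) and (\ref{eqn:Ffxyvnn_as_f}): in the length-one summand indexed by $x,y$ one is forced to take $v_0=x'$ and $v_1=y'$, and the only contributing path is the single edge from $x$ to $y$, whence $(Ff)_{x,y,1}=f_{x,y}$.

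I expect the only genuine work to lie in the extensivity step, namely justifying that pulling the graded map $(Ff)_{x,y}=\coprod_n (Ff)_{x,y,n}$ back along a single coprojection recovers precisely one summand; but this is exactly the content of (\ref{eqn:ext_coprod}) being an equivalence, whose right adjoint acts componentwise by pullback along coprojections, so no hypothesis beyond extensivity is needed. Everything else is bookkeeping with the explicit description of $F$ on morphisms recalled just before Proposition~\ref{prop:free_V_cat_pres_pb}. Assembling the trivial object-level pullback together with the hom-object pullbacks via Proposition~\ref{prop:pb_in_V_gph_and_V_cat} yields that the naturality square at $f$ is a pullback, and since $f$ was arbitrary, $\eta$ is cartesian.
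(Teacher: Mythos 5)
Your proposal is correct and takes essentially the same route as the paper's proof: reduce to the hom-object squares via Proposition~\ref{prop:pb_in_V_gph_and_V_cat}, regard $(Ff)_{x,y}$ as a coproduct of its length-graded components, apply Proposition~\ref{prop:ext_criterion} to conclude that the coprojection squares are pullbacks, and identify the length-one component with $f_{x,y}$. The only difference is organizational — you apply extensivity to the decomposition indexed by path length $n$ and then unwind the $n=1$ piece, whereas the paper passes directly to the finer decomposition indexed by $(n,v_0,\dots,v_n)$ — which changes nothing of substance.
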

\begin{proof}
By Proposition~\ref{prop:pb_in_V_gph_and_V_cat}, it 
suffices to show that for any morphism 
$f\colon G\longrightarrow H$ of $\cat{V}$-graphs,
and $x,y\in\ob{G}$,
the square
\begin{equation*}
\begin{tikzpicture}[baseline=-\the\dimexpr\fontdimen22\textfont2\relax ]
      \node(0) at (0,1) {$G(x,y)$};
      \node(1) at (4,1) {$(FG)(x,y)$};
      \node(2) at (0,-1) {$H(fx,fy)$};
      \node(3) at (4,-1) {$(FH)(fx,fy)$};
      
      \draw [->] 
            (0) to node (t)[auto,labelsize] {$\sigma_{(1,x,y)}$} 
            (1);
      \draw [->] 
            (1) to node (r)[auto,labelsize] {$(Ff)_{x,y}$} 
            (3);
      \draw [->] 
            (0) to node (l)[auto,swap,labelsize] {$f_{x,y}$} 
            (2);
      \draw [->] 
            (2) to node (b)[auto,swap,labelsize] {$\sigma_{(1,fx,fy)}$} 
            (3);  
\end{tikzpicture}
\end{equation*}
is a pullback in $\cat{V}$. 
This follows from 
Proposition~\ref{prop:ext_criterion}
(note that if we rewrite $(Ff)_{x,y}$ via (\ref{eqn:Ffxy_via_Ffxyn}) and 
(\ref{eqn:Ffxyn_via_Ffxynvv}), we have $(Ff)_{x,y,1,fx,fy}=f_{x,y}$).
\end{proof}

We only need the case $m=1$ of the following proposition in order to show
that $\monoid{T}$ is cartesian; the full generality of this stronger version 
will be needed in the next section.
\begin{proposition}\label{prop:free_V_cat_counit}
If $\cat{V}$ is an extensive category with finite limits,
then for each natural number $m$, the natural transformation 
\begin{equation*}
\begin{tikzpicture}[baseline=-\the\dimexpr\fontdimen22\textfont2\relax ]
      \node(0) at (0,0) {$(\enGph{\cat{V}})^m$};
      \node(1) at (3,0) {$(\enCat{\cat{V}})^m$};
      \node(2) at (6,0) {$\enCat{\cat{V}}$};
      \node(3) at (8,1) {$\enGph{\cat{V}}$};
      \node(4) at (10,0) {$\enCat{\cat{V}},$};
      
      \draw [->] 
            (0) to node (t)[auto,labelsize] {$F^m$} 
            (1);
      \draw [->] 
            (1) to node (r)[auto,labelsize] {$\prod$} 
            (2);
      \draw [->,bend left=20] 
            (2) to node (l)[auto,labelsize] {$U$} 
            (3);
      \draw [->,bend left=20] 
            (3) to node (b)[auto,labelsize] {$F$} 
            (4);  
      \draw [->] 
            (2) to node (b)[auto,swap,labelsize] {$\id{\enCat{\cat{V}}}$} 
            (4);  

	\draw [2cell] (8,0.9) to node[auto,labelsize] {$\varepsilon$}  
	(8,0.1);          
\end{tikzpicture}
\end{equation*}
where $\varepsilon$ is the counit of the adjunction $F\dashv U$
in Proposition~\ref{prop:free_V_cat}
and $\prod$ is the $m$-ary product functor,
is cartesian.
\end{proposition}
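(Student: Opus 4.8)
The plan is to reduce the claim to a pullback condition on hom-objects and then settle it by a combinatorial decomposition controlled by the extensivity of $\cat{V}$, in the same spirit as the proof of Proposition~\ref{prop:free_V_cat_pres_pb}. Although only the case $m=1$ is needed to conclude that $\monoid{T}$ is cartesian, the argument works verbatim for general $m$, so I would prove the general statement. Fix a morphism $(f_i)_i\colon (G_i)_i\longrightarrow (H_i)_i$ in $(\enGph{\cat{V}})^m$, write $\cat{A}=\prod_i FG_i$, $\cat{B}=\prod_i FH_i$ and $\phi=\prod_i Ff_i\colon\cat{A}\longrightarrow\cat{B}$. The component of the displayed natural transformation at $(G_i)_i$ is $\varepsilon_\cat{A}\colon FU\cat{A}\longrightarrow\cat{A}$, and its naturality square against $(f_i)_i$ is the square relating $FU\cat{A},FU\cat{B},\cat{A},\cat{B}$ through $FU\phi$ (top), $\phi$ (bottom), $\varepsilon_\cat{A}$ and $\varepsilon_\cat{B}$ (sides). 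Since $FU$, $\prod$ and $F^m$ all act as a power of the identity on underlying object-sets, Proposition~\ref{prop:pb_in_V_gph_and_V_cat} reduces the problem to showing that, for each pair $a=(a_i)_i,a'=(a'_i)_i\in\ob{\cat{A}}$ (with $b=\phi(a)$, $b'=\phi(a')$), the induced square of hom-objects in $\cat{V}$ is a pullback.

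I would then compute the pullback $P=\cat{A}(a,a')\times_{\cat{B}(b,b')}(FU\cat{B})(b,b')$ explicitly and exhibit a canonical isomorphism $(FU\cat{A})(a,a')\cong P$ compatible with the structure maps. First decompose $(FU\cat{B})(b,b')$ as the coproduct, over the outer length $n$ and the intermediate objects $c_\bullet\in\ob{\cat{B}}$, of the products $\prod_{j}\cat{B}(c_{j-1},c_j)$, on which $\varepsilon_\cat{B}$ restricts to the $n$-fold composition map. Pulling this coproduct back along $\phi$ by Corollary~\ref{cor:ext_pb_coprod} turns $P$ into a coproduct over $(n,c_\bullet)$ of the pullbacks of these composition maps against $\phi$. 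Because both $\cat{B}(c_{j-1},c_j)=\prod_i(FH_i)(\cdots)$ and $\phi=\prod_i Ff_i$ are products over $i$, and pullbacks commute with products, each summand factors as a product over $i$ of one-variable pullbacks $R_{n,c_\bullet,i}$.

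The heart of the argument is the computation of each $R_{n,c_\bullet,i}$. The key observation is that, once the factors $(FH_i)(c_{j-1,i},c_{j,i})$ are resolved into their path components, the composition map $(FH_i)(c_{n-1,i},c_{n,i})\times\cdots\times(FH_i)(c_{0,i},c_{1,i})\longrightarrow(FH_i)(b_i,b'_i)$ is, on each component, the coprojection picking out the concatenated path, and globally a codiagonal onto the sub-coproduct of paths admitting the prescribed breakpoints through $c_\bullet$. Using the extensive-category fact that $\cat{V}/\coprod_\ell X_\ell\simeq\prod_\ell\cat{V}/X_\ell$ (so pullbacks over a coproduct are computed grade-wise) together with Corollary~\ref{cor:ext_pb_coprod} and Proposition~\ref{prop:ext_coprod_of_pbs}, the pullback of this codiagonal against the length-preserving map $Ff_i$ reduces to a coproduct, indexed by paths $w$ in $G_i$ from $a_i$ to $a'_i$ equipped with breakpoints $0=p_0\le\cdots\le p_n=|w|$ satisfying $f_i(w_{p_j})=c_{j,i}$, of the edge-object products $\prod_k G_i(w_{k-1},w_k)$. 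Distributing the product over $i$ across these coproducts by Proposition~\ref{prop:extensive_prod} and then taking the coproduct over $(n,c_\bullet)$, the intermediate objects become determined via $c_{j,i}=f_i(w^{(i)}_{p^{(i)}_j})$, so $P$ acquires exactly the coproduct-of-products description obtained by the analogous path decomposition of $(FU\cat{A})(a,a')$; one then checks that the comparison map $\langle\varepsilon_\cat{A},FU\phi\rangle$ realizes this isomorphism.

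I expect the main obstacle to be essentially bookkeeping: organizing the nested coproduct/product decompositions so that Propositions~\ref{prop:ext_criterion}, \ref{prop:ext_coprod_of_pbs}, \ref{prop:extensive_prod} and Corollary~\ref{cor:ext_pb_coprod} apply cleanly, and in particular correctly handling length-zero (empty) inner paths, which is where the freedom in the breakpoints $p_\bullet$ lives and where the concatenation map fails to be injective. The conceptual content — that $\varepsilon$ compares a \emph{path of tuples of paths} with the tuple of its coordinatewise concatenations, and that reconstructing a grouping of a $G_i$-path from a grouping of its $H_i$-image is unique once the break positions are fixed — is straightforward; turning each reindexing step into an honest consequence of the extensivity lemmas is the laborious part.
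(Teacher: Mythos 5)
Your proposal is correct and takes essentially the same approach as the paper's proof: reduce to hom-object squares via Proposition~\ref{prop:pb_in_V_gph_and_V_cat}, then decompose using Proposition~\ref{prop:extensive_prod}, Proposition~\ref{prop:ext_coprod_of_pbs} and Corollary~\ref{cor:ext_pb_coprod}, with the path/breakpoint combinatorics (including the repeated-breakpoint case for empty inner paths) supplying the component-wise identification. The only divergence is bookkeeping: the paper first resolves the common codomain $\prod_{i}(FH^{(i)})(f^{(i)}x^{(i)},f^{(i)}y^{(i)})$ into its concatenated-path components (the set $I$) and then verifies each component square is a pullback in a single application of Corollary~\ref{cor:ext_pb_coprod}, whereas you first decompose the domain of $\varepsilon$ over the outer path structure, factor over the $m$ coordinates, and compute the pullback explicitly before comparing with $(FU\prod_i FG^{(i)})(x,y)$ --- the same combinatorics traversed in the opposite order.
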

\begin{proof}
Let $f=(f^{(1)},\dots, f^{(m)})\colon (G^{(1)},\dots,G^{(m)})\longrightarrow
(H^{(1)},\dots,H^{(m)})$
be a morphism in $(\enGph{\cat{V}})^m$. 
Our aim is to show that the square 
\begin{equation*}
\begin{tikzpicture}[baseline=-\the\dimexpr\fontdimen22\textfont2\relax ]
      \node(0) at (0,1) {$FU(FG^{(1)}\times \dots \times FG^{(m)})$};
      \node(1) at (7,1) {$FG^{(1)}\times \dots \times FG^{(m)}$};
      \node(2) at (0,-1) {$FU(FH^{(1)}\times \dots \times FH^{(m)})$};
      \node(3) at (7,-1) {$FH^{(1)}\times \dots \times FH^{(m)}$};
      
      \draw [->] 
            (0) to node (t)[auto,labelsize] {${\varepsilon_{FG^{(1)}\times 
            \dots \times FG^{(m)}}}$} 
            (1);
      \draw [->] 
            (1) to node (r)[auto,labelsize] {${Ff^{(1)}\times \dots \times 
            Ff^{(m)}}$} 
            (3);
      \draw [->] 
            (0) to node (l)[auto,swap,labelsize] {${FU(Ff^{(1)}\times \dots 
            \times Ff^{(m)})}$} 
            (2);
      \draw [->] 
            (2) to node (b)[auto,swap,labelsize] 
            {${\varepsilon_{FH^{(1)}\times \dots \times FH^{(m)}}}$} 
            (3);  
            
\end{tikzpicture}
\end{equation*}
in $\enCat{\cat{V}}$ is a pullback.
By Proposition~\ref{prop:pb_in_V_gph_and_V_cat} it suffices 
to show that for every pair of objects 
$x=(x^{(1)},\dots,x^{(m)}),y=(y^{(1)},\dots,y^{(m)})\in\ob{FU(FG^{(1)}\times 
\dots \times FG^{(m)})}=\ob{G^{(1)}}\times \dots \times \ob{G^{(m)}}$, the 
square
\begin{equation}
\label{eqn:homs_complicated}
\begin{tikzpicture}[baseline=-\the\dimexpr\fontdimen22\textfont2\relax ]
      \node(0) at (0,1) 
      {$\big(FU\prod_{i=1}^{m}(FG^{(i)})\big)(x,y)$};
      \node(1) at (6.5,1) {$\prod_{i=1}^{m}(FG^{(i)})(x^{(i)},y^{(i)})$};
      \node(2) at (0,-1) 
      {$\big(FU\prod_{i=1}^{m}(FH^{(i)})\big)(fx,fy)$};
      \node(3) at (6.5,-1) 
      {$\prod_{i=1}^{m}(FH^{(i)})(f^{(i)}x^{(i)},f^{(i)}y^{(i)})$};
      
      \draw [->] 
            (0) to node (t)[auto,labelsize] {$$} 
            (1);
      \draw [->] 
            (1) to node (r)[auto,labelsize] {$$} 
            (3);
      \draw [->] 
            (0) to node (l)[auto,swap,labelsize] {$$} 
            (2);
      \draw [->] 
            (2) to node (b)[auto,swap,labelsize] 
            {$$} 
            (3);  
            
\end{tikzpicture}
\end{equation}
in $\cat{V}$ is a pullback.
Using Proposition~\ref{prop:extensive_prod}, we may rewrite the bottom right 
object $\prod_{i=1}^{m}(FH^{(i)})(f^{(i)}x^{(i)},f^{(i)}y^{(i)})$
as a coproduct of products. Precisely, we define an (indexing) set $I$ to be 
\begin{multline*}
I=\{\,(n_1,v^{(1)}_0,\dots, v^{(1)}_{n_1},\dots, n_m, 
v^{(m)}_0,\dots,v^{(m)}_{n_m})\\
\mid n_i\in \NN, v^{(i)}_j\in\ob{H^{(i)}}, v^{(i)}_0=f^{(i)}x^{(i)},
v^{(i)}_{n_i}=f^{(i)}y^{(i)} \,\}.
\end{multline*}
Then $\prod_{i=1}^{m}(FH^{(i)})(f^{(i)}x^{(i)},f^{(i)}y^{(i)})$ is isomorphic to
\begin{multline}
\label{eqn:large_hom_in_proof}
\coprod_{I} H^{(1)}(v^{(1)}_{n_1-1},v^{(1)}_{n_1})\times\dots\times 
H^{(1)}(v^{(1)}_{0},v^{(1)}_{1})\\ \times\dots\times 
H^{(m)}(v^{(m)}_{n_m-1},v^{(m)}_{n_m})\times\dots\times 
H^{(m)}(v^{(m)}_{0},v^{(m)}_{1}).
\end{multline}
We may now decompose the diagram (\ref{eqn:homs_complicated}) into coproducts 
over $I$ and apply Proposition~\ref{prop:ext_coprod_of_pbs}.
Fix an element $(n_1,v^{(1)}_0,\dots, v^{(1)}_{n_1},\dots, n_m, 
v^{(m)}_0,\dots,v^{(m)}_{n_m})\in I$,
and introduce new indexing sets 
\begin{align*}
J&=\{\,(k,j^{(1)}_1,\dots,j^{(1)}_{k-1},\dots, j^{(m)}_1,\dots,j^{(m)}_{k-1})\\
&\qquad\qquad\qquad\mid k, j^{(i)}_l\in \NN, 0\leq j^{(i)}_1\leq\dots\leq 
j^{(i)}_{k-1}\leq 
n_i\,\},\\
K&=\{\, (w^{(1)}_0,\dots,w^{(1)}_{n_1},\dots,w^{(m)}_0,\dots,w^{(m)}_{n_m})\\
&\qquad\qquad\qquad\mid w^{(i)}_j\in\ob{G^{(i)}},w^{(i)}_0=x^{(i)}, 
w^{(i)}_{n_i}=y^{(i)}, 
f^{(i)}w^{(i)}_j=v^{(i)}_j\,\}.
\end{align*}
It suffices to show that the square 
\[
\begin{tikzpicture}[baseline=-\the\dimexpr\fontdimen22\textfont2\relax ]
      \node(0) at (0,2.5) {$\displaystyle 
      \coprod_J \coprod_K
      G^{(1)}(w^{(1)}_{n_1-1},w^{(1)}_{n_1})\times\dots\times 
      G^{(m)}(w^{(m)}_{0},w^{(m)}_{1})$};
      \node(1) at (6,1) 
      {$\displaystyle 
      \coprod_K
      G^{(1)}(w^{(1)}_{n_1-1},w^{(1)}_{n_1})\times\dots\times 
      G^{(m)}(w^{(m)}_{0},w^{(m)}_{1})$};
      \node(2) at (0,-1) {$\displaystyle \coprod_J
      H^{(1)}(v^{(1)}_{n_1-1},v^{(1)}_{n_1})\times\dots\times 
      H^{(m)}(v^{(m)}_{0},v^{(m)}_{1})$};
      \node(3) at (6,-2.5) 
      {$H^{(1)}(v^{(1)}_{n_1-1},v^{(1)}_{n_1})\times\dots\times 
      H^{(m)}(v^{(m)}_{0},v^{(m)}_{1})$};
      
      \draw [->,rounded corners] (0)--(6,2.5)--(1);
      \node [labelsize] at (6,2.7) 
      {$\coprod_K [\id{}]_J$};
      \draw [->] (1) to node (r)[midway,fill=white,labelsize,xshift={30pt}] 
      {$[f^{(1)}_{w^{(1)}_{n_1-1},w^{(1)}_{n_1}}\times \dots \times 
      f^{(m)}_{w^{(m)}_{0},w^{(m)}_{1}}]_K$} (3);
      \draw [->] (0) to node (l)[midway,fill=white,labelsize,xshift={-30pt}] 
      {$\coprod_J[f^{(1)}_{w^{(1)}_{n_1-1},w^{(1)}_{n_1}}\times \dots \times 
            f^{(m)}_{w^{(m)}_{0},w^{(m)}_{1}}]_K$} (2);
      \draw [->,rounded corners] (2)--(0,-2.5)--(3); 
      \node [labelsize] at (0,-2.7) {$[\id{}]_{J}$};
\end{tikzpicture}
\]
is a pullback,
which follows from the second clause of Corollary~\ref{cor:ext_pb_coprod}.
\end{proof}

\begin{thm}
\label{thm:free_V_cat_monad_cart}
If $\cat{V}$ is an extensive category with finite limits, then the 
free $\cat{V}$-category monad $\monoid{T}=(T,\eta,\mu)$ on $\enGph{\cat{V}}$ is 
cartesian.
\end{thm}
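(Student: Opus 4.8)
The plan is to verify the three defining conditions of a cartesian monad directly, exploiting the fact that $\monoid{T}$ arises from the adjunction $F\dashv U$ of Proposition~\ref{prop:free_V_cat}. Writing $\varepsilon$ for the counit of that adjunction, we have $T=UF$ and $\mu=U\varepsilon F$, while the monad unit is the adjunction unit $\eta$. Thus it suffices to show (i) that $T$ preserves pullbacks, (ii) that $\eta$ is cartesian, and (iii) that $\mu$ is cartesian. The substantive work has already been carried out in the preceding propositions, and the present proof amounts to assembling them.

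First I would handle condition (i). By Proposition~\ref{prop:free_V_cat_pres_pb}, $F$ preserves pullbacks. Moreover, the forgetful functor $U\colon\enCat{\cat{V}}\longrightarrow\enGph{\cat{V}}$ preserves pullbacks: by Proposition~\ref{prop:pb_in_V_gph_and_V_cat}, a square in $\enCat{\cat{V}}$ is a pullback precisely when the induced square on objects (in $\Set$) and the induced squares on hom-objects (in $\cat{V}$) are pullbacks, and the very same description characterises pullbacks in $\enGph{\cat{V}}$. Since $U$ leaves both the set of objects and the hom-objects untouched, it carries pullbacks to pullbacks. Consequently $T=UF$ preserves pullbacks.

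Condition (ii) is exactly Proposition~\ref{prop:free_V_cat_unit}. For condition (iii), I would argue that $\mu=U\varepsilon F$ is cartesian in two steps, using two elementary observations about cartesian natural transformations: precomposition by an arbitrary functor preserves cartesianness (the naturality square of $\alpha H$ at a morphism $f$ is the naturality square of $\alpha$ at $Hf$), and postcomposition by a pullback-preserving functor preserves cartesianness (the naturality square of $G\alpha$ at $f$ is the image under $G$ of the naturality square of $\alpha$ at $f$). Taking $m=1$ in Proposition~\ref{prop:free_V_cat_counit}, where $F^{1}=F$ and the unary product functor is the identity, shows that $\varepsilon\colon FU\Longrightarrow\id{\enCat{\cat{V}}}$ is itself cartesian. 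Precomposing with $F$ keeps $\varepsilon F$ cartesian, and postcomposing with the pullback-preserving functor $U$ (established in step (i)) keeps $U\varepsilon F=\mu$ cartesian. This verifies the three conditions, so $\monoid{T}$ is cartesian.

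The genuine difficulty of the result lies entirely in Propositions~\ref{prop:free_V_cat_pres_pb} and~\ref{prop:free_V_cat_counit}, whose proofs decompose the relevant hom-objects into coproducts of products of hom-objects of the original graphs and repeatedly invoke the good behaviour of coproducts in an extensive category (Proposition~\ref{prop:ext_coprod_of_pbs} and Corollary~\ref{cor:ext_pb_coprod}). Granting these, the theorem is a routine assembly; the only point demanding a moment's care is the verification that $U$ preserves pullbacks, since this underlies both step (i) and the postcomposition argument of step (iii).
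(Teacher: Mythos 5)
Your overall assembly is the same as the paper's: the paper likewise deduces that $T=UF$ preserves pullbacks from Proposition~\ref{prop:free_V_cat_pres_pb} together with pullback-preservation of $U$ (which it justifies simply by $U$ being a right adjoint, whereas you argue via the explicit description of pullbacks in Proposition~\ref{prop:pb_in_V_gph_and_V_cat}; both are fine), quotes Proposition~\ref{prop:free_V_cat_unit} for $\eta$, and handles $\mu=U\varepsilon F$ by combining the $m=1$ case of Proposition~\ref{prop:free_V_cat_counit} with pullback-preservation of $U$.

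There is, however, one genuine misstep in your treatment of $\mu$: you assert that the $m=1$ case of Proposition~\ref{prop:free_V_cat_counit} ``shows that $\varepsilon\colon FU\Longrightarrow\id{\enCat{\cat{V}}}$ is itself cartesian''. It does not. With $m=1$ the natural transformation in that proposition is the \emph{whiskering} $\varepsilon F$, with components $\varepsilon_{FG}$ indexed by $G\in\enGph{\cat{V}}$, so its cartesianness asserts pullback naturality squares only at morphisms of the form $Ff$ with $f$ a morphism of $\cat{V}$-graphs, not at arbitrary $\cat{V}$-functors. Indeed $\varepsilon$ itself is \emph{not} cartesian in general: take $\cat{V}=\Set$, let $B$ be the group $\mathbb{Z}/2\mathbb{Z}=\{\mathrm{id},\sigma\}$ viewed as a one-object category, and let $g\colon 1\longrightarrow B$ be the functor from the terminal category. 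In the naturality square of $\varepsilon$ at $g$, the pullback of $\varepsilon_B$ along $g$ has as endomorphisms of its unique object all words in $\{\mathrm{id},\sigma\}$ whose composite in $B$ is the identity, e.g.\ $(\sigma,\sigma)$, whereas $FU1\cong\NN$ supplies only the words consisting entirely of identities; the comparison map fails to be surjective, so the square is not a pullback. Fortunately the error is inessential to your proof: the argument never needs cartesianness of $\varepsilon$, only of $\varepsilon F$, which is exactly what the proposition at $m=1$ provides directly, and then your (correct) observation that postcomposition with the pullback-preserving functor $U$ preserves cartesianness yields that $\mu=U\varepsilon F$ is cartesian. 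With that one correction your proof coincides with the paper's.
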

\begin{proof}
The functor $T=UF$ preserves pullbacks since both $U$ (because it is a right 
adjoint) and $F$ (by Proposition~\ref{prop:free_V_cat_pres_pb}) do.
The unit $\eta$ is cartesian by Proposition~\ref{prop:free_V_cat_unit}.
The multiplication $\mu=U\varepsilon F$ is cartesian because $\varepsilon F$ 
is so by Proposition~\ref{prop:free_V_cat_counit} (take $m=1$), 
and $U$ preserves pullbacks.
\end{proof}

\section{The free strict $n$-dimensional $\cat{V}$-category monad
}\label{sec:n-gph_n-cat}
In this section we 
show that the forgetful functor from the category of strict
$n$-dimensional $\cat{V}$-categories to that of 
$n$-dimensional $\cat{V}$-graphs has a left adjoint. We
assume throughout that $\cat{V}$ is extensive and has finite limits.
It follows that $\enGph{\cat{V}}$ and $\enCat{\cat{V}}$ are likewise
(by Propositions~\ref{prop:VGph_ext}, \ref{prop:VCat_ext} and 
\ref{prop:pb_in_V_gph_and_V_cat}), and so, by 
induction, for each natural number $n$,
the categories $\enGph{\cat{V}}^{(n)}$ and $\enCat{\cat{V}}^{(n)}$ are
also extensive with finite limits. 

Recall that, by Propositions~\ref{prop:extensive_prod}
and~\ref{prop:free_V_cat}, the forgetful functor 
\[U\colon
\enCat{(\enCat{\cat{V}}^{(n)})}\longrightarrow\enGph{(\enCat{\cat{V}}^{(n)})}\]
admits a left adjoint $F$.

\begin{definition}
For each natural number $n$, we define an adjunction
$F^{(n)}\dashv U^{(n)}\colon
\enCat{\cat{V}}^{(n)}\longrightarrow\enGph{\cat{V}}^{(n)}$
recursively as follows:
\begin{enumerate}
\item $F^{(0)}=U^{(0)}=\id{\cat{V}}$;
\item $F^{(n+1)}\dashv U^{(n+1)}$ is the composite:
\[
\begin{tikzpicture}[baseline=-\the\dimexpr\fontdimen22\textfont2\relax ]
      \node(1) at (10,0) {$\enCat{(\enCat{\cat{V}}^{(n)})}.$};
      \node(2) at (5,0) {$\enGph{(\enCat{\cat{V}}^{(n)})}$};
      \node(3) at (0,0) {$\enGph{(\enGph{\cat{V}}^{(n)})}$};
      
      \draw [transform canvas={yshift=0.5em},<-] 
            (1) to node [auto,swap,labelsize] {$F$} 
            (2);
	\node [labelsize,rotate=-90]at (2.5,0) {$\dashv$};
      \draw [transform canvas={yshift=-0.5em},->] 
            (1) to node [auto,labelsize] {$U$} 
            (2);
      \draw [transform canvas={yshift=0.5em},<-] 
            (2) to node [auto,swap,labelsize] {$\enGph{F^{(n)}}$} 
            (3);
	\node[labelsize,rotate=-90] at (7.5,0) {$\dashv$};
      \draw [transform canvas={yshift=-0.5em},->] 
            (2) to node [auto,labelsize] {$\enGph{U^{(n)}}$} 
            (3);
\end{tikzpicture}
\] 
\end{enumerate}
\end{definition}

The adjunction $F^{(n)}\dashv U^{(n)}$ induces a monad $\monoid{T}^{(n)}=
(T^{(n)},\eta^{(n)},\mu^{(n)})$ on $\enGph{\cat{V}}^{(n)}$.
We call $\monoid{T}^{(n)}$ the \defemph{free strict $n$-dimensional 
$\cat{V}$-category monad}, and
now show that it is cartesian.

\begin{proposition}\label{prop:free_n_V_cat_pres_pb}
For each natural number $n$, 
$F^{(n)}\colon \enGph{\cat{V}}^{(n)}\longrightarrow\enCat{\cat{V}}^{(n)}$
preserves pullbacks. 
\end{proposition}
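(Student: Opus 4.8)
The plan is to proceed by induction on $n$, exploiting the recursive definition $F^{(n+1)}=F\circ\enGph{F^{(n)}}$. The base case $n=0$ is immediate, since $F^{(0)}=\id{\cat{V}}$ trivially preserves pullbacks. For the inductive step I would factor $F^{(n+1)}$ as $\enGph{F^{(n)}}$ followed by the free $\enCat{\cat{V}}^{(n)}$-category functor $F$, and show that each factor preserves pullbacks; the composite then does as well. Throughout I would use the fact, established in the text preceding the statement (via Propositions~\ref{prop:VGph_ext}, \ref{prop:VCat_ext} and~\ref{prop:pb_in_V_gph_and_V_cat}), that both $\enGph{\cat{V}}^{(n)}$ and $\enCat{\cat{V}}^{(n)}$ are again extensive with finite limits, so that the cited results apply at each stage.

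The second factor is handled directly. Since $\enCat{\cat{V}}^{(n)}$ is extensive with finite limits, Proposition~\ref{prop:free_V_cat_pres_pb} applies with base category $\enCat{\cat{V}}^{(n)}$, giving that $F\colon\enGph{(\enCat{\cat{V}}^{(n)})}\longrightarrow\enCat{(\enCat{\cat{V}}^{(n)})}$ preserves pullbacks.

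The first factor requires a small auxiliary observation that I would state and prove separately: the $2$-functor $\enGph{(-)}$ sends pullback-preserving functors to pullback-preserving functors. To see this, let $G\colon\cat{A}\longrightarrow\cat{B}$ preserve pullbacks, with $\cat{A}$ and $\cat{B}$ having finite limits. The functor $\enGph{G}$ acts as the identity on underlying object-sets and applies $G$ to each hom-object. Given a pullback square in $\enGph{\cat{A}}$, Proposition~\ref{prop:pb_in_V_gph_and_V_cat} characterises it by the condition that the induced square of object-sets is a pullback in $\Set$ and that each induced square of hom-objects is a pullback in $\cat{A}$. Applying $\enGph{G}$ leaves the object-set square unchanged and applies $G$ to each hom-object square; since $G$ preserves pullbacks, the resulting hom-object squares are pullbacks in $\cat{B}$. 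By the same characterisation, now read in $\enGph{\cat{B}}$, the image square is a pullback. Taking $G=F^{(n)}$, which preserves pullbacks by the inductive hypothesis, shows that $\enGph{F^{(n)}}$ preserves pullbacks.

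I do not expect a serious obstacle here: the argument is a routine induction once Proposition~\ref{prop:pb_in_V_gph_and_V_cat} is in hand, and the composite of two pullback-preserving functors preserves pullbacks. The one point needing care is that the auxiliary observation about $\enGph{(-)}$ must be formulated for an \emph{arbitrary} pullback-preserving functor between categories with finite limits, rather than only for $F^{(n)}$, so that the inductive hypothesis can be fed into it cleanly; the remaining bookkeeping is simply checking that the extensivity and finite-limit hypotheses persist through each application of $\enGph{(-)}$ and $\enCat{(-)}$.
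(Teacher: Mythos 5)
Your proposal is correct and follows essentially the same route as the paper: induction on $n$, factoring $F^{(n+1)}$ as $F\circ\enGph{F^{(n)}}$, handling $\enGph{F^{(n)}}$ via the pullback characterisation of Proposition~\ref{prop:pb_in_V_gph_and_V_cat} applied to the inductive hypothesis, and handling $F$ via Proposition~\ref{prop:free_V_cat_pres_pb}. The only difference is presentational: you isolate as an explicit auxiliary lemma (that $\enGph{(-)}$ sends pullback-preserving functors to pullback-preserving functors) what the paper leaves as an implicit one-line appeal to Proposition~\ref{prop:pb_in_V_gph_and_V_cat}.
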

\begin{proof}
For $n=0$, the assertion is trivial.
Proceeding inductively, 
if $F^{(n)}$ preserves pullbacks,
so does $\enGph{F^{(n)}}$ by Proposition~\ref{prop:pb_in_V_gph_and_V_cat}.
The functor $F\colon \enGph{(\enCat{\cat{V}}^{(n)})}\longrightarrow 
\enCat{(\enCat{\cat{V}}^{(n)})}$
preserves pullbacks by Proposition~\ref{prop:free_V_cat_pres_pb}.
\end{proof}

\begin{proposition}\label{prop:forgetful_n_pres_coprod}
For each natural number $n$, 
$U^{(n)}\colon \enCat{\cat{V}}^{(n)}\longrightarrow\enGph{\cat{V}}^{(n)}$
preserves small coproducts. 
\end{proposition}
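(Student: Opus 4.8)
The plan is to argue by induction on $n$, following the recursive definition of $U^{(n)}$. For the base case $n=0$ we have $U^{(0)}=\id{\cat{V}}$, which trivially preserves small coproducts. For the inductive step I would assume that $U^{(n)}$ preserves small coproducts and deduce the same for $U^{(n+1)}$. Recalling that $U^{(n+1)}$ is by definition the composite $\enGph{U^{(n)}}\circ U$, where $U\colon \enCat{(\enCat{\cat{V}}^{(n)})}\longrightarrow\enGph{(\enCat{\cat{V}}^{(n)})}$ is the ordinary forgetful functor, and that a composite of small-coproduct-preserving functors again preserves small coproducts, it suffices to treat the two factors separately.

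The factor $U$ preserves small coproducts by Proposition~\ref{prop:forgetful_V_cat_coprod}, applied with the base category taken to be $\enCat{\cat{V}}^{(n)}$; this is legitimate because, as noted at the start of this section, $\enCat{\cat{V}}^{(n)}$ is extensive with finite limits, and in particular has a strict initial object and finite products. The crux is therefore the auxiliary lemma that the $2$-functor $\enGph{(-)}$ sends a small-coproduct-preserving functor $G\colon \cat{W}\longrightarrow\cat{W}'$ (both having strict initial objects) to a small-coproduct-preserving functor $\enGph{G}$; applying this with $G=U^{(n)}$ and invoking the induction hypothesis then closes the induction.

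To prove this lemma I would unfold the explicit description of coproducts of $\cat{V}$-graphs recorded in the proof of Proposition~\ref{prop:VGph_ext}: the coproduct $\coprod_{i\in I}K_i$ has object set $\coprod_{i\in I}\ob{K_i}$, with hom-object $K_i(x,x')$ within a single component $i$ and with the initial object $0$ across distinct components. Applying $\enGph{G}$ leaves the object set and the coprojection functions untouched, replacing each within-component hom-object $K_i(x,x')$ by $G(K_i(x,x'))$ and each cross-component hom-object $0$ by $G0$. Comparing with $\coprod_{i\in I}\enGph{G}(K_i)$, which carries the same object set, the same within-component hom-objects $G(K_i(x,x'))$, and the initial object $0$ across components, the two $\cat{W}'$-graphs coincide precisely when $G0\cong 0$. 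Since $G$ preserves all small coproducts it preserves the empty one, i.e.\ the initial object, so $G0\cong 0$ and the canonical comparison morphism $\coprod_{i\in I}\enGph{G}(K_i)\longrightarrow\enGph{G}(\coprod_{i\in I}K_i)$ is an isomorphism.

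I expect this lemma to be the only genuinely substantive point; the induction itself is purely formal. The one piece needing a routine but honest check is the verification that the comparison morphism is the canonical one, carried by the coprojections $\enGph{G}(\sigma_i)$, rather than merely an abstract isomorphism of the underlying $\cat{W}'$-graphs, and this follows immediately from functoriality of $\enGph{(-)}$ together with the identity-on-objects nature of the coprojections.
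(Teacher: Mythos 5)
Your proof is correct and follows essentially the same route as the paper: induction on $n$, decomposing $U^{(n+1)}$ as $\enGph{U^{(n)}}\circ U$, handling $U$ via Proposition~\ref{prop:forgetful_V_cat_coprod}, and observing that $\enGph{U^{(n)}}$ preserves small coproducts because $U^{(n)}$ preserves initial objects. The paper states this last point in one line; your unfolding of the explicit coproduct of enriched graphs is just a more detailed account of the same argument.
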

\begin{proof}
For $n=0$, the assertion is trivial. Proceeding inductively, if
$U^{(n)}$ preserves small coproducts, it preserves initial
objects, and so the functor $\enGph{U^{(n)}}$ preserves
small coproducts. The functor $U\colon
\enCat{(\enCat{\cat{V}}^{(n)})}\longrightarrow
\enGph{(\enCat{\cat{V}}^{(n)})}$ also preserves small coproducts by
Proposition~\ref{prop:forgetful_V_cat_coprod}.
\end{proof}

\begin{proposition}\label{prop:free_n_V_cat_unit_cart}
For each natural number $n$, the unit $\eta^{(n)}\colon
\id{\enGph{\cat{V}}^{(n)}} \Longrightarrow U^{(n)}F^{(n)}$
of the adjunction $F^{(n)}\dashv U^{(n)}$ is cartesian.
\end{proposition}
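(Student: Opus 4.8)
The plan is to argue by induction on $n$, exploiting the fact that $F^{(n+1)}\dashv U^{(n+1)}$ is by definition a composite of two adjunctions, so that its unit $\eta^{(n+1)}$ decomposes accordingly. The base case $n=0$ is immediate: $F^{(0)}=U^{(0)}=\id{\cat{V}}$ and $\eta^{(0)}$ is the identity natural transformation on $\id{\cat{V}}$, whose naturality squares are degenerate and hence trivially pullbacks. For the inductive step, write $F\dashv U$ for the free/forgetful adjunction of Proposition~\ref{prop:free_V_cat} with base category $\enCat{\cat{V}}^{(n)}$, so that $F\colon\enGph{(\enCat{\cat{V}}^{(n)})}\longrightarrow\enCat{(\enCat{\cat{V}}^{(n)})}$, and let $\eta'$ denote its unit. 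Since $\enGph{(-)}$ is a 2-functor on $\tCAT$, it carries the adjunction $F^{(n)}\dashv U^{(n)}$ to the adjunction $\enGph{F^{(n)}}\dashv\enGph{U^{(n)}}$ with unit $\enGph{\eta^{(n)}}$. The standard formula for the unit of a composite adjunction then yields
\[
\eta^{(n+1)}=\big(\enGph{U^{(n)}}\,\eta'\,\enGph{F^{(n)}}\big)\circ\enGph{\eta^{(n)}}.
\]

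I would then show that each of the two factors is cartesian and invoke the pasting law for pullbacks: the vertical composite of two cartesian natural transformations is again cartesian, because the naturality rectangle of the composite at a morphism $f$ is the horizontal pasting of the two (pullback) naturality squares, and a horizontal pasting of pullback squares is a pullback.

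The key technical input consists of two stability properties of $\enGph{(-)}$, both direct consequences of the characterisation of pullbacks in $\enGph{\cat{V}}$ (Proposition~\ref{prop:pb_in_V_gph_and_V_cat}): namely, (i) $\enGph{(-)}$ sends cartesian natural transformations to cartesian ones, and (ii) $\enGph{(-)}$ sends pullback-preserving functors to pullback-preserving ones. For (i), given a cartesian $\alpha\colon P\Longrightarrow Q$, the naturality square of $\enGph{\alpha}$ at a morphism $f\colon G\longrightarrow H$ of base-graphs is, on object-sets, the trivial square on the identity functions $\ob{G}\longrightarrow\ob{H}$, and homwise at $x,y\in\ob{G}$ it is precisely the naturality square of $\alpha$ at $f_{x,y}$; Proposition~\ref{prop:pb_in_V_gph_and_V_cat} then identifies the whole square as a pullback. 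Property (ii) follows from the same objectwise-and-homwise inspection. Applying (i) to the inductive hypothesis shows $\enGph{\eta^{(n)}}$ is cartesian.

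For the remaining factor, $\eta'$ is the unit of the adjunction of Proposition~\ref{prop:free_V_cat} over $\enCat{\cat{V}}^{(n)}$, which is extensive with finite limits (by Propositions~\ref{prop:VGph_ext} and \ref{prop:VCat_ext} together with the inductive closure established earlier), so $\eta'$ is cartesian by Proposition~\ref{prop:free_V_cat_unit}. Right-whiskering by $\enGph{F^{(n)}}$ preserves cartesianness, since the naturality squares are merely reindexed along the functor, and left-whiskering by $\enGph{U^{(n)}}$ preserves cartesianness because $\enGph{U^{(n)}}$ preserves pullbacks; the latter uses (ii) together with the fact that $U^{(n)}$, being a right adjoint, preserves pullbacks. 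Hence $\enGph{U^{(n)}}\,\eta'\,\enGph{F^{(n)}}$ is cartesian, and by the pasting law $\eta^{(n+1)}$ is cartesian. The main obstacle is just the careful verification of (i) and (ii); everything else is formal 2-categorical bookkeeping with the composite-adjunction unit, the whiskering lemmas, and the pasting law. Since (i) and (ii) reduce cleanly to the objectwise/homwise description of pullbacks in $\enGph{\cat{V}}$, I expect no genuine difficulty beyond attention to the reindexing of object-sets in the relevant naturality squares.
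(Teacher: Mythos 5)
Your proposal is correct and follows essentially the same route as the paper's own proof: induction on $n$, with the inductive step resting on the observation that the unit of a composite adjunction is cartesian when the units of the two factors are (your whiskering-plus-pasting argument is exactly the content of the paper's terse remark that "adjunctions whose units are cartesian are closed under composition"), with $\enGph{\eta^{(n)}}$ handled via Proposition~\ref{prop:pb_in_V_gph_and_V_cat} and the unit of $F\dashv U$ handled via Proposition~\ref{prop:free_V_cat_unit}. The only difference is that you spell out the bookkeeping the paper leaves implicit.
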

\begin{proof}
Observe that adjunctions whose units are cartesian are closed under
composition.  
Proceeding inductively, if 
$\eta^{(n)}$ is cartesian, so is $\enGph{\eta^{(n)}}$
by Proposition~\ref{prop:pb_in_V_gph_and_V_cat}.
The unit of the adjunction $F\dashv U\colon
\enCat{(\enCat{\cat{V}}^{(n)})}\longrightarrow
\enGph{(\enCat{\cat{V}}^{(n)})}$ is cartesian by
Proposition~\ref{prop:free_V_cat_unit}. 
\end{proof}

\begin{proposition}\label{prop:free_n_V_cat_counit}
For each pair of natural numbers $n$ and $m$, the natural transformation 
\begin{equation*}
\begin{tikzpicture}[baseline=-\the\dimexpr\fontdimen22\textfont2\relax ]
      \node(0) at (-0.5,0) {$(\enGph{\cat{V}}^{(n)})^m$};
      \node(1) at (3,0) {$(\enCat{\cat{V}}^{(n)})^m$};
      \node(2) at (6,0) {$\enCat{\cat{V}}^{(n)}$};
      \node(3) at (8,1) {$\enGph{\cat{V}}^{(n)}$};
      \node(4) at (10,0) {$\enCat{\cat{V}}^{(n)},$};
      
      \draw [->] 
            (0) to node (t)[auto,labelsize] {$(F^{(n)})^m$} 
            (1);
      \draw [->] 
            (1) to node (r)[auto,labelsize] {$\prod$} 
            (2);
      \draw [->,bend left=20] 
            (2) to node (l)[auto,labelsize] {$U^{(n)}$} 
            (3);
      \draw [->,bend left=20] 
            (3) to node (b)[auto,labelsize] {$F^{(n)}$} 
            (4);  
      \draw [->] 
            (2) to node (b)[auto,swap,labelsize] {$\id{\enCat{\cat{V}}^{(n)}}$} 
            (4);  

	\draw [2cell] (8,0.8) to node[auto,labelsize] {$\varepsilon^{(n)}$}  
	(8,0.1);          
\end{tikzpicture}
\end{equation*}
where $\varepsilon^{(n)}$ is the counit of the adjunction $F^{(n)}\dashv 
U^{(n)}$
and $\prod$ is the $m$-ary product functor,
is cartesian.
\end{proposition}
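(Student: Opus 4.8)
The plan is to argue by induction on $n$, keeping $m$ universally quantified throughout so that the inductive hypothesis is available for products of \emph{all} arities (this is exactly what the combinatorial decomposition below will demand). For the base case $n=0$ we have $F^{(0)}=U^{(0)}=\id{\cat{V}}$ and $\varepsilon^{(0)}=\id{}$, so the natural transformation in question is the identity on the $m$-ary product functor $\prod\colon\cat{V}^m\longrightarrow\cat{V}$; identity natural transformations are cartesian, since their naturality squares are identity squares and hence pullbacks.

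For the inductive step, write $\cat{W}=\enCat{\cat{V}}^{(n)}$, which is extensive with finite limits by the inductive arguments of the previous section (Propositions~\ref{prop:VGph_ext}, \ref{prop:VCat_ext} and \ref{prop:pb_in_V_gph_and_V_cat}). Since $\enGph{(-)}$ is a 2-functor, it sends the adjunction $F^{(n)}\dashv U^{(n)}$ to $\enGph{F^{(n)}}\dashv\enGph{U^{(n)}}$ with counit $\enGph{\varepsilon^{(n)}}$, and by definition $F^{(n+1)}\dashv U^{(n+1)}$ is the composite of this with $F\dashv U\colon\enCat{\cat{W}}\longrightarrow\enGph{\cat{W}}$. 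The standard factorisation of the counit of a composite adjunction gives
\[
\varepsilon^{(n+1)}=\varepsilon\circ(F\enGph{\varepsilon^{(n)}}U)\colon F^{(n+1)}U^{(n+1)}=F\enGph{F^{(n)}}\enGph{U^{(n)}}U\Longrightarrow\id{\enCat{\cat{W}}}.
\]
Write $P=\prod\circ(F^{(n+1)})^m=\big(\prod\circ F^m\big)\circ(\enGph{F^{(n)}})^m$ for the functor by which we whisker. Then the natural transformation to be shown cartesian is the vertical composite of $\varepsilon P$ and $(F\enGph{\varepsilon^{(n)}}U)P$ (right-whiskering both by $P$). Since a vertical composite of cartesian natural transformations is again cartesian — by the pasting lemma for pullbacks applied to the naturality squares — it suffices to treat the two factors separately.

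For the first factor, associativity of whiskering gives $\varepsilon P=\big(\varepsilon(\prod\circ F^m)\big)(\enGph{F^{(n)}})^m$. Here $\varepsilon(\prod\circ F^m)$ is precisely the natural transformation shown cartesian in Proposition~\ref{prop:free_V_cat_counit}, applied to the extensive category $\cat{W}$ with finite limits; right-whiskering a cartesian natural transformation by $(\enGph{F^{(n)}})^m$ only reindexes its naturality squares, so $\varepsilon P$ is cartesian. For the second factor, $(F\enGph{\varepsilon^{(n)}}U)P=F\big(\enGph{\varepsilon^{(n)}}(UP)\big)$, and since $F$ preserves pullbacks (Proposition~\ref{prop:free_V_cat_pres_pb}) left-whiskering by $F$ preserves cartesianness; thus it suffices to prove $\enGph{\varepsilon^{(n)}}(UP)$ cartesian. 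By Proposition~\ref{prop:pb_in_V_gph_and_V_cat} this reduces to checking the naturality squares on objects and on hom-objects. On objects the vertical legs are identities (as $\enGph{\varepsilon^{(n)}}$ is identity-on-objects), so these squares are trivially pullbacks. On hom-objects, the path description of $F$ in Proposition~\ref{prop:free_V_cat} together with distributivity of finite products over small coproducts in $\cat{W}$ (Proposition~\ref{prop:extensive_prod}) exhibits each relevant $\cat{W}$-object of the form $UP(\dots)(x,y)$ as a small coproduct of finite products of $F^{(n)}$-images $F^{(n)}(G_i(-,-))$, and the corresponding hom-naturality square of $\enGph{\varepsilon^{(n)}}(UP)$ is the naturality square of $\varepsilon^{(n)}$ at the induced morphism between two such coproducts of products.

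The main obstacle is exactly this last verification for the second factor. One must show that, under the above identification, the naturality square of $\varepsilon^{(n)}$ at the induced morphism decomposes — pushing coproducts and products through pullbacks by means of Proposition~\ref{prop:ext_coprod_of_pbs} and Corollary~\ref{cor:ext_pb_coprod} — into a small coproduct of naturality squares of $\varepsilon^{(n)}$ at morphisms of the form $\prod\circ(F^{(n)})^{k}$, following the same combinatorial bookkeeping as in the proof of Proposition~\ref{prop:free_V_cat_counit}. Each such square is a pullback by the inductive hypothesis applied at the appropriate arity $k$ (the total number of hom-factors appearing, whence the need to quantify over all $m$), and extensivity reassembles the coproduct of pullbacks into a pullback. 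This yields all hom-naturality squares as pullbacks, so $\enGph{\varepsilon^{(n)}}(UP)$ is cartesian; combining the two factors completes the induction.
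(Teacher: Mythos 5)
Your proposal is correct and follows essentially the same route as the paper's proof: induction on $n$ carrying the arity $m$ along, factoring the counit of the composite adjunction into the two whiskered pieces, disposing of the first via Proposition~\ref{prop:free_V_cat_counit} applied to $\enCat{\cat{V}}^{(n)}$ and of the second via pullback-preservation of $F$ (Proposition~\ref{prop:free_V_cat_pres_pb}), Proposition~\ref{prop:pb_in_V_gph_and_V_cat}, and the extensivity-based coproduct decomposition of the hom-level squares that feeds the inductive hypothesis at the total arity $n_1+\dots+n_m$. When fleshing out that last decomposition, the one ingredient you should cite explicitly is Proposition~\ref{prop:forgetful_n_pres_coprod} (that $U^{(n)}$ preserves small coproducts), since that is what lets you push $F^{(n)}U^{(n)}$ across the coproduct indexing the top-left corner before applying Proposition~\ref{prop:ext_coprod_of_pbs}.
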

\begin{proof}
By induction on $n$. Suppose the claim is true for $n=k$ and for all $m$. 
For brevity, we will write the adjunction $\enGph{F^{(k)}}\dashv\enGph{U^{(k)}}$
as $F'\dashv U'$, and whose counit $\enGph{\varepsilon^{(k)}}$ as 
$\varepsilon'$.
We aim to show that for every
morphism 
$(f^{(1)},\dots, f^{(m)})\colon (G^{(1)},\dots,G^{(m)})\longrightarrow
(H^{(1)},\dots,H^{(m)})$
in $(\enGph{\cat{V}}^{(k+1)})^m$,  
the outer rectangle in the diagram
\begin{equation*}
\begin{tikzpicture}[baseline=-\the\dimexpr\fontdimen22\textfont2\relax ]
      \node(0) at (0,1) {$FF'U'U(\prod_{i=1}^{m}FF'G^{(i)})$};
      \node(1) at (5.8,1) {$FU(\prod_{i=1}^{m}FF'G^{(i)})$};
      \node(2) at (0,-1) {$FF'U'U(\prod_{i=1}^{m}FF'H^{(i)})$};
      \node(3) at (5.8,-1) {$FU(\prod_{i=1}^{m}FF'H^{(i)})$};
      \node(4) at (10.5,1) {$\prod_{i=1}^{m}FF'G^{(i)}$};
      \node(5) at (10.5,-1) {$\prod_{i=1}^{m}FF'H^{(i)}$};
      
      \draw [->] 
            (0) to node (t)[auto,labelsize] 
            {${F\varepsilon'_{U(\prod_{i=1}^{m}FF'G^{(i)})}}$} 
            (1);
      \draw [->] 
            (1) to node (r)[midway,fill=white,labelsize] 
            {${FU(\prod_{i=1}^{m}FF'f^{(i)})}$} 
            (3);
      \draw [->] 
            (0) to node (l)[midway,fill=white,labelsize] 
            {${FF'U'U(\prod_{i=1}^{m}FF'f^{(i)})}$} 
            (2);
      \draw [->] 
            (2) to node (b)[auto,swap,labelsize] 
            {${F\varepsilon'_{U(\prod_{i=1}^{m}FF'H^{(i)})}}$} 
            (3);  
      \draw [->] 
            (1) to node [auto,labelsize] 
            {${\varepsilon_{\prod_{i=1}^{m}FF'G^{(i)}}}$} 
            (4);
      \draw [->] 
            (4) to node [midway,fill=white,labelsize] 
            {${\prod_{i=1}^{m}FF'f^{(i)}}$} 
            (5);
      \draw [->] 
            (3) to node [auto,swap,labelsize] 
            {${\varepsilon_{\prod_{i=1}^{m}FF'H^{(i)}}}$} 
            (5);
\end{tikzpicture}
\end{equation*}
in $\enCat{\cat{V}}^{(k+1)}$ is a pullback.
The right square is a pullback by Proposition~\ref{prop:free_V_cat_counit},
so we shall show that the left square is also a pullback.
Since $F$ preserves pullbacks by Proposition~\ref{prop:free_V_cat_pres_pb}, 
it suffices to show that the square
\begin{equation*}
\begin{tikzpicture}[baseline=-\the\dimexpr\fontdimen22\textfont2\relax ]
      \node(0) at (0,1) {$F'U'U(\prod_{i=1}^{m}FF'G^{(i)})$};
      \node(1) at (6,1) {$U(\prod_{i=1}^{m}FF'G^{(i)})$};
      \node(2) at (0,-1) {$F'U'U(\prod_{i=1}^{m}FF'H^{(i)})$};
      \node(3) at (6,-1) {$U(\prod_{i=1}^{m}FF'H^{(i)})$};
      
      \draw [->] 
            (0) to node (t)[auto,labelsize] 
            {${\varepsilon'_{U(\prod_{i=1}^{m}FF'G^{(i)})}}$} 
            (1);
      \draw [->] 
            (1) to node (r)[auto,labelsize] {${U(\prod_{i=1}^{m}FF'f^{(i)})}$} 
            (3);
      \draw [->] 
            (0) to node (l)[auto,swap,labelsize] 
            {${F'U'U(\prod_{i=1}^{m}FF'f^{(i)})}$} 
            (2);
      \draw [->] 
            (2) to node (b)[auto,swap,labelsize] 
            {${\varepsilon'_{U(\prod_{i=1}^{m}FF'H^{(i)})}}$} 
            (3);   
            
\end{tikzpicture}
\end{equation*}
in $\enGph{(\enCat{\cat{V}}^{(k)})}$ is a pullback.
By Proposition~\ref{prop:pb_in_V_gph_and_V_cat},
it suffices to show that for every pair of objects 
$(x^{(1)},\dots,x^{(m)}),(y^{(1)},\dots,y^{(m)})\in
\ob{F'U'U(\prod_{i=1}^{m}FF'G^{(i)})}=\ob{G^{(1)}}\times \dots \times 
\ob{G^{(m)}}$, 
the square
\begin{equation}
\label{eqn:large_hom_1}
\begin{tikzpicture}[baseline=-\the\dimexpr\fontdimen22\textfont2\relax ]
      \node(0) at (0,1) 
      {$F^{(k)}U^{(k)}(\prod_{i=1}^{m}(FF'G^{(i)})(x^{(i)},y^{(i)}))$};
      \node(1) at (7.5,1) {$\prod_{i=1}^{m}(FF'G^{(i)})(x^{(i)},y^{(i)})$};
      \node(2) at (0,-1) 
      {$F^{(k)}U^{(k)}(\prod_{i=1}^{m}(FF'H^{(i)})(f^{(i)}x^{(i)},f^{(i)}y^{(i)}))$};
      \node(3) at (7.5,-1) 
      {$\prod_{i=1}^{m}(FF'H^{(i)})(f^{(i)}x^{(i)},f^{(i)}y^{(i)})$};
      
      \draw [->] 
            (0) to node (t)[auto,labelsize] 
            {${}$} 
            (1);
      \draw [->] 
            (1) to node (r)[auto,labelsize] {${}$} 
            (3);
      \draw [->] 
            (0) to node (l)[auto,swap,labelsize] 
            {$$} 
            (2);
      \draw [->] 
            (2) to node (b)[auto,swap,labelsize] 
            {${}$} 
            (3);   
            
\end{tikzpicture}
\end{equation}
in $\enCat{\cat{V}}^{(k)}$ is a pullback.
The bottom right object may be rewritten, using the set 
\begin{multline*}
I=\{\,(n_1,v^{(1)}_0,\dots, v^{(1)}_{n_1},\dots, n_m, 
v^{(m)}_0,\dots,v^{(m)}_{n_m})\\
\mid n_i\in \NN, v^{(i)}_j\in\ob{H^{(i)}}, v^{(i)}_0=f^{(i)}x^{(i)},
v^{(i)}_{n_i}=f^{(i)}y^{(i)} \,\},
\end{multline*}
as
\begin{multline*}
\coprod_{I} F^{(k)}\big(H^{(1)}(v^{(1)}_{n_1-1},v^{(1)}_{n_1})\big)
\times\dots\times 
F^{(k)}\big(H^{(1)}(v^{(1)}_{0},v^{(1)}_{1})\big)\\ \times\dots\times 
F^{(k)}\big(H^{(m)}(v^{(m)}_{n_m-1},v^{(m)}_{n_m})\big)\times\dots\times 
F^{(k)}\big(H^{(m)}(v^{(m)}_{0},v^{(m)}_{1})\big);
\end{multline*}
cf.~(\ref{eqn:large_hom_in_proof}).
Because both $F^{(k)}$ and $U^{(k)}$ (by 
Proposition~\ref{prop:forgetful_n_pres_coprod}) preserve small 
coproducts, we may decompose (\ref{eqn:large_hom_1})
as the coproduct over the set $I$ and apply
Proposition~\ref{prop:ext_coprod_of_pbs}. 
Fix an element $(n_1,v^{(1)}_0,\dots, v^{(1)}_{n_1},\dots, n_m, 
v^{(m)}_0,\dots,v^{(m)}_{n_m})\in I$ and 
introduce the set 
\begin{multline*}
K=\{\, (w^{(1)}_0,\dots,w^{(1)}_{n_1},\dots,w^{(m)}_0,\dots,w^{(m)}_{n_m})\\
\mid w^{(i)}_j\in\ob{G^{(i)}},w^{(i)}_0=x^{(i)}, 
w^{(i)}_{n_i}=y^{(i)}, 
f^{(i)}w^{(i)}_j=v^{(i)}_j\,\}.
\end{multline*}
It suffices to show that the square 
\[
\begin{tikzpicture}[baseline=-\the\dimexpr\fontdimen22\textfont2\relax ]
      \node(0)[align=left] at (0,3) {$\displaystyle 
      \coprod_K
      F^{(k)}U^{(k)}F^{(k)}G^{(1)}(w^{(1)}_{n_1-1},w^{(1)}_{n_1})\times\cdots$\\
      $\qquad
      \cdots\times 
      F^{(k)}U^{(k)}F^{(k)}G^{(m)}(w^{(m)}_{0},w^{(m)}_{1})$};
      
      \node(1)[align=left] at (7,1) 
      {$\displaystyle 
      \coprod_K
      F^{(k)}G^{(1)}(w^{(1)}_{n_1-1},w^{(1)}_{n_1})\times\cdots$\\
      $\qquad\cdots\times 
      F^{(k)}G^{(m)}(w^{(m)}_{0},w^{(m)}_{1})$};
      
      \node(2)[align=left] at (0,-1) {$
      F^{(k)}U^{(k)}F^{(k)}H^{(1)}(v^{(1)}_{n_1-1},v^{(1)}_{n_1})\times\cdots$\\
      $\qquad
      \cdots\times 
      F^{(k)}U^{(k)}F^{(k)}H^{(m)}(v^{(m)}_{0},v^{(m)}_{1})$};
      
      \node(3)[align=left] at (7,-3) 
      {$F^{(k)}H^{(1)}(v^{(1)}_{n_1-1},v^{(1)}_{n_1})\times\cdots$\\
      $\qquad\cdots\times 
      F^{(k)}H^{(m)}(v^{(m)}_{0},v^{(m)}_{1})$};
      
      \draw [->,rounded corners] (0)--(7,3)--(1);
      \draw [->] (1) to  (3);
      \draw [->] (0) to (2);
      \draw [->,rounded corners] (2)--(0,-3)--(3); 
\end{tikzpicture}
\]
is a pullback. 
This follows from the first clause of
Corollary~\ref{cor:ext_pb_coprod}, and the induction hypothesis.
\end{proof}

\begin{thm}
\label{thm:free_nV_cat_monad_cart}
For each natural number $n$, the free strict $n$-dimensional $\cat{V}$-category 
monad $\monoid{T}^{(n)}$ is cartesian.
\end{thm}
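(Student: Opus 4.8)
The plan is to verify directly the three defining conditions for $\monoid{T}^{(n)}=(T^{(n)},\eta^{(n)},\mu^{(n)})$ to be a cartesian monad in the sense of the definition recalled earlier: that the endofunctor $T^{(n)}$ preserves pullbacks, and that both the unit $\eta^{(n)}$ and the multiplication $\mu^{(n)}$ are cartesian natural transformations. The argument is a verbatim higher-dimensional analogue of the proof of Theorem~\ref{thm:free_V_cat_monad_cart} for the one-dimensional case; the only difference is that each appeal to a one-dimensional lemma is replaced by its $n$-dimensional counterpart established in this section.

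First I would treat the endofunctor. Since $T^{(n)}=U^{(n)}F^{(n)}$, it suffices to show that each factor preserves pullbacks. The right adjoint $U^{(n)}$ preserves all limits, and in particular pullbacks; and $F^{(n)}$ preserves pullbacks by Proposition~\ref{prop:free_n_V_cat_pres_pb}. Hence the composite $T^{(n)}$ preserves pullbacks.

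Next, the unit $\eta^{(n)}$ is cartesian immediately by Proposition~\ref{prop:free_n_V_cat_unit_cart}. For the multiplication, recall that $\mu^{(n)}=U^{(n)}\varepsilon^{(n)}F^{(n)}$, where $\varepsilon^{(n)}$ is the counit of the adjunction $F^{(n)}\dashv U^{(n)}$. The whiskered counit $\varepsilon^{(n)}F^{(n)}$ is precisely the $m=1$ instance of the natural transformation shown to be cartesian in Proposition~\ref{prop:free_n_V_cat_counit}. It then remains to observe that applying the pullback-preserving functor $U^{(n)}$ to a cartesian natural transformation again yields a cartesian one: the naturality squares of $U^{(n)}\varepsilon^{(n)}F^{(n)}$ are exactly the $U^{(n)}$-images of the (pullback) naturality squares of $\varepsilon^{(n)}F^{(n)}$, and these remain pullbacks because $U^{(n)}$ preserves pullbacks. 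This yields that $\mu^{(n)}$ is cartesian, completing all three conditions.

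Given the propositions of this section, the assembly above is entirely formal, so there is no genuine obstacle in the theorem itself. The substantive work—mirroring the one-dimensional development—lies in the preceding inductive results, and especially in Proposition~\ref{prop:free_n_V_cat_counit}, whose proof requires the delicate decomposition of hom-objects into coproducts of products together with repeated use of the extensivity lemmas (Proposition~\ref{prop:ext_coprod_of_pbs} and Corollary~\ref{cor:ext_pb_coprod}). The one subtlety worth flagging is that, although the present proof invokes only the case $m=1$ of that counit proposition, the statement had to be established for all $m$ in order for its own induction on $n$ to close; this is exactly parallel to the one-dimensional situation, where Proposition~\ref{prop:free_V_cat_counit} was likewise proved in the stronger form over all $m$.
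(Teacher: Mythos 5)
Your proof is correct and follows essentially the same route as the paper: the paper likewise assembles Proposition~\ref{prop:free_n_V_cat_pres_pb} (plus $U^{(n)}$ being a right adjoint) for pullback-preservation of $T^{(n)}$, Proposition~\ref{prop:free_n_V_cat_unit_cart} for the unit, and the $m=1$ case of Proposition~\ref{prop:free_n_V_cat_counit} together with pullback-preservation of $U^{(n)}$ for the multiplication, mirroring the one-dimensional Theorem~\ref{thm:free_V_cat_monad_cart}. Your closing observation about why the counit proposition must be proved for all $m$ (so its induction on $n$ closes) is also exactly the point the paper makes.
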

\begin{proof}
This follows from Proposition~\ref{prop:free_n_V_cat_pres_pb}, 
Proposition~\ref{prop:free_n_V_cat_unit_cart},
and Proposition~\ref{prop:free_n_V_cat_counit} (take $m=1$);
cf.~the proof of Theorem~\ref{thm:free_V_cat_monad_cart}.
\end{proof}

\chapter{The definition of weak $n$-dimensional $\cat{V}$-category}
\label{chap:def_weak_n_V_cat}
Building upon the results of the previous chapters, 
in this chapter we define weak $n$-dimensional $\cat{V}$-category 
for each natural number $n$ and \emph{locally presentable} extensive 
category $\cat{V}$.\footnote{Locally presentable categories are both complete 
and cocomplete, so we do not have to write the condition that $\cat{V}$ admits 
all finite limits separately.}
Local presentability is a certain size condition on a category,
and we need to assume this in the final step of 
the definition.
Our definition follows and enriches that of Leinster~\cite{Leinster_book},
which in turn was inspired by Batanin's work~\cite{Batanin_98}.

Leinster's definition of weak $n$-category may be summarised as follows.
Consider the free strict $n$-category monad $\monoid{T}^{(n)}$
on the category $\enGph{n}$ of $n$-graphs;
this is the case $\cat{V}=\Set$ of the monad $\monoid{T}^{(n)}$ studied in the 
previous chapter.
As we have already seen in Theorem~\ref{thm:free_nV_cat_monad_cart}
in the enriched setting, this monad is cartesian,
hence we may consider $\monoid{T}^{(n)}$-operads.
Now, Leinster has introduced the notion of \emph{contraction} on morphisms 
in $\enGph{n}$. 
Recall that a $\monoid{T}^{(n)}$-operad is a monoid object in the slice 
category $\enGph{n}/T^{(n)}1$.
By defining a contraction on a $\monoid{T}^{(n)}$-operad to be a contraction on
its underlying object in $\enGph{n}/T^{(n)}1$, we may also talk about 
$\monoid{T}^{(n)}$-operads with contractions.
Let $\monoid{L}^{(n)}$ be the \emph{initial $\monoid{T}^{(n)}$-operad with a 
contraction}.
Leinster defines weak $n$-categories to be the models of $\monoid{L}^{(n)}$.

In this chapter, we will carry out the enriched version of the above 
development.
Leinster's original formulation of contraction depends heavily on set-theoretic 
manipulations, so we shall use Garner's reformulation~\cite{Garner_homotopy}
of contractions in more categorical terms.
We define contractions on morphisms in $\enGph{\cat{V}}^{(n)}$,
and then on $\monoid{T}^{(n)}$-operads.
We show the existence of the initial $\monoid{T}^{(n)}$-operad with a 
contraction $\monoid{L}^{(n)}$ using our new assumption that $\cat{V}$ is 
locally 
presentable,
and finally define weak $n$-dimensional $\cat{V}$-categories to be the models 
of $\monoid{L}^{(n)}$.

The results in this section have been published in 
\cite{CFP2}.

\section{Contractions}
In this section we describe the notion of \emph{contraction},
introduced by Leinster~\cite{Leinster_book},
and generalise it to the enriched setting.
We follow Garner~\cite{Garner_homotopy} and 
define contraction as a choice of certain diagonal fillers.
The following definition is an example of the construction 
described in \cite[Proposition~3.8]{Garner_understanding}.

\begin{definition}\label{def:contraction}
Let $\cat{C}$ be a category, ${J}$ a set, and 
$\sfam{F}=(f_j\colon A_j\longrightarrow B_j)_{j\in {J}}$ a
${J}$-indexed family of morphisms in $\cat{C}$.
\begin{enumerate}
\item A \defemph{contraction} (with respect to $\sfam{F}$) 
on a morphism $g\colon C\longrightarrow D$
in $\cat{C}$ is a $J$-indexed family of functions $(\kappa_j)_{j\in J}$
such that for each $j\in J$, $\kappa_j$
assigns to every 
pair of morphisms $(h,k)$ in $\cat{V}$ which
makes the perimeter of (\ref{eqn:diagonal_filler}) commute,
a morphism $\kappa_j(h,k)$ making the whole diagram 
(\ref{eqn:diagonal_filler})
commute.
\begin{equation}\label{eqn:diagonal_filler}
\begin{tikzpicture}[baseline=-\the\dimexpr\fontdimen22\textfont2\relax ]
      \node(0) at (0,1) {$A_j$};
      \node(1) at (2,1) {$C$};
      \node(2) at (0,-1) {$B_j$};
      \node(3) at (2,-1) {$D$};
      
      \draw [->] 
            (0) to node (t)[auto,labelsize] {$h$} 
            (1);
      \draw [->] 
            (1) to node (r)[auto,labelsize] {$g$} 
            (3);
      \draw [->] 
            (0) to node (l)[auto,swap,labelsize] {$f_j$} 
            (2);
      \draw [->] 
            (2) to node (b)[auto,swap,labelsize] {$k$} 
            (3);  
      \draw [->] 
            (2) to node [midway,fill=white,labelsize] {$\kappa_j(h,k)$} (1);
\end{tikzpicture}
\end{equation}
\item A \defemph{map of morphisms with contractions} from $(g\colon 
C\longrightarrow D, (\kappa)_{j\in J})$
to $(g'\colon C'\longrightarrow D',(\kappa'_j)_{j\in J})$
is a map of morphisms $(u\colon C\longrightarrow C',v\colon D\longrightarrow 
D')$
from $g$ to $g'$ which commutes with contractions:
for each $j\in J$ and $(h,k)$ in the domain of $\kappa_j$,
$u\circ \kappa_j(h,k)=\kappa'_j(u\circ h, v\circ k)$.
\begin{equation*}
\begin{tikzpicture}[baseline=-\the\dimexpr\fontdimen22\textfont2\relax ]
      \node(0) at (0,2) {$A_j$};
      \node(1) at (2,2) {$C$};
      \node(2) at (0,0) {$B_j$};
      \node(3) at (2,0) {$D$};
      \node(4) at (4,2) {$C'$};
      \node(5) at (4,0) {$D'$};

      \draw [->] 
            (0) to node (t)[auto,labelsize] {$h$} 
            (1);
      \draw [->] 
            (1) to node (r)[auto,near end,labelsize] {$g$} 
            (3);
      \draw [->] 
            (0) to node (l)[auto,swap,labelsize] {$f_j$} 
            (2);
      \draw [->] 
            (2) to node (b)[auto,swap,labelsize] {$k$} 
            (3);  
      \draw [->] 
            (1) to node (b)[auto,labelsize] {$u$} 
            (4);  
      \draw [->] 
            (3) to node (b)[auto,swap,labelsize] {$v$} 
            (5);  
      \draw [->] 
            (4) to node (b)[auto,labelsize] {$g'$} 
            (5);  

      \draw [->] 
            (2) to node [midway,fill=white,labelsize] {$\kappa_j(h,k)$} (1);
      \draw [->] 
            (2) to node [near end,fill=white,labelsize] {$\kappa'_j(u 
            h,v k)$} (4);
\end{tikzpicture}
\end{equation*}
\end{enumerate}
We write the category of morphisms in $\cat{C}$ with contractions (with respect 
to $\mathcal{F}$) as $\Contr{\mathcal{F}}$.
Note that we have the evident forgetful functor 
$V\colon\Contr{\mathcal{F}}\longrightarrow\cat{C}^{\ord{2}}$
where $\ord{2}$ denotes the arrow category (i.e., the ordinal $\ord{2}$ seen as 
a category) and $\cat{C}^\ord{2}=[\ord{2},\cat{C}]$ is the functor category.
\end{definition}
In other words, for each $j\in J$, $\kappa_j$ is a section
of the function $\rho_j$ below, induced by the universality of pullback.
\begin{equation*}
\begin{tikzpicture}[baseline=-\the\dimexpr\fontdimen22\textfont2\relax ]
      \node(v) at (-2,2) {$\cat{C}(B_j,C)$};
      \node(0) at (0,1) {$P_j$};
      \node(1) at (3,1) {$\cat{C}(B_j,D)$};
      \node(2) at (0,-1) {$\cat{C}(A_j,C)$};
      \node(3) at (3,-1) {$\cat{C}(A_j,D)$};
      
      \draw [->] 
            (0) to node (t)[auto,labelsize] {} 
            (1);
      \draw [->] 
            (1) to node (r)[auto,labelsize] {$\cat{C}(f_j,D)$} 
            (3);
      \draw [->] 
            (0) to node (l)[auto,swap,labelsize] {} 
            (2);
      \draw [->] 
            (2) to node (b)[auto,swap,labelsize] {$\cat{C}(A_j,g)$} 
            (3);  

      \draw [->,bend left=10] 
            (v) to node [auto,labelsize] {$\cat{C}(B_j,g)$} 
            (1);  
      \draw [->,bend right=20] 
            (v) to node (b)[auto,swap,labelsize] {$\cat{C}(f_j,C)$} 
            (2);  

      \draw [->,dashed] 
            (v) to node (b)[auto,labelsize] {$\rho_j$} 
            (0); 

     \draw (0.2,0.4) -- (0.6,0.4) -- (0.6,0.8);
\end{tikzpicture}
\end{equation*}

As observed in \cite{Garner_homotopy},
Leinster's notion of contraction, for each natural number $n$,
is a special case of Definition~\ref{def:contraction}
where $\cat{C}=\enGph{n}$ and $\mathcal{F}$ is a certain family
$\mathcal{F}^{(n)}=(f^{(n)}_0,\dots,
f^{(n)}_{n+1})$ 
consisting of $n+2$ morphisms in $\enGph{n}$.
Before giving a precise definition, we try to give an intuitive 
idea of them by drawing a suggestive picture.
For example, when $n=2$ the family can be drawn as
\[
\mathcal{F}^{(2)}=\left(
\begin{tikzpicture}[baseline=-\the\dimexpr\fontdimen22\textfont2\relax ]
      \node(11) at (0,1) {$\bigg( \quad\bigg) $};
      \node(21) at (0,-1) {$\bigg(\bullet\bigg)$};
      
      \draw [->] (0, 0.4) to node[auto,labelsize]{$f^{(2)}_0$} (0,-0.4);
\end{tikzpicture}, 
\begin{tikzpicture}[baseline=-\the\dimexpr\fontdimen22\textfont2\relax ]
      \node(11) at (0,1) {$\bigg( \bullet$};
      \node(12) at (1.5,1) {$\bullet \bigg)$};
      \node(21) at (0,-1) {$\bigg( \bullet$};
      \node(22) at (1.5,-1) {$\bullet \bigg)$};
      
      \draw [->]  (21) to(22);      
      
      \draw [->] (0.75, 0.4) to  node[auto,labelsize]{$f^{(2)}_1$} (0.75,-0.4);
\end{tikzpicture}, 
\begin{tikzpicture}[baseline=-\the\dimexpr\fontdimen22\textfont2\relax ]
      \node(11) at (0,1) {$\bigg( \bullet$};
      \node(12) at (1.5,1) {$\bullet \bigg)$};
      \node(21) at (0,-1) {$\bigg( \bullet$};
      \node(22) at (1.5,-1) {$\bullet \bigg)$};
      
      \draw [->,bend left=30]  (11) to node (1u) {} (12);
      \draw [->,bend right=30] (11) to node (1b) {} (12);
      \draw [->,bend left=30]  (21) to node (2u) {} (22);      
      \draw [->,bend right=30] (21) to node (2b) {} (22); 
      
      \draw [->] (2u) to (2b);
      
      \draw [->] (0.75, 0.4) to  node[auto,labelsize]{$f^{(2)}_2$} (0.75,-0.4);
\end{tikzpicture}, 
\begin{tikzpicture}[baseline=-\the\dimexpr\fontdimen22\textfont2\relax ]
      \node(11) at (0,1) {$\bigg( \bullet$};
      \node(12) at (1.5,1) {$\bullet \bigg)$};
      \node(21) at (0,-1) {$\bigg( \bullet$};
      \node(22) at (1.5,-1) {$\bullet \bigg)$};
      
      \draw [->,bend left=30]  (11) to node (1u) {} (12);
      \draw [->,bend right=30] (11) to node (1b) {} (12);
      \draw [->,bend left=30]  (21) to node (2u) {} (22);      
      \draw [->,bend right=30] (21) to node (2b) {} (22); 
      
      \draw [->,transform canvas={xshift=-0.4em}] (1u) to (1b);
      \draw [->,transform canvas={xshift=0.4em}]  (1u) to (1b);
      \draw [->] (2u) to (2b);
      
      \draw [->] (0.75, 0.4) to  node[auto,labelsize]{$f^{(2)}_3$} (0.75,-0.4);
\end{tikzpicture}
\right).
\]
Just in case it is not clear how to read the above picture, let us 
explain one object. The picture
\[
\begin{tikzpicture}[baseline=-\the\dimexpr\fontdimen22\textfont2\relax ]
      \node(21) at (0,0) {$\bigg( \bullet$};
      \node(22) at (1.5,0) {$\bullet \bigg)$};
      
      \draw [->,bend left=30]  (21) to node (2u) {} (22);      
      \draw [->,bend right=30] (21) to node (2b) {} (22); 
      
      \draw [->] (2u) to (2b);
\end{tikzpicture}
\]
denotes the 2-graph $G$ with two objects ($\ob{G}=\{s,t\}$, represented by the 
black dots), such that the 1-graphs
$G(s,s),G(t,s)$ and $G(t,t)$ have no objects, and 
$G(s,t)$ is the 1-graph with two objects ($\ob{G(s,t)}=\{x,y\}$, represented by 
the two horizontal arrows between the black dots) such that 
 $(G(s,t)) (x,x)=(G(s,t)) (y,x)=(G(s,t)) (y,y)=\emptyset$ and
$(G(s,t)) (x,y)=\{z\}$ (the vertical arrow).

The morphisms $f^{(2)}_0,f^{(2)}_1$ and $f^{(2)}_2$ are 
monomorphisms, and $f^{(2)}_3$ is an epimorphism in $\enGph{n}$.
The idea is that an element of $\mathcal{F}^{(n)}$ is
``the inclusion of the boundary of a ball'',
although $f^{(n)}_{n+1}$ is no longer a monomorphism
due to lack of cells of dimension greater than $n$.

To give a recursive definition of $\mathcal{F}^{(n)}$ in the enriched setting, 
we start with auxiliary definitions.
For any category $\cat{V}'$ with an initial object $0$, define the
\defemph{suspension} functor 
$\Sigma\colon\cat{V}'\longrightarrow \enGph{\cat{V}'}$
which maps $X\in\cat{V}$ to 
\[
\Sigma X= (\{s,t\}, (\Sigma X(i,j))_{i,j\in\{s,t\}})
\]
given by $\Sigma X(s,t)=X$, $\Sigma X(i,j)=0$ if $(i,j)\neq(s,t)$;
cf.~\cite[Section~9.3]{Leinster_book}.
Also define the \defemph{discrete $\cat{V}'$-graph} 
functor $D\colon \Set\longrightarrow\enGph{\cat{V}'}$
which maps a set $I$ to $DI=(I,(0)_{i,j\in I})$. The functor $D$ is the 
left adjoint of $\ob{-}\colon \enGph{\cat{V}'}\longrightarrow \Set$.

\begin{definition}\label{defn:generating_cofibrations}
Let $\cat{V}$ be a category with a terminal object and finite coproducts.
For each natural number $n$, define a family
$\mathcal{F}^{(n)}=(f^{(n)}_0,\dots,f^{(n)}_{n+1})$
of morphisms in $\enGph{\cat{V}}^{(n)}$ recursively as follows.
\begin{enumerate}
\item $f^{(0)}_0\colon 0\longrightarrow 1$ and 
$f^{(0)}_1\colon 1+1\longrightarrow 1$ are the unique morphisms in $\cat{V}$
into the terminal object $1$.
\item $f^{(n)}_0\colon D\emptyset \longrightarrow D\{\ast\}$,
where $\emptyset$ and $\{\ast\}$ are the empty set and a singleton
respectively,
is the unique morphism in $\enGph{\cat{V}}^{(n)}$ 
out of the initial object $D\emptyset$,
and for each $i\in \{1,\dots, n+1\}$,
$f^{(n)}_i=\Sigma f^{(n-1)}_{i-1}$.\qedhere
\end{enumerate}
\end{definition}

For each object $X\in \enGph{\cat{V}}^{(n)}$,
define the category $\Contr{\mathcal{F}^{(n)}}_X$ of morphisms into $X$ with 
contractions (with respect to $\mathcal{F}^{(n)}$) as
the following pullback of categories:
\begin{equation}
\label{eqn:contr}
\begin{tikzpicture}[baseline=-\the\dimexpr\fontdimen22\textfont2\relax ]
      \node(0) at (0,1) {$\Contr{\mathcal{F}^{(n)}}_X$};
      \node(1) at (4,1) {$\Contr{\mathcal{F}^{(n)}}$};
      \node(2) at (0,-1) {$\enGph{\cat{V}}^{(n)}/X$};
      \node(3) at (4,-1) {$(\enGph{\cat{V}}^{(n)})^\ord{2},$};
      
      \draw [->] 
            (0) to node (t)[auto,labelsize] {} 
            (1);
      \draw [->] 
            (1) to node (r)[auto,labelsize] {$V$} 
            (3);
      \draw [->] 
            (0) to node (l)[auto,swap,labelsize] {$V_X$} 
            (2);
      \draw [->]
            (2) to node (b)[auto,swap,labelsize] {} 
            (3);  

     \draw (0.2,0.4) -- (0.6,0.4) -- (0.6,0.8);
\end{tikzpicture}
\end{equation}
where $\enGph{\cat{V}}^{(n)}/X\longrightarrow(\enGph{\cat{V}}^{(n)})^\ord{2}$
is the inclusion functor.
Explicitly, the category $\Contr{\mathcal{F}^{(n)}}_X$ is given as follows.
\begin{itemize}
\item An object is a morphism $g$ in $\enGph{\cat{V}}^{(n)}$ with
a contraction as in Definition~\ref{def:contraction}
such that the codomain of $g$ is $X$.
\item A morphism is a map of morphisms with contractions $(u,v)$ as 
in Definition~\ref{def:contraction} such that $v=\id{X}$.
\end{itemize}
We will in particular be concerned with the case where
$X=T^{(n)}1$.

\medskip
Now we can describe our definition of weak $n$-dimensional 
$\cat{V}$-category in more detail. 
We have already mentioned at the beginning of this chapter that we  
define a {weak $n$-dimensional $\cat{V}$-category} to be a model
of a certain $\monoid{T}^{(n)}$-operad $\monoid{L}^{(n)}$, characterised as 
the \emph{initial $\monoid{T}^{(n)}$-operad with a contraction}.
Let us define what this means in more precise terms.
We define the category $\OC{\monoid{T}^{(n)}}$ of $\monoid{T}^{(n)}$-operads
with contractions to be the following pullback of categories:
\begin{equation}
\label{eqn:pb_OC}
\begin{tikzpicture}[baseline=-\the\dimexpr\fontdimen22\textfont2\relax ]
      \node(0) at (0,1) {$\OC{\monoid{T}^{(n)}}$};
      \node(1) at (4,1) {$\Contr{\mathcal{F}^{(n)}}_{T^{(n)}1}$};
      \node(2) at (0,-1) {$\Operad{\monoid{T}^{(n)}}$};
      \node(3) at (4,-1) {$\enGph{\cat{V}}^{(n)}/T^{(n)}1,$};
      
      \draw [->] 
            (0) to node (t)[auto,labelsize] {} 
            (1);
      \draw [->] 
            (1) to node (r)[auto,labelsize] {$V_{T^{(n)}1}$} 
            (3);
      \draw [->] 
            (0) to node (l)[auto,swap,labelsize] {} 
            (2);
      \draw [->]
            (2) to node (b)[auto,swap,labelsize] {$W$} 
            (3);  

     \draw (0.2,0.4) -- (0.6,0.4) -- (0.6,0.8);
\end{tikzpicture}
\end{equation}
where the functor $V_{T^{(n)}1}$ is the appropriate instance of 
(\ref{eqn:contr}) and 
$W$ forgets the $\monoid{T}^{(n)}$-operad structure 
(recall that $\Operad{\monoid{T}^{(n)}}=\Mon{\enGph{\cat{V}}^{(n)}/T^{(n)}1}$).
Provided that the category $\OC{\monoid{T}^{(n)}}$ has an initial object 
$((\arity{L^{(n)}}\colon L^{(n)}\longrightarrow T^{(n)}1),m,$ $e,\kappa)$,
by the initial $\monoid{T}^{(n)}$-operad with contraction we mean
its underlying $\monoid{T}^{(n)}$-operad $\monoid{L}^{(n)}=
((\arity{L^{(n)}}\colon L^{(n)}\longrightarrow T^{(n)}1),m,e)$
(forgetting the contraction $\kappa$).

Thus the remaining step in our definition of weak $n$-dimensional 
$\cat{V}$-category is to show that the category $\OC{\monoid{T}^{(n)}}$ indeed 
has an initial object.
This can be shown, under the additional assumption that $\cat{V}$ is locally 
presentable.

\section{Local presentability and algebraic weak factorisation systems}
We first provide a minimal introduction to locally presentable categories;
see \cite[Chapter~1]{Adamek_Rosicky} for more details.

A cardinal $\alpha$ is called \defemph{regular} if  for any set $I$ 
and $I$-indexed family of sets $(x_i)_{i\in I}$, 
$|I|<\alpha$ and $|x_i|<\alpha$ for all $i\in I$ imply $|\coprod_{i\in I}x_i| 
<\alpha$.
We shall only talk about small regular cardinals.

From now on, let $\alpha$ be a (small) regular cardinal.
A small poset $\cat{I}$ is said to be \defemph{$\alpha$-directed} if 
any subset of $\cat{I}$ whose cardinality is less than $\alpha$
admits an upper bound in $\cat{I}$.
For any category $\cat{C}$,
an \defemph{$\alpha$-directed diagram} is a functor 
$\cat{I}\longrightarrow\cat{C}$ from an $\alpha$-directed poset 
$\cat{I}$ (seen as a category).
By an \defemph{$\alpha$-directed colimit} we mean the colimit of an 
$\alpha$-directed diagram.

Suppose that $\cat{C}$ and $\cat{D}$ are locally small categories 
admitting all $\alpha$-directed colimits
(i.e., admitting all colimits indexed by small $\alpha$-directed posets).
A functor $\cat{C}\longrightarrow\cat{D}$ is said to be 
\defemph{$\alpha$-accessible} if it preserves all $\alpha$-directed colimits.
An object $C\in\cat{C}$ is called \defemph{$\alpha$-presentable}
if the functor $\cat{C}(C,-)\colon\cat{C}\longrightarrow\Set$ is 
$\alpha$-accessible.

A locally small category $\cat{C}$ is called \defemph{locally 
$\alpha$-presentable} if it is cocomplete and 
there exists a small full subcategory 
$\cat{C}_\alpha\subseteq \cat{C}$ such that (i) all objects in $\cat{C}_\alpha$
are $\alpha$-presentable, and (ii) any object in $\cat{C}$ can be expressed as 
an $\alpha$-directed colimit of objects in $\cat{C}_\alpha$.

Finally, a locally small category $\cat{C}$ is called \defemph{locally 
presentable} if there exists a (small) regular cardinal $\alpha$ such that 
$\cat{C}$ is locally $\alpha$-presentable.
A functor $F$ between cocomplete categories is called \defemph{accessible} if 
there exists a (small) regular cardinal $\alpha$ such that $F$ is 
$\alpha$-accessible.

It is known that $\Set$ is locally $\aleph_0$-presentable (also called 
\emph{locally finitely presentable}), and $\omega$-$\Cpo$ is locally 
$\aleph_1$-presentable (see \cite[Example~1.18]{Adamek_Rosicky}).
It is also known that whenever $\cat{V}$ is locally presentable, so is 
$\enGph{\cat{V}}$ (\cite[Proposition~4.4]{Kelly_Lack_loc_presentable}).

\medskip

Among others, local presentability is used as a standard condition on 
categories in order to ensure 
that certain transfinite constructions to converge \cite{Kelly_unified}.
An example of such constructions relevant to our purpose is 
Garner's version \cite{Garner_understanding} of 
the \emph{small object argument} originally developed by Quillen \cite{Quillen}.
We have the following result, easily deducible from 
\cite[Proposition~16]{Bourke_Garner}.

\begin{proposition}
\label{prop:V_X_monadic_accessible}
Let $\cat{V}$ be a locally presentable category. 
Then for each $n\in\NN$ and $X\in \enGph{\cat{V}}^{(n)}$,
the functor $V_X\colon 
\Contr{\mathcal{F}^{(n)}}_{X}\longrightarrow 
\enGph{\cat{V}}^{(n)}/X$
is monadic and accessible.
\end{proposition}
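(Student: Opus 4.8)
The plan is to reduce the statement to \cite[Proposition~16]{Bourke_Garner} together with an elementary descent to a fibre of the codomain functor. Recall from Definition~\ref{def:contraction} that a contraction on a morphism $g$ is precisely a choice of diagonal fillers against the family $\mathcal{F}^{(n)}$, i.e.\ a section of each of the canonical comparison maps $\rho_j$; thus $\Contr{\mathcal{F}^{(n)}}$ is the category of algebras for the (pointed) endofunctor on $(\enGph{\cat{V}}^{(n)})^\ord{2}$ canonically associated with $\mathcal{F}^{(n)}$, via the construction of \cite[Proposition~3.8]{Garner_understanding}, and $V\colon\Contr{\mathcal{F}^{(n)}}\longrightarrow(\enGph{\cat{V}}^{(n)})^\ord{2}$ is the corresponding forgetful functor. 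First I would check the two hypotheses needed to invoke the Bourke--Garner result: that $\enGph{\cat{V}}^{(n)}$ is locally presentable, and that $\mathcal{F}^{(n)}$ is a small family of maps between presentable objects. The former follows by induction on $n$ from \cite[Proposition~4.4]{Kelly_Lack_loc_presentable} (that $\enGph{(-)}$ preserves local presentability), starting from the assumption that $\cat{V}$ is locally presentable. The latter is automatic: $\mathcal{F}^{(n)}=(f^{(n)}_0,\dots,f^{(n)}_{n+1})$ is finite, and in a locally presentable category every object is $\alpha$-presentable for some regular cardinal $\alpha$ \cite{Adamek_Rosicky}, so we may take $\alpha$ large enough that all the relevant domains and codomains are $\alpha$-presentable. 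Applying \cite[Proposition~16]{Bourke_Garner} then shows that $V$ is monadic and accessible; I denote the induced (accessible) monad on $(\enGph{\cat{V}}^{(n)})^\ord{2}$ by $\mathbb{R}$.

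Second, I would descend from $V$ to $V_{X}$ by restricting along the codomain functor. The key observation is that $\mathbb{R}$ lives over $\enGph{\cat{V}}^{(n)}$ via the codomain functor $\mathrm{cod}$: the one-step factorisation from which $\mathbb{R}$ is generated leaves codomains fixed, so the underlying endofunctor $R$ of $\mathbb{R}$ satisfies $\mathrm{cod}\circ R=\mathrm{cod}$, and the unit and multiplication are identities on codomains. Consequently $\mathbb{R}$ restricts to a monad $\mathbb{R}_{X}$ on the strict fibre $\mathrm{cod}^{-1}(X)=\enGph{\cat{V}}^{(n)}/X$, and an $\mathbb{R}$-algebra whose underlying arrow has codomain $X$ is exactly an $\mathbb{R}_{X}$-algebra (the algebra axioms force the codomain-component of the structure map to be $\id{X}$). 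Since the pullback square (\ref{eqn:contr}) exhibits $\Contr{\mathcal{F}^{(n)}}_{X}$ as precisely this fibre of $\Contr{\mathcal{F}^{(n)}}$ over $\enGph{\cat{V}}^{(n)}/X$, it follows that $V_{X}$ is the forgetful functor of $\mathbb{R}_{X}$, and hence monadic.

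Third, for accessibility I would argue that $\enGph{\cat{V}}^{(n)}/X$ is locally presentable (a slice of a locally presentable category is again locally presentable \cite{Adamek_Rosicky}) and that $\mathbb{R}_{X}$ is accessible. The latter holds because $\alpha$-directed colimits in $\enGph{\cat{V}}^{(n)}/X$ are created by the domain functor and agree with those computed in $(\enGph{\cat{V}}^{(n)})^\ord{2}$ (directed posets are connected, and $\mathrm{cod}$ preserves all colimits), so the $\alpha$-accessibility of $\mathbb{R}$ transfers to $\alpha$-accessibility of $\mathbb{R}_{X}$. Therefore the category of $\mathbb{R}_{X}$-algebras is locally presentable and its forgetful functor $V_{X}$ preserves $\alpha$-directed colimits, i.e.\ is accessible.

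I expect the main obstacle to be the second step: justifying that the accessible monad over the whole arrow category restricts cleanly to the chosen fibre, and that its algebras over $X$ coincide with $\Contr{\mathcal{F}^{(n)}}_{X}$. Everything there hinges on the codomain-preservation property of $\mathbb{R}$; once that is pinned down, the transfer of monadicity and accessibility is formal. A secondary point requiring care is matching the invocation of \cite[Proposition~16]{Bourke_Garner} to our coherence-free notion of contraction, which amounts to recognising $\Contr{\mathcal{F}^{(n)}}$ as the category of algebras for the algebraically-free monad on the associated accessible pointed endofunctor, so that the naive chosen-filler structures are genuinely monadic over the arrow category.
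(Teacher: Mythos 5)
Your proposal is correct and follows essentially the same route as the paper: the paper offers no argument beyond declaring the proposition ``easily deducible from [Bourke--Garner, Proposition~16]'', and your proof is precisely that deduction spelled out --- invoking local presentability of $\enGph{\cat{V}}^{(n)}$ via Kelly--Lack, applying Bourke--Garner over the arrow category, and then restricting along $\cod$ to the fibre $\enGph{\cat{V}}^{(n)}/X$, using that the induced monad is a monad over $\cod$ so that its algebras with codomain $X$ coincide with objects of $\Contr{\mathcal{F}^{(n)}}_X$. The fibre-restriction and accessibility-transfer steps you flag as delicate are exactly the content the paper leaves implicit, and your treatment of them is sound.
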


\section{Weak $n$-dimensional $\cat{V}$-categories}\label{sec:weak}
Finally we prove that $\OC{\monoid{T}^{(n)}}$ actually
has an initial object, for any category $\cat{V}$
which is locally presentable and extensive. 

\begin{thm}
If $\cat{V}$ is a locally presentable and extensive category,
then for any natural number $n$
the category $\OC{T^{(n)}}$ has an initial object.
\end{thm}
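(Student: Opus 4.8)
The plan is to realise $\OC{\monoid{T}^{(n)}}$ as (a category equivalent to) the category of algebras for an accessible monad on a locally presentable category; such a category is cocomplete, so its initial object is the colimit of the empty diagram. Throughout I write $\cat{C}=\enGph{\cat{V}}^{(n)}$ and $S=\monoid{T}^{(n)}=(T^{(n)},\eta^{(n)},\mu^{(n)})$, which is cartesian by Theorem~\ref{thm:free_nV_cat_monad_cart}. First I would set up the base. Since $\cat{V}$ is locally presentable, so are $\enGph{\cat{V}}$ and $\enCat{\cat{V}}$ by \cite{Kelly_Lack_loc_presentable}, and hence by induction both $\cat{C}=\enGph{\cat{V}}^{(n)}$ and $\enCat{\cat{V}}^{(n)}$ are locally presentable. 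As slices of locally presentable categories are locally presentable \cite{Adamek_Rosicky}, the base category $\cat{B}:=\cat{C}/T^{(n)}1$ appearing in (\ref{eqn:pb_OC}) is locally presentable. Recall that $\OC{\monoid{T}^{(n)}}$ is the strict pullback of the two legs $V_{T^{(n)}1}\colon\Contr{\mathcal{F}^{(n)}}_{T^{(n)}1}\longrightarrow\cat{B}$ and $W\colon\Operad{\monoid{T}^{(n)}}\longrightarrow\cat{B}$.

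The left-hand leg is easy: by Proposition~\ref{prop:V_X_monadic_accessible} (with $X=T^{(n)}1$), $V_{T^{(n)}1}$ is monadic and accessible. Writing $G_1$ for the induced monad on $\cat{B}$, its left adjoint preserves all colimits and $V_{T^{(n)}1}$ is accessible, so $G_1$ is an accessible monad and $\Contr{\mathcal{F}^{(n)}}_{T^{(n)}1}\simeq\cat{B}^{G_1}$ is locally presentable. The right-hand leg is where the main work lies. Here $\Operad{\monoid{T}^{(n)}}=\Mon{\cat{B}}$ for the monoidal structure $(\cat{B},I,\otimes)$ of Definition~\ref{def:S_operad}, and $W$ is the underlying-object functor. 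I would first check that $\otimes$ is accessible in each variable. The monad $S=U^{(n)}F^{(n)}$ is accessible, being the composite of the left adjoint $F^{(n)}$ (cocontinuous) with the right adjoint $U^{(n)}$ between locally presentable categories (accessible, since right adjoints between locally presentable categories are accessible). Using the pullback description (\ref{eqn:pb_over_S1}) of the tensor, each functor $-\otimes(P,p)$ and $(Q,q)\otimes-$ factors as a composite of domain functors $\cat{C}/Y\to\cat{C}$ (left adjoints, hence cocontinuous), base-change functors $f^\ast$ along a fixed morphism (each a right adjoint to post-composition $f_!$, hence accessible), and the accessible functor $S$. Hence $\otimes$ preserves $\beta$-filtered colimits in each variable for some regular cardinal $\beta$. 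It then follows from the transfinite construction of free monoids \cite{Kelly_unified} that the free-monoid monad $G_2$ on $\cat{B}$ exists and is accessible, $W$ is monadic, and $\Operad{\monoid{T}^{(n)}}\simeq\cat{B}^{G_2}$ is locally presentable.

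Finally I would combine the two legs. Since $G_1$ and $G_2$ are accessible monads on the locally presentable category $\cat{B}$, their coproduct $G_1+G_2$ in $\Mnd{\cat{B}}$ exists and is again accessible, and a $(G_1+G_2)$-algebra is precisely an object of $\cat{B}$ carrying both a $G_1$- and a $G_2$-algebra structure. As monadic functors are isofibrations, the strict pullback (\ref{eqn:pb_OC}) is equivalent to the pullback of the two algebra categories over $\cat{B}$, so $\OC{\monoid{T}^{(n)}}\simeq\cat{B}^{G_1+G_2}$. Being algebras for an accessible monad on a locally presentable category, $\cat{B}^{G_1+G_2}$ is locally presentable and in particular cocomplete; since existence of an initial object is preserved along equivalences, $\OC{\monoid{T}^{(n)}}$ has an initial object.

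The principal obstacle is the second paragraph, namely establishing that $W$ is monadic with accessible induced monad. The two facts that make it go through are that $\monoid{T}^{(n)}$ is accessible (a composite of a left and a right adjoint between locally presentable categories) and that base change along a fixed morphism is a right adjoint, hence accessible, in the slices of $\cat{C}$; the extensivity of $\cat{C}$ established in Chapter~\ref{chap:extensive} underlies the good interaction of these base-change functors with the coproducts occurring in the definition of $\otimes$. Once $G_1$ and $G_2$ are seen to be accessible monads, the remaining steps are standard facts about locally presentable categories and coproducts of accessible monads.
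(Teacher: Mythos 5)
Your proof is correct and takes essentially the same route as the paper: both arguments show that the base $\enGph{\cat{V}}^{(n)}/T^{(n)}1$ is locally presentable and that the two legs $V_{T^{(n)}1}$ and $W$ of the pullback (\ref{eqn:pb_OC}) are monadic and accessible, and then combine the two monadic structures using the machinery of \cite{Kelly_unified}. The differences are only presentational: the paper invokes Theorem~27.1 of \cite{Kelly_unified} directly (whose content is exactly your coproduct-of-accessible-monads argument, including the identification of algebras for the coproduct monad with objects carrying both structures) and exhibits the initial object as the free algebra on the initial object of the base, and it obtains the left adjoint to $W$ in the explicit form $GP=\coprod_{n\in\N}P^{\otimes n}$ (using Proposition~\ref{prop:forgetful_n_pres_coprod}) rather than via the general transfinite free-monoid construction.
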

\begin{proof}
We shall follow the argument in \cite[Appendix~G]{Leinster_book} 
(where $\cat{V}=\Set$ and $n=\omega$) and 
show that $\enGph{\cat{V}}^{(n)}/T^{(n)}1$ is locally presentable
(hence is both complete and cocomplete), and that $W$ and $V_{T^{(n)}1}$
are monadic and accessible. 
Then by \cite[Theorem~27.1]{Kelly_unified} it follows that the 
forgetful functor from $\OC{\monoid{T}^{(n)}}$ to 
$\enGph{\cat{V}}^{(n)}/T^{(n)}1$
(the composite of functors in (\ref{eqn:pb_OC}))
is also monadic, thus in particular
$\OC{\monoid{T}^{(n)}}$ has an initial object, given by the 
free algebra over the initial object in $\enGph{\cat{V}}^{(n)}/T^{(n)}1$.

Because $\enGph{\cat{V}}^{(n)}$ is locally 
presentable,
so is $\enGph{\cat{V}}^{(n)}/T^{(n)}1$, being its slice.
The functor $W$ is monadic because it is the forgetful functor from a 
category of monoids and admits a left adjoint $G$
(which, incidentally, is of a particularly simple form 
$GP=\coprod_{n\in\N}P^{\otimes n}$
thanks to Proposition~\ref{prop:forgetful_n_pres_coprod}).
It is routine to show that $W$ is accessible.
The functor $V_{T^{(n)}1}$ is monadic and accessible by 
Proposition~\ref{prop:V_X_monadic_accessible}.
\end{proof}

The condition of $\cat{V}$ being locally presentable and extensive
is an axiomatic reason why Batanin and Leinster's approach works. 
Of course the category $\Set$ satisfies this condition, but 
in their work this fact is used only implicitly, often in the 
form of concrete set-theoretic manipulation. 

\begin{definition}
Let $\cat{V}$ be a locally presentable extensive category and $n$ be a natural
number.
A \defemph{weak $n$-dimensional $\cat{V}$-category} is 
a model of the initial $\monoid{T}^{(n)}$-operad with contraction,
where $\monoid{T}^{(n)}$ is the free strict $n$-dimensional $\cat{V}$-category
monad on $\enGph{\cat{V}}^{(n)}$.
\end{definition}

We remark that when $\cat{V}(1,-)$ is not conservative, it might be more
appropriate to replace 1 of Definition~\ref{defn:generating_cofibrations}
by the family of morphisms $0\longrightarrow X$ and $X+X \longrightarrow X$
(codiagonal) where $X$ ranges over a small set of strong generators of $\cat{V}$
(exists if $\cat{V}$ is locally presentable).
We thank an anonymous reviewer of \cite{CFP2} for pointing this out. 
Even if we alter Definition~\ref{defn:generating_cofibrations} this way,
all arguments so far hold unchanged.

\begin{example}
If we let $\cat{V}=\Set$ and $n=2$,
then weak $2$-categories (weak $2$-dimensional $\Set$-categories) 
are equivalent to \emph{unbiased bicategories},
which are a variant of bicategories
equipped with for each natural number $m$, an $m$-ary horizontal
composition operation. 
See \cite[Section~9.4]{Leinster_book} for details.
\end{example}

\begin{example}
If we let $\cat{V}=\omega\mhyphen\mathbf{Cpo}$ and $n=2$, then 
weak $2$-dimensional $\omega\mhyphen\mathbf{Cpo}$-categories
are the unbiased version of $\omega\mhyphen\mathbf{Cpo}$-enriched
bicategories as in \cite{Power_Tanaka}.
\end{example}

\chapter{Conclusion}

\section{Summary}
In this thesis, we have investigated aspects of algebraic structure.
In the first part, we have developed a unified framework for various 
notions of algebraic theory.
In the second part, we focused on a particular algebraic structure,
weak $n$-categories {\`a} la Batanin and Leinster, and generalised the known
definition by allowing enrichment over any extensive and locally presentable 
category.

\medskip

Our unified framework for notions of algebraic theory is based on a number of 
more or less independent observations
made by many researchers over years, which we have summarised in  
Section~\ref{sec:framework_prelude}.
The concepts of metatheory and theory, 
being identical to (large) monoidal category and monoid object, are of course 
well-known.
As for these, the novelty is not in the concepts themselves but in our attitude 
to 
identify them with notion of algebraic theory and algebraic theory respectively.
To the best of our knowledge, no one seems to have proposed such identification.

We have supported this rather bold proposal by 
modelling the semantical aspect of notions of algebraic theory as well
in our framework.
Here, in order to unify enrichments and oplax actions, which have been 
observed to underlie notions of model, we have introduced a new concept of 
metamodel.
Although one can reduce metamodels (of $\cat{M}$ in $\cat{C}$)
to combinations of known concepts, such as enrichment\footnote{To be precise, 
the concept of enrichment (Definition~\ref{def:enrichment}) also seems to have 
been newly introduced in this thesis, though it is fairly similar to the 
well-known concept of $\cat{M}$-category~\cite{Kelly:enriched}.} 
of $\cat{C}$ over 
$\widehat{\cat{M}}=[\cat{M}^\op,\SET]$ or as a lax monoidal functor 
$\cat{M}^\op\longrightarrow[\cat{C}^\op\times\cat{C},\SET]$,
they do not seem to have been studied extensively so far, let alone 
in connection to notions of algebraic theory.
The fact that we can give a definition of model relative to a metamodel in a 
way compatible with those relative to an enrichment or an oplax action,
though not particularly difficult to show, seems to testify to the inherent 
coherence underlying various notions of algebraic theory.

We have also introduced morphism between metatheories.
An appropriate notion of morphism turned out to be more general than 
the ones usually considered, namely lax, oplax or strong monoidal functors;
it is a monoidal version of profunctors.
If the morphisms come in an adjoint pair, 
then (by the pseudo-functoriality of $\MtMod{-}$) we obtain a 
2-adjunction between the 2-categories of metamodels.
Because our morphisms between metatheories are quite general, it is not 
difficult to obtain an adjoint pair of them;
any strong monoidal functor generates an adjoint pair.
In this case, we immediately obtain isomorphisms of categories of models
in different notions of algebraic theory,
by a purely formal categorical argument (see Section~\ref{sec:comparing}).

Within our framework,
we have also obtained a general structure-semantics adjointness result 
(Chapter~\ref{chap:str_sem}) and a double categorical universal 
characterisation of categories of models (Chapter~\ref{chap:double_lim}).
The former result
supports our claim that the framework is appropriate for notions of 
algebraic theory, by incorporating the topic which has been studied extensively
in the categorical algebra community.
The latter result may be taken as an evidence of the naturality or 
canonicity of our framework,
as it gives an abstract characterisation of categories of models arising in
our framework, generalising the characterisation of 
Eilenberg--Moore categories by Street~\cite{Street_FTM} in a natural direction.
In addition, we believe that it provides a non-trivial
example of double limits, which are a newly 
introduced notion \cite{GP1} and seem to be in need of examples.

\medskip

Our generalisation of Batanin and Leinster's definition of 
weak $n$-category clarifies the structure of their original definition, by 
pointing out the fact that the categorical properties of extensivity and local 
presentability play a key role in the definition.

We have established in Chapter~\ref{chap:extensive}
a number of properties on (infinitary) extensive categories.
Since these properties are not very hard to show, we 
expect that they are either known to or immediately recognisable by
the experts,
but we have not been able to find a suitable reference.
The papers \cite{Carboni_Lack_Walters,Cockett} are excellent sources of 
information,
but they only treat finitary extensive categories.

In Chapter~\ref{chap:free_strict_n_cat_monad} we have shown by induction on $n$ 
that the free strict $n$-dimensional $\cat{V}$-category monad $\monoid{T}^{(n)}$
on $\enGph{\cat{V}}^{(n)}$ is cartesian.
Our inductive argument is more delicate than one might first imagine,
and we had to choose properties more general than 
is strictly necessary for our goal
(see e.g., Proposition~\ref{prop:free_n_V_cat_counit}).
The proofs fully exploit the properties of extensive categories established in 
Chapter~\ref{chap:extensive}.

Our definition of weak $n$-dimensional $\cat{V}$-category for any 
extensive and locally presentable category $\cat{V}$ is given in 
Chapter~\ref{chap:def_weak_n_V_cat}.
Here, in order to generalise Leinster's notion of contraction,
we have applied Garner's theory of algebraic weak factorisation systems 
\cite{Garner_understanding}.

\section{Future work}
As future work, we would like to further investigate 
various aspects of our unified 
framework for notions of algebraic theory.
One natural open problem is to characterise 
the categories of models arising in our framework---or rather, the associated 
forgetful functors---by their intrinsic properties.
For the case of monad, the corresponding result is various 
\emph{monadicity theorems} (such as Beck's 
theorem~\cite[Section~VI.~7]{MacLane_CWM}),
characterising the \emph{monadic functors}, i.e., those functors 
isomorphic to the forgetful functors from Eilenberg--Moore categories.
The forgetful functors arising in our framework are more general than the 
monadic functors; for example, they need not admit left adjoints,
as is the case for the forgetful functor 
$\mathbf{FinGrp}\longrightarrow\mathbf{FinSet}$
from the category of finite groups to the category of finite sets
(this functor arises if we consider the metatheory $[\F,\Set]$ for clones,
the clone of groups and the standard metamodel of $[\F,\Set]$
in the category $\mathbf{FinSet}$ with finite powers).
However, they are far from being arbitrary.
For example, it is immediate from the definition of categories of models 
(Definition~\ref{def:metamodel_model}) that 
such functors are faithful and conservative. 
We would like to identify what additional condition on a functor is 
enough to ensure that it arises (up to an isomorphism)
as the forgetful functor associated with a category of models in our framework.
Such a result would help us to better understand the generality of 
our framework.

We would also like to incorporate more examples of notions of algebraic theory
into our framework.
We have already listed some possible examples
in Section~\ref{sec:other_ex}.
As for PROs and PROPs, we expect that 
monoidal and symmetric monoidal versions of profunctors
(cf.~morphisms of metatheories in Definition~\ref{def:morphism_of_metatheories})
would be useful.
For example, a PRO is defined as a strict
monoidal category together with an identity-on-objects 
strict monoidal functor from 
$\Ncat$, the free strict monoidal category generated by one object.
By considering the monoidal category of monoidal endo-profunctors on $\Ncat$,
we would obtain PROs as monoids therein.
As for multi-sorted algebraic theories, 
we think that the best way to model them 
is to identify them with pseudo double categories,
in such a way that objects, 
vertical morphisms, horizontal morphisms and squares correspond to 
sorts, translations between sorts, functional signatures (with designated 
input/output sorts)
and translations of functional signatures, respectively.
This view is compatible with our current framework, because  
pseudo double categories with one object and one vertical morphism 
correspond to monoidal categories.
In fact, the pseudo double categories suitable for multli-sorted clones, 
symmetric operads, non-symmetric operads and generalised operads 
are already studied in \cite{Cruttwell_Shulman};
this paper would lay foundations for the syntactic aspect of the multi-sorted
version of our framework.

Our framework shows that whenever we have a monoidal category,
we can regard it as a notion of algebraic theory.
This observation provides a novel, particularly simple way to define 
new notions of algebraic theory.
Need for new notions of algebraic theory would arise, for example,
in study of computational effects. 
The monad and Lawvere theory approaches to computational effects
(see Section~\ref{sec_alg_str_math_cs}) have
captured different aspects of computational effects, and 
the characteristic features of these notions of algebraic theory
are reflected in their major applications: 
the simplicity of monad makes it into a popular design pattern in 
functional programming~\cite{Wadler}, and the modularity of 
Lawvere theory neatly explains how to model
combinations of effects~\cite{Hyland_Plotkin_Power}.
One naturally expects that suitable notions of algebraic theory 
would be useful in capturing other aspects of computational effects.
Here we mention one such possibility:
the quantitative aspect as measured by \emph{effect systems} 
\cite{Lucassen_Gifford}.
A categorical semantics of effect systems is given via the notion of 
\emph{graded monad} \cite{Katsumata},
which is a monad in a suitable 2-category \cite{FKM}
and hence a monoid in a monoidal category,
but a suitable notion of \emph{graded Lawvere theory}
is yet to be defined.

Another future work is to apply our framework to the study of higher 
dimensional categories.
As we have mentioned in the introduction, currently there are many definitions 
of weak $n$-category and a conceptual understanding of the relationship 
between these definitions is in need.
An obstruction to the direct comparison 
is the fact that different (algebraic)
definitions of weak $n$-category are given in terms of algebraic theories 
belonging to different notion of algebraic theory,
such as generalised operads, symmetric operads and monads; 
cf.~\cite{Leinster_survey}.
We expect that our unified framework may overcome this difficulty thanks to its
generality, incorporating a wide range of notions of algebraic theory.

Finally we mention that there are also a lot to be done around
Batanin and Leinster's weak $n$-categories.
In Leinster's definition, weak $n$-categories are defined as models of a 
certain $\monoid{T}^{(n)}$-operad $\monoid{L}^{(n)}$.
However, if we consider homomorphisms in the usual sense between models of 
$\monoid{L}^{(n)}$, then 
these correspond to \emph{strict $n$-functors} and the more natural
\emph{weak $n$-functors} are not treated in \cite{Leinster_book}.
Batanin gives a definition of weak $n$-functor in 
\cite[Definition~8.8]{Batanin_98}, and it would be interesting 
to adapt that definition to Leisnter's version of weak $n$-categories,
and to enrich it over an extensive and locally presentable category 
$\cat{V}$ in order to clarify the structure of the definition.
We believe that a substantial theory of weak $n$-categories would 
have applications in computer science as well, 
for instance by suggesting new semantically motivated axioms 
to homotopy type theory.


\bibliographystyle{plain} 
\bibliography{myref} %
\end{document}